%% file: main.tex
\documentclass[a4paper, 11pt]{article}
\usepackage{standalone}
\input{preamble.tex}

\usepackage[margin=1in]{geometry}

\usepackage[backend=biber,style=numeric,  eprint=false, sorting=nyt,giveninits=true]{biblatex}
\AtEveryBibitem{%
  \ifentrytype{article}{%
    \clearfield{url}%
    \clearfield{urldate}%
  }{}
}
\AtEveryBibitem{%
  \ifentrytype{book}{%
    \clearfield{url}%
    \clearfield{urldate}%
  }{}
}

\title{Non-Kähler metrics on complex manifolds of LVMB type}
\author{Federico Thiella\footnote{Dipartimento di Matematica e Informatica, Università degli Studi di Firenze}}
\date{}

\begin{document}
\maketitle

\begin{abstract}
    LVMB manifolds are a class of non-Kähler compact complex manifolds with a remarkably rich geometry: in many cases they admit a holomorphic bundle structure over a compact toric manifold. In fact, such a bundle is determined by an algebro-combinatoric datum encapsulating a simplicial fan. This is reflected in a close relationship between the geometry of an LVMB manifold and that of its toric base space.
    
    Throughout this paper, we restrict to this subclass of LVMB manifolds. We provide a formula for the characteristic class of the bundle in terms of the original LVMB datum. Subsequently, this expression is employed to address the existence of Hermitian metrics satisfying ``special'' conditions. We consider balanced and SKT metrics, showing that the former are obstructed in most cases. Moreover, within the LVMB class, we construct a new example of a manifold admitting a balanced metric. Finally, restricting ourselves to the subclass of LVM manifolds, we characterize those admitting an SKT metric.
\end{abstract}
\paragraph{MSC2020:} 32Q99, 53C55, 14M25; 32L05, 32M25

\section*{Introduction}

LVMB manifolds are a class of compact non-Kähler manifolds whose structure is controlled by an algebro-combinatoric datum. Some of them are very well-known examples in the literature: complex tori, Hopf manifolds and Calabi--Eckmann manifolds all belong to the LVMB class. They were first introduced by López de Medrano and Verjovsky \cite{LopezVerjovsky1997}, and then generalized by Meersseman \cite{Meersseman2000}. A further generalization was then provided by Bosio \cite{Bosio2001}. More references for LVMB manifolds and their geometry include \cite{MeerssemanVerjovsky2004,Cupit-FoutouZaffran2007,PanovUstinovsky2012,Tambour2012,Battisti2013,BattagliaZaffran2015,BattagliaZaffran2017,Ishida2019,KatzarkovLupercioMeerssemanVerjovsky2021,IshidaKrutowskiPanov2022,Madera2025,Panov2025}.

The algebro-combinatoric nature of LVMB manifolds makes them one of the few systematic constructions of non-Kähler manifolds in the landscape of complex manifolds. Although complex geometry aims to describe complex manifolds in their full generality, constructing and studying new complex manifolds plays a leading role in contemporary research. In fact, very few results are known to hold in full generality, while many others are proved just for restricted classes of complex manifolds. This gap motivates the study of new examples of complex manifolds: understanding their properties may either disprove a resilient conjecture in general, or extend the range of cases in which it holds.

This paper investigates the geometry of LVMB manifolds and, in particular, establishes conditions that ensure the existence of \emph{canonical} Hermitian metrics. The existence of these metrics is a central problem in complex geometry, aiming to extend the Uniformization Theorem to higher dimensional complex manifolds. Such generalization has a twofold nature. Indeed, a canonical Hermitian metric can be characterized either by a curvature property of some Hermitian connection (like being Einstein or with constant scalar curvature), or by a cohomological property of its fundamental form.

In this sense, the Kähler setting is very rigid. Indeed, as Kähler metrics are precisely the ones for which the Levi--Civita connection is Hermitian, this connection is the most natural choice in the Kähler setting. The problem of classifying space forms---i.e. complete manifolds with constant curvature---is solved even in the Riemannian setting; similarly, complete Kähler manifolds with constant holomorphic sectional curvature have just three models: the projective space, the complex space, and the open unit ball \cite[\S IX.7]{KobayashiNomizu1969}. Thus, finding canonical metrics on a Kähler manifold essentially reduces to two classes: Kähler--Einstein metrics and cscK metrics. The existence of the former on General Type and Calabi--Yau compact manifolds was proved in the seminal works of Yau and Aubin \cite{Szekelyhidi2014}, while for Fano manifolds this problem is more delicate. Recently, the existence of cscK metrics has also been fully characterized \cite{ChenCheng2021II}.

Addressing the same problem in a generic Hermitian setting is much more involved. Indeed, a non-Kähler manifold admits several Hermitian connections that can be considered when a curvature condition is imposed; since they all have torsion, some technicalities need to be introduced. On the other hand, the Kähler condition can be relaxed as well: this is done by identifying weaker cohomological properties for the fundamental form. Yet several choices emerge naturally, depending on the context \cite{GrayHervella1980,GarciaFernandez2016}. This leads to introducing many different \emph{``special'' Hermitian metrics}.

Although the equations defining such metrics appear deceptively simple (we will discuss two of them in \cref{Special_metrics_LVMB}), examples appear either in very special classes, or sporadically: this leaves many questions and conjectures unanswered. Indeed, it remains unclear to what extent admitting a special metric restricts a complex manifold, and how two different such metrics interact. For example, it is conjectured by A. Fino and L. Vezzoni in \cite{FinoVezzoni2015} that the coexistence of a balanced metric with an SKT one (see below) forces the underlying manifold to be Kähler. This holds for a large class of quotients of Lie groups carrying a complex structure, but since examples lack in the first place, the general picture has not yet been fully understood.

Many of these problems have been investigated on quotients of Lie groups where explicit computations are often possible \cite{AbbenaGrassi1986,FinoPartonSalamon2004}, while outside this class our knowledge is significantly more limited \cite{Fino2025}. This is our main motivation to address this kind of problem for LVMB manifolds. On one hand, their structure is stiff enough to make computations feasible; on the other hand, they are very abundant as any compact toric manifold admits multiple ``companion'' LVMB manifolds. Moreover, their construction is not related to Lie groups, hence we expect to encounter genuinely different behaviours.

In the geometric structure of LVMB manifolds we can find a holomorphic foliation \cite{LoebNicolau1996,Meersseman2000}. Under a certain rationality condition on the initial datum, the leaves are complex tori and the leaf space is a compact toric variety---either smooth, or endowed with finite quotient singularities \cite{MeerssemanVerjovsky2004}. We restrict to smooth base-spaces; this identifies a subclass---which we call \emph{regular}---of LVMB manifolds that are holomorphic principal torus bundles. This suggests that their geometry could be determined (at least partially) by the characteristic class of the associated torus bundle and hence by the generating algebro-combinatoric datum itself. The relations between this datum, the LVMB manifold it generates, and the bundle structure, are explored in \cref{section:construction} of this paper. This closer description of the structure of regular LVMB manifolds will be crucial to apply some general results by D. Grantcharov, G. Grantcharov and Y.~S. Poon \cite{GrantcharovPoon2008} to address the existence of “special'' Hermitian metrics on this class of manifolds.

In order to investigate the bundle structure of regular LVMB manifolds, we adopt Bosio's approach \cite{Bosio2001} and the construction described in \cite{BattagliaZaffran2015}. From their perspective, any complete simplicial fan can be embedded---often by suitably adding some vectors called \emph{ghosts}---into a so-called \emph{triangulated vector configuration} \cite{LoeraRambauSantos2010}. For rational smooth fans, such an object is given by a pair \((V_\Sigma,\calT_\Sigma)\), where \(V_\Sigma\) is a list containing all primitive vectors of a fan \(\Sigma\), plus the ghosts, while \(\calT_\Sigma\) is the combinatorial datum of \(\Sigma\). We will call these triangulated configurations  \emph{regular}.

From this, by choosing an ordered basis \((f_0,\dots,f_{2m})\) for the space of linear relations among the vectors in \(V_\Sigma\), and then applying the so-called \emph{Gale transform} as in \cite{BattagliaZaffran2015}, we obtain a point configuration in a complex affine space. Together with the fan combinatorial datum, this gives rise to an LVMB manifold \(N\), which then is the total space of a principal holomorphic torus bundle over the compact toric manifold \(X_\Sigma\).

This structure clearly emerges when this construction is compared with the well-known Audin--Cox construction of toric manifolds. Exploiting this, in this paper we describe how the choice of a Gale dual configuration determines a splitting of \(\pi : N \to X\) into \(S^1\)-principal sub-bundles. This point of view is crucial for our discussion: indeed each of them can be represented as a complex line bundle over \(X\), and since the base space is a toric manifold, their first Chern class can be algebraically described in terms of the simplicial fan corresponding to \(X\). This traces a clear link between the characteristic class of the holomorphic torus bundle \(\pi : N \to X_\Sigma\) associated to a regular LVMB manifold, and the initial triangulated vector configuration for which a Gale dual configuration has been chosen. We provide an explicit formula in the theorem:

\begin{namedtheorem}[{\labelcref{them:father}}]
    Let \((V_\Sigma, \calT_\Sigma)\) be a regular triangulated configuration with underlying fan \(\Sigma\). Let \((f_0, f_1, \dots, f_{2m})\) be any Gale dual configuration for this datum and \(K\) be the lattice spanned by these vectors. Then, with \(N\) denoting the corresponding LVMB manifold, the characteristic class of the principal torus bundle \(\pi : N \to X_\Sigma\) is
    \begin{equation*}
        \gamma(\pi) = \sum_{j=1}^{2m} \iota^\vee(f_j^*) \otimes f_j \in K_\Sigma^\vee \otimes K,
    \end{equation*}
    where  \(\iota : K_\Sigma \hookrightarrow K\) is the inclusion of the sublattice \(K_\Sigma\) of linear relations among non-ghost vectors, and \(\iota^\vee\) its adjoint map.
\end{namedtheorem}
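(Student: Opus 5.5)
The plan is to compare the LVMB construction with the Audin--Cox quotient description of \(X_\Sigma\), and to use the Gale dual basis \((f_0,\dots,f_{2m})\) to split \(\pi\) into circle bundles. The characteristic class of each circle bundle will then be read off as a first Chern class of a toric line bundle.

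First I fix the setting. Let \(V_\Sigma=(v_1,\dots,v_n)\), including ghosts, so \(K\subset\mathbb Z^n\) is the lattice of integer linear relations. Let \(\mathcal U\subset\mathbb C^n\) be the open set determined by \(\calT_\Sigma\), where the coordinates of ghost vectors are always nonzero. Then \(X_\Sigma=\mathcal U/(K\otimes\mathbb C^*)\), while \(N\) is the quotient of \(\mathcal U\) by \(\mathbb C^*\) (acting diagonally through \(f_0\)) times the \(m\)-dimensional complex subgroup defined by the Gale configuration. Hence \(N\to X_\Sigma\) is the quotient by the compact torus \((K\otimes\mathbb C^*)/(\mathbb C^*\times\mathbb C^m)\), which is identified with a real torus \(K\otimes\mathbb R/K\) modulo the directions of the complex subgroup. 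Using the basis \(f_1,\dots,f_{2m}\) of a complement, I write the fibre torus as \(T=\mathbb R^{2m}/\mathbb Z^{2m}\), with coordinates dual to the \(f_j\). This gives a splitting of \(\pi\) as a fibre product of \(2m\) principal \(S^1\)-bundles \(P_j\to X_\Sigma\), where \(P_j\) is obtained from \(\mathcal U\) by quotienting by the kernel of the character \(f_j^*:K\otimes\mathbb C^*\to\mathbb C^*\) (composed with the radial part). The characteristic class of a principal \(T=K'\otimes S^1\) bundle lies in \(H^2(X_\Sigma;\mathbb Z)\otimes\Lambda\), where \(\Lambda\) is the lattice of the fibre torus, and additivity over the splitting gives \(\gamma(\pi)=\sum_j c_1(P_j)\otimes f_j\).

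Next I identify \(c_1(P_j)\). In the Audin--Cox picture, every character \(\chi\in K^\vee\) of \(K\otimes\mathbb C^*\) defines the line bundle \(L_\chi=\mathcal U\times_{\chi}\mathbb C\), and \(\chi\mapsto c_1(L_\chi)\) is the standard isomorphism \(\operatorname{Pic}(X_\Sigma)=H^2(X_\Sigma;\mathbb Z)\cong K_\Sigma^\vee\). Here \(K_\Sigma^\vee\) is the cokernel of \(M\to\mathbb Z^{\text{non-ghost}}\), sending \(u\mapsto(\langle u,v_i\rangle)\). The ghost coordinates do not vanish on \(\mathcal U\), so ghost characters give trivial bundles. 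This is why the restriction \(\iota^\vee:K^\vee\to K_\Sigma^\vee\) appears: the factor of \(K\otimes\mathbb C^*\) corresponding to ghosts acts freely on a \((\mathbb C^*)^{\text{ghosts}}\) factor and can be quotiented first, reducing to the genuine Cox quotient by \(K_\Sigma\otimes\mathbb C^*\). Then \(P_j\) is the unit circle bundle of \(L_{f_j^*}\), up to a sign convention, so \(c_1(P_j)=\iota^\vee(f_j^*)\).

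The main obstacle is the first step: showing that the quotient by the complex subgroup, which is not a product of standard \(\mathbb C^*\)'s, really yields the same principal \(T\)-bundle as the fibre product of the \(P_j\). My approach is to show that the inclusion of the radial and real-torus parts, \(\mathcal U/(\mathbb R_{>0}^{K}\cdot \ker)\), is an equivariant diffeomorphism onto \(N\). This uses the fact that \(\mathbb C^*\cdot f_0\) together with the Gale subgroup and the compact torus \(K\otimes S^1\) span \(K\otimes\mathbb C^*\) transversally, which is the Gale duality and regularity condition. Since topological principal bundles are classified up to isomorphism, this identification suffices. Finally I check the sign and orientation conventions for \(c_1\) against the definition of \(\gamma\) used earlier in the paper.
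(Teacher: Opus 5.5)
Your argument is correct in outline, but it reaches the formula by a different route from the paper. The paper never computes the circle factors for an arbitrary Gale basis.
\begin{itemize}
\item It first proves \cref{lemma:charClassGhosts} only for a \emph{good} basis. There, \cref{cor:bundle_triviality} makes the first $k-1$ factors trivial, and the remaining ones are by construction the Cox $\C^*$-bundles of the $h_j^*\in K_\Sigma^\vee$.
\item The ghost-free case $k=0$ is handled separately, by prepending two zero ghosts (\cref{ghostAddition}) and using naturality along $N'\to N$.
\item An arbitrary basis is reached only at the end, by an explicit change-of-basis computation.
\end{itemize}
You instead compute $c_1$ of the bundle attached to an \emph{arbitrary} character $\chi\in K^\vee$ of $K\otimes\C^*$ and get $\iota^\vee(\chi)$. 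You do this by restricting to the slice where the ghost coordinates equal $1$, which reduces the structure group to $K_\Sigma\otimes\C^*$ and lands you in the Cox quotient. This treats every basis and every $k$ at once. It also shows that, under $H_2(X_\Sigma)\cong K_\Sigma$, $\gamma(\pi)$ is simply the composite $K_\Sigma\xrightarrow{\iota}K\to K/\Z f_0$. Basis-independence then comes for free. It also makes clear that the formula really lives in $K_\Sigma^\vee\otimes(K/\Z f_0)$: terms $\otimes f_0$ are invisible, consistent with the $f_j$ being defined only modulo $f_0$. What the paper's longer route buys is the extra geometric fact that a good basis splits off trivial circle factors.

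Two corrections on your ``main obstacle''.

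First, $P_j=\mathcal U/\ker(r\circ f_j^*)$, as you define it, is not literally a quotient of $N$. The Gale subgroup $\exp(\C^m_N)$ is not contained in $\ker(r\circ f_j^*)$, since $\Re f_j^*$ does not vanish on $\C^m_N$. What is true is that $N=\mathcal U\times_G(G/H)$, with $G=K\otimes\C^*$ and $H=\C^*_{f_0}\cdot\exp(\C^m_N)$. Take $q\colon G\to G/H\cong T$ and let $\chi_j$ be the coordinate character of $T$ dual to $f_j$. Then the homomorphisms $\chi_j\circ q$ and $r\circ f_j^*$ agree on $K\otimes S^1$ and differ only on the contractible radial factor. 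Hence they are homotopic, and the associated circle bundles are isomorphic. This is exactly the content of \cref{prop:bundleReduction}, and no explicit equivariant diffeomorphism is needed.

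Second, the transversality $(\C f_0+\C^m_N)\cap(K\otimes\R)=\R f_0$ is what makes $G/H$ a compact torus with $\pi_1=K/\Z f_0$. It comes from the non-degeneracy of the period matrix, not from regularity. Regularity is what makes the $G$-action free and $X_\Sigma$ smooth.
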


We use this to provide a bigraded model for the algebra of differential forms of a regular LVMB manifold. As an application, we then prove that a regular LVMB manifold satisfying the \(\del\delbar\)-lemma must be a complex torus. These facts were already known for a subclass of LVMB manifolds called \emph{moment-angle manifolds} \cite{PanovUstinovsky2012}. Leveraging some results in \cite{Hofer1993}, we can further exploit the bigraded model to explore the Borel spectral sequence induced by the holomorphic torus bundle \(\pi : N \to X_\Sigma\). This yields a neat expression for the first Chern and Bott--Chern classes of a regular LVMB manifold.

It has been clear since the early works on LVMB manifolds, that those admitting a Kähler metric are only complex tori \cite{LopezVerjovsky1997,Meersseman2000,Bosio2001}. Yet, the existence of other special metrics on manifolds in the LVMB class remained unexplored before Faucard \cite{Faucard2024} addressed the existence of \emph{Locally Conformally Kähler} metrics (LCK for brevity). His answer is mostly negative: it is proven that, apart from Hopf manifolds, LCK metrics on LVMB manifolds are obstructed at the level of the group of automorphisms. See also \cite{Faucard2025} for LCK metrics on coverings of LVMB manifolds.

As an application of \cref{lemma:charClassGhosts} and of its cohomological implications, we explore the existence of two other types of special metrics on LVMB manifolds: \emph{balanced} metrics and \emph{SKT} metrics, of which we recall the definitions in \cref{Special_metrics_LVMB}. These metrics appear naturally in complex geometry: for example, the former exist on bimeromorphic modifications of Kähler manifolds, while the latter define a canonical class of Hermitian metrics on complex surfaces. We obtain the following main facts regarding them:

\begin{namedtheorem}[{\labelcref{balancedness_obstruction}}]
    Let \((V,\calT)\) be a regular vector configuration containing at least one ghost vector. Then, any corresponding LVMB manifold admits a balanced metric if and only if it is a complex torus.
\end{namedtheorem}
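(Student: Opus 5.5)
The plan is to combine the explicit characteristic class of \cref{them:father} with the balancedness criterion for principal torus bundles of Grantcharov--Grantcharov--Poon \cite{GrantcharovPoon2008}. That criterion says roughly the following. Let \(\pi : N \to X\) be a principal holomorphic torus bundle over a compact Kähler base, with characteristic class whose components are of type \((1,1)\). Suppose \(N\) admits a balanced metric. Then, after averaging over the torus fibres, one gets an invariant balanced metric. Its \((n-1)\)-power then forces every curvature form \(\omega_j\) to satisfy \(\Lambda_{\omega_X}\omega_j = 0\) for a Kähler form on \(X\). Equivalently, \(\int_X \omega_j \wedge \omega_X^{n-1} = 0\) for all components \(j\). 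So the first step is to reduce the existence of a balanced metric on \(N\) to the existence of a Kähler (or balanced) class \(\omega_X\) on the toric base \(X_\Sigma\) for which every Chern class in the splitting of \(\gamma(\pi)\) is primitive. The reduction uses averaging plus the fact that the pushforward of \(\omega^{n-1}\) along the fibres gives a balanced, hence \(d\)-closed \((n'-1,n'-1)\), form on \(X\).

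Second, we identify these Chern classes concretely. By \cref{them:father}, \(\gamma(\pi) = \sum_j \iota^\vee(f_j^*)\otimes f_j\), and \(\iota^\vee(f_j^*)\) restricted to \(K_\Sigma\) is a class in \(H^2(X_\Sigma,\mathbb Z)\cong K_\Sigma^\vee\), described as a combination of toric divisors \(D_\rho\). The key claim to establish is the following: since \(V\) contains a ghost vector, the restriction map \(\iota^\vee : K^\vee \to K_\Sigma^\vee\) is surjective with nontrivial kernel. Hence the span of the classes \(\iota^\vee(f_j^*)\), \(j=1,\dots,2m\), is all of \(H^2(X_\Sigma,\mathbb R)\). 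The point is that the ghost adds a direction to \(K\) absent from \(K_\Sigma\), so that \(\dim K = \dim K_\Sigma + \#\text{ghosts}\). The \(2m\) vectors \(f_1,\dots,f_{2m}\) together with \(f_0\) span \(K\), and their images then span \(K_\Sigma^\vee\). Combined with the complex structure (the \(f_j\) pair into a complex-linear map), the Chern class components of type \((1,1)\) span the full \(H^{1,1}(X_\Sigma)\), which for a toric manifold equals \(H^2\).

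Third, we derive the contradiction. If all these classes are primitive for \(\omega_X\), then by spanning we get \(\int_X \alpha \wedge \omega_X^{n-1} = 0\) for every \(\alpha\in H^2(X)\). Taking \(\alpha = [\omega_X]\) gives \(\mathrm{vol}(X)=0\), which is impossible unless \(\dim X = 0\). A zero-dimensional base means \(N\) is a single fibre, i.e. a complex torus. Conversely, tori are Kähler, hence balanced.

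The main obstacle is the spanning step. We must check carefully that the presence of a ghost forces the role of \(f_0\) (the distinguished vector tied to the affine/Gale normalization) to be absorbed by the ghost direction. Without a ghost, \(\iota^\vee(f_0^*)\) carries one needed class, which is typically the anticanonical-type class, and the span of the remaining \(2m\) misses it. That is exactly why ghost-free examples can be balanced. We would verify this by choosing the Gale basis adapted to the splitting \(K = K_\Sigma \oplus \langle\text{ghost relation}\rangle\), noting that \(\gamma\) is independent of that choice up to automorphism.
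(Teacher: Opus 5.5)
Your strategy differs from the paper's and can be made to work, and your spanning step is correct. Because $f_0=(1,\dots,1)$ has nonzero ghost entries, $f_0\notin\Rel(\Sigma)$. Hence the real span of $\iota^\vee(f_1^*),\dots,\iota^\vee(f_{2m}^*)$ equals $\iota^\vee(\mathrm{Ann}(f_0))$, which is all of $K_\Sigma^\vee\otimes\R$, whatever $f_1,\dots,f_{2m}$ are. This is the same surjectivity of $\gamma(\pi)$ that the paper uses.

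\textbf{The gap is in your last step.} Averaging plus the Grantcharov--Grantcharov--Poon criterion (equivalently, fibre integration of $\omega^{n-1}$) gives you on the base only a \emph{balanced} form $\omega_X$ with $\Lambda_{\omega_X}\Theta_j=0$, not a Kähler one. Since $\omega_X$ need not be closed, $[\omega_X]$ is not a class in $H^2(X_\Sigma;\R)$, and you cannot take $\alpha=[\omega_X]$. This matters: when $X_\Sigma$ is not projective (LVMB but not LVM, which occurs for $d\geq 3$), the base carries no Kähler form at all. What you have actually shown is that $[\omega_X^{d-1}]\in H^{2d-2}(X_\Sigma;\R)$ pairs to zero with all of $H^2(X_\Sigma;\R)$, i.e.\ $[\omega_X^{d-1}]=0$. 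You still need a positivity statement to contradict this.

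\textbf{The fix.} Test against the Poincaré dual of a torus-invariant divisor $D$, which exists as soon as $d\geq 1$ because a complete fan has rays. Then $0=\int_{X_\Sigma}\PD(D)\wedge\omega_X^{d-1}=\int_D\omega_X^{d-1}>0$. If $X_\Sigma$ is projective you could instead pair with a genuine Kähler form $\omega_K$, since $\int\omega_K\wedge\omega_X^{d-1}>0$, but the divisor argument covers every case.

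\textbf{Comparison with the paper.} With this repair, your proof is a base-level version of the paper's argument. The paper avoids averaging and the GGP criterion entirely:
\begin{itemize}
\item surjectivity of $d_2^{0,1}=\gamma(\pi)$ in the Leray--Serre spectral sequence kills $\pi^*H^2(X_\Sigma)$;
\item so $\pi^{-1}(D)$ is a null-homologous complex hypersurface of $N$;
\item this contradicts Michelsohn's positivity $\int_{\pi^{-1}(D)}\omega^{n-1}>0$ directly on $N$;
\item finally, Stanley's formula gives $b_2(X_\Sigma)=|\Sigma^{(1)}|-d>0$ whenever $d>0$.
\end{itemize}
The paper's route is purely topological. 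Yours needs more machinery but casts the obstruction as the impossibility of making every curvature class primitive. That viewpoint also explains the ghost-free case: there the span of the $\iota^\vee(f_j^*)$, $j\geq 1$, is only a hyperplane in $H^2(X_\Sigma;\R)$, which leaves room for balanced examples such as $N_3$.
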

This does not hold for configurations without ghost vectors. Indeed, in \cref{section:submersionMetricsBalanced} we explicitly construct an LVMB manifold \(N_3 \to (\CP^1)^3\) of complex dimension \(4\) supporting a balanced metric. This is, to the best of our knowledge, a new example. Indeed, employing the results developed so far, we show that it does not belong to any known class of compact complex manifolds admitting a balanced metric. This construction can be generalized to provide a family of examples of the form \(N_\ell \to (\CP^1)^\ell\) with complex dimension \(\left\lfloor\frac{3}{2}\ell\right\rfloor\).

Finally, we discuss the existence of SKT metrics on regular LVMB manifolds. Employing our formula for the characteristic class \(\gamma(\pi)\), we provide a cohomological obstruction to their existence. For LVMB manifolds \(N \to X_\Sigma\) such that \(X_\Sigma\) is projective, this is indeed a characterization.

\begin{namedtheorem}[{\labelcref{LVMB_SKT}}]
    Let \(\pi : N \to X_\Sigma\) be the principal torus bundle associated to a regular LVMB manifold \(N\), and let \(\gamma(\pi) = \sum_{j=1}^{2m} \iota^\vee(f_j^*) \otimes f_j\) be its characteristic class. Then, \(N\) admits an SKT metric only if
    \begin{equation*}
        \sum_{j=1}^{2m} \iota^\vee(f_j^*) \wedge \iota^\vee(f_j^*) = 0 \in H^4(X;\Z).
    \end{equation*}
    If \(X_\Sigma\) is projective, this condition is also sufficient.
\end{namedtheorem}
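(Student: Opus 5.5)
The plan is to reduce the statement to the Grantcharov--Grantcharov--Poon criterion for SKT metrics on principal holomorphic torus bundles \cite{GrantcharovPoon2008}, using \cref{them:father} to make the characteristic class explicit. We view \(\pi : N \to X_\Sigma\) as a principal bundle with fibre the complex torus \(T = K_\R/K\). The complex structure on \(K_\R\) is induced by the Gale dual configuration, and we fix a real basis adapted to the splitting into \(S^1\)-sub-bundles described in \cref{section:construction}. By \cref{them:father}, the curvature of a principal connection on \(N\) can then be written as \(\sum_{j} \omega_j \otimes f_j\). Here each \(\omega_j\) is a closed real \(2\)-form on \(X\) representing \(\iota^\vee(f_j^*)\), and may be chosen of type \((1,1)\), since each sub-bundle is represented by a holomorphic line bundle over the toric manifold. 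Thus the connection forms \(\theta_j\) satisfy \(d\theta_j = \pi^*\omega_j\). Pairing the real components \(f_{2k-1}, f_{2k}\) into complex ones recovers the usual setup of \cite{GrantcharovPoon2008}.

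\emph{Necessity.} Let \(g\) be an SKT metric on \(N\). We average \(g\) over the compact torus \(T\) acting by holomorphic translations; the SKT condition \(\partial\bar\partial F=0\) is linear and preserved by pullback under biholomorphisms, so the averaged metric is again SKT. For a \(T\)-invariant SKT metric, we decompose the fundamental form as \(F = \pi^*F_X + (\text{terms in } \theta_j, \pi^*\text{forms}) + \sum_k \sqrt{-1}\,h\,\Theta_k\wedge\bar\Theta_k\)-type fibre terms. We then compute \(dd^cF\) using \(d\theta_j = \pi^*\omega_j\). The component of \(dd^cF\) that is basic, i.e.\ contains no \(\theta\)'s, is \(\pi^*\bigl(dd^c F_X' + \sum_j c\,\omega_j\wedge\omega_j\bigr)\) for a suitable normalization. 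Here \(F_X'\) is obtained from the horizontal part after integrating over the fibre, and the coefficients come from the fibre metric. The identity \(dd^c F_X' = -\sum_j \omega_j\wedge\omega_j\) (up to the fibre metric) then shows that \(\sum_j \omega_j\wedge\omega_j\) is \(d\)-exact on \(X\). The precise statement I would use is GGP's Theorem: \(N\) is SKT iff there is a Hermitian \(F_X\) on \(X\) with \(dd^cF_X = \sum_j \omega_j\wedge\omega_j\), where the sum runs over an orthonormal real basis for the fibre metric. The remaining difficulty is that the fibre metric need not make \((f_j)\) orthonormal. I would handle this by noting that the Gale dual basis may be changed, since \cref{them:father} holds for any Gale dual configuration, together with the invariance of the formula \(\sum_j \iota^\vee(f_j^*)\otimes f_j\) under change of basis. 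Choosing an orthonormal basis for the averaged fibre metric that is compatible with the complex structure then shows the class \(\sum_j \iota^\vee(f_j^*)\wedge\iota^\vee(f_j^*)\) vanishes in real cohomology. Since \(H^*(X_\Sigma;\Z)\) is torsion free, the class vanishes in \(H^4(X;\Z)\).

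\emph{Sufficiency, with \(X\) projective.} Suppose the class vanishes. Then \(\Phi=\sum_j \omega_j\wedge\omega_j\) is an exact real \((2,2)\)-form. Since \(X_\Sigma\) is Kähler, the \(\partial\bar\partial\)-lemma gives \(\Phi = dd^c\beta\) for a real \((1,1)\)-form \(\beta\). Let \(\omega_X\) be the Kähler form coming from projectivity. Then \(F_X = \beta + C\omega_X\) is positive for \(C\gg0\), and \(dd^cF_X = \Phi\) because \(\omega_X\) is closed. Feeding \(F_X\) and the flat fibre metric making \((f_j)\) orthonormal into the GGP ansatz \(F = \pi^*F_X + \sum_k \sqrt{-1}\,\Theta_k\wedge\bar\Theta_k\) yields an SKT metric, with signs fixed by the GGP convention. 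Compactness of \(X\) is what allows the choice of \(C\), and projectivity, or more precisely Kählerness, is used only here.

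I expect the main obstacle to be the necessity direction: the fibre metric of an arbitrary invariant SKT metric is not adapted to the Gale dual basis. Some care is needed to show that the cohomological condition is basis independent, so that one obtains exactly \(\sum_j \iota^\vee(f_j^*)\wedge\iota^\vee(f_j^*)\) rather than a weighted quadratic expression. If this fails in general, I would instead argue that the GGP quadratic form may be any positive definite one compatible with \(J\), and show that the vanishing statement is independent of this choice for LVMB data.
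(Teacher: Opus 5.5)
Your route is the paper's own. \cref{SKTcohomology} averages an SKT metric (\cref{AveragedSpecialMetrics}), treats the invariant metric as a submersion metric (\cref{InvariantIsSubmersion}), applies \cref{GGP}, and reads off $\sum_j c_1(\pi_j)^2=0$ with $c_1(\pi_j)=\iota^\vee(f_j^*)$ from \cref{them:father}. Sufficiency is delegated to \cite[Lemma 4]{CobanTurcoPoddar2021}, which is exactly your $\del\delbar$-lemma argument with $\beta+C\omega_X$. That half is fine, because with $\Pi_\text{std}$ the flat metric declaring $(f_j)$ orthonormal is $J$-compatible.

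The necessity step you single out is a genuine gap, and the paper does not close it either. It simply writes $[\Theta_j]=c_1(\pi_j)$, whereas \cref{InvariantIsSubmersion} only produces connection forms dual to a basis orthonormal for the averaged fibre metric $h$. What the GGP argument actually yields is that $h(\gamma,\gamma)=\sum_{j,k}h(f_j,f_k)\,\iota^\vee(f_j^*)\wedge\iota^\vee(f_k^*)$ vanishes in $H^4(X;\R)$ for some $J$-compatible $h$. Passing to an $h$-orthonormal, $J$-adapted Gale basis gives the displayed condition \emph{for that basis}. Transferring it back to the given $(f_j)$ would require $\sum_j\iota^\vee(f_j^*)^2$ to be basis-invariant, and it is not. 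Invariance of $\gamma(\pi)$ says nothing about its contraction with the particular inner product that declares $(f_j)$ orthonormal.

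Your fallback (independence of $h$ for LVMB data) also fails. For $m\ge 2$ the compatible $h$ form $\operatorname{GL}(m,\C)/\mathrm{U}(m)$, and the vanishing really depends on $h$. Here is a concrete case.
\begin{itemize}
\item Take $(\CP^1)^3$ with two zero ghosts, so $m=2$. Use the good configuration in which $f_1$ is the coordinate vector of the second ghost and $f_{i+1}$ is the relation $\varepsilon_i+(-\varepsilon_i)=0$.
\item Then $\iota^\vee(f_1^*)=0$, and $u_i:=\iota^\vee(f_{i+1}^*)$ satisfy $u_i^2=0$. So the condition holds, and $N$ is SKT (it is a Hopf surface times $M_{1,1;i}$).
\item Now take the good configuration $f'_1=f_1+f_2$, $f'_3=f_3+f_4$, $f'_2=f_2$, $f'_4=f_4$. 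It gives $\Lambda'=\Lambda Z$ with $Z\in\operatorname{GL}(2,\C)$, hence the same $N$ with $\Pi_\text{std}$.
\item In this basis $\gamma=u_1\otimes f'_2+u_2\otimes f'_3+(u_3-u_2)\otimes f'_4$, and $\sum_j\iota^\vee({f'_j}^*)^2=-2u_2u_3\neq0$.
\end{itemize}
So necessity can only be proved in existential form: the condition holds for \emph{some} Gale presentation of $N$ with $\Pi_\text{std}$, equivalently $h(\gamma,\gamma)=0$ for some compatible $h$. Since an $h$-orthonormal basis need not be integral, your torsion-freeness step should be replaced by a statement in $H^4(X;\R)$.

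Finally, the fibre metric $H_{k\bar l}$ of an invariant metric is a priori a function on $X$, which both you and \cref{InvariantIsSubmersion} take for granted. You should observe that the $\vartheta_k\wedge\bar\vartheta_l$-component of $\del\delbar\omega$, with $\vartheta_k$ the complex connection forms, is $\del\delbar H_{k\bar l}$. Hence SKT forces $H$ to be pluriharmonic, and therefore constant on compact $X$.
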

The projectivity of the base space is not a very restrictive assumption: this is indeed the case considered by Meersseman and Verjovsky in \cite{MeerssemanVerjovsky2004}. As an application we can prove:
\begin{namedtheorem}[\labelcref{SKT_either_balanced_family}]
    Let \(N_\ell \to (\CP^1)^\ell\) be the family of LVMB manifolds constructed as above. Then, \(N_\ell\) admits a balanced metric if and only if \(\ell\) is odd, while it admits an SKT metric if and only if \(\ell\) is even. In this case it is a product of Calabi--Eckmann threefolds.
\end{namedtheorem}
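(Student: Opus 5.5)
We prove the statement in two halves, using the balanced criterion of \cite{GrantcharovPoon2008} for balancedness and the SKT characterization of \Cref{LVMB_SKT}.

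\textbf{Balanced metrics.} For a principal holomorphic torus bundle over a compact Kähler base, \cite{GrantcharovPoon2008} gives the following criterion. The total space admits a balanced metric (compatible with the bundle structure, after averaging over the torus action) if and only if there is a Kähler class \([\omega]\) on the base such that each real component of the characteristic class is primitive, i.e. \(\omega^{n-1}\wedge \iota^\vee(f_j^*) = 0\) in \(H^{2n}(X)\). Here \(n = \dim_\C X\). Since the torus acts by automorphisms, averaging reduces the existence of any balanced metric to an invariant one. So the question becomes whether the \(2m\) classes \(\iota^\vee(f_j^*)\in H^2((\CP^1)^\ell)\) are simultaneously primitive for some Kähler class.

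To make this concrete, write \(h_1,\dots,h_\ell\) for the pullbacks of the hyperplane class, so that \(H^2 = \bigoplus \Z h_i\). Take \(\omega=\sum a_i h_i\) with all \(a_i>0\). Then for \(c = \sum c_i h_i\),
\[
\omega^{\ell-1} c = (\ell-1)!\Big(\prod_i a_i\Big)\sum_i \frac{c_i}{a_i}.
\]
Primitivity of all the \(\iota^\vee(f_j^*)\) therefore says that the vector \((1/a_i)_i\), which has positive entries, lies in the annihilator of the span of the coefficient vectors of the \(\iota^\vee(f_j^*)\).

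Next we read these coefficients off the explicit Gale dual configuration for \(N_\ell\) from \cref{section:submersionMetricsBalanced}, via the formula of \Cref{them:father}. We expect the span to be a codimension-one subspace of \(\R^\ell\) whose normal vector has, up to sign, a pattern depending on \(\ell \bmod 2\). For odd \(\ell\) the normal should be taken with all entries positive; this recovers the balanced metric already built for \(N_3\) and extends it to all odd \(\ell\). For even \(\ell\) the span should instead contain a vector with all coefficients positive, or the normal should have mixed signs; either way no positive \((1/a_i)\) exists, so there is no balanced metric.

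\textbf{SKT metrics.} By \Cref{LVMB_SKT}, and since \((\CP^1)^\ell\) is projective, \(N_\ell\) is SKT if and only if
\[
\sum_j \iota^\vee(f_j^*)^2 = 0 \in H^4.
\]
In \(H^*((\CP^1)^\ell)\) we have \(h_i^2=0\), so \(c^2 = 2\sum_{i<k} c_ic_k h_ih_k\). The condition therefore becomes, for every pair \(i<k\),
\[
\sum_j c^{(j)}_i c^{(j)}_k = 0,
\]
where \(c^{(j)}\) is the coefficient vector of \(\iota^\vee(f_j^*)\). We will check these equations directly from the explicit \(c^{(j)}\). For even \(\ell\) we expect the classes to pair off \((\CP^1)^2\) factors, giving block-diagonal supports so that all mixed sums vanish. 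For odd \(\ell\) the leftover factor, glued to its neighbours in the construction, should produce a nonzero mixed product.

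For the last claim, in the even case we identify the configuration with a product of the configurations defining Calabi--Eckmann threefolds \(S^3\times S^3\to\CP^1\times\CP^1\). LVMB data behave well under products, and the characteristic class splits accordingly, so \(N_\ell\) is that product.

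\textbf{Main obstacle.} The hard step is computing the \(\iota^\vee(f_j^*)\) explicitly for general \(\ell\) and finding the parity pattern. In particular, we need to show that for even \(\ell\) the positivity obstruction really holds, and not merely that the obvious Kähler classes fail. We would first handle \(\ell=2,3,4\) by hand to guess the general pattern, then prove it by induction on \(\ell\) using the recursive shape of the configuration.
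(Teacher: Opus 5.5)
Your overall reduction is sound. Averaging makes a balanced metric invariant, hence a submersion metric, so balancedness amounts to finding a positive vector $b\in\R^\ell_{>0}$ that annihilates the coefficient vectors of the $\iota^\vee(f_j^*)$; the SKT question is \cref{LVMB_SKT}. But the proof stops exactly where the content is. You leave the classes $\iota^\vee(f_j^*)$ as an ``expected'' pattern to be found by induction, and for even $\ell$ that expectation is wrong. No induction is needed, because the classes can be read off directly.

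Let $H_i$ denote the divisor class of the $i$-th factor (the paper's $w_{2i-1}$).
\begin{itemize}
\item For $\ell=2d+1$ there are no ghosts, $f_0=(1,\dots,1)$, and $f_1,\dots,f_{2d}$ are the pair relations of the first $2d$ factors. Dualizing gives $\iota^\vee(f_j^*)=H_j-H_\ell$. Their span is the hyperplane $\sum_i c_i=0$ with normal $(1,\dots,1)$, which corresponds to the product Kähler class. Moreover $\sum_j(H_j-H_\ell)^2=-2H_\ell\sum_jH_j\neq0$, so SKT is obstructed.
\item For $\ell=2d$ the span is \emph{not} a hyperplane. The zero vector is a ghost, so $k=1$, and $\gamma(\pi)$ maps $H^1(T^{2d};\Z)$ onto $H^2(X;\Z)$, both of rank $2d$. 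Concretely, each $\iota^\vee(f_j^*)$ is a single $H_i$, and together they exhaust all factors. Hence there is no nonzero annihilator at all, which is exactly the ghost-vector obstruction \cref{balancedness_obstruction} the paper uses for even $\ell$. Meanwhile $\sum_iH_i^2=0$ gives SKT.
\end{itemize}
The idea your plan is missing is that a ghost forces $\gamma(\pi)$ to be surjective. Without it, your even case rests on an unverified and partly incorrect guess.

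Your justification of the product claim also fails. $\gamma(\pi)$ is a topological invariant of the principal bundle. It does not determine the complex structure of $N_\ell$, which lives in $\gamma(\pi)^{1,0}$ and depends on the Gale dual and the period matrix. So a splitting of $\gamma(\pi)$ cannot yield a biholomorphism. Likewise, ``LVMB data behave well under products'' would need a precise statement with compatible choices. The naive product of $d$ Calabi--Eckmann configurations has $d$ ghosts and is not the configuration defining $N_{2d}$ when $d>1$. The paper instead computes the action directly. The ghost has $\Lambda=0$, so its coordinate can be normalized to $1$ and $\proj(U(\calT))\cong(\C^2\setminus\{0\})^{2d}$. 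Then $t_j$ acts only on the $j$-th pair of $\C^2$-blocks, by the Calabi--Eckmann action, whence $N_{2d}\cong(M_{1,1;i})^d$.

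Two smaller points in your balanced criterion also need attention.
\begin{itemize}
\item For necessity, Grantcharov--Poon's lemma yields a \emph{balanced} base metric, not a Kähler one. On $(\CP^1)^\ell$ this is harmless: $\int_D\omega_X^{\ell-1}>0$ on every coordinate divisor $D$, and every class in $H^{2\ell-2}$ with this property is the $(\ell-1)$-th power of a Kähler class. But this has to be said.
\item For sufficiency, you need a connection with $J_N\theta_{2j-1}=\theta_{2j}$ whose curvatures are the primitive harmonic representatives. The paper flags this explicitly and obtains it by pulling back its connection on $N_3$ along $\tilde\Delta$.
\end{itemize}
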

This behaviour is consistent with the aforementioned conjecture by A.~Fino and L.~Vezzoni \cite{FinoVezzoni2015}.

\paragraph{Acknowledgements}
I am deeply grateful to my advisors prof. Daniele Angella and prof. Fiammetta Battaglia for their constant support and encouragement during the writing of this paper. I wish to thank also prof. Cristiano Spotti for his support and the fruitful conversation with him during my visit at Aarhus; and prof.s Gueo Grantcharov and Anna Fino for their valuable comments during the revision.

The author is supported by: Università degli Studi di Firenze, INdAM--GNSAGA and PRIN 2022 project “Real and Complex Manifolds: Geometry and holomorphic dynamics'' (code 2022AP8HZ9).

\section{The structure of LVMB manifolds}
\label{section:construction}
In their full generality, LVMB manifolds can be obtained as leaf spaces of a holomorphic group action; by construction, their geometry is controlled by a datum \((\Lambda,\mathcal{E})\) which we will call \emph{LVMB datum} \cite{Meersseman2000,Bosio2001}. This is \emph{Gale dual} to a so-called \emph{triangulated vector configuration} \((V,\calT)\) \cite{LoeraRambauSantos2010}; however, the way the latter determines the former depends on some choices. This means that the same triangulated configuration produces several LVMB data, hence many LVMB manifolds. In turn, any triangulated configuration contains a simplicial fan embedded in it, thus considering such configurations instead of LVMB data highlights the relation between LVMB and toric manifolds.

This is the setting we consider throughout this paper: when we restrict to the regular case---i.e. to considering triangulated vector configurations \((V,\calT)\) encapsulating a \emph{smooth} complete simplicial fan \(\Sigma\)---the LVMB manifolds we obtain are the total spaces of holomorphic torus bundles over the toric manifold associated to \(\Sigma\) \cite{MeerssemanVerjovsky2004}. This motivates the smoothness assumption we make throughout this paper. However, in the more general case where \(\Sigma\) is just a rational simplicial fan, these bundles get only mild singularities \cite{MeerssemanVerjovsky2004}, so we conjecture that the techniques we describe here could be generalized.

\subsection{Construction of LVMB manifolds}
\label{section:initialData}
In the following, we adopt the approach of \cite{Bosio2001} and \cite{BattagliaZaffran2015}.

\begin{definition}
    Let \(E\) be a \(d\)-dimensional vector space over \(\R\).
    A \emph{triangulated vector configuration} is a pair \((V, \calT)\) where
    \begin{itemize}
        \item \(V = (v_1, \dots, v_n)\) is a list of vectors in \(E\), admitting repetitions. We assume that \(E = \left\langle v_1, \dots, v_n \right\rangle_\R\).
        \item We call \(\tau \subseteq \{1,\dots,n\}\) a \emph{simplex} if \(\{v_j:j\in\tau\}\) is a set of linearly independent vectors. A simplicial \emph{cone} over \(\tau \in \calT\) is the convex set \(\cone(\tau) = \{\sum_{j\in\tau}\R_{\geq 0}v_j\}\). Then, \(\calT\) is an abstract simplicial complex of these simplexes such that:
        \begin{itemize}
            \item \(\cone(\tau) \cap \cone(\tau') = \cone(\tau\cap\tau')\) for all \(\tau,\tau' \in \calT\),
            \item \(\bigcup_{\tau \in \calT} \cone(\tau) \supseteq \cone(V)\).
        \end{itemize}
        In particular \(\cone(\emptyset) = \{0_E\}\), while, by convention, \(\cone(V) = \sum_{j=1}^n \R_{\geq} v_j\).
    \end{itemize}
    A triangulated vector configuration \((V,\calT)\) is \emph{rational} if there exists a full-rank lattice \(L\) in \(E\) such that \(v_i \in L\) for all \(i = 1,\dots, n\). Notice that, in this case, 
    \begin{equation*}
        \Rel(V) = \left\{c \in \R^n : \sum_{j=1}^n c_j v_j = 0\right\}
    \end{equation*}
    is a rational subspace in \(\R^n\)---that is, it admits a basis of vectors in \(\Q^n\). We say that \((V,\calT)\) is \emph{regular}, if, for any \(\tau \in \calT\) such that \(|\tau| = d\), the set \(\{v_j : j\in \tau\}\) is a \(\Z\)-basis of \(L \subset E\).

    A configuration \((V,\calT)\) is called \emph{odd} if \(V\) has odd cardinality, and \emph{balanced} if \(\sum_i v_i = 0\).
\end{definition}
\begin{remark}
    This definition allows some vectors in \(V\) not to belong to any simplex of \(\calT\). These will be called \emph{ghost vectors}, and up to a change of indices, they can be put at the head of \(V\) as \(v_1, \dots, v_k\).
\end{remark}

For the construction of LVMB manifolds we will need to consider odd and balanced configurations satisfying \(n - d = 2m+1\) for some positive integer \(m\). This is not very restrictive as, just by adding ghost vectors, we can turn any configuration \((V,\calT)\) into a new one satisfying the conditions above. In fact, balancedness can be achieved by prepending \(-\sum_i v_i\) to \(V\) and increasing \(n\) by 1; oddness, instead, can be obtained just by prepending one or two copies of \(0\) to \(V\) and correspondingly increasing \(n\).

\subsubsection{Simplicial fans as triangulated vector configurations}
\label{section:simplicial_fans_as_triangulated_vector_configurations}
Polyhedral fans are the cornerstones of toric geometry. For an exhaustive presentation of this subject, we refer to Cox, Little and Schenck's book \cite{CoxLittleSchenck2011}. Here we recall some definitions.
\begin{definition}
    Let \(\Sigma\) be a polyhedral fan in \(E\), we denote its \(j\)-dimensional skeleton \(\Sigma^{(j)}\):
    \begin{itemize}
        \item \(\Sigma\) is called \emph{simplicial} when the minimal generators of each of its maximal cones are linearly independent over \(\R\),
        \item \(\Sigma\) is called \emph{complete} if its support \(|\Sigma|\) coincides with \(E\),
        \item \(\Sigma\) is called \emph{rational} if there exists  a full-rank lattice \(L\) in \(E\) that has nonempty intersection with each one-dimensional cone in the fan,
        \item \(\Sigma\) is called \emph{smooth} if the minimal generators of maximal cones in \(\Sigma\) generate \(L\) as \(\Z\)-module.
    \end{itemize}
    Thus, to any rational fan there naturally corresponds the set of minimal generators of the one-dimensional cones; these last are called \emph{rays}. If the fan is smooth, this set is a set of \(\Z\)-generators of the lattice.
\end{definition}

Any \emph{smooth} fan \(\Sigma\) can be encoded in a \emph{regualar} triangulated vector configuration \((V_\Sigma,\calT_\Sigma)\) as follows. We consider \(\tilde v_1, \dots, \tilde v_s\) as the minimal generators of \(\Sigma^{(1)}\), then we set \(V_\Sigma = (\tilde v_1, \dots, \tilde v_s)\); on the other hand, for the combinatorial datum \(\calT_\Sigma\), we just copy the combinatorics of \(\Sigma\). More precisely, \(\tau \in \calT_\Sigma\) if and only if \(\{\tilde v_j : j \in \tau\}\) spans a cone in \(\Sigma^{(|\tau|)}\). As we started with a smooth fan, \((V_\Sigma,\calT_\Sigma)\) is well-defined, rational and, moreover, regular.

The configuration \((V_\Sigma,\calT_\Sigma)\) obtained this way is not necessarily odd and balanced; however it can be embedded into such a one by adding an appropriate number of ghost vectors as described above. In order to preserve rationality, these must lie in the lattice with respect to which \(\Sigma\) is rational. At the end of this process, we get a new rational vector configuration
\begin{equation*}
    V_\Sigma' = (v_1, \dots, v_k, v_{k+1} = \tilde v_1, \dots, v_{k+s} = \tilde v_s).
\end{equation*}
When no confusion is feared, we will not distinguish \(V_\Sigma\) and \(V_\Sigma'\). Hence, we will often use the notation \((V_\Sigma,\calT_\Sigma)\) to denote an odd and balanced triangulated configuration encapsulating a complete simplicial fan \(\Sigma\); when needed, ghost vectors will be specified in the context.
\begin{remark}
    \label{remark:The_note}
    A complete simplicial fan, whether rational or not, yields a whole family of toric data \cite{Prato2001}. In turn, each such datum can be embedded into infinitely many triangulated vector configurations, all odd and balanced \cite{BattagliaZaffran2015}. In this paper, we restrict our attention to the smooth case, see \cref{remark:nonrational} for comments about the general setting.
\end{remark}

\begin{note}
    Any simplicial fan \(\Sigma\) embedded in a balanced triangulated configuration \((V_\Sigma,\calT_\Sigma)\) must be complete. Indeed, since \(\sum_{v \in V_\Sigma} v = 0\), it holds \( E = \cone(V_\Sigma) \subseteq \bigcup_{\tau \in \calT} \cone(\tau)\). As these are precisely the cones of \(\Sigma\), it is complete.
\end{note}
\subsubsection{Gale Transform}
Now choose a Gale Transform of a triangulated vector configuration \((V,\calT)\); this is the first of two choices required to produce an LVMB manifold from a triangulated vector configuration.

This is done by choosing an ordered basis \((f_0,\dots, f_{2m})\) for \(\Rel(V)\), such that the matrix \(M\) has the following form:
\begin{equation*}
    M =
    \begin{bmatrix}
        \vertbar &  & \vertbar\\
        f_0 & \cdots & f_{2m}\\
        \vertbar &  & \vertbar
    \end{bmatrix}
    =
    \begin{bNiceMatrix}
        1 & f_1^1 & \Cdots & f^1_{2m}\\
        \Vdots & \Vdots & \Ddots & \Vdots \\
        1 & f^n_1 & \Cdots & f^n_{2m}
    \end{bNiceMatrix}.
\end{equation*}
A \emph{Gale dual configuration} of \(V\) is then the list of vectors \(\hat\Lambda^\R = (\hat\Lambda^\R_1, \dots, \hat\Lambda^\R_{n})\) in \(\R^{2m+1}\) such that
\begin{equation*}
    M =
    \begin{bmatrix}
        \horzbar \hat\Lambda_1^\R \horzbar\\
        \vdots\\
        \horzbar \hat\Lambda_n^\R \horzbar
    \end{bmatrix}.
\end{equation*}
Since each \(\hat\Lambda^\R_j\) has \(1\) as its first entry, we can trim them to a configuration of points \(\Lambda^\R = (\Lambda^\R_1, \dots, \Lambda^\R_n)\) in the affine space \(\mathbb{A}^{2m}_{\R}\), where \(\hat\Lambda^\R_j \eqqcolon [1 \horzbar \Lambda^\R_j \horzbar]\).
\begin{remark}
    As in the rest of the paper, the \((2m+1)\)-tuple \((f_0, \dots, f_{2m})\) will play a major role; we will often refer to it as a Gale dual configuration of some \((V,\calT)\) in place of \(\Lambda^{\R}\). This abuse of notation is negligible, as each of these two pieces of data uniquely determines the other.
\end{remark}

\subsubsection{The construction}
\label{section:TheConstruction}
Now we show how a genuine LVMB datum \((\Lambda,\mathcal{E})\) can be obtained from a triangulated vector configuration \((V,\calT)\), for which a Gale transform \(\Lambda^{\R}\) has been chosen. We do so by selecting a \emph{period matrix}---that is a matrix \(\Pi = [P|I_m] \in \operatorname{Mat}_{m\times 2m}(\C)\) with \(\det\Im P \neq 0\) (cf. \cite[\S 1.2]{BirkenhakeLange2004}). This is a technicality we have introduced in the construction of LVMB manifolds to allow more flexibility. The choice of such matrix gives an identification between \(\mathbb{A}_\R^{2m}\) and \(\mathbb{A}_\C^m\). In turn, \(\Lambda^{\R}\) is identified with the \(n\)-tuple of points \(\Lambda = (\Lambda_1, \dots, \Lambda_n)\) in \(\mathbb{A}_\C^m\) given by
\begin{equation*}
    \Lambda =
    \begin{bmatrix}
        \horzbar \Lambda_1 \horzbar\\
        \vdots\\
        \horzbar \Lambda_n \horzbar\\
    \end{bmatrix}
    =
    \begin{bmatrix}
        \horzbar \Lambda_1^{\R} \horzbar\\
        \vdots\\
        \horzbar \Lambda_n^{\R} \horzbar\\
    \end{bmatrix}
    \trans{\Pi}
    = \Lambda^{\R} \trans{\Pi},
\end{equation*}
which is the first datum in the pair \((\Lambda,\mathcal{E})\).
On the other hand, \(\mathcal{E}\) is the \emph{virtual chamber} associated to \(\calT\); this is given by
\begin{equation*}
    \mathcal{E} = \left\{\{1, \dots, n\}\setminus \tau : \tau \in \calT, |\tau| = d \right\}.
\end{equation*}
We denote its elements as \(\varepsilon\). The pair \((\Lambda,\mathcal{E})\) obtained in this way is an LVMB datum \cite[Prop. 2.1]{BattagliaZaffran2015} that allows us to construct a unique LVMB manifold following \cite{Bosio2001}.

The LVMB manifold associated to \((\Lambda,\mathcal{E})\) then arises as follows: the virtual chamber \(\mathcal{E}\) determines an open subset in \(\C^n\) defined as
\begin{equation*}
    U(\calT) = \bigcup_{\varepsilon \in \mathcal{E}} \bigcap_{j \in \varepsilon} D(z_j)
\end{equation*}
where \(D(z_j) = \{z_j \neq 0\}\) denotes the distinguished open subset of the Zariski topology of \(\C^n\). On its projectivization \(\proj(U(\calT))\), the datum provided with \(\Lambda\) defines a \(\C^m\)-action via the map
\begin{equation}
    \label{equation:LVMBaction}
    \begin{aligned}
        \alpha : \C^m \times \proj(U(\calT)) &\longrightarrow \proj(U(\calT))\\
        (u, (z_1 : \dots : z_n)) &\longmapsto \left(e^{2\pi i \left\langle \Lambda_1, u \right\rangle} z_1 : \dots : e^{2\pi i \left\langle \Lambda_n, u \right\rangle} z_n\right)
    \end{aligned}    
\end{equation}
where \(\left\langle \cdot, \cdot\right\rangle\) denotes the standard bilinear pairing in \(\C^m\). The quotient \(\faktor{\proj(U(\calT))}{\alpha}\) is a compact complex manifold which we will denote by \(N\).

We say that \(N\) is \emph{regular} if the generating triangulated configuration \((V,\calT)\) is regular.

\begin{remark}
    \label{indispensableRemark}
    The Gale Transform is what links a triangulated vector configuration to an LVMB datum. In particular, as the indices of ghost vectors belong to any \(\varepsilon \in \mathcal{E}\), the corresponding coordinates in \(U(\calT)\) never vanish. In the literature, these are known as \emph{indispensable indexes} \cite{Meersseman2000}. From this perspective, the Gale Transform makes ghost vectors correspond to indispensable points of the LVMB datum. 
\end{remark}

\subsubsection{Dependency on choices}
\label{section:dependecyOnChoices}
The construction described above depends on two choices: the Gale transform, which turns a vector configuration \(V\) into an \(n\)-tuple \(\Lambda^{\R}\) of points in \(\mathbb{A}^{2m}_{\R}\), and the period matrix \(\Pi\), which identifies \(\R^{2m}\) with \(\C^m\). Consequently, \(\mathbb{A}^{2m}_{\R}\) and \(\mathbb{A}^{m}_{\C}\) are also identified.

However, these choices are not independent; indeed, we have the following:
\begin{proposition}
    \label{ThreeOutOfFour}
    Let \((V,\calT)\) be any triangulated vector configuration. Fix a Gale dual configuration \(\Lambda{^\R}\) and a period matrix \(\Pi\). Then, for any other Gale dual configuration \(\tilde\Lambda^{\R}\) there exists a unique period matrix \(\Pi'\) such that
    \begin{equation*}
        \Lambda^{\R} \trans{\Pi} = \tilde\Lambda^{\R} \trans{\Pi}'.
    \end{equation*}
    Similarly, for any choice of \(\Pi'\) there exists a unique \(\tilde\Lambda^{\R}\) satisfying the same equation.
\end{proposition}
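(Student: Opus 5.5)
The plan is to reduce everything to linear algebra on the matrix $M$. Two Gale dual configurations $\Lambda^{\R}$ and $\tilde\Lambda^{\R}$ correspond to two ordered bases $(f_0,\dots,f_{2m})$ and $(\tilde f_0,\dots,\tilde f_{2m})$ of $\Rel(V)$. In both bases the first vector is $f_0=\tilde f_0=(1,\dots,1)$, since $\sum_i v_i=0$. Hence there is a unique invertible change-of-basis matrix $G\in GL_{2m+1}(\R)$ with $\tilde M = M G$, and $G$ must fix the first column. This gives the block form
\begin{equation*}
    G=\begin{bmatrix} 1 & b\\ 0 & A\end{bmatrix},\qquad A\in GL_{2m}(\R),\ b\in\R^{2m}.
\end{equation*}
Trimming the first column, this reads $\tilde\Lambda^{\R}=\mathbf{1}\,b+\Lambda^{\R}A$, where $\mathbf 1$ is the column of ones. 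So the two Gale duals differ by a real affine transformation of $\mathbb{A}^{2m}_\R$.

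The first subtlety is the translation part $b$. An equality $\Lambda^{\R}\trans{\Pi}=\tilde\Lambda^{\R}\trans{\Pi}'$ with linear maps $\trans\Pi,\trans\Pi'$ cannot absorb a translation unless $b=0$. I would handle this in one of two ways:
\begin{itemize}
    \item Observe that the statement should be read up to the translation ambiguity: translating all $\Lambda_j$ by a common vector multiplies every coordinate in \eqref{equation:LVMBaction} by the same scalar, so the action on $\proj(U(\calT))$ is unchanged.
    \item Normalize the Gale duals, for instance by requiring the $f_j$ for $j\ge1$ to be orthogonal to $f_0$, so that $b=0$ automatically.
\end{itemize}
I expect pinning down this normalization to be the main point needing care. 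With $b=0$, the equation becomes $\Lambda^{\R}\trans\Pi=\Lambda^{\R}A\,\trans\Pi'$.

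Uniqueness comes from the rows of $\Lambda^{\R}$ spanning $\R^{2m}$. This holds because the columns of $M$ are linearly independent, so the trimmed matrix has rank $2m$ modulo the span of $\mathbf 1$. Because of this spanning, the equation is equivalent to $\trans\Pi=A\,\trans\Pi'$, i.e.\ $\Pi'=\Pi\,\trans{(A^{-1})}$. Here $\Pi'$ is an $m\times 2m$ complex matrix of rank $m$ whose induced map $\R^{2m}\to\C^m$ is an $\R$-isomorphism, since $\Pi$ gives one and $A$ is invertible. For the converse direction, given $\Pi'$ we set $A=\trans\Pi\,(\trans\Pi')^{-1}$, viewing both as real isomorphisms $\R^{2m}\to\C^m\cong\R^{2m}$. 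Then $\tilde M=MG$ with $G=\mathrm{diag}(1,A)$ is a new basis of $\Rel(V)$ with first column $\mathbf 1$, and it is unique by the same spanning argument.

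A second subtlety is that $\Pi'$ produced this way need not be in the normalized form $[P'|I_m]$. Any real isomorphism $\R^{2m}\to\C^m$ can be written as $S[P'|I_m]$ with $S\in GL_m(\C)$ exactly when the last $m$ real coordinates map to a complex basis. I would either interpret ``period matrix'' up to the left $GL_m(\C)$-action, which only reparametrizes $u\in\C^m$ in \eqref{equation:LVMBaction} and so does not change $N$, or absorb $S$ into the identification. The condition $\det\Im P'\neq0$ is precisely the statement that the real map is an isomorphism. So after left multiplication by $GL_m(\C)$, the only real content is the bijection $\Pi\leftrightarrow A$ above.
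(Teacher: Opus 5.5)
Your argument is essentially the paper's. The paper also writes $\tilde\Lambda^{\R}=\Lambda^{\R}T$ and sets $\Pi'=\Pi\,\trans{T}^{\inv}$, and for the converse it takes $\tilde\Lambda^{\R}=\Lambda^{\R}[\Re\trans{\Pi}\,|\,\Im\trans{\Pi}]\,[\Re\trans{\Pi}'\,|\,\Im\trans{\Pi}']^{\inv}$, which is your $A=\trans{\Pi}(\trans{\Pi}')^{\inv}$ in real coordinates. Your worry about the translation part $b$ is justified and catches a step the paper skips: it asserts that $T$ exists, but if $b\neq 0$ the linear independence of the columns of $M$ forces $\Pi'\,\trans{b}=0$, so no period matrix works and the statement holds only up to a common translation of the $\Lambda_j$, which leaves $N$ unchanged. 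Your explicit uniqueness argument and your remark on the normal form $[P|I_m]$ likewise make precise what the paper leaves implicit, since it only checks that $\Pi'$ induces a real isomorphism $\R^{2m}\to\C^m$.
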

\begin{proof}
    As the columns of both \(\Lambda^{\R}\) and \(\tilde\Lambda^{\R}\) span \(\Rel(V)\), there exists \(T \in \operatorname{GL}(2m;\R)\) such that \(\Lambda^{\R} T = \tilde\Lambda^{\R}\). Then, \(\Pi' = \Pi \trans{T}^\inv\) has the desired property. This is still a period matrix because
    \begin{equation*}
        \det
        \begin{bmatrix}
            \Pi'\\
            \overline{\Pi}'
        \end{bmatrix}
        = \det
            \begin{bmatrix}
                \Pi\\
                \overline{\Pi}
            \end{bmatrix} 
            \cdot \det T^\inv \neq 0
    \end{equation*}
    as both factors are non-singular.

    Conversely, suppose that a period matrix \(\Pi'\) has been fixed. Then we can observe that
    \begin{equation*}
        [\Re\trans{\Pi}|\Im\trans{\Pi}] = \frac{1}{2} \left[\trans{\Pi}\,\Big|\!\trans{\overline{\Pi}}\right]
        \begin{bmatrix}
            I_m & -i I_m\\
            I_m & i I_m
        \end{bmatrix},
    \end{equation*}
    so the \((2m\times 2m)\)-matrices \([\Re\trans{\Pi} | \Im\trans{\Pi} ]\) and \([\Re\trans{\Pi}' | \Im\trans{\Pi}' ]\) are clearly non-singular. Therefore, we can define \(\tilde{\Lambda}^{\R}\) as
    \begin{equation*}
        \tilde\Lambda^{\R} = \Lambda^{\R} [\Re\trans{\Pi} | \Im\trans{\Pi} ] \, [\Re\trans{\Pi}' | \Im\trans{\Pi}' ]^{\inv}.
    \end{equation*}
    This is a Gale dual configuration as its columns clearly span \(\Rel(V)\); moreover
    \begin{equation*}
        \tilde\Lambda^{\R} \trans{\Pi}' = \tilde\Lambda^{\R} [\Re\trans{\Pi}' | \Im\trans{\Pi}' ] [I_m| i I_m] = \Lambda^{\R} [\Re\trans{\Pi} | \Im\trans{\Pi} ]  [I_m| i I_m] = \Lambda^{\R} \trans{\Pi},
    \end{equation*}
    so the equation in the statement is fulfilled.
\end{proof}

As the construction of LVMB manifolds is controlled by the datum \((\Lambda,\mathcal{E}) = (\Lambda^{\R}\trans{\Pi}, \mathcal{E})\), this result shows that we are free to modify either the choice of \(\Lambda^{\R}\) or the choice of \(\Pi\), provided that the other datum is changed accordingly.

Nevertheless, many choices for \(\Lambda^\R\) and \(\Pi\) produce the same LVMB manifold. Indeed, as the action \(\alpha\) is invariant under automorphisms of \(\mathbb{A}^m_{\C}\) \cite{Meersseman2000}, all the LVMB data in the set \(\{(\Lambda Z,\mathcal{E}) : Z \in \operatorname{GL}(m,\C)\}\) produce the same LVMB manifold. Thus, for a fixed choice of Gale dual configuration \(\Lambda^\R\), any period matrix in \(\{Z\,\Pi : Z \in \operatorname{GL}(m,\C)\}\) induces the same construction. Yet, we have a natural choice for \(\Pi\), namely \(\Pi_\text{std} = i[-i I_m | I_m] = [I_m | i I_m]\), which we will call \emph{standard period matrix}. Unless stated otherwise, we will use \(\Pi_\text{std}\) to construct LVMB manifolds.

\Cref{ThreeOutOfFour} essentially indicates that choosing a different Gale dual configuration or changing the identification between \(\mathbb{A}^{2m}_\C\) and \(\mathbb{A}^m_{\C}\) affects the construction of LVMB manifolds in the same way. Since we are able to keep track of such a change of Gale duality, it is worth identifying some choices that make the forthcoming discussion easier to follow.

\begin{definition}
    Let \((V_\Sigma,\calT_\Sigma)\) be a regular, odd and balanced triangulated vector configuration in \(\R^d\) containing \(n\) vectors; assume it contains \(k \geq 1\) ghost vectors. Let \(\Rel(\Sigma) \subset \Rel(V)\) be the subspace spanned by linear relations among non-ghost vectors in \(V_\Sigma\). Consider the sublattices of \(\Z^n\) given by \(K = \Rel(V_\Sigma) \cap \Z^n\) and \(K_\Sigma = \Rel(\Sigma) \cap \Z^n\), together with the natural inclusion \(\iota : K_\Sigma \hookrightarrow K\). We call \emph{good} any Gale dual configuration \((f_0,\dots,f_{2m})\) that is a basis of \(K\) of the following form:
    \begin{equation*}
        \begin{bmatrix}
            \vertbar & & \vertbar\\
            f_0 & \cdots & f_{2m}\\
            \vertbar && \vertbar
        \end{bmatrix}
        =
        \begin{bmatrix}
            \vertbar & & \vertbar & \vertbar && \vertbar\\
            f_0 & \cdots & f_{k-1} & \iota(h_{1}) &\cdots & \iota(h_{2m-k+1})\\
            \vertbar & & \vertbar & \vertbar && \vertbar
        \end{bmatrix}
        =
        \begin{bNiceArray}{c|ccc|ccc}[margin]
            1 & \Block[borders={bottom}]{1-6}{0} &&&&&\\
            \Vdots&\Block[borders=bottom]{2-3}{I_{k-1}} & & & \Block[borders=bottom]{2-3}{0} \\
            \\
            &\Block{3-3}{[m_{ij}]} &&& \vertbar && \vertbar \\
            &&&& h_{1} & \Cdots & h_{2m-k+1}\\
            1&&&& \vertbar && \vertbar
        \end{bNiceArray},
    \end{equation*}
    where \(m_{ij} \in \Z\), and \(\{h_1,\dots,h_{2m-k+1}\}\) is a basis for \(K_\Sigma\).
\end{definition}
The above definition does not immediately extend to configurations without ghost vectors. Nonetheless, as in this case \(K = K_\Sigma\), we call \emph{good} a Gale dual configuration of the form
\begin{equation*}
    \begin{bmatrix}
        \vertbar & & \vertbar\\
        h_1 & \cdots & h_{2m+1}\\
        \vertbar && \vertbar
    \end{bmatrix}
    =
    \begin{bNiceArray}{c|ccc}[margin]
        1 & \Block[borders={bottom}]{1-3}{0} &&\\
        \Vdots& \vertbar && \vertbar \\
        & h_{1} & \Cdots & h_{2m+1}\\
        1& \vertbar && \vertbar
    \end{bNiceArray}.
\end{equation*}

\begin{proposition}
    \label{goodGaleDualExistence}
    Any regular odd and balanced triangulated vector configuration \((V_\Sigma,\calT_\Sigma)\) admits a good Gale dual configuration.
\end{proposition}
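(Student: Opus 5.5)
The plan is to construct the basis directly, using the exact sequence obtained by projecting $K$ onto the ghost coordinates. Let $p : K \to \Z^k$ be the restriction of the coordinate projection $\Z^n \to \Z^k$, $c \mapsto (c_1,\dots,c_k)$. By definition, a relation lies in $\Rel(\Sigma)$ exactly when its ghost coordinates vanish. Hence $\ker p = K \cap \{c_1=\dots=c_k=0\} = K_\Sigma$, and we get an exact sequence $0 \to K_\Sigma \xrightarrow{\iota} K \xrightarrow{p} \Z^k$.

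The key step, and the only one that uses regularity, is to show that $p$ is surjective. Fix a ghost index $i \le k$ and a maximal simplex $\tau\in\calT_\Sigma$ with $|\tau|=d$, which exists since $\Sigma$ is complete. Every index in $\tau$ is non-ghost. The ghosts were added inside the lattice $L$, so $v_i\in L$. By regularity, $\{v_j : j\in\tau\}$ is a $\Z$-basis of $L$, so $v_i=\sum_{j\in\tau}a_jv_j$ with $a_j\in\Z$. Then $g_i := e_i-\sum_{j\in\tau}a_je_j$ lies in $\Rel(V_\Sigma)\cap\Z^n = K$, and $p(g_i)=e_i$. Therefore $p$ is onto, the sequence splits, and $K = \iota(K_\Sigma)\oplus\langle g_1,\dots,g_k\rangle_\Z$. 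Taking ranks gives $\operatorname{rank} K_\Sigma = 2m+1-k$.

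Next we fix the first column. Since the configuration is balanced, $f_0=(1,\dots,1)\in K$, and $p(f_0)=e_1+\dots+e_k$. The vectors $p(f_0), e_2,\dots,e_k$ differ from $e_1,\dots,e_k$ by a unimodular change of basis. Hence $\{f_0, g_2,\dots,g_k\}$ still maps to a $\Z$-basis of $\Z^k$. Set $f_{i-1}=g_i$ for $2\le i\le k$ and choose any $\Z$-basis $h_1,\dots,h_{2m-k+1}$ of $K_\Sigma$. By the splitting, $(f_0,\dots,f_{k-1},\iota(h_1),\dots,\iota(h_{2m-k+1}))$ is a $\Z$-basis of $K$.

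It remains to read off the shape of the matrix. In row $1$ the only nonzero entry is the $1$ coming from $f_0$. Rows $2,\dots,k$ contain a $1$ from $f_0$, then $I_{k-1}$ because $p(g_i)=e_i$, then zeros because the $h_j$ vanish on ghost coordinates. The remaining entries $m_{ij}$ are integers. The first column is all ones, and $K\otimes\R=\Rel(V_\Sigma)$ because the subspace is rational. So this is a Gale dual configuration, and it is good.

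For configurations without ghosts, $K=K_\Sigma$, and one only needs to extend $f_0=(1,\dots,1)$ to a $\Z$-basis of $K$. This is possible because $K=\Rel(V)\cap\Z^n$ is saturated in $\Z^n$ and $f_0$ is primitive in $\Z^n$, hence primitive in $K$. The main point to check carefully is the surjectivity of $p$. It rests on the ghost vectors lying in the lattice $L$ for which the maximal cones are unimodular; without this, $p$ would only have finite cokernel.
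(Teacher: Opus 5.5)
Your proof is correct, and it reaches the good form by a more direct route than the paper.

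The paper proceeds in four steps:
\begin{itemize}
\item it extends a basis of the saturated sublattice $K_\Sigma$ to a basis of $K$ via a general basis-completion theorem;
\item it splits off $f_0$ through $K=\Z f_0\oplus\hat K$, with $\hat K$ the relations whose first coordinate vanishes;
\item it uses regularity only to say that $e_j\mapsto v_j$ is split epic;
\item it brings the ghost block to some $A\in\operatorname{SL}(k-1,\Z)$ by a Hermite normal form, and then adjusts $A$ to $I_{k-1}$.
\end{itemize}

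You instead isolate the one fact that makes this normalization possible: the ghost-coordinate projection $p:K\to\Z^k$ is onto, with kernel exactly $K_\Sigma$. You prove it by writing each ghost in the $\Z$-basis of a single maximal cone. The splitting lemma, together with the unimodular change to $p(f_0),e_2,\dots,e_k$, then gives the identity block with no further reduction.

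Your route buys two things. First, the columns are explicit, $f_{i-1}=e_i-\sum_{j\in\tau}a_je_j$, so the $m_{ij}$ are read off from the coordinates of the ghosts in a unimodular cone basis. Second, it shows exactly where regularity enters. Primitivity of $K$ or $\hat K$ in $\Z^n$ alone would not rule out a ghost entry like $2$. What forces unimodularity is your surjectivity of $p$, i.e.\ the fact that the splitting $v_j\mapsto e_j$, $j\in\tau$, lands in non-ghost coordinates. The paper leaves this point implicit. Its phrasing, in turn, uses only standard lattice tools and does not single out a cone.

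In the ghost-free case your argument and the paper's coincide. If the displayed form is read as also requiring the first row to be $(1,0,\dots,0)$, subtract suitable multiples of $f_0$ from the remaining basis vectors.
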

\begin{proof}
    If \(k = 0\) there is nothing to prove. Indeed, as in this case \(\Sigma\) is balanced by itself, we can assume \(h_1 = [1,\dots,1]\). Completing this to a basis of \(K_\Sigma\), we obtain a good Gale dual configuration.

    Conversely, suppose \(k \geq 1\). Then, as we are assuming that ghost vectors are placed at the head of \(V_\Sigma\), \(\Rel(\Sigma) \subset \Span_{\R}(e_{k+1},\dots, e_{n}) =\R^{n-k} \subset \R^n\). It can be easily checked that \(K_\Sigma = \Rel(\Sigma) \cap K\), thus we can apply \cite[Thm.31]{SiegelChandrasekharan1989} to complete a basis \(\{h_1,\dots, h_{2m-k+1}\}\) of \(K_\Sigma\) to a basis \(\{f_0, \dots, f_{k-1},\iota(h_{1}),\dots, \iota(h_{2m-k+1})\}\) of \(K\). We can assume that \(f_0 = [1,\dots, 1]\) since, as \((V_\Sigma,\calT_\Sigma)\) is balanced, \(f_0 \in K\). Moreover, the lattice \(\hat K = K \cap \Span_{\Z}(e_2,\dots, e_n)\) is such that \(K = \Z f_0 \oplus \hat K\). In fact, the natural inclusion \(\hat K \hookrightarrow K\) is a section for the exact sequence \(0 \to \Z f_0 \to K \to \hat K \to 0\) whose projection takes
    \begin{equation*}
        K \ni \kappa = \sum_{j=1}^n \kappa_j e_j \mapsto \sum_{j=2}^n (\kappa_j - \kappa_1) e_j \in \hat K.
    \end{equation*}
    
    Finally, we need to prove that \(f_1,\dots, f_{k-1}\) can be put in the form described in the statement. For this, we consider any maximal simplex \(\tau \in \calT\). Since \((V_\Sigma,\calT_\Sigma)\) is regular, \(\Sigma\) is a smooth fan; hence \(\{v_j \in V_\Sigma : j \in \tau\}\) is a basis of \(\Z^d \subset \R^d\). Thus, the projection sending \(\Z^n \ni e_j \to v_j \in \Z^d\) is split epic. Hence, \(K\), and a fortiori \(\hat K\), are direct summands of \(\Z^n\). This tells us that \(f_1,\dots, f_{k-1}\) is a basis of \(\hat K\), which can be completed to a basis of \(\Z^n\). Thus, by taking its \emph{Hermite normal form} (see e.g. \cite[\S 1.5]{Mader2000}), we obtain a Gale dual configuration of the form
    \begin{equation*}
        \begin{bNiceArray}{c|ccc|ccc}[margin]
            1 & \Block[borders={bottom}]{1-6}{0} &&&&&\\
            \Vdots&\Block[borders=bottom]{2-3}{A} & & & \Block[borders=bottom]{2-3}{0} \\
            \\
            &\Block{3-3}{[m_{ij}]} &&& \vertbar && \vertbar \\
            &&&& h_{1} & \Cdots & h_{2m-k+1}\\
            1&&&& \vertbar && \vertbar
        \end{bNiceArray},
    \end{equation*}
    where \(A \in \operatorname{SL}(k-1,\Z)\). By performing another unimodular change of basis, we can tweak \(A\) so that \(A = I_{k-1}\).
\end{proof}

\subsubsection{Classical examples}
\label{ClassicalExamples}
LVMB manifolds generalize well-known constructions of complex manifolds \cite{LopezVerjovsky1997,Meersseman2000}. Here we briefly review how they can be obtained in the framework described above.

\begin{example}[Complex tori]
    \label{example:complex_tori}
    Complex tori can be interpreted as degenerate LVMB manifolds. Indeed, the \(m\)-dimensional complex torus is obtained via the LVMB construction considering the trivial configuration \((V_0, \calT_0)\) in \(\R^0 = \{0\}\). \(V_0\) is a list containing \(2m+1\) copies of \(0\), and \(\calT_0\) is generated by the \((2m+1)\)-tuple \((0,\dots,0)\). Hence, \(\proj(U(\calT_0)) \iso (\C^*)^{2m}\) and, choosing the trivial linear relations among the null vectors, we get the Gale dual configuration given by the points \(\Lambda_j = e_j\) for \(j=1,\dots, m\), and \(\Lambda_{j} = i e_{j-m}\) for \(j=m+1,\dots, 2m\). Taking the quotient \(\faktor{\proj(U(\calT_0))}{\alpha}\), we clearly get the \(m\)-dimensional complex torus of period matrix \([I_m | iI_m]\). The complex structure of the torus can be prescribed using any period matrix in the construction.
\end{example}

\begin{example}[Diagonal Hopf manifolds]
    \label{example:Hopf_manifolds}
    These are precisely the LVMB manifolds obtained from the polyhedral fan whose corresponding variety is \(\CP^d\). Let \(\Sigma\) be such fan in \(\R^d\), then  the corresponding \((V_\Sigma,\calT_\Sigma)\) is such that \(n-d = d+1-d = 1\). Hence, in order to get a valid configuration, we need to prepend to \(V_\Sigma\) two copies of \(0 \in \R^d\). As an ordered basis of \(\Rel(V)\) we take the columns of the matrix
    \begin{equation*}
        \begin{bNiceMatrix}
            1 & 0 & 0\\
            & 1 & 0\\
            \Vdots & 0 & 1\\
            & \Vdots & \Vdots\\
            1 & 0 &1
        \end{bNiceMatrix};
    \end{equation*}
    thus the corresponding Gale dual configuration is given by the complex points \(\Lambda_1 = 0, \Lambda_2 = 1\) and \(\Lambda_j = i\) for \(j = 3,\dots, d+3\). Since \(\proj (U(\calT)) \iso \C^* \times (\C^{d+1}\setminus \{0\})\), we immediately see that the quotient of the \(\C\)-action
    \begin{equation*}
        t.(z_1,\dots, z_{d+1}) = (e^{2\pi i t} z_1, e^{-2\pi t} z_2, \dots, e^{-2\pi t} z_{d+1})
    \end{equation*}
    defines the diagonal Hopf manifold \(M_{0,n;i}\). Again, for a generic \(\Pi = [1,\tau]\) we obtain \(M_{0,n;\tau}\).    
\end{example}

\begin{note}
    Any linear Hopf manifold \cite[Appendix]{Haefliger1985} can be obtained using this construction. This is done by making different choices for the ghost vectors added to \(V_\Sigma\). Tweaking the choice of the Gale dual configuration, we can obtain the desired linear homothety.
\end{note}

\begin{example}[Calabi--Eckmann manifolds]
    \label{example:C-E_manifolds}
    Similarly to the case of Hopf manifolds, we can obtain all Calabi-Eckmann manifolds as LVMB manifolds. Let \(\Sigma\) be the polyhedral fan corresponding to the toric variety \(\CP^a \times \CP^b\). As the corresponding triangulated configuration \((V_\Sigma,\calT_\Sigma)\) is balanced and satisfies \(n-d = (a +1 + b+1) - (a+b) = 2\), we need to prepend a zero as a ghost vector in order to obtain a valid configuration for the LVMB construction. As a basis for \(\Rel(V)\) we consider the columns of the \((a+b+1)\times 3\) matrix
    \begin{equation*}
        \begin{bNiceMatrix}
            1 & 0 & 0\\
            \Vdots& 1 & \Vdots\\
            & \Vdots \\
            & 1 &0 \\
            & 0 & 1\\
            & \Vdots & \Vdots\\
            1 & 0 &1
        \end{bNiceMatrix}.
    \end{equation*}
    The corresponding Gale dual configuration of points is then \(\Lambda_1 = 0\), \(\Lambda_j = 1\) for \(j = 2,\dots, a+1\) and \(\Lambda_j = i\) for \(j= a+2,\dots, a+b+1\). Hence, since \(\proj(U(\calT)) \iso (\C^{a+1}\setminus\{0\}) \times (\C^{b+1}\setminus\{0\})\), the \(\C\)-action of \(\Lambda\) can be written as
    \begin{equation*}
        t.(z_1,\dots, z_{a+1}, z_{a+2}, \dots, z_{a+b+2}) = (e^{2\pi i t} z_1, \dots, e^{2\pi i t} z_{a+1}, e^{-2\pi t} z_{a+2}, \dots, e^{-2\pi t} z_{a+b+2});
    \end{equation*}
    whence it is clear that the quotient LVMB manifold is the Calabi--Eckmann manifold \(M_{a,b;i}\) diffeomorphic to \( S^{2a+1} \times S^{2b+1}\). Of course, also in this case we obtain \(M_{a,b;\tau}\) setting \(\Pi = [1,\tau]\).
\end{example}

\begin{remark}
    From this construction it is clear how diagonal Hopf manifolds are a special case of Calabi--Eckmann manifolds. Actually, elliptic curves are also such manifolds. Indeed, taking \(a = b = 0\) in the Calabi--Eckmann construction leads to a configuration with three zeroes as ghosts. Hence, \(M_{0,0;\tau}\) is the elliptic curve of period \(\tau\) fibering over \(\CP^0 \times \CP^0 = \{\cdot\}\).
\end{remark}

\subsection{The manifold \texorpdfstring{\(N\)}{N} as a torus bundle}
\label{section:relationWithToricGeometry}

It is well known that toric varieties can be obtained as complex quotients \cite{Cox1995}, for the smooth case see also \cite{Audin2004}.
We keep our interest restricted to the smooth rational case, and we retrace the Audin--Cox construction in our terms. This allows us to split the principal torus bundle structure of a regular LVMB manifold into its \(S^1\)-principal components. This splitting is determined by the choice of the Gale transform in the construction.

To see this, let \(\Sigma\) be a smooth complete simplicial fan in \(\Z^d \subset \R^d\), and let \((V_\Sigma, \calT_\Sigma)\) be an associated triangulated configuration, with \(v_1, \dots, v_k\) as ghost vectors; suppose it is odd and balanced. These data induce the morphism of short exact sequences
\begin{equation}
    \label{equation:AudinExactSequence}
    \begin{tikzcd}
        0 \ar[r] & K_\Sigma \ar[r] \ar[d,"\iota|_{K_\Sigma}"] & \Z^{n-k} \ar[r,"p'"] \ar[d,"\iota"] & \Z^d \ar[r] \ar[d,equals]& 0\\
        0 \ar[r] & K \ar[r] & \Z^n \ar[r,"p"]& \Z^d \ar[r] & 0
    \end{tikzcd}
\end{equation}
The bottom row is associated to the map \(p : e_j \mapsto v_j\) for \(j = 1, \dots, n\); its restriction to non-ghost indexes (namely \(e_{k+1},\dots, e_n\)), gives the one on the top. The inclusion \(\iota: e_i' \mapsto e_{k+i}\) for \(i = 1,\dots, n-k\) naturally restricts to a morphism \(\iota|_{K_\Sigma} : K_\Sigma \hookrightarrow K\), mapping a linear relation among non-ghost vectors into itself, but this time seen as a linear relation among vectors in \(V_\Sigma\).

We choose a Gale dual configuration \((f_0,\dots,f_{2m})\), as \(\Sigma\) is smooth, by \cref{goodGaleDualExistence,ChangeOfBundles}, we can assume it to be good up to changing the period matrix. In particular, \(\{f_0,\dots, f_{k-1}\}\) is a basis for a complement \(L\) of \(\iota(K_\Sigma) \subseteq K\), while \(f_j = \iota(h_{j-k+1})\), where \(\{h_1,\dots, h_{2m-k+1}\}\) is a basis of \(K_\Sigma\).

Taking the tensor product with \(\C\) and then exponentiating, the morphism of exact sequences \labelcref{equation:AudinExactSequence} becomes by functoriality an exact sequence of algebraic tori
\begin{equation*}
    \begin{tikzcd}
        1 \ar[r] & K_\Sigma^{\C} \ar[r] \ar[d,"\iota_\C|_{K^{\C}_\Sigma}"] & (\C^*)^{n-k} \ar[r,"p_{\C}'"] \ar[d,"\iota_\C"] & (\C^*)^d \ar[r] \ar[d,equals]& 1\\
        1 \ar[r] & \tilde K^{\C} \ar[r] & (\C^*)^n \ar[r,"p_{\C}"]& (\C^*)^d \ar[r] & 1
    \end{tikzcd}
\end{equation*}
Explicitly, we have
\begin{equation*}
    K^\C_\Sigma = \exp\left(\frac{K_\Sigma \otimes \C}{\bigoplus_{i = 1}^{t} \Z h_i}\right), \qquad \tilde K_\C = \exp\left(\frac{K \otimes \C}{\bigoplus_{i = 0}^{2m} \Z f_i}\right).
\end{equation*}

Notice that the first is the group appearing in the standard Audin--Cox construction and it acts by multiplication on an open subset \(U_\Sigma \subset \C^{n-k}\)  \cite[\nopp VII.1.12]{Audin2004}. The second group, instead, appears in the construction of toric manifolds starting from the datum of a regular triangulated vector configuration \cite[\S 2.3.1]{BattagliaZaffran2015}. As shown there, \(\tilde K_\C\) acts by multiplication on the open subset \(U(\calT_\Sigma)\) described in \cref{section:TheConstruction}. Since ghost vectors do not belong to any simplex in the triangulation \(\calT_\Sigma\), one has \(U(\calT_\Sigma) = (\C^*)^k \times U_\Sigma\).

Since \((V_\Sigma, \calT_\Sigma)\) is regular, both the orbit spaces \(X_\Sigma' = \faktor{U_\Sigma}{K^\C_\Sigma}\) and \(X_\Sigma = \faktor{U(\calT_\Sigma)}{\tilde K^\C}\) are compact complex manifolds \cite[\nopp VII.1.14]{Audin2004} \cite{BattagliaZaffran2017}. In fact, these are biholomorphic \cite[Thm. 2.1]{BattagliaZaffran2017}, and so \(X_\Sigma\) is the compact toric manifold associated to the smooth complete simplicial fan \(\Sigma\). As the action of \(\tilde K^{\C}\) is principal, the projection onto the orbit space \(\tilde\pi : U(\calT_\Sigma) \to X_\Sigma\) is a principal \((\C^*)^{2m+1}\)-bundle.

\subsubsection{The bundle \texorpdfstring{\(\tilde\pi\)}{tilde pi} as a product of \texorpdfstring{\(\C^*\)}{CC*}-bundles}
\label{section:splitting_of_pi}

We now investigate these constructions more closely. Actually, here we observe that the choice of a Gale dual configuration for a triangulated vector configuration, induces a splitting of the \((\C^*)^{2m+1}\)-principal bundle \(\tilde\pi : U(\calT_\Sigma) \to X_\Sigma\) into \(\C^*\)-principal components. Indeed, choosing a Gale transform for \((V_\Sigma,\calT_\Sigma)\) determines an isomorphism \(\tilde K^\C \iso (\C^*)^{2m+1}\), thus its action splits into \(2m+1\) actions of the 1-dimensional algebraic torus \(\C^*\) commuting with each other. The \(j\)\th component of \(\alpha \in (\C^*)^{2m+1}\) acts as
\begin{equation*}
    \alpha_j.(z_0, \dots, z_n) = \left(\alpha_j^{f_j^1} z_1, \dots, \alpha_j^{f_j^{n}} z_n\right).
\end{equation*}
Therefore, under the above isomorphism, we can write \(\tilde K^\C = \prod_{i = 0}^{2m}\C^*_{f_i}\), where each factor is a copy of \(\C^*\) acting with \(f_i\).

\begin{remark}
    Since \(f_0 =[1,\dots,1]\), the corresponding algebraic torus \(\C^*_{f_0}\) acts diagonally on \(\C^n\), hence \(\faktor{U(\calT_\Sigma)}{\C^*_{f_0}} = \proj(U(\calT_\Sigma))\) as open subsets of \(\CP^{n-1}\). Thus,
    \begin{equation*}
        X_\Sigma \iso \frac{U(\calT_\Sigma)}{\tilde K^\C} \iso \frac{\proj(U(\calT_\Sigma))}{\faktor{\tilde K^\C\!\!\!}{\C^*_{f_0}} } \iso \frac{\proj(U(\calT_\Sigma))}{\prod_{i = 1}^{2m}\C^*_{f_i}} \nfd \frac{\proj(U(\calT_\Sigma))}{K^\C}.
    \end{equation*}
    Explicitly, a pair \((u,v) \in \C^m \times \C^m \iso \C^{2m}\) acts as
    \begin{equation}
        \label{equation:toricQuotientAction}
        (u,v).(z_1 : \dots : z_n) = \left(e^{2\pi i\left(\sum_{i=1}^m u_i f_i^1 + \sum_{i=1}^m v_i f_{m+i}^1 \right)} z_1 : \dots : e^{2\pi i\left(\sum_{i=1}^m u_i f_i^n + \sum_{i=1}^m v_i f_{m+i}^n \right)} z_n\right);
    \end{equation}
    we will denote this action by \(\beta\).
\end{remark}

Since \(K^{\C} \subseteq \tilde K^\C\) as a closed subgroup, its action on \(\proj(U(\calT_\Sigma))\) is proper and free as \(\tilde K^\C \action U(\calT_\Sigma)\) is so. Hence, the projection \(\tilde\pi : \proj(U(\calT_\Sigma)) \to X_\Sigma\) defines a principal \(K^\C\)-bundle. Moreover, the same argument can be used to split \(\tilde \pi\) into the product of \(2m\) principal \(\C^*\)-bundles \(\tilde \pi_j : \tilde F_j \to X_\Sigma\).

Indeed, we can consider a new torus \(K^\C_j \dfn \faktor{K^\C\!\!\!}{\C^*_{f_j}}\) acting as the product \(\prod_{i \neq j} \C^*_{f_i}\). As this is a closed subgroup of \(K^{\C}\), its action on \(\proj(U(\calT_\Sigma))\) is still free and proper, thus \(\tilde F_j = \faktor{\proj(U(\calT_\Sigma))}{{K_\C}_j} \) is a smooth manifold. Moreover, we have
\begin{equation*}
    X_\Sigma \iso \frac{\proj(U(\calT_\Sigma))}{ K^\C} \iso \frac{\faktor{\proj(U(\calT_\Sigma))}{K^\C_j}}{ \faktor{K^\C\!\!\!}{K^\C_j}} \iso \frac{\tilde F_j}{\C^*_{f_j}},
\end{equation*}
where \(\C^*_{f_j} \action \tilde F_j\) is the residual action. As this is induced by a free and proper action, it inherits these properties as well. Hence, the projection \(\tilde\pi_j : \tilde F_j \to X_\Sigma\) defines a principal \(\C^*\)-bundle. Moreover, as this action is the projection of (\ref{equation:toricQuotientAction}) on \({\tilde K_\C}_j\)-classes, we have the following isomorphism of principal bundles
\begin{equation*}
    \tilde \pi \iso \Delta^*\left(\prod_{i=1}^{2m} \tilde \pi_i\right),
\end{equation*}
where \(\Delta : X_\Sigma \to \prod_{i=1}^{2m} X_\Sigma\) is the diagonal map.

\begin{remark}
    \label{ChangeOfBundles}
    Different choices of Gale dual configurations \((f_0, f_1, \dots, f_{2m})\) and \((f_0,f_1',\dots, f_{2m}')\) provide different decompositions of \(\tilde\pi\) into \(\C^*\)-principal components
    \begin{equation*}
        \Delta^*\left(\prod_{i=1}^{2m} \tilde \pi_i\right) \iso \tilde \pi \iso \Delta^*\left(\prod_{i=1}^{2m} \tilde \pi_i'\right);
    \end{equation*}
    and the isomorphism \(\Phi_\sharp : \Delta^*\left(\prod_{i=1}^{2m} \tilde \pi_i\right) \to \Delta^*\left(\prod_{i=1}^{2m} \tilde \pi_i'\right)\) is induced by a real automorphism \(\Phi\) of \(\Rel(V_\Sigma)\) such that \(\Phi(f_j) = f_j'\) for \(j \geq 1\), and \(\Phi(f_0) = f_0\). Restricting it to each factor, we can see \(\tilde\pi_j\) as a sub-bundle of \(\tilde\pi \iso \Delta^*\left(\prod_{i=1}^{2m} \tilde \pi_i'\right)\).
    
    Therefore, as long as \((V,\calT)\) is regular, we do not really need to restrict ourselves to only Gale dual configurations lying in the lattice \(K\). In fact, for any other choice, we obtain a different decomposition whose factors inject into the original one. Hence, their structure can be investigated by “transferring back'' the properties of the whole \((\C^*)^m\)-bundle, which can be understood more easily by choosing a Gale dual configuration in \(K\), for example a good one. Clearly, the situation dramatically changes when \(\Rel(V)\) is not rational: in this case \(\Rel(V) \cap \Z^n\) is not full-ranked, hence it admits no basis whose vectors form a Gale dual configuration.
\end{remark}

Decomposing \(\tilde\pi\) into \(\C^*\)-principal components gives an insight into how the constructions of toric manifolds discussed in the previous section diverge. Indeed, such splitting exists for both constructions, but the Audin--Cox one contains fewer \(\tilde F_j\)'s components. This suggests that some of them are overabundant, and they are collapsed when reducing the construction described by Battaglia and Zaffran to the one of Audin and Cox by removing ghost vectors. For good Gale dual configurations we can prove a stronger result.

\begin{proposition}
    \label{cor:bundle_triviality}
    Let \((V_\Sigma,\calT_\Sigma)\) be a regular, odd and balanced triangulated vector configuration. Suppose it contains \(k \geq 2\) ghost vectors. Let \((f_0,\dots,f_{2m})\) be a \emph{good} Gale dual configuration with \(f_0,\dots f_{k-1}\) as a basis of complement \(\Rel(\Sigma)\) in \(\Rel(V)\). Consider \(\tilde\pi_j : \tilde F_j \to X_\Sigma\) as the principal \(\C^*\)-bundles constructed as above, then, \(\tilde\pi_j\) is trivial for \(j=1, \dots, k-1\).
\end{proposition}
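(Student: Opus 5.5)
Since the good Gale dual configuration has the block form of the definition, for \(j=1,\dots,k-1\) the vector \(f_j\) has exactly one nonzero entry among the ghost coordinates: \(f_j^{i}=\delta_{i,j+1}\) for \(i=1,\dots,k\) (the first row is the zero block and rows \(2,\dots,k\) carry \(I_{k-1}\)). The other entries \(f_j^i=m_{ij}\), for \(i>k\), are arbitrary integers. The plan is to exhibit a global holomorphic section of \(\tilde\pi_j:\tilde F_j\to X_\Sigma\), or equivalently a \(\C^*_{f_j}\)-equivariant map \(\tilde F_j\to\C^*\). A principal \(\C^*\)-bundle with such a map is trivial.

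\textbf{Step 1: the equivariant function upstairs.} On \(\proj(U(\calT_\Sigma))\) consider
\begin{equation*}
    \phi_j(z_1:\dots:z_n)=\frac{z_{j+1}}{z_1}.
\end{equation*}
By \cref{indispensableRemark} the ghost coordinates \(z_1,\dots,z_k\) never vanish on \(U(\calT_\Sigma)=(\C^*)^k\times U_\Sigma\). Since \(\phi_j\) is homogeneous of degree \(0\), it is a well-defined holomorphic map \(\proj(U(\calT_\Sigma))\to\C^*\). Under the factor \(\C^*_{f_i}\), the coordinate \(z_l\) is multiplied by \(\alpha_i^{f_i^l}\), so \(\phi_j\) is multiplied by \(\alpha_i^{f_i^{j+1}-f_i^1}\).

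\textbf{Step 2: computing the weights.} Look at the first \(k\) rows of the good matrix. For \(i=1,\dots,2m\) we have \(f_i^1=0\). For \(i=1,\dots,k-1\) we have \(f_i^{j+1}=\delta_{ij}\). For \(i\ge k\) the vectors \(f_i=\iota(h_{i-k+1})\) lie in \(\Rel(\Sigma)\subset\Span(e_{k+1},\dots,e_n)\), so all their ghost entries vanish. Hence \(\phi_j\) is invariant under \(\C^*_{f_i}\) for every \(i\neq j\), \(i\ge1\), and transforms with weight \(1\) under \(\C^*_{f_j}\).

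\textbf{Step 3: descending to \(\tilde F_j\).} By Step 2, \(\phi_j\) is invariant under \(K^\C_j=\prod_{i\neq j}\C^*_{f_i}\), so it descends to a holomorphic map \(\bar\phi_j:\tilde F_j\to\C^*\). This map satisfies \(\bar\phi_j(\alpha.x)=\alpha\,\bar\phi_j(x)\) for the residual \(\C^*_{f_j}\)-action. The map
\begin{equation*}
    \tilde F_j\to X_\Sigma\times\C^*,\qquad x\mapsto(\tilde\pi_j(x),\bar\phi_j(x))
\end{equation*}
is then a \(\C^*\)-equivariant morphism of principal bundles over \(X_\Sigma\), hence an isomorphism. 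Equivalently, \(\bar\phi_j^{-1}(1)\) is a global section. Therefore \(\tilde\pi_j\) is trivial for \(j=1,\dots,k-1\).

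The main obstacle is bookkeeping rather than conceptual. One has to check the index conventions for which rows carry \(I_{k-1}\), and confirm that the \(h\)-columns have zero ghost entries. One also has to make sure that \(\tilde F_j\) is the quotient by the \emph{complementary} torus, so that invariance under \(\C^*_{f_i}\) for \(i\neq j\) is exactly what is needed. The hypothesis \(k\ge2\) is used only to ensure that the range \(1\le j\le k-1\) is nonempty and that a second nonvanishing ghost coordinate \(z_{j+1}\) exists alongside \(z_1\).
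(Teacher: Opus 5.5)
Your proof is correct. It rests on the same observation as the paper's: the nonvanishing ghost coordinate $z_{j+1}$ serves as a global fibre coordinate for $\tilde\pi_j$. The difference is the direction of the map.

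The paper writes a map $X_\Sigma\times\C^*\to\tilde F_j$, $([z],t)\mapsto[(1:w_2:\dots:w_k:z)]$, with $t$ in slot $j+1$ and $1$ in the other ghost slots. It justifies equivariance by asserting that the non-ghost entries of $f_j$ lie in the span of the $\iota(h_i)$. You instead build the weight-one invariant $z_{j+1}/z_1$, and then use that an equivariant map of principal bundles over the identity is an isomorphism.

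Your route only ever uses the ghost rows of the good matrix, and that is a real advantage. Since $f_j$ is a relation with $f_j^1=0$ and $f_j^{i+1}=\delta_{ij}$ for $1\le i\le k-1$, its non-ghost part satisfies $\sum_{l>k}f_j^l v_l=-v_{j+1}$. So that part lies in $K_\Sigma$ only when the ghost $v_{j+1}$ is the zero vector.

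For a nonzero ghost, such as one prepended to achieve balancedness, the paper's formula sends $\{[z]\}\times\C^*$ over the moving points $t^{v_{j+1}}\cdot[z]$ of $X_\Sigma$, so it does not map fibres to fibres. The correct formula is $([z],t)\mapsto t\cdot_{f_j}[(1:\dots:1:z)]$, which is exactly the inverse of your trivialization. Your argument therefore covers arbitrary ghost vectors without modification, while the paper's explicit formula is valid as written when $v_{j+1}=0$.
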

\begin{proof}
    As we are considering a good Gale dual configuration, it is of the form \((f_0,\dots,f_{2m}) = (f_0,\dots, f_{k-1}, \iota(h_1),\dots, \iota(h_{2m-k+1}))\), with \(\{h_1,\dots,h_{2m-k+1}\}\) a basis of \(K_\Sigma\). For any \(j\), we have the diffeomorphism
    \begin{align*}
        X_\Sigma \times \C^* &\longrightarrow \tilde F_j = \frac{\proj((\C^*)^{k} \times U_\Sigma)}{\prod_{\substack{i=1\\i\neq j}}^{k-1} \C^*_{f_i} \times \prod_{i=1}^{2m-k+1} \C^*_{\iota(h_i)}}\\
        ([(z_1, \dots, z_\ell)], t) & \longmapsto [(1: w_2: \dots: w_k: z_1: \dots: z_\ell)]
    \end{align*}
    with
    \begin{equation*}
        w_i =
        \begin{cases}
            t & \text{if } i=j+1\\
            1 & \text{else}.
        \end{cases}
    \end{equation*}
    This map is \(\C^*\)-equivariant. Indeed, since each \(f_j\) is chosen such that its last components lie in the span of \(\iota(h_j)\)'s, the action of \(\C^*_{f_j}\) only moves the \((j+1)\)\th component, modulo the simultaneous action of the smaller torus \(\prod_{i=1}^{2m-k+1} \C^*_{\iota(h_i)}\). Therefore, these maps trivialize the bundles.
\end{proof}
This fact shows that good Gale dual configurations can be interpreted as those for which the construction of LVMB manifolds splits. For other choices (namely, for choices not containing a basis of \(K_\Sigma\)), this property breaks as the change of basis discussed in \cref{ChangeOfBundles} may twist the trivial factors by non-trivial ones. 

\subsubsection{The complex structure of the fibers}
LVMB manifolds are endowed with a holomorphic foliation \cite{LopezVerjovsky1997,Meersseman2000}. When the initial configuration is rational such foliation has closed leaves, and the leaf space is a toric variety with at most orbifold singularities \cite{MeerssemanVerjovsky2004}. We adapt here to our purposes the description  in \cite{BattagliaZaffran2015}.

The foliated structure of an LVMB manifold descends from the actions \labelcref{equation:LVMBaction,equation:toricQuotientAction} defined before: the first gives the LVMB manifold \(N\) as quotient of \(\proj(U(\calT_\Sigma))\), while its quotient by the second action returns, instead, the underlying toric manifold. Since these two actions commute, we can consider the residue of the second one on \(N\). Such action is holomorphic and induces a foliation on \(N\) with equidimensional leaves.

Explicitly, consider a triangulated vector configuration \((V_\Sigma,\calT_\Sigma)\) as above with underlying smooth complete fan \(\Sigma\); for this datum, choose any Gale dual configuration \((f_0,\dots, f_{2m})\). Taking \(\Pi_\text{std}\) as period matrix, we can check that the linear map \(\C^m \to \C^{2m}\) given by \(\underline{w} \mapsto \trans{\Pi}_\text{std} \underline{w}\) induces a splitting of \(\C^{2m}\) that can be chosen as follows
\begin{equation}
    \label{std_decomposition}
    \begin{alignedat}{3}
        \C^{2m} &\iso \hspace{1.5ex} \C^m_{N} &&\oplus \hspace{2.5ex} \C^m_{\mathcal{F}}\\
        \begin{bmatrix}
            \underline{u}\\\underline{v}
        \end{bmatrix}
         &=
         \begin{bmatrix}
            -i\underline{v}\\\underline{v}
         \end{bmatrix}
        &&+
        \begin{bmatrix}
            \underline{u} + i \underline{v}\\
            0
        \end{bmatrix}.
    \end{alignedat}    
\end{equation}
More generally, we can do the same with respect to any period matrix \(\Pi = \Pi_\text{std} A \eqqcolon [P|I_m]\) with \(A \in \operatorname{GL}(2m,\R)\). In this case, this decomposition can be written as
\begin{equation*}
    \begin{bmatrix}
        \underline{u}\\\underline{v}
    \end{bmatrix}
    =
    -i[0|\trans{\Pi}] \trans{A}^\inv 
    \begin{bmatrix}
        \underline{u}\\\underline{v}
    \end{bmatrix}
    +
    \begin{bmatrix}
        i I_m & -I_m\\
        i \Re(\trans{P}) & - \Re(\trans{P})
    \end{bmatrix}
    \trans{A}^\inv
    \begin{bmatrix}
        \underline{u}\\\underline{v}
    \end{bmatrix}
\end{equation*}
In any case, the restriction of the action \(\beta\) to \(\C^m_N\) coincides with \(\alpha\), \(\faktor{\proj(U(\calT_\Sigma))}{\C^m_N} \iso N\). On the other hand, this decomposition shows that the residue of \(\beta\) on \(N\) acts on the representatives of the classes in \(N\) as \(\beta\) does when restricted to \(\C^m_\mathcal{F}\).

For \(\Pi = \Pi_\text{std}\), the expression of the action of \(w \in \C^m_\mathcal{F}\) on a generic point in \([z_1: \dots: z_n]_{N} \in N\) can be written as follows:
\begin{equation}
    \label{residualAction}
    w.[z_1: \dots: z_n]_{N} = \left[e^{2\pi i\sum_{i=1}^m w_i f_i^1} z_1: \dots : e^{2\pi i\sum_{i=1}^m w_i f_i^n} z_n\right]_{N};
\end{equation}
where \([z_1:\dots:z_n]_N\) denotes \(\alpha\)-classes. For different choices of \(\Pi\), a substantially similar expression can be obtained by performing a change of basis. When the initial configuration is regular, this action is proper \cite{BattagliaZaffran2015} and each point is fixed by a lattice \(\Psi \subset \C^m\) generated by \(f_1,\dots,f_{2m}\). Hence, the residual action \(\beta/\alpha\) in \labelcref{residualAction} defines a holomorphic principal torus bundle \(\pi : N \to X_\Sigma\) with the complex torus \(\cpxTorus{m} = \C^m / \Psi\) as fiber \cite{MeerssemanVerjovsky2004}. Actually, its complex structure changes accordingly with \(\Pi\), thus, it can be prescribed by tweaking either the period matrix or the Gale dual configuration chosen at the beginning of the construction. Therefore, these choices affect the complex structure of the manifold \(N\) as it was already proved in \cite{Meersseman2000}.

\subsubsection{The bundle \texorpdfstring{\(\pi\)}{pi} as a product of \texorpdfstring{\(S^1\)}{S1} bundles}
\label{section:CircleBundles}

So far, we have discussed how a regular LVMB manifold \(N\) is related to two holomorphic bundles: \(\tilde\pi : \proj(U(\calT)) \to X_\Sigma\) and \(\pi : N \to X_\Sigma\). Moreover, in \cref{section:splitting_of_pi} we showed how the choice of a Gale dual configuration in the construction determines a splitting of \(\tilde\pi\) into its \(\C^*\)-principal components, and how different choices produce isomorphic decompositions.

In this section we investigate how \(\tilde\pi\) and \(\pi\) are related, this will allow transferring the splitting of the former to the latter. This will decompose \(\pi\) in \(2m\) principal \(S^1\)-bundles.
To delve the relation between \(\tilde\pi\) and \(\pi\) we check how the corresponding principal actions \(\alpha\) and \(\beta\) interact. This can be done rewriting the residual action \(\beta/\alpha \action N\) in a real form. As we observed in \cref{ChangeOfBundles}, as long as the triangulated vector configuration is regular, we can consider any Gale dual configuration \((f_0,\dots,f_{2m})\) not necessarily in \(\Rel(V) \cap \Z^n\); thus we can assume to be using \(\Pi_\text{std}\) as period matrix. In this case, according to the decomposition \labelcref{std_decomposition}, we can identify \(\C^m_\mathcal{F}\) with \( \R^{2m}\) via \(w = x + i y\) with \(x,y \in \R^m\), hence the real form of \(\beta/\alpha \action N\) is
\begin{equation*}
    (x,y).[z_1 : \dots : z_n]_{N} = \left[e^{2\pi i\left(\sum_{i=1}^m x_i f_i^1 + \sum_{i=1}^m y_i f_{m+i}^1 \right)} z_1 : \dots : e^{2\pi i\left(\sum_{i=1}^m x_i f_i^n + \sum_{i=1}^m y_i f_{m+i}^n \right)} z_n \right]_{N}.
\end{equation*}

Thus, the same ideas discussed in \cref{section:splitting_of_pi} can be applied to \(\pi\) in order to split it into \(S^1\)-principal components. Indeed, the \(j\)\th factor of the real \(2m\)-dimensional torus \(\realTorus{2m}\) acts on \(N\) via \(f_j\), and these actions commute as well; so with the same abuse of notation of \cref{section:splitting_of_pi}, we can see \(\realTorus{2m}\) as \(\prod_{j=1}^{2m} S^1_{f_j}\).

Therefore, the partial quotient by the subtorus \(\realTorus{}_j = \faktor{\realTorus{2m}\!\!\!}{S^1_{f_j}} \iso \prod_{i \neq j} S^1_{f_i}\) is smooth as we are restricting a free action to a compact subgroup. For the same reason, the projection onto the \(S^1_{f_j}\)-orbits \(\pi_j : F_j \dfn \faktor{N_\Sigma}{\realTorus{}_j} \to X_\Sigma\) is a principal \(S^1\)-bundle and
\begin{equation*}
    \frac{F_j}{S^1_{f_j}} = \frac{\faktor{N}{\realTorus{}_j}}{\faktor{\realTorus{2m}}{\realTorus{}_j}} \iso \frac{N_\Sigma}{\realTorus{2m}} \iso X_\Sigma.
\end{equation*}

Similarly to the decomposition for \(\tilde \pi\), we have \(\pi \iso \Delta^*\left(\prod_{i=1}^{2m} \pi_i\right)\). Actually, \(\pi\) and \(\tilde \pi\) are related as it is proved in the following result.

\begin{proposition}
    \label{prop:bundleReduction}
    Let \(\Sigma\) be a smooth rational polyhedral fan, \((V_\Sigma, \calT_\Sigma)\) an associated triangulated configuration and \(N\) the LVMB manifold corresponding to the choice of the Gale dual configuration \((f_0,\dots,f_{2m})\). Then, \( \pi : N \to X_\Sigma\) is the bundle reduction of \(\tilde \pi : \proj(U(\calT_\Sigma)) \to  X_\Sigma\) along the canonical inclusion of the fibers \(\realTorus{2m} \hookrightarrow (\C^*)^{2m}\).
\end{proposition}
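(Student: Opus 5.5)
We aim to exhibit a smooth map \(\Theta : N \to \proj(U(\calT_\Sigma))\) over \(X_\Sigma\) that is equivariant along the inclusion \(\realTorus{2m} \hookrightarrow (\C^*)^{2m} \iso K^\C\). By the standard criterion for reductions of structure group, such a map identifies \(\pi\) with a reduction of \(\tilde\pi\): indeed \(\proj(U(\calT_\Sigma)) \iso N \times_{\realTorus{2m}} (\C^*)^{2m}\). As in \cref{ChangeOfBundles}, we may work with \(\Pi_\text{std}\) and any Gale dual configuration. Then the decomposition \labelcref{std_decomposition} splits the action \(\beta\) of \(\C^{2m}\) into the action of \(\C^m_N\), which is \(\alpha\), and the action of \(\C^m_\mathcal{F}\).

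\textbf{Step 1: polar decomposition of the acting group.} Write \(K^\C = \C^{2m}/\Z\langle f_1,\dots,f_{2m}\rangle \iso (\C^*)^{2m}\). Identify \(\C^{2m} = \R^{2m} \oplus i\R^{2m}\), with the real part giving the compact torus \(\realTorus{2m}\) and the imaginary part giving \(\R^{2m}_{>0}\). Using \labelcref{std_decomposition}, the real subspace \(\{(x,y)\}\) maps isomorphically onto \(\C^m_\mathcal{F}\) via \(w=x+iy\). So \(\C^m_N\) is a complement to \(\R^{2m}\) in \(\C^{2m}\). Hence \(\C^{2m} = \C^m_N \oplus \R^{2m}\) as real vector spaces. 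Passing to the quotient by the lattice (which lies in \(\R^{2m}\)) gives \(K^\C \iso \C^m_N \times \realTorus{2m}\) as real Lie groups, with \(\C^m_N\) acting through \(\alpha\).

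\textbf{Step 2: a global section of \(\C^m_N\)-orbits.} We need a \(\realTorus{2m}\)-invariant smooth submanifold \(S \subset \proj(U(\calT_\Sigma))\) meeting each \(\alpha\)-orbit exactly once and transversally. The natural candidate is the Meersseman / López de Medrano--Verjovsky one: the intersection of the level set \(\{\sum_j \Lambda_j |z_j|^2 = 0\}\) with the unit sphere. The \(\alpha\)-action rescales the moduli of the coordinates by \(e^{-2\pi\Im\langle\Lambda_j,u\rangle}\) and does not affect the \(\realTorus{2m}\)-action, which only rotates phases. Properness and freeness of \(\alpha\), together with the convexity argument of \cite{Meersseman2000} (weak hyperbolicity and the Siegel condition, guaranteed by \cite[Prop. 2.1]{BattagliaZaffran2015}), give that \(S \to N\) is a \(\realTorus{2m}\)-equivariant diffeomorphism. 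Set \(\Theta\) to be its inverse composed with the inclusion.

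\textbf{Step 3: conclusion.} \(\Theta\) covers the identity of \(X_\Sigma\), since \(\tilde\pi\) is the \(K^\C\)-quotient and \(\pi\) the \(\realTorus{2m}\)-quotient. Moreover, \(\Theta\) is \(\realTorus{2m}\)-equivariant. Therefore the induced map \(N\times_{\realTorus{2m}}(\C^*)^{2m} \to \proj(U(\calT_\Sigma))\) is a \((\C^*)^{2m}\)-equivariant bundle map over the identity, hence an isomorphism.

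The main obstacle is Step 2. We must check that the slice \(S\) is invariant under the \emph{whole} \(\realTorus{2m}\) and meets each orbit once. This is clear once we note that the \(\realTorus{2m}\)-action by the real parts of \(f_j\) preserves all \(|z_j|\). Uniqueness of the intersection point reduces to the standard strict convexity of \(u\mapsto \sum_j |z_j|^2 e^{-4\pi\Im\langle\Lambda_j,u\rangle}\) on the imaginary directions. I would try to import this directly from \cite{Meersseman2000}.
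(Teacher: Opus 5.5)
Your Step 1 is the decomposition the paper uses, \(\C^{2m}=\C^m_N\oplus\R^{2m}\) with the lattice in the real summand, and Step 3 is the standard reduction criterion. The gap is in Step 2. The set \(S=\{[z]:\sum_j\Lambda_j|z_j|^2=0\}\) is the L\'opez de Medrano--Verjovsky--Meersseman slice, and it meets every \(\alpha\)-orbit of \(\proj(U(\calT_\Sigma))\) only for LVM data.

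To see this, note that \(\alpha\) preserves the support of \(z\). Take \([z]\) whose support is exactly some \(\varepsilon\in\mathcal{E}\). A point of its orbit lies in \(S\) only if \(0\) is a combination of the \(\Lambda_j\), \(j\in\varepsilon\), with strictly positive coefficients. So the orbit misses \(S\) (and your convex function has no critical point) unless \(0\) lies in the interior of \(\mathrm{Conv}(\Lambda_j : j\in\varepsilon)\).

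Requiring this for every \(\varepsilon\in\mathcal{E}\) is exactly the LVM condition. Translating \(\Lambda\) does not help, since it does not change \(\alpha\) projectively. By \cite{Cupit-FoutouZaffran2007}, the LVM condition holds iff \(\Sigma\) is polytopal, i.e.\ iff \(X_\Sigma\) is projective. \cite[Prop.~2.1]{BattagliaZaffran2015} only gives Bosio's axioms: there the interiors of the simplices \(\mathrm{Conv}(\Lambda_\varepsilon)\) intersect pairwise but need not share a common point. So Meersseman's Siegel/weak-hyperbolicity argument cannot be imported. Since the proposition allows an arbitrary smooth complete fan, your Step 2 proves it only in the LVM case. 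Separately, the unit sphere does not belong inside \(\proj(U(\calT_\Sigma))\): the quadric is already homogeneous, and cutting with \(S^{2n-1}\subset\C^n\) gives an \(S^1\)-bundle over \(N\), not \(N\) itself.

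No explicit slice is needed; contractibility of \((\C^*)^{2m}/\realTorus{2m}\iso(\R_{>0})^{2m}\) is enough, and this is how the paper concludes. Since \(K^\C\iso\C^m_N\times\realTorus{2m}\) acts freely and properly, the quotient \(\proj(U(\calT_\Sigma))/\realTorus{2m}\to X_\Sigma\) is a principal bundle with structure group \(\C^m_N\iso\R^{2m}\). Freeness of \(\C^m_N\) on this quotient follows from freeness of \(K^\C\) and the product decomposition. Being contractible, this bundle has a global smooth section \(\sigma\).

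The preimage of \(\sigma(X_\Sigma)\) in \(\proj(U(\calT_\Sigma))\) is a \(\realTorus{2m}\)-invariant submanifold meeting each \(\alpha\)-orbit exactly once. It therefore maps \(\realTorus{2m}\)-equivariantly and diffeomorphically onto \(N\) over \(X_\Sigma\), and your Step 3 goes through unchanged. Equivalently, you can average any smooth section of the principal \(\C^m_N\)-bundle \(\proj(U(\calT_\Sigma))\to N\) over \(\realTorus{2m}\). This is legitimate because such sections form an affine space over \(C^\infty(N;\C^m_N)\).

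With this replacement your argument becomes the paper's, written out in more detail. Your quadric slice is then just a concrete choice of \(\sigma\), available only in the projective (LVM) case.
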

\begin{proof}
    Since we have no condition on the \(f_j\)'s, because of \cref{ThreeOutOfFour}, we can assume that in the construction of \(N\) the period matrix is \(\Pi_\text{std}\). Therefore,  any \((u,v) \in \C^{m} \oplus \C^{m}\) crosses the isomorphisms \(\C^{2m} \overnumber{\iso}{1} \C^m_N \oplus \C^m_\mathcal{F} \overnumber{\iso}{2} \C^m_N \oplus \R^{2m}\) described in \cref{std_decomposition}  as
    \begin{equation*}
        \begin{bmatrix}
            u\\v
        \end{bmatrix}
        \overnumber{=}{1}
        \begin{bmatrix}
            -iv\\v
        \end{bmatrix}
        +
        \begin{bmatrix}
            u+iv\\0
        \end{bmatrix}
        \overnumber{=}{2}
        \begin{bmatrix}
            -iv + i \Im(u+iv)\\
            v - \Im(u+iv)
        \end{bmatrix}
        +
        \begin{bmatrix}
            \Re(u) - \Im(v)\\
            \Re(v) + \Im(u)
        \end{bmatrix}
        \nfd
        \begin{bmatrix}
            \zeta\\
            i\zeta
        \end{bmatrix}
        +
        \begin{bmatrix}
            x\\y
        \end{bmatrix}
        \in \C^m_{N} \oplus \R^{2m}.
    \end{equation*}
    The quotient by the action of the first component gives \(N\), while the second gives the bundle \(\pi : N \to X_\Sigma\). Hence, for any \([\underline{z}] = (z_1 : \dots : z_n)\in \proj(U(\calT_\Sigma))\), the action
    \begin{align*}
        (u,v).[\underline{z}] &= \left(e^{2\pi i\left(\sum_{i=1}^m u_i f_i^1 + \sum_{i=1}^m v_i f_{m+i}^1 \right)} z_1 : \dots : e^{2\pi i\left(\sum_{i=1}^m u_i f_i^n + \sum_{i=1}^m v_i f_{m+i}^n \right)} z_n \right)
        \intertext{can be written by means of \(\zeta, x\) and \(y\) as follows:}
        (\zeta, i\zeta).(x,y).[\underline{z}] &= \left(\cdots : e^{2\pi i\sum_{i=1}^m \zeta_i (f_i^j +i f_{m+i}^j)} e^{2\pi i\left(\sum_{i=1}^m x_i f_i^j + y_i f_{m+i}^j\right)} z_j : \cdots \right).
    \end{align*}
    Since the elements of the form \((\zeta, i\zeta)\) act trivially on \(N\), the above expression projects to
    \begin{equation*}
        (x,y).[\underline{z}]_{N} = \left[\cdots :e^{2\pi i\left(\sum_{i=1}^m x_i f_i^j + y_i f_{m+i}^j\right)} z_j : \cdots \right]_{N},
    \end{equation*}
    which is nothing but the pullback of \(\beta\) along the natural inclusion \(\iota_{\R^{2m}} : \R^{2m} \hookrightarrow \C^{2m}\), or, equivalently, the real expression of the residual action \(\beta/\alpha \action N\). In other words, \(\iota_{\R^{2m}}\) induces the inclusion of principal bundles represented in the following diagram
    \begin{equation*}
        \begin{tikzcd}
            \realTorus{2m} \ar[rr, hook,"\exp \iota_{\R^{2m}}"] \ar[d, hook, "i_x"] && (\C^*)^{2m} \ar[d, hook,"\tilde\imath_x"] \ar[ll, bend right, "\exp r"']\\
            N \ar[dr,"\pi"'] \ar[rr, "(\exp\iota_{\R^{2m}})_{\sharp}"] && \proj(U(\calT)) \ar[dl, "\tilde\pi"]\\
            & X_\Sigma
        \end{tikzcd}
    \end{equation*}
    where \(i_x\) and \( \tilde\imath_x\) are the inclusion of the fiber in \(x \in X_\Sigma\) of the respective bundles, \(r: (u,v) \mapsto (\Re(u)-\Im(v), \Re(v) + \Im(u))\) is a deformation retraction for \(\iota_{\R^{2m}}\) and \((\exp\iota_{\R^{2m}})_{\sharp}\) the corresponding map induced on the bundles. As \((\C^*)^{2m} = (S^1)^m \times (\R^\times_{>0})^{2m}\), the coset bundle \(\tilde\pi/\pi : \proj(U(\calT))/N \to X_\Sigma\) is \( (\R^\times_{>0})^{2m}\)-principal; as this is a contractible Lie group, such bundle admits a section. Therefore, \(\pi \hookrightarrow\tilde\pi\) is a bundle reduction.
\end{proof}

The fact that \(\pi_j \hookrightarrow \tilde \pi_j\) is a bundle reduction holds true for any choice of the period matrix.
Indeed, by \cref{ThreeOutOfFour}, for any Gale dual configuration \((f_0',f_1',\dots,f_{2m}')\) and any period matrix \(\Pi\) we can find another Gale dual configuration \((f_0,f_1,\dots,f_{2m})\) such that \({\Lambda'}^\R \Pi = \Lambda^\R \Pi_\text{std}\). These choices induce two different decompositions of \(\tilde\pi\) in \(\C^*\)-bundles \(\tilde\pi \iso \Delta^*\left(\prod_{i=1}^{2m} \tilde\pi_i'\right) \iso \Delta^*\left(\prod_{i=1}^{2m} \tilde\pi_i\right)\); similarly, \(\pi \iso \Delta^*\left(\prod_{i=1}^{2m} \pi_i'\right) \iso \Delta^*\left(\prod_{i=1}^{2m} \pi_i\right)\). With the same argument of \cref{ChangeOfBundles}, we can show that the inclusions of the factors are compatible with the bundle reduction \(\pi \hookrightarrow \tilde\pi\). Thus, we have a commutative square
\begin{equation*}
    \begin{tikzcd}
        \pi'_j \ar[d,hook] \ar[r,hook] & \pi \iso \Delta^*\left(\prod_{i=1}^{2m} \pi_i\right) \ar[d,hook]\\
        \tilde\pi'_j \ar[r,hook]& \pi \iso \Delta^*\left(\prod_{i=1}^{2m} \tilde\pi_i\right)
    \end{tikzcd}
\end{equation*}
whose bottom map is induced projecting the fiber \(\realTorus{2m}\) to its subgroup \(S^1_{f_j'}\). Hence, \(\pi' \hookrightarrow \tilde\pi'\) is an inclusion of principal bundles, and repeating the argument at the end of \cref{prop:bundleReduction}, we deduce that it is a bundle reduction.

This observation shows that, independently of the choices in the construction, the principal bundles \(\pi_j\) and \(\tilde\pi_j\) have the same representations, thus their associated line bundles  \(\pi_j \times_{\id} \C \) and \(\tilde \pi_j \times_{\id} \C\) are isomorphic.

\subsection{Adding ghosts}
A natural question about the LVMB construction is how adding or removing ghost vectors to a triangulated vector configuration affects the resulting LVMB manifolds. Of course any answer must take into account the consequent choices of Gale dual configurations.

Such problem was first addressed by Meersseman \cite[Thm.12 and Corollary]{Meersseman2000}. There, it is proven that, whenever it is possible, removing two \emph{indispensable points} from an LVMB datum containing at least three of them, affects the topology of \(N\) as a deletion of a factor \(\realTorus{2}\). As we remarked in \labelcref{indispensableRemark}, indispensable points correspond to ghost vectors by Gale duality. Thus, reading his result backwards, we can see that adding two ghost vectors to a configuration containing at least one of them, modifies the underlying smooth manifold by a product with \(\realTorus{2}\). Nonetheless, the new LVMB manifold might not carry the product complex structure.

Here we prove a result intersecting Meersseman's which gives a partial answer to this problem.

\begin{proposition}
    \label{ghostAddition}
    Let \((V,\calT)\) be a regular odd and balanced configuration with a choice of a Gale dual configuration \((\Lambda_1^\R,\dots, \Lambda_{n}^\R)\), and let \(N\) be the corresponding LVMB manifold. Consider the new configuration \((V',\calT')\) obtained from \((V,\calT)\) by prepending two ghost vectors \(v_1' = v_2' = 0\). Denote as \(n', m', k'\) its integer constants, in particular \(k' = k+2\), \(m' = m+1\) and \(n' = n+2\). Then, for a suitable choice of the Gale transform and period matrix, the corresponding LVMB manifold \(N'\) is a holomorphic principal \(\cpxTorus{1}\)-bundle over \(N\).

    Moreover, if \((V,\calT)\) contains at least one ghost vector, then \(N' \iso N \times \cpxTorus{1}\) as complex manifolds.
\end{proposition}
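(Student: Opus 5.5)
The plan is to make the choices of Gale dual configuration and period matrix for \((V',\calT')\) block-diagonal with respect to the splitting \(\R^{n'} = \R^2 \oplus \R^n\), and then to read both claims off from the coordinate projection \(\C^{n'} \to \C^n\) forgetting the two new coordinates.

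\textbf{Step 1: choices for \((V',\calT')\).} Since \(v_1'=v_2'=0\), we have \(\Rel(V') = \R e_1 \oplus \R e_2 \oplus \Rel(V)\), where \(\Rel(V)\) sits in the last \(n\) coordinates. Starting from the given basis \((f_0,\dots,f_{2m})\) of \(\Rel(V)\), I take as Gale dual configuration for \(V'\)
\begin{equation*}
    f_0' = (1,1,f_0),\qquad f_j' = (0,0,f_j)\ (1\le j\le 2m),\qquad f_{2m+1}'=e_1,\qquad f_{2m+2}'=e_2 .
\end{equation*}
These are \(2m+3 = 2m'+1\) vectors, and \(f_0'\) is the all-ones vector, as the construction requires. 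For the period matrix I take \(\Pi'\) block-diagonal: \(\Pi\) on the old \(2m\) real coordinates, and \([1\,|\,\tau]\) with \(\Im\tau\neq0\) pairing \(f'_{2m+1}\) with \(f'_{2m+2}\). By \cref{ThreeOutOfFour}, any other choice can be traded for this one, so nothing is lost. The resulting LVMB datum is
\begin{equation*}
    \Lambda'_1 = (0,\dots,0,1),\qquad \Lambda'_2 = (0,\dots,0,\tau),\qquad \Lambda'_{j+2} = (\Lambda_j,0).
\end{equation*}
Since ghosts lie in every \(\varepsilon\), the virtual chamber gives \(U(\calT') = (\C^*)^2 \times U(\calT)\).

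\textbf{Step 2: the bundle \(N'\to N\).} The map \((z_1:z_2:\underline z)\mapsto(\underline z)\) from \(\proj(U(\calT'))\) to \(\proj(U(\calT))\) is well defined, because \(\underline z \neq 0\) on \(U(\calT)\). Under the action \(\alpha'\) of \(u=(u_1,\dots,u_m,s)\in\C^{m+1}\), the last \(n\) coordinates move only through \(\alpha(u_1,\dots,u_m)\). Meanwhile \(z_1 \mapsto e^{2\pi i s}z_1\) and \(z_2 \mapsto e^{2\pi i\tau s}z_2\). Hence the map descends to a holomorphic map \(N'\to N\).

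To identify its fibres, I use that \(\alpha\) is free on \(\proj(U(\calT))\). Then a point of \(N\) has a representative \(\underline z\) that is unique once the projective scaling is fixed. The fibre is therefore \((\C^*)^2\) modulo the \(s\)-action \((z_1,z_2)\mapsto(e^{2\pi i s}z_1,e^{2\pi i\tau s}z_2)\). Normalising \(z_1=1\) leaves \(z_2\in\C^*\) modulo \(z_2\sim e^{2\pi i\tau}z_2\), which is the elliptic curve \(\cpxTorus{1}\) of period \(\tau\). The group \((\C^*)^2/\C\cong\cpxTorus{1}\) acts on \(N'\) by multiplying \(z_1,z_2\). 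This action commutes with \(\alpha'\) and is free and transitive on the fibres. Properness follows from compactness of \(N'\), so \(N'\to N\) is a holomorphic principal \(\cpxTorus{1}\)-bundle.

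\textbf{Step 3: triviality when \(V\) has a ghost.} By \cref{goodGaleDualExistence} (after again adjusting \(\Pi\) via \cref{ThreeOutOfFour}), I may take \((f_0,\dots,f_{2m})\) to be good. The row of the first ghost of \(V\) is then \([1,0,\dots,0]\), i.e. \(\Lambda_1 = 0\). So the coordinate \(z_3\) of \(\proj(U(\calT'))\) never vanishes and is fixed by \(\alpha'\) (it is only rescaled projectively). The map
\begin{equation*}
    [z_1:z_2:\underline z]_{N'}\longmapsto\Bigl([\underline z]_N,\ [z_1/z_3,\,z_2/z_3]\Bigr)\in N\times\faktor{(\C^*)^2}{\C}
\end{equation*}
is then well defined, holomorphic and \(\cpxTorus{1}\)-equivariant over \(N\). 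An equivariant map of principal bundles over the same base is an isomorphism, so \(N'\cong N\times\cpxTorus{1}\).

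\textbf{Main obstacle.} I expect the delicate point to be Step 2: checking that the fibre really is \((\C^*)^2/\C\) with no extra identifications. The issue is the interaction between the projective scaling, which acts on all coordinates including \(z_1,z_2\), and the freeness of \(\alpha\) on \(\proj(U(\calT))\). I would handle it by working on the affine cone \((\C^*)^2\times U(\calT)\) with the combined group \(\C^*\times\C^{m+1}\). There one checks that its stabiliser of \(\underline z\) acting on \(U(\calT)\) alone is exactly \(\{1\}\times\{0\}^m\times\C\), the last factor being the \(s\)-component.
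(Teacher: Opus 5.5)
Your proof is correct and follows essentially the paper's route: forget the two new coordinates to get \(N'\to N\), let \(\cpxTorus{1}\) act by rescaling those coordinates, and, when \(V\) already has a ghost, trivialize via that ghost's nonvanishing coordinate. Your map \([z_1:z_2:\underline z]_{N'}\mapsto([\underline z]_N,[z_1/z_3,z_2/z_3])\) is just the inverse of the trivialization coming from the section that sets the new coordinates equal to the ghost one, which is the paper's \(s\).

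Your block-diagonal choice of Gale dual and period matrix is only a cosmetic variant of the paper's coupled one. You are also more explicit on two points the paper leaves tacit:
\begin{itemize}
\item you derive transitivity on the fibres from freeness of \(\alpha\);
\item you impose the normalization \(\Lambda_1=0\). Without it, a section like \(s\) is not compatible with \(\alpha\) and \(\alpha'\). Passing to a good Gale dual changes \(\Lambda\) only by a translation, so \(N\) is unaffected.
\end{itemize}
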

\begin{proof}
    The new vector configuration can be written as \(V' = (v_1', v_2', v_1, \dots, v_n)\); consider as basis of \(\Rel(V')\) the columns of the matrix
    \begin{equation*}
        M' =
        \left[
            \begin{array}{cc}
                1 & 0\\
                1 & 1\\
                1 & 0\\
                \vdots & \vdots\\
                1 & 0
            \end{array}
        \right|\left.\vphantom{\begin{array}{cc}
                1\\
                1\\
                1\\
                \vdots\\
                1
            \end{array}}
            \begin{array}{ccc}
                0 & \cdots & 0\\
                0 &\cdots & 0\\
                \hline
                \vspace{6.5pt}\vertbar & & \vertbar\\
                f_0 & \cdots & f_{2m}\\
                \vertbar & & \vertbar
            \end{array}
        \right]
    \end{equation*}
    where the bottom-right block contains a basis of \(\Rel(V)\). The corresponding point configuration in \(\mathbb{A}^{2m'}_{\R}\) is then
    \begin{equation*}
        {\Lambda'}_1^\R = 0, \quad {\Lambda'}_2^\R = [1,0, \cdots, 0], \quad {\Lambda'}_j^\R = [0, 1, \horzbar \Lambda_{j-2}^\R \horzbar] \text{ for } j=3,\dots,n',
    \end{equation*}
    where \(\Lambda^\R\) is the Gale transform of \((V,\calT)\) from which \(N\) is generated. To obtain the new configuration of complex points \(\Lambda'\), we take \(\tau \in \C\) such that \(\Im\tau > 0\) and we consider the period matrix
    \begin{equation*}
        \Pi' = 
        \begin{bNiceArray}{cc|ccc}[margin]
            1 & \tau & 0 &\Cdots&0 \\
            \hline
            0 & 0 & \Block{3-3}{\Pi} \\
            \Vdots & \Vdots\\
            0 & 0
        \end{bNiceArray},
    \end{equation*}    
    containing the one used in the construction of \(N\) as bottom-right block. With respect to this choice, the points \(\Lambda'_j = {\Lambda'}^{\R}_j \trans{\Pi}'\) are of the form:
    \begin{equation*}
        \Lambda'_1 = 0, \quad \Lambda'_2 = [1,0, \dots, 0], \quad \Lambda'_{j} = [\tau, \horzbar \Lambda_{j-2} \horzbar] \text{ for } j= 3,\dots,n'.
    \end{equation*}
    Notice that this choice for \(\Pi'\) is very natural as this way the pair of real coordinates we added matches together as the first complex coordinate of \(\mathbb{A}^{m'}_{\C}\).

    Consider \(U(\calT) \subset \C^n\) and \(U(\calT)' \subset \C^{n'}\). Since we added two zeroes as ghosts, we have \(U(\calT)' = (\C^*)^2 \times U(\calT)\). So we can define the map
    \begin{align*}
        \Phi : \proj((\C^*)^2 \times U(\calT)) &\longrightarrow \proj(U(\calT))\\
        (z_0: z_1 : \dots : z_{n+1}) &\mapsto (z_2 : \cdots : z_{n+1}),
    \end{align*}
    which is surjective and equivariant with respect to the actions \(\alpha'\) and \(\alpha\) defined on \(U(\calT)\) and \(U(\calT)'\) respectively. Since this is also holomorphic, it descends to a holomorphic map \(\tilde\Phi : N' \to N\).

    To prove this map defines a principal bundle consider the action defined on classes of \(N'\) as
    \begin{align*}
        a : \C\times N' &\longrightarrow N'\\
        (u, [1: z_1 :  \dots z_{n+1}]_{N'}) &\longmapsto [1:e^{2\pi i u} z_1 : z_2 : \dots : z_n]_{N'}.
    \end{align*}
    Since \([1:e^{2\pi i u} z_1 : z_2 : \dots : z_n]_{N'} = [1 : z_1 : e^{-2\pi i \tau u} z_2 : \dots : e^{-2\pi i\tau u} z_{n}]_{N'}\), each point is fixed by \(\Z \oplus {\tau}\Z \subset \C\), hence it defines a free action of the complex torus \(\cpxTorus{1}\) of period \(\tau\). Moreover, it is holomorphic and preserves the fibers of \(\tilde{\Phi}\), hence \(\tilde\Phi\) is a principal \(\cpxTorus{1}\)-bundle over \(N\).

    To prove the second part of the statement, suppose \((V,\calT)\) contains at least one ghost vector. Under this hypothesis, \(U(\calT) = \C^* \times \tilde U\) and \(U(\calT') = (\C^*)^3 \times \tilde U\) for some open subset \(\tilde U \subset \C^{n-1}\). Hence, the natural injection \(\tilde U \hookrightarrow (\C^*)^2 \times \tilde U\) can be extended to a well-defined holomorphic map
    \begin{align*}
        s:\proj(\C^* \times \tilde U) = \proj(U(\calT)) &\longrightarrow \proj((\C^*)^2 \times U(\calT)) = \proj((\C^*)^3 \times \tilde U)\\
        (1:z_2:\cdots:z_n) &\longmapsto (1:1:1:z_2:\dots : z_n),
    \end{align*}
    which is a section for \(\Phi\). Moreover, \(s\) is compatible with \(\alpha\) and \(\alpha'\), hence it descends to a holomorphic section for \(\tilde \Phi : N' \to N\). As this bundle is \(\cpxTorus{1}\)-principal, it must be trivial.
\end{proof}
\begin{remark}
    In the proof of the last part of the statement, the existence of a ghost vector in \((V,\calT)\) is crucial. Indeed, this allows the identification between \(\proj(U(\calT))\) and \(\tilde U\), which in turn is needed to construct the section \(s\). 
    In fact, as a straightforward application of \cref{DeRhamFirstChernClass}, we will be able to see that, if the initial configuration contains no ghost vector, the torus bundle \(\tilde\Phi: N'\to N\) cannot be holomorphically trivial.
\end{remark}

\section{The characteristic class of a regular LVMB manifold}
\label{section:characteristic_class}
We are now able to study \(N\) as a holomorphic principal torus bundle over a toric manifold. Standard ideas in algebraic topology (see e.g. \cite{May1999,Milnor1956}) prove that these bundles can be given a characteristic class in cohomology which classifies them. On the other hand, LVMB manifolds are realizations of the algebro-combinatoric datum \((V,\calT)\), so we expect to be able to extract this kind of topological information directly from it. Indeed, we provide an explicit formula for the characteristic class of the torus bundle underlying a regular LVMB manifold.

We start by recalling how the characteristic class of a principal torus bundle is constructed in general. These general properties will be applied to the LVMB case in \cref{bundles_over_toric_manifolds}.

\subsection{The characteristic class of a principal torus bundle}
\label{section:characteristic_class_torus_bundle}
The characteristic class of a principal \(G\)-bundle \(\pi : P \to M\) can be essentially defined in two ways: the first uses Chern--Weil theory to associate suitable cohomology classes to the curvature of a principal connection, while the second involves algebraic topology and, in particular, the theory of classifying spaces. Here, we will be using the second approach, so we will denote as \(\pi_G : EG \to BG\) the classifying bundle of the generic topological group \(G\). As a consequence of the Eckmann--Hilton duality, we have a weak homotopical equivalence between \(G\) and the loopspace of \(BG\). This fact reads as
\begin{equation*}
    \pi_n(G) \iso \pi_{n+1}(BG)
\end{equation*}
for any \(n \geq 0\).

If \(G = \realTorus{k}\) is a real torus, it can be written as the quotient \(\R^k/\lattice\) for some full rank lattice \(\lattice \subset \R^k\); immediately, we conclude that \(\pi_1(\realTorus{k}) = \lattice\), as loops can be canonically identified with its generators. Since all the other homotopy groups are trivial, we have that \(BT^k\) is an Eilenberg--MacLane space \(K(\lattice,2)\), hence \(H_2(B\realTorus{k}) \iso \pi_2(B\realTorus{k}) \iso \lattice\) because of Hurewicz theorem, and so \(H^*(B\realTorus{k};\Z)\) is isomorphic to the graded group algebra \(\Z(\lattice^\vee)\) generated in degree 2. If \(\lattice = \bigoplus_{i=1}^k \Z\lambda_i\) and \(x_1, \dots, x_k\) is the dual basis, \(\Z(\lattice^\vee)\) is nothing but the polynomial ring \(\Z[x_1, \dots, x_k]\) where \(\deg x_j = 2\). Therefore, a principal \(\realTorus{k}\)-bundle \(\pi : P \to M\) determines a continuous map \(\Gamma : M \to B\realTorus{k}\), unique up to homotopy equivalences; in turn, this provides a graded ring homomorphism \(\Gamma^* : \Z[x_1, \dots, x_k] \to H^*(M;\Z)\). Being graded, such map is completely determined by the restriction \(\Gamma^*|_2 : \bigoplus_{i=1}^k \Z x_i = \Hom(\lattice,\Z) \to H^2(M;\Z)\). Finally, as \(\lattice\) is free and finitely generated, \(\Hom(\Hom(\lattice,\Z), H^2(M,\Z)) \iso \lattice \otimes H^2(M;\Z)\), whence we conclude that torus bundles over \(M\) are classified by this group. More details can be found in \cite{Hofer1993} where this result is discussed via Čech cohomology instead.

\begin{definition}
    We denote the image of \(\Gamma^*|_2\) under this identification as \(\gamma(\pi) \in \lattice \otimes H^2(M;\Z)\). We refer to this object as the \emph{characteristic class} of \(\pi : P \to M\).
\end{definition}
\begin{remark}
    \label{charClassIdentified}
    If \(H_1(M)\) has no torsion (which is always the case of compact toric manifolds that are indeed simply connected \cite[Thm. 12.1.10]{CoxLittleSchenck2011}), the Universal Coefficients Theorem gives the isomorphism \(H^2(M;\Z) \iso \Hom(H_2(M),\Z)\) of finitely generated Abelian groups. Thus, \(\lattice \otimes H^2(M;\Z)\) can be identified in three ways, namely
    \begin{equation}
        \label{canonical_identifications}
        \Hom(\lattice^\vee, H^2(M;\Z)) \iso \lattice \otimes H^2(M;\Z) \iso \Hom(H_2(M), \lattice);
    \end{equation}
    notice that the first is the one we used to pass from \(\Gamma^*|_2\) to \(\gamma(\pi)\). As these identifications are canonical, \(\gamma(\pi)\) is associated to three \emph{apparently} different objects. However, to keep the notation light, we will not distinguish the images of \(\gamma(\pi)\) via the isomorphisms above, thus referring to all of them as ``characteristic class of \(\pi\)''. Indeed, no confusion has to be feared as the identification we are subtending will be always clear from the context.
\end{remark}

The expression of \(\gamma(\pi)\) can be written explicitly by means of the Chern classes of the components of \(\pi\). Indeed, the choice of a basis for \(\lattice\) induces \(\realTorus{k} \iso \U(1)^k\) as Lie groups, and \(\U(1)^k\) naturally embeds in \(\U(k)\) as maximal torus with the natural inclusion \(i : \U(1)^k \hookrightarrow \U(k)\).

As \(B\) functorially maps groups to topological spaces, \(i\) induces a continuous map on classifying spaces \(Bi : B(\U(1))^k \to B\U(k)\), and a ring homomorphism in cohomology
\begin{equation*}
    Bi^* : H^*(B\U(k);\Z) \to H^*(B(\U(1)^k);\Z).
\end{equation*}
As \(B\) commutes with products \cite[129]{May1999},  by the Künneth Formula we have
\begin{equation*}
    H^*(B(\U(1)^k);\Z) \iso H^*((B\U(1))^k;\Z) \iso H^*(B\U(1);\Z)^{\otimes k} \iso \Z[x_1, \dots, x_k].
\end{equation*}
On the other hand, \(B\U(k)\) is the infinite dimensional complex Grassmannian \(\operatorname{Gr}(k, \C^\infty)\) of rank \(k\). It is a standard result that \(H^*(B\U(k);\Z) \iso \Z[c_1, \dots, c_k]\) (see e.g. \cite{MilnorStasheff1974}), where \(c_j\) is the \(j\)\th Chern class of the tautological bundle of \(\operatorname{Gr}(k, \C^\infty)\); in particular, \(\deg c_j = 2j\). As an equivalent formulation of the splitting principle for complex vector bundles, we have that \(Bi^*(c_j) = \sigma_j\), where \(\sigma_j\) denotes the invariant polynomial of degree \(j\) in the variables \(x_1, \dots, x_k\). Therefore,
\begin{equation}
    \label{eq:Chern1}
    c_1(\pi) = (\Gamma^* \circ Bi^*)(c_1) = \Gamma^*(\sigma_1) = \Gamma^*\left(\textstyle\sum_{j=1}^{k} x_j\right).
\end{equation}
This Chern class can be related to the Chern classes of each component: indeed, since \(B\realTorus{k} \iso (B\U(1))^k\), the classifying map \(\Gamma : M \to B\realTorus{k}\) splits into the product of \(k\) maps \(\Gamma_j : M \to B\U(1)\). More precisely, if \(\Delta : M \to \prod_{j=1}^k M\) denotes the diagonal morphism, we have
\begin{equation*}
    \Gamma = \left(\prod_{j=1}^k \Gamma_j\right) \circ \Delta = (\Gamma_1, \dots , \Gamma_k),
\end{equation*}
hence, by duality, \(\Gamma^* = \Delta^* \circ \bigoplus_{j=1}^k \Gamma_j^* = \sum_{j=1}^k \Gamma_j^*\), so \cref{eq:Chern1} can be read as
\begin{equation*}
    c_1(\pi) = \Gamma^*\left(\textstyle\sum_{j=1}^{k} x_j\right) = \textstyle\sum_{j=1}^k \Gamma_j^*(x_j) = \textstyle\sum_{j=1}^k c_1(\pi_j),
\end{equation*}
where \(\pi_j\) is the \(j\)\th component of \(\pi\) determined by the splitting we chose for \(T^k\). This observation makes also explicit the expression of \(\gamma(\pi)\): indeed, if \(\iota_j : \Z[x_j] \hookrightarrow \Z[x_1, \dots, x_k]\) denotes the \(j\)\th canonical coprojection of the polynomial ring, and \(\langle\cdot, \cdot\rangle\) the pairing between \(\lattice\) and \(\lattice^\vee\), we have \(c_1(\pi_j) = \Gamma_j^*(x_j) = (\Gamma^* \circ \iota_j)(x_j) = \left\langle x_j,\gamma(\pi)\right\rangle\); therefore
\begin{equation}
    \label{eq:generalClassExpression}
    \gamma(\pi) = \sum_{j=1}^k c_1(\pi_j) \otimes \lambda_j \in H^2(M;\Z) \otimes \lattice,
\end{equation}
and, in particular, \(c_1(\pi) = \left\langle\textstyle\sum_{j=1}^k x_j, \gamma(\pi) \right\rangle\).

\subsubsection{Naturality}
\label{naturality}
This construction of \(\gamma(\pi)\) in terms of the classifying map makes proving the naturality with respect to principal bundles maps straightforward. Indeed, let \(\pi : P \to M\) and \(\pi' : P' \to M\) be two principal torus bundles with \(T\) and \(T'\) as fibers, and let \(\varphi : P' \to P\) be a map of principal bundles. Consider the classifying maps \(\Gamma : M \to B\realTorus{}\) and \(\Gamma' : M \to T'\). Then, the map induced on the fiber \(\varphi_0 : T' \to \realTorus{}\) fits in the following diagram which commutes in any face
\begin{equation*}
    \begin{tikzcd}[sep = small]
    &|[label={[label distance=-2mm]-45:\lrcorner}]| P \arrow[dd,near start, "\pi"] \arrow[rr] && E\realTorus{} \arrow[dd] \\
    |[label={[label distance=-2mm]-45:\lrcorner}]| P' \arrow[dd, "\pi'"] \arrow[ru, "\varphi"] \arrow[rr,crossing over] && ET' \arrow[ru]    &\\
    & M \arrow[rr, near start,"\Gamma"] && B\realTorus{} \\
    M \arrow[rr, "\Gamma'"] \arrow[ru,equals] && BT' \arrow[ru, "B\varphi_0"'] \arrow[from=uu,crossing over] &                         
    \end{tikzcd}
\end{equation*}
Following the same ideas in the construction of \(\gamma(\pi)\), the bottom square induces a commutative triangle in cohomology
\begin{equation*}
    \begin{tikzcd}
        H^1(T;\Z) \ar[r, "\gamma(\pi)"] \ar[d,"(B\varphi_0)^*"'] & H^2(M;\Z) \\
        H^1(T';\Z) \ar[ur,"\gamma(\pi')"']
    \end{tikzcd},
\end{equation*}
which proves the naturality of \(\gamma\).

Moreover, if \(\lattice,\lattice'\) are the period lattices of \(T\) and \(T'\) respectively, then \((B\varphi_0)^*\) can be identified with a homomorphism \(\phi : \lattice^\vee \to (\lattice')^\vee\) which pulls back the coordinates of \(\lattice\) to \(\lattice'\). This is precisely the adjoint map of the group homomorphism \(\tilde \varphi_0 : \lattice' \to \lattice\) inducing \(\varphi_0\) as homomorphism of tori.

\subsection{\texorpdfstring{\(\C^*\)}{C*}-bundles over toric manifolds}
\label{bundles_over_toric_manifolds}
What we discussed above holds for any principal torus bundle with simply connected base space, in particular this applies to regular LVMB manifolds. Here we show that the algebro-combinatoric datum \((V,\calT)\) also encodes the expression of the characteristic class \(\gamma(\pi)\).%

First, we briefly recall a circle of well-known facts in toric geometry regarding principal \(\C^*\)-bundles and their associated line bundles, for more details see \cite[\S VII.2]{Audin2004} or \cite[Ch.6]{CoxLittleSchenck2011}. Basically, we need to observe that, in the setting of \cref{section:relationWithToricGeometry}, when \(\Sigma\) is smooth, any \(g \in K_\Sigma^\vee\) induces a regular fan \(\Sigma_g\) in \(\R^{d+1}\) whose corresponding toric variety \(X_{\Sigma_g}\) is the total space of a complex line bundle \(\pi_g : X_{\Sigma_g} \to X_\Sigma\); since any line bundle arises this way, this gives an identification between \(K_\Sigma^\vee\) and \(\operatorname{Pic}(X_\Sigma)\). Moreover, by Demazure Vanishing Theorem (see e.g. \cite[Thm.9.2.3]{CoxLittleSchenck2011}), \(H^i(X_\Sigma, \mathcal{O}_{X_\Sigma}) = 0\) for \(i > 0\); hence the first Chern class map is an isomorphism between \(K_\Sigma^\vee\) and \(H^2(X_\Sigma;\Z)\). With respect to this identification, we may write \(c_1(\pi_g) = g\). 

A similar construction exists for principal \(\C^*\)-bundles. Indeed, if \(\{h_1, \dots, h_t\}\) is a basis of \(K_\Sigma\) and \(\{g_1, \dots, g_t\}\) is its dual, any \(g_i\) can be associated to a principal \(\C^*\)-bundle \(\varpi_i : X_{\Sigma_i} \to X_\Sigma\) such that \(\varpi_i \times_{\id} \C \iso \pi_{g_i}\). As \(\C^*\) and \(S^1\) are homotopically equivalent, their classifying spaces are the same, thus we can repeat all the previous considerations to construct a characteristic class for a principal \(\C^*\)-bundle. In particular, we have \(\gamma(\varpi_i) = g_i \otimes h_i\).

This construction is essentially the same as the one of \(\tilde F_j\) in \cref{section:relationWithToricGeometry}, hence \(\varpi_i = \tilde \pi_{i+k-1}\) for \(i = 1, \dots, t\). Therefore, this observation gives an explicit formula for the characteristic class of the torus bundle underlying a regular LVMB manifold.
\begin{lemma}
    \label{lemma:charClassGhosts}
    Let \((V_\Sigma, \calT_\Sigma)\) be a regular triangulated configuration with underlying smooth polyhedral fan \(\Sigma\). Let \(K_\Sigma\) and \(K\) be the lattices of integral linear relations among the vectors in \(V_\Sigma\) as in \cref{section:relationWithToricGeometry}. Consider \(\{h_1,\dots,h_{2m-k+1}\}\) a basis of \(K_\Sigma\) included in a good Gale dual configuration \((f_0, \dots, f_{2m})\) via the inclusion \(\iota : K_\Sigma \hookrightarrow K\). Let \(N\) be the LVMB manifold constructed from these data and \(\pi : N \to X_\Sigma\) the principal \(\realTorus{2m}\)-bundle coming with it. Then,
    \begin{itemize}
        \item if \(V_\Sigma\) contains \(k \geq 1\) ghost vectors,
        \begin{equation*}
            \gamma(\pi) = \sum_{j=1}^{2m-k+1} h_j^* \otimes \iota(h_j) \in K_\Sigma^\vee \otimes K.
        \end{equation*}
        \item If \((V_\Sigma, \calT_\Sigma)\) does not contain ghost vectors,
        \begin{equation*}
            \gamma(\pi) = \sum_{j=2}^{2m+1} h_j^* \otimes h_j \in K_\Sigma^\vee \otimes K_\Sigma;
        \end{equation*} 
        in this case \(\iota : K_\Sigma \hookrightarrow K = K_\Sigma\) is the identity.
    \end{itemize}
\end{lemma}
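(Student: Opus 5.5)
The plan is to compute \(\gamma(\pi)\) factor by factor, using the splitting of \(\pi\) into circle bundles given by the Gale dual configuration together with the general formula \labelcref{eq:generalClassExpression}. With the lattice \(\lattice\) of the fibre torus identified with \(K/\Z f_0\), the basis \(f_1,\dots,f_{2m}\) gives \(\realTorus{2m}\iso\prod_{j=1}^{2m}S^1_{f_j}\). By \cref{section:CircleBundles} we have \(\pi\iso\Delta^*\bigl(\prod_j\pi_j\bigr)\), so \labelcref{eq:generalClassExpression} yields
\begin{equation*}
    \gamma(\pi)=\sum_{j=1}^{2m}c_1(\pi_j)\otimes f_j .
\end{equation*}
It therefore remains to identify each \(c_1(\pi_j)\in H^2(X_\Sigma;\Z)\iso K_\Sigma^\vee\).

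First, \(\pi_j\) and \(\tilde\pi_j\) carry the same characteristic data. By \cref{prop:bundleReduction} and the remark after it, \(\pi_j\hookrightarrow\tilde\pi_j\) is a reduction of structure group from \(\C^*\) to \(S^1\), so their associated line bundles are isomorphic. Alternatively, \(S^1\hookrightarrow\C^*\) is a homotopy equivalence and induces the identity on the \(\Z\) lattices, so naturality (\cref{naturality}) gives \(c_1(\pi_j)=c_1(\tilde\pi_j)\). Next we split into ghost indices and non-ghost indices, assuming \(k\ge1\).

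\textbf{Ghost indices.} For \(1\le j\le k-1\), \cref{cor:bundle_triviality} shows that \(\tilde\pi_j\) is trivial when \(k\ge2\), so \(c_1(\pi_j)=0\). When \(k=1\) there are no such indices.

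\textbf{Non-ghost indices.} For \(j=k-1+i\) with \(1\le i\le 2m-k+1\) we have \(f_j=\iota(h_i)\). By the discussion in \cref{bundles_over_toric_manifolds}, \(\tilde\pi_{i+k-1}=\varpi_i\), so \(c_1(\pi_j)=g_i=h_i^*\) under \(H^2(X_\Sigma;\Z)\iso K_\Sigma^\vee\). Combining the two cases gives \(\gamma(\pi)=\sum_i h_i^*\otimes\iota(h_i)\). When there are no ghosts, the good configuration has \(f_0=h_1=(1,\dots,1)\), and \(f_0\) is exactly the direction quotiented out by projectivisation. The fibre therefore has basis \(h_2,\dots,h_{2m+1}\), and the same identification \(\tilde\pi_j=\varpi_j\) gives \(\sum_{j\ge2}h_j^*\otimes h_j\).

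The main obstacle is the step \(\tilde\pi_{i+k-1}=\varpi_i\), meaning that the \(\C^*\)-bundle obtained from \(\proj(U(\calT_\Sigma))\) by quotienting out all the other factors \(\C^*_{f_l}\) has first Chern class exactly the dual basis element \(h_i^*\). I would check this directly by matching the construction with the Audin--Cox one. Using \(U(\calT_\Sigma)=(\C^*)^k\times U_\Sigma\), first quotient by \(\C^*_{f_0}\) and by the ghost factors \(\C^*_{f_1},\dots,\C^*_{f_{k-1}}\). Because the configuration is good, with the identity block \(I_{k-1}\) and zeros above the \(h\)-columns, this normalises all the ghost coordinates to \(1\), exactly as in the proof of \cref{cor:bundle_triviality}, and leaves \(U_\Sigma\) with the \(K^\C_\Sigma\)-action. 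In the Audin--Cox quotient, the bundle \(U_\Sigma/\prod_{l\ne i}\C^*_{h_l}\to X_\Sigma\) has transition functions given by the character \(h_i^*\) of \(K_\Sigma^\C\). Its line bundle thus corresponds to \(h_i^*\in K_\Sigma^\vee\iso\operatorname{Pic}(X_\Sigma)\), and this is the identification used in \cref{bundles_over_toric_manifolds}. Some care is needed with sign conventions, namely whether the associated line bundle is taken with the character or with its inverse. I would fix these by testing on the Hopf or Calabi--Eckmann examples of \cref{ClassicalExamples}, where \(c_1\) of the Hopf circle bundle over \(\CP^d\) is known.
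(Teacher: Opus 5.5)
Your proposal is correct. For \(k\ge 1\) it follows the paper's own argument. You split \(\pi\) along the good Gale dual basis, kill the ghost components with \cref{cor:bundle_triviality}, and identify each remaining \(\tilde\pi_{i+k-1}\) with the Audin--Cox bundle \(\varpi_i\). Your slice argument actually supplies the check that the paper only asserts (``essentially the same construction''). The subgroup generated by \(f_0,\dots,f_{k-1}\) acts freely and transitively on the ghost coordinates because of the zero first row and the \(I_{k-1}\) block, while the \(\iota(h_l)\) do not touch those coordinates.

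For \(k=0\) you take a genuinely different route. The paper does not identify the components directly there. It prepends two zero ghosts and uses \cref{ghostAddition} to obtain a \(\cpxTorus{1}\)-bundle \(N'\to N\) compatible with the projections to \(X_\Sigma\). It then applies the ghost case to \(N'\) and pushes \(\gamma(\pi')\) forward along \(\rho=\sum_{j\ge 2}\iota(h_j)^*\otimes h_j\) by naturality.

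You instead observe that without ghosts \(U(\calT_\Sigma)=U_\Sigma\) and \(\tilde K^\C=K_\Sigma^\C\), so the two toric quotients literally coincide. The factor \(\C^*_{f_0}=\C^*_{h_1}\) is absorbed by projectivisation. Hence the component attached to \(f_{j-1}=h_j\) is \(U_\Sigma/\prod_{l\neq j}\C^*_{h_l}\), the Audin--Cox bundle of the character \(h_j^*\), for \(j=2,\dots,2m+1\). One small correction: your ``\(\tilde\pi_j=\varpi_j\)'' should read \(\tilde\pi_{j-1}=\varpi_j\).

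Your route is shorter and self-contained. It avoids the auxiliary manifold and the bookkeeping needed to check that \(N'\to N\) is a bundle map over \(X_\Sigma\) inducing \(\rho\) on fibre lattices. The paper's route only uses the component identification in the presence of ghosts, and as a by-product it records how \(\gamma\) behaves under ghost addition.

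The sign convention you flag is also left implicit in the paper, which simply sets \(c_1(\pi_g)=g\). So it is not a gap relative to the paper's proof.
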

\begin{proof}
    The proof consists of an application of \cref{eq:generalClassExpression} in the case of a regular LVMB manifold. First we prove the \(k \geq 1\) case. Since we are considering a good Gale dual configuration, it is of the form
    \begin{equation*}
        (f_0,\dots,f_{2m}) = (f_0,f_1,\dots,f_{k-1}, \iota(h_1),\dots, \iota(h_{2m-k+1}))
    \end{equation*}
    for some \(\{h_1,\dots,h_{2m-k+1}\}\) basis of \(K_\Sigma\). Moreover, as we showed in the proof of \cref{goodGaleDualExistence}, \(K\) splits as \(\Z f_0 \oplus \hat K\), and \(\pi : N \to X_\Sigma\) is a principal \(\realTorus{2m}\)-bundle with fiber \(\R^{2m}/\hat K\). As we discussed in \cref{section:CircleBundles}, the choice of any Gale dual configuration induces a decomposition of \(\pi\) into the product of principal \(S^1\)-components \(\pi_j : F_j \to X_\Sigma\); moreover from \cref{cor:bundle_triviality} we deduce that \(\pi_j\) is trivial for \(j=1\dots,k-1\), being the reduction of the trivial \(\C^*\)-bundle \(\tilde\pi_j\). For all the others, we have \(\gamma(\pi_j) = \gamma(\tilde\pi_j) = c_1(\tilde\pi_j \times_{\text{id}} \C) \otimes \iota(h_{j-k+1})\). Since \(\tilde\pi_{j}\) is by construction the \(\C^*\)-principal bundle obtained from \(h^*_{j-k+1} \in K_\Sigma^\vee\), we immediately obtain
    \begin{equation*}
        \gamma(\pi) = \sum_{j=k}^{2m} c_1(\tilde\pi_j \times_{\text{id}} \C) \otimes \iota(h_{j-k+1}) = \sum_{j=1}^{2m-k+1} h_j^* \otimes \iota(h_j).
    \end{equation*}

    To prove the second part of the statement, we assume that \((V_\Sigma,\calT_\Sigma)\) does not contain ghost vectors; in this case we can prepend two of them \(v_1 = v_2 = 0\) to it in order to get a new configuration \((V_\Sigma',\calT_\Sigma')\). As both triangulated vector configurations are regular, we can complete a good Gale dual configuration for \((V_\Sigma,\calT_\Sigma)\) to a good Gale dual configuration for \((V_\Sigma',\calT_\Sigma')\). Namely, we get a configuration of the form
    \begin{equation*}
        (f_0,f_1,\iota(h_1),\dots, h_{2m+1}),
    \end{equation*}
    with \(f_1 = [0,1, \horzbar \hat f_1 \horzbar]\) where \(\hat f_1 \in \Span_{\Z}(h_1,\dots,h_{2m+1})\). Since the new lattice of relations \(K' = \Span_{\Z}(f_0,f_1) \oplus \iota(K)\), we have a natural retraction \(\rho : K' \to K\) of \(\iota\) with adjoint map \(\rho^\vee : K^\vee \to {K'}^\vee\). By \cref{ghostAddition}, we can construct \(N'\) such that \(\varpi : N' \to N\) is a principal \(\cpxTorus{1}\)-bundle, and we denote its bundle projection as \(\pi' : N' \to X_\Sigma\). In this situation, for any \(x \in X_\Sigma\), the map induced on the fibers \(\varpi_x : (\pi')^\inv(x) \to \pi^\inv(x)\) induces a homomorphism in cohomology \(\varpi_x^* : H^1((\pi')^\inv(x);\Z) \to H^1(\pi^\inv(x);\Z)\).
    
    Therefore, to conclude, it is sufficient to observe that the choice of the Gale dual configuration determines both the splitting of \(N\) as product of principal \(S^1\)-bundles and a basis for the homology of the fibers. Thus, because of the observations in \cref{naturality}, \(\varpi^*_x = \rho^\vee\) under the identifications \(K \iso H_1(\pi^\inv(x);\Z)\) and \(K' \iso H_1((\pi')^\inv(x);\Z)\). Hence, by naturality of \(\gamma(\pi)\), we have \(\gamma(\pi) = \rho \circ \gamma(\pi')\). This can be written explicitly with respect to the fixed bases, namely \(\rho = \sum_{j=2}^{2m+1} \iota(h_j)^* \otimes h_j\), thus
    \begin{equation*}
        \gamma(\pi)  = \left(\sum_{j=2}^{2m+1} \iota(h_j)^* \otimes h_j \right) \circ \left(\sum_{j=1}^{2m+1} h_j^* \otimes \iota(h_j) \right) = \sum_{j=2}^{2m+1} h_j^* \otimes h_j. \qedhere
    \end{equation*}
\end{proof}

\begin{remark}
    \label{change_of_lattices}
    By the identifications discussed in \cref{charClassIdentified}, \(\gamma(\pi)\) has to be intrinsic to the triangulated vector configuration datum \((V_\Sigma,\calT_\Sigma)\). Indeed, under the second isomorphism in \cref{canonical_identifications}, we can see that \(\gamma(\pi)\) corresponds to \(\iota : K_\Sigma \hookrightarrow K\); hence, any choice of Gale dual configuration which is an adapted basis for \(\iota\), provides the same characteristic class. Nonetheless, the construction of LVMB manifolds can be performed starting from any Gale dual configuration \((f_0,\dots,f_{2m})\) which does not necessarily lie in \(K = \Rel(V) \cap \Z^n\). However, as long as \((V_\Sigma,\calT_\Sigma)\) is regular, we can find another full rank lattice \(\Xi\) in \(\R^n\) such that \(\Rel(V_\Sigma) \cap \Xi\) is generated by \(\{f_0,\dots,f_{2m}\}\). In this case, the diagram \labelcref{equation:AudinExactSequence} can be rewritten as
    \begin{equation*}
        \label{equation:skewExactSequence}
        \begin{tikzcd}
            0 \ar[r] & K_\Sigma \ar[r] \ar[d,"\tilde\iota|_{K_\Sigma}"] & \Xi \cap E \ar[r,"\tilde p'"] \ar[d,"\iota"] & \Z^d \ar[r] \ar[d,equals]& 0\\
        0 \ar[r] & K \ar[r] & \Xi \ar[r,"\tilde p"]& \Z^d \ar[r] & 0
    \end{tikzcd}
\end{equation*}
where \(E \subseteq \R^n\) is an \((n-k)\)-dimensional linear subspace; explicitly, if \(\{\xi_1,\dots,\xi_n\}\) is a basis of \(\Xi\), \(E\) will be the subset spanned by a subset \(\{\xi_{i_1},\dots, \xi_{i_{n-k}}\}\) containing the generators of \(\Xi\) which are not mapped by \(\tilde p\) to a ghost vector in \(V_\Sigma\). Actually, this is the same construction seen through a change of basis, hence \(\hat K\) is still canonically identified with \(\pi_1(\realTorus{2m})\), and \(K_\Sigma^\vee\) with \(H^2(X_\Sigma;\Z)\).
\end{remark}

By naturality, the characteristic class \(\gamma(\pi)\) does not depend on the choice of a basis. This yields observing that the formula for \(\gamma(\pi)\) given in \cref{lemma:charClassGhosts} can be written with respect to any choice of Gale dual configuration.

\begin{theorem}
    \label{them:father}
    Let \((V_\Sigma, \calT_\Sigma)\) be a regular triangulated configuration with underlying fan \(\Sigma\). Let \((f_0, f_1, \dots, f_{2m})\) be any Gale dual configuration for this datum and \(K\) be the lattice spanned by these vectors. Then, with \(N\) denoting the corresponding LVMB manifold, the characteristic class of the principal torus bundle \(\pi : N \to X_\Sigma\) is
    \begin{equation*}
        \gamma(\pi) = \sum_{j=1}^{2m} \iota^\vee(f_j^*) \otimes f_j \in K_\Sigma^\vee \otimes K,
    \end{equation*}
    where \(K_\Sigma\) and \(K\) are the lattices described above and \(\iota^\vee : K^\vee \to K_\Sigma^\vee\) is the adjoint map of the natural inclusion.
\end{theorem}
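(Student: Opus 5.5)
The plan is to reduce the theorem to \cref{lemma:charClassGhosts} by a change of basis, using that \(\gamma(\pi)\) is intrinsic. By \cref{change_of_lattices}, after replacing \(\Z^n\) with a suitable full-rank lattice \(\Xi\), we may assume \((f_0,\dots,f_{2m})\) is a \(\Z\)-basis of \(K = \Rel(V_\Sigma)\cap\Xi\) with \(f_0 = [1,\dots,1]\). The identification of \(\pi_1\) of the fiber with \(K/\Z f_0\) (i.e. \(\hat K\)), and of \(H^2(X_\Sigma;\Z)\) with \(K_\Sigma^\vee\), is unchanged by this. Since \(f_0\) acts trivially on \(\proj(U(\calT_\Sigma))\), the term with \(j=0\) never contributes, so the claimed expression is really an element of \(K_\Sigma^\vee\otimes \hat K\).

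First fix a good Gale dual configuration \((f_0',\dots,f_{k-1}',\iota(h_1),\dots,\iota(h_{t}))\), which exists by \cref{goodGaleDualExistence}. Here \(t = 2m-k+1\), and \(h_1,\dots,h_t\) is a basis of \(K_\Sigma\). \Cref{lemma:charClassGhosts} gives \(\gamma(\pi)=\sum_{j} h_j^*\otimes\iota(h_j)\), which is the element of \(K_\Sigma^\vee\otimes K\) corresponding to \(\iota\) under \(K_\Sigma^\vee\otimes K\iso\Hom(K_\Sigma,K)\). This holds provided the classes \(f_i'^*\) for the ghost-complement directions restrict to zero on \(K_\Sigma\). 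That is exactly the statement \(\iota^\vee(f_i'^*)=0\) for \(i<k\), and it is the content of the triviality in \cref{cor:bundle_triviality}. Writing \(\gamma(\pi)\) in the primed basis as \(\sum_{j\ge1}\iota^\vee(f_j'^*)\otimes f_j'\) therefore agrees with the lemma. The no-ghost case works the same way: there \(\iota=\id\), and the \(j=0\) (i.e. \(h_1=f_0\)) term is dropped.

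Next, observe that \(\sum_{j} \iota^\vee(f_j^*)\otimes f_j\), summed over \emph{all} \(j=0,\dots,2m\), is the basis-free tensor \((\iota^\vee\otimes\id)\bigl(\sum_j f_j^*\otimes f_j\bigr)\). The inner sum is the identity element of \(K^\vee\otimes K\), which is independent of the basis, so the whole expression is just \(\iota\in\Hom(K_\Sigma,K)\). It remains to account for the \(j=0\) term. Modulo \(\Z f_0\) it vanishes, because the fiber lattice is \(K/\Z f_0\), or equivalently because the \(S^1_{f_0}\)-factor has already been quotiented in passing to \(\proj(U(\calT_\Sigma))\). Hence for any basis with \(f_0=[1,\dots,1]\), the truncated sum \(\sum_{j=1}^{2m}\iota^\vee(f_j^*)\otimes f_j\) equals the image of \(\iota\) in \(K_\Sigma^\vee\otimes\hat K\), and this coincides with the good-basis value computed above.

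Finally, invoke naturality (\cref{naturality}) together with \cref{ChangeOfBundles} to show that changing the Gale dual configuration is an automorphism \(\Phi\) of \(\Rel(V_\Sigma)\) fixing \(f_0\). This automorphism transports the decomposition of \(\pi\) into circle bundles and the fiber lattice compatibly, so the class computed for the good basis is the same class expressed in the new basis. The step I expect to be the main obstacle is the bookkeeping around \(f_0\) and the quotient \(K\to\hat K\). One has to check that dropping \(j=0\) is legitimate even when the \(f_j\) for \(j\ge1\) are not the splitting \(\hat K\) of \cref{goodGaleDualExistence}, i.e. that \(f_j^*\), viewed as dual to \(f_j\) in \(K\), induces the correct coordinates on \(K/\Z f_0\). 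This holds because \(f_0\) is itself a basis element, so \(f_1^*,\dots,f_{2m}^*\) vanish on \(f_0\) and descend.
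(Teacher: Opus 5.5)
Your proposal is correct and follows the same route as the paper. You apply \cref{lemma:charClassGhosts} to a good Gale dual configuration and then show the expression is invariant under a change of Gale dual configuration. Along the way you note that $\iota^\vee(f_i'^*)=0$ for $1\le i<k$; this is just linear algebra, since $f_k',\dots,f_{2m}'$ span $\iota(K_\Sigma)$, rather than a consequence of \cref{cor:bundle_triviality}. The only other difference is presentational. You replace the paper's explicit change-of-basis matrices $\Phi$ by the basis-free identity element of $K^\vee\otimes K$, and you dispose of the $j=0$ term by working in $K_\Sigma^\vee\otimes K/\Z f_0$. This makes explicit what the paper's computation does only implicitly when it drops the $f_0$-components.
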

\begin{proof}
    The proof consists in showing how the expression of \(\gamma(\pi)\) transforms under the change of basis between two different choices for the Gale dual configuration.
    
    As before, we first assume that \(k \geq 1\). Since the configuration is regular, there exists a good Gale dual configuration of the form
    \begin{equation*}
        (f_0,f_1,\dots,f_{k-1},\iota(h_1),\dots,\iota(h_{2m-k+1})) \eqqcolon (f_0, f_1',\dots,f_{2m}').
    \end{equation*}
    Let \(\Phi \in \operatorname{Aut}(\Rel(V))\) be a change of basis in \(\Rel(V)\) such that \(\Phi(f_0) = f_0\) and \(\Phi(f_j) = f_j'\) for any \(j\); denote as \(\Phi_{i\ell}\) the corresponding matrix with respect to the given bases. If \((f_0^*, {f_1'}^*,\dots,{f_{2m}'}^*)\) is its dual basis, it can be checked that \(h_j^* = \iota^\vee(f_{j+k-1}^*)\) for all \(j=1,\dots, 2m-k+1\), thus the characteristic class can be written as
    \begin{equation*}
        \gamma(\pi) = \sum_{j=1}^{2m-k+1} h_j^* \otimes \iota(h_{j}) = \sum_{j=k}^{2m} \iota^\vee({f_j'}^*) \otimes f_{j}' = \sum_{j=1}^{2m} \iota^\vee({f_j'}^*) \otimes f_{j}',
    \end{equation*}
    since the first terms in the sum vanish. Thus, as \(f_j' = \sum_\ell \Phi_{j\ell} f_\ell\) and \(f_j^* = \sum_\ell {f_\ell'}^* \Phi_{\ell j}\), we have
    \begin{equation*}
        \gamma(\pi) = \sum_{j=1}^{2m} \sum_{\ell = 1}^{2m} \iota^\vee({f_j'}^*) \otimes \Phi_{j\ell}  f_{\ell} = \sum_{\ell = 1}^{2m} \sum_{j=1}^{2m} \iota^\vee( \Phi_{j\ell} {f_j'}^*)  \otimes f_\ell = \sum_{\ell=1}^{2m} \iota^\vee(f_\ell^*) \otimes f_\ell.
    \end{equation*}

    If \(k = 0\), \(\iota\) is the identity, hence in this case any Gale dual configuration is of the form
    \begin{equation*}
        (f_0,\dots,f_{2m}) = (h_1,\dots,h_{2m+1}) 
    \end{equation*}
    with \(h_1 = [1,\dots,1]\). Therefore, there exists a unimodular automorphism \(\Phi\) such that the configuration \((h_1, h_2' ,\dots,h_{2m+1}')\) with \(h_j' = \Phi(h_j)\) is a good Gale dual configuration. With respect to these bases, \(\Phi\) is represented by the matrix
    \begin{equation*}
        [\Phi_{ij}] =
        \begin{bNiceMatrix}
            1 & 0 &\Cdots & 0\\
            \Phi_{2,1} & 1 &&\\
            \Vdots && \Ddots\\
            \Phi_{2m+1,1} && & 1
        \end{bNiceMatrix},
        \text{ with inverse }
        [\Phi^{\inv}_{ij}] =
        \begin{bNiceMatrix}
            1 & 0 &\Cdots & 0\\
            -\Phi_{2,1} & 1 &&\\
            \Vdots && \Ddots\\
            -\Phi_{2m+1,1} && & 1
        \end{bNiceMatrix}
    \end{equation*}
    Hence, we have \(h_j' = \sum_{\ell=1}^{2m+1} \Phi_{j\ell} h_\ell'\) and \({h_j'}^* = \sum_{i=1}^{2m+1} h_{i}^* \Phi_{ij}^\inv\), thus the change of basis can be written as
    \begin{equation*}
        \gamma(\pi) = \sum_{j=2}^{2m+1} {h_j'}^* \otimes h_j' = \sum_{j=2}^{2m+1}  \sum_{i,\ell=1}^{2m+1}  h_i^* \Phi_{ij}^\inv \otimes \Phi_{j\ell} h_\ell = \sum_{\ell=2}^{2m+1} h_\ell^* \otimes h_\ell = \sum_{\ell =1}^{2m} f_\ell^* \otimes f_\ell
    \end{equation*}
    since for  \(j\geq 2\) the terms \(\Phi_{1j}^\inv\) vanish.
\end{proof}

\begin{remark}
    \label{anticipation}
    The invariance of \(\gamma(\pi)\) under change of Gale dual configuration shows this datum is intrinsic to the triangulated vector configuration \((V_\Sigma,\calT_\Sigma)\). However, the principal torus bundle coming with an LVMB manifold is holomorphic, and its complex structure is determined by the combined choices of Gale dual configuration and period matrix. In \cref{Borel_spectral_sequence} we will see how this is no longer true when we consider the restriction of (the complexification of) \(\gamma(\pi)\) to cohomology classes of type \((1,0)\). In fact, this detects how changing Gale dual configuration affects the complex structure of \(N\).
\end{remark}
    
\subsubsection{Identifying classes in the fan}
\label{ClassesInTheFan}
On a smooth toric variety, torus invariant divisors generate the entire class group. As these are Weil divisors, they are identified with torus invariant hypersurfaces and so, by the orbit--cone correspondence, with rays in the fan. Let \(\Sigma\) be a smooth complete simplicial fan in \(\R^d\) containing \(s\) rays, let \(v_1, \dots, v_s \in \Z^d\) be their primitive generators. Then, the corresponding Weil divisors  \(D_1, \dots, D_s\)  fulfil the linear equation
\begin{equation}
    \label{fanRelations}
    \sum_{i = 1}^s \varepsilon_j(v_i) D_i = 0 \quad \text{for \(j=1,\dots, d\)},
\end{equation}
where \(\varepsilon_j\) is the dual canonical basis of \((\R^d)^\vee\). Their Poincaré dual classes \(w_j = \PD(D_j)\) generate \(H^2(X_\Sigma;\Z)\) \cite[\S 12.3]{CoxLittleSchenck2011}, and by Stokes Theorem, they fulfil the same relations.  Indeed, these classes provide an explicit presentation of the entire cohomology ring, namely:
\begin{equation*}
    H^*(X_\Sigma;\Z) \iso \frac{\Z[w_1, \dots, w_s]}{\mathcal{I}+\mathcal{J}};
\end{equation*}
here \(\mathcal{I}\) is the ideal generated by the linear relations in \cref{fanRelations}, while \(\mathcal{J}\) is the Stanley--Reisner ideal \cite[\S 12.4]{CoxLittleSchenck2011} which consists of relations in degree at least \( 4\).

In general, a Gale dual configuration \((f_0,\dots,f_{2m})\) does not lie in the lattice \(\Rel(V)\cap\Z^n\). However, as we discussed in \cref{change_of_lattices}, as long as the triangulated vector configuration is regular, we can find another lattice, there denoted by \(\Xi\) for which the given Gale dual configuration is a basis. Of course this introduces a change of basis both in \(H^2(X_\Sigma;\Z)\) and in \(H^1(\realTorus{2m};\Z)\). Therefore, it is not restrictive to assume \((f_0,\dots,f_{2m})\) to be a basis of the full rank lattice \(K\) in \(\Rel(V)\).

To represent the cohomology classes \(\iota^\vee(f_j^*) \in K_\Sigma^\vee\) with respect to the cohomology basis given by torus invariant divisors, we observe that for a smooth toric variety, the short exact sequence \(0 \to K_\Sigma \xrightarrow{\kappa} \Z^s \xrightarrow{p} \Z^d \to 0\) is split; this fact was discussed in the proof of \cref{goodGaleDualExistence}. Hence, the monomorphism \(\kappa\) admits a retraction \(r : \Z^s \to K_\Sigma\). Considering the duals, we have the exact sequence
\begin{equation*}
    \begin{tikzcd}
        0 &  K_\Sigma^\vee \ar[l] \ar[r, bend left, "r^\vee",dashed] & (\Z^s)^\vee \ar[l, "\kappa^\vee"] & (\Z^d)^\vee \ar[l, "p^\vee"]& 0 \ar[l]
    \end{tikzcd}
\end{equation*}
with \(K_\Sigma^\vee \iso H^2(X_\Sigma;\Z)\); this isomorphism is induced by identifying divisors on \(X_\Sigma\) with integral continuous maps \(\R^d \to \R\) which are linear when restricted on every maximal cone of \(\Sigma\). Such maps are easily identified with linear functionals on \(\Z^s\) \cite[Rmk.VII.2.7]{Audin2004} modulo the subspace \(p^\vee((\Z^d)^\vee)\). In particular, this tells us that the cohomology classes \(w_j\) coincide with the canonical basis of \((\Z^s)^\vee\) with respect to this identification. Hence, we can use a section of \(\kappa^\vee\) to compute the \emph{torus invariant components} of \(\iota^\vee(f_j^*)\) modulo \(p^\vee\).

\begin{note}
    Here, the use of the term ``component'' is improper. Indeed, as  the classes \(w_j\) are never linearly independent if \(X_\Sigma\) is compact, such ``components'' cannot be unique. Despite this, we can still employ these constants in computations, keeping in mind that the same cohomology class can be presented in different ways. 
\end{note}

This presentation of the cohomology classes is independent of the choice of the retraction \(r\). Indeed, for any other choice \(r'\) we have that \((r-r')\circ \kappa = 0\), so there exists \(g\) such that \(r-r' = g \circ p\). Dualizing these equations, we get \(\Ima(r^\vee-{r'}^\vee) \subseteq \Ima p^\vee = \mathcal{I}\), so any \(h \in K_\Sigma^\vee\) has a unique torus invariant representation \(r^\vee(h)\) in the quotient
\begin{equation*}
    \frac{\Z[w_1,\dots, w_s]}{\mathcal{I}+\mathcal{J}}.
\end{equation*}

This fact shows that the geometry of a regular LVMB manifold and its toric base-space are deeply intertwined. In particular, each torus bundle \(F_j\) in which \(\pi : N \to X\) splits, determines a divisor on the base and its torus-invariant components can be read in a left-inverse of the matrix
\begin{equation*}
    \begin{bmatrix}
        \vertbar &  & \vertbar\\
        \iota^\vee(f_1^*) &\cdots &\iota^\vee(f_{2m}^*)\\
        \vertbar & &\vertbar
    \end{bmatrix}.
\end{equation*}

\subsection{Chern classes of regular LVMB manifolds}
\label{Chern_classes_LVMB}
For a compact Kähler manifold, the vanishing of its first Chern class is equivalent to existence of a Kähler Ricci-flat metric on it \cite{Yau1978}. Kähler manifolds admitting such metrics are called Calabi--Yau manifolds. Geometrically, they are characterized by the existence of an étale covering with trivial canonical bundle and, in particular, their holonomy group reduces to \(\SU(n)\).

In the non-Kähler setting, things become more involved as the vanishing of the first Chern class no longer implies the holomorphic triviality of the canonical bundle: this is  what happens for Calabi--Eckmann manifolds, for example. Indeed, whenever the \(\del\delbar\)-lemma fails, like in many non-Kähler manifolds, the role of the de Rham cohomology is played by other two cohomologies: the Bott--Chern cohomology, and the Aeppli cohomology. If \((\mathcal{A}_M^{p,q},\del,\delbar)\) is the bidifferential bicomplex of \(\C\)-valued smooth forms on \(M\), they are respectively defined in bidegree \((p,q)\) as
\begin{equation*}
    H^{p,q}_\text{BC}(M) = \frac{\ker \del \cap \ker \delbar \cap \mathcal{A}_M^{p,q}}{\Ima(\del\delbar) \cap \mathcal{A}_M^{p,q}}, \qquad H^{p,q}_{\text{Ae}}(M) = \frac{\ker \del\delbar \cap \mathcal{A}_M^{p,q}}{(\Ima \del + \Ima\delbar) \cap \mathcal{A}_M^{p,q}}.
\end{equation*}

On the other hand, a non-Kähler manifold also admits multiple Hermitian connections; among these, a distinguished role is played by the \emph{Chern connection} and the \emph{first canonical connection} \cite{LiuYang2017}. %
Combining these notions, it is possible to generalize the topological first Chern class of a holomorphic line bundle \(L \to M\) so that some of the good properties of the Kähler case are retained. Namely, for any Hermitian metric \(h\) on \(L\), the curvature form \(\Theta^h = -i \del\delbar \log h\) of the Chern connection is \(\del\) and \(\delbar\)-closed, hence it defines three different first Chern classes: the \emph{first Bott--Chern class} \(c_1^\text{BC}(L) \in H^{1,1}_\text{BC}(M)\), the \emph{first Aeppli--Chern class} \(c_1^\text{Ae}(L) \in H^{1,1}_\text{Ae}(M)\) and the (topological) first Chern class \(c_1(L) \in H^2_\text{dR}(M)\). As these are represented by the same real \((1,1)\)-form, they are mapped onto each other by the natural morphisms
\begin{equation*}
    \begin{tikzcd}[row sep = tiny]
        H^{1,1}_\text{BC}(M) \ar[r] & H^2_\text{dR}(M;\C) \ar[r] & H^{1,1}_\text{Ae}(M)\\
        c_1^\text{BC}(L) \ar[r,mapsto] & c_1(L) \ar[r,mapsto] & c_1^\text{Ae}(L) 
    \end{tikzcd}
\end{equation*}
If \(\mathcal{K}^\inv_M\) denotes the anticanonical bundle of \(M\), we define \(c_1^\diamond(M)\) as \(c_1^\diamond(\mathcal{K}^\inv_M)\), where \(\diamond\) is a placeholder used to denote either “BC'' or “Ae'' superscripts.

The first Bott--Chern class appears in \cite{Tosatti2015} where the author proves that its vanishing is equivalent to the existence of a Chern Ricci-flat metric; this gives a sort of weak non-Kähler version of the Calabi--Yau theorem. On the other hand, \(c_1^\text{Ae}\) is represented by the first Ricci form of the first canonical connection \cite{LiuYang2017}, hence Levi--Civita Ricci-flat metrics exist only when this class vanishes. However, no analogue of the Calabi--Yau Theorem exists for these metrics.

This motivates our interest in computing these classes for LVMB manifolds. In the following, we use the characteristic class \(\gamma(\pi)\) to obtain their explicit expressions.

\subsubsection{The tangent bundle of regular LVMB manifolds}
To achieve this goal, we first need to investigate the structure of the holomorphic tangent bundle \(T^{1,0}N\). As \(N\) is the total space of \(\pi : N \to X_\Sigma\), the distribution \(T^{1,0}\mathcal{F}\) generated by the holomorphic vector fields tangent to the fibers defines a holomorphic sub-bundle of \(T^{1,0}N \to N\). In particular, it fits into the short exact sequence of holomorphic vector bundles over \(N\):
\begin{equation}
    \label{equation:tangentBundleExactSequence}
    \begin{tikzcd}
        0 \ar[r] & T^{1,0}\mathcal{F}\ar[r] & T^{1,0}N \ar[r, "d\pi"]& \pi^* T^{1,0}X_\Sigma  \ar[r] & 0.
    \end{tikzcd}    
\end{equation}
We can observe that \(T^{1,0}\mathcal{F}\) is trivialized as a complex vector bundle by the fundamental vector fields of \(\pi : N \to X_\Sigma\). Since the principal action of the torus defining the bundle is holomorphic, such vector fields are holomorphic as well. Hence, \(T^{1,0}\mathcal{F} \iso \underline{\C}^m\) also in the holomorphic category. This has two main consequences, the first of which is discussed in the following:

\begin{proposition}
    \label{Chern_Class_vanish}
    Let \(N\) be a regular LVMB obtained from a triangulated vector configuration \((V,\calT)\) in \(\R^d\); then the topological \(j\)\th Chern class \(c_j(T^{1,0}N) = 0\) for \(j > d\).
\end{proposition}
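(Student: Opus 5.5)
The plan is to use the short exact sequence \labelcref{equation:tangentBundleExactSequence} together with the holomorphic triviality \(T^{1,0}\mathcal{F} \iso \underline{\C}^m\) just established. For topological Chern classes a short exact sequence of complex vector bundles splits smoothly, so \(T^{1,0}N \iso \underline{\C}^m \oplus \pi^*T^{1,0}X_\Sigma\) as complex vector bundles. By the Whitney sum formula and the triviality of \(\underline{\C}^m\), the total Chern class satisfies
\begin{equation*}
    c(T^{1,0}N) = c(\underline{\C}^m)\, c(\pi^*T^{1,0}X_\Sigma) = \pi^* c(T^{1,0}X_\Sigma).
\end{equation*}
Hence \(c_j(T^{1,0}N) = \pi^* c_j(T^{1,0}X_\Sigma)\) for every \(j\).

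It then remains to show that \(c_j(T^{1,0}X_\Sigma) = 0\) for \(j > d\). Since \((V,\calT)\) lives in \(\R^d\), the fan \(\Sigma\) is complete in \(\R^d\). Therefore \(X_\Sigma\) is a compact toric manifold of complex dimension \(d\), and \(T^{1,0}X_\Sigma\) is a complex vector bundle of rank \(d\). Its Chern classes \(c_j\) vanish for \(j > d\) for rank reasons; equivalently, \(H^{2j}(X_\Sigma;\Z) = 0\) for \(2j > 2d\), since \(X_\Sigma\) is a real \(2d\)-dimensional manifold. Pulling back by \(\pi\) gives \(c_j(T^{1,0}N) = 0\) for \(j > d\).

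There is no real obstacle. The only points needing care are that the smooth splitting of \labelcref{equation:tangentBundleExactSequence} is legitimate for topological Chern classes (a Hermitian metric gives the complement), and that pullback commutes with Chern classes, which is naturality. One might also mention the alternative description via the toric Euler sequence \(c(T^{1,0}X_\Sigma) = \prod_i (1 + w_i)\) from \cref{ClassesInTheFan}. However, the rank and dimension argument already suffices and does not require the Stanley--Reisner relations.
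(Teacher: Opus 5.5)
Your proposal is correct and is essentially the paper's own argument. The paper also splits the sequence \labelcref{equation:tangentBundleExactSequence} smoothly, uses the Whitney sum formula with the trivial factor \(\underline{\C}^m\) to get \(c(T^{1,0}N)=\pi^*c(T^{1,0}X_\Sigma)\), and concludes from \(\rk_\C T^{1,0}X_\Sigma=\dim_\C X_\Sigma=d\).
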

\begin{proof}
    The exact sequence \labelcref{equation:tangentBundleExactSequence} splits in the smooth category, hence \(T^{1,0} N \iso \pi^*T^{1,0}X \oplus \underline{\C}^m\) as complex vector bundles. By the total Chern class formula for Whitney sums we have \(c(N) = \pi^*c(X_\Sigma)\). Since \(\dim_{\C}X_\Sigma = \rk_{\C} T^{1,0}X_\Sigma = d\), we conclude \(c_j(N) = 0\) for \(j > d\).
\end{proof}

Notice that the splitting \(T^{1,0} N \iso \pi^*T^{1,0}X \oplus \underline{\C}^m\) in general fails in the holomorphic category. However, as \(T^{1,0}\mathcal{F}\) is holomorphically trivial, the exact sequence \labelcref{equation:tangentBundleExactSequence} induces the holomorphic isomorphism \(\mathcal{K}_N \iso \pi^* \mathcal{K}_{X_\Sigma}\) between the corresponding canonical bundles. Thus, if \(\pi^*_\diamond : H^{1,1}_\diamond(X_\Sigma) \to H^{1,1}_\diamond(N)\) denotes the map induced in cohomology, we have \(c_1^\diamond(N) = \pi^*_\diamond c_1^\diamond(X_\Sigma)\).

Since \(X_\Sigma\) is smooth toric, it is a Moishezon manifold \cite[Prop.13.1.2]{CoxLittleSchenck2011}, a fortiori it fulfils the \(\del\delbar\)-lemma. Therefore, we have canonical isomorphism among the complex cohomologies \(H^{p,q}_{\delbar}(X_\Sigma) \iso H^{p,q}_{\text{BC}}(X_\Sigma) \iso  H^{p,q}_{\text{Ae}}(X_\Sigma) \), and \(H^{j}_\text{dR}(X_\Sigma) \iso \bigoplus_{p+q = j} H^{p,q}_{\delbar}(X_\Sigma)\). Under this identification, all the first Chern classes \(c_1^\diamond(X_\Sigma)\) we defined coincide; thus, there is no ambiguity in writing \(c_1(X_\Sigma)\) to denote the first Chern class in any of the above cohomologies.

Toric geometry can be exploited once again to compute combinatorially the Chern classes of a toric manifold. If \(w_1, \dots, w_s\) are the Poincaré-dual classes associated to the invariant divisors of a smooth toric manifold \(X_\Sigma\), \(c(X_\Sigma) = c(T^{1,0}X_\Sigma) = \prod_{j=1}^s (1 + w_j)\) \cite[prop. 13.1.2]{CoxLittleSchenck2011}, therefore,
\begin{equation*}
    c_1^\diamond(N) = \pi^*_\diamond c_1(X) = \pi^*_\diamond\sum_{j=1}^s w_j.
\end{equation*}

This means that the problem of computing \(c_1^\diamond(N)\) shifts to describing \(\pi^*_\diamond\). As this is the pullback of a bundle map, \(\pi^*_\text{dR}\) and \(\pi^*_{\delbar}\) arise from  Leray--Serre and Borel spectral sequences respectively: the first converges to de Rham cohomology, while the second to Dolbeault. Even though we have no spectral sequence directly computing Bott--Chern nor Aeppli cohomologies, we can infer the behaviour of \(\pi^*_\text{BC}\) and \(\pi^*_\text{Ae}\) on classes of type \((1,1)\) from \(\pi^*_{\delbar}\).

Before addressing this, we need the following well-known fact regarding the Chern class of smooth toric varieties.
\begin{lemma}
    \label{ToricCY}
    Let \(X_\Sigma\) be a smooth compact toric variety associated to a smooth complete simplicial fan \(\Sigma\). Then, \(c_1(X_\Sigma) = 0\) if and only if \(X_\Sigma = \{\cdot\}\).
\end{lemma}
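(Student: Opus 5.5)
The direction "if" is trivial: a point has zero first Chern class. For "only if", the plan is to show that a smooth complete toric variety of positive dimension \(d\geq 1\) always has \(c_1(X_\Sigma)\neq 0\). The natural tool is the combinatorial formula recalled above, \(c_1(X_\Sigma)=\sum_{j=1}^s w_j\), where \(w_j=\PD(D_j)\) are the classes of the torus-invariant divisors. In \(\operatorname{Pic}(X_\Sigma)\iso H^2(X_\Sigma;\Z)\), this is the class of the anticanonical divisor \(-K_{X_\Sigma}=\sum_j D_j\). It therefore suffices to show that \(\sum_j D_j\) is not linearly equivalent to zero.

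I would argue by contradiction. Suppose \(\sum_j D_j\sim 0\). By the presentation of \(H^2\) (the relations in \(\mathcal{I}\) are exactly the principal torus-invariant divisors \(\sum_i \langle u,v_i\rangle D_i\), \(u\in(\Z^d)^\vee\)), there exists \(u\in(\Z^d)^\vee\) with
\begin{equation*}
    \langle u, v_i\rangle = 1 \quad\text{for all } i=1,\dots,s.
\end{equation*}
Here we use that the \(w_j\) satisfy no further linear relations beyond \(\mathcal{I}\). This follows from the exactness of \(0\to (\Z^d)^\vee\xrightarrow{p^\vee}(\Z^s)^\vee\xrightarrow{\kappa^\vee}K_\Sigma^\vee\to 0\) together with \(K_\Sigma^\vee\iso H^2(X_\Sigma;\Z)\); the Stanley–Reisner ideal \(\mathcal{J}\) lives in degree \(\ge 4\), so it does not interfere in degree \(2\).

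Next I would invoke completeness of \(\Sigma\). Since \(|\Sigma|=\R^d\) and every cone is generated by some of the \(v_i\), the rays positively span \(\R^d\). In particular, for \(d\geq1\) the vector \(-\sum_i v_i\) lies in some cone, so there are nonnegative coefficients \(c_i\ge 0\), not all zero, with \(\sum_i c_i v_i=0\). A simpler route is to pick any ray \(v_1\) and write \(-v_1=\sum_{i\in\tau}a_iv_i\) with \(a_i\geq 0\) for some cone \(\tau\). Applying \(u\) to this relation gives \(-1=\sum a_i\ge 0\), which is a contradiction. Hence \(c_1(X_\Sigma)\neq 0\) whenever \(s\geq1\), that is, whenever \(d\geq 1\).

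The main (mild) obstacle is justifying that vanishing in \(H^2(X_\Sigma;\Z)\) is equivalent to the existence of such a \(u\). One must also check whether the paper intends integral or rational/de Rham vanishing. In the rational case, the same argument works with \(u\in(\Q^d)^\vee\), using \(H^2(X_\Sigma;\Z)\) torsion-free and the sequence tensored with \(\Q\). This is exactly the content of the identification of \(K_\Sigma^\vee\) with \(H^2(X_\Sigma;\Z)\) and of \(\Ima p^\vee=\mathcal{I}\) established in \cref{ClassesInTheFan}.
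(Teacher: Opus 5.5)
Your proposal is correct and follows essentially the same route as the paper. Both identify \(c_1(X_\Sigma)=\sum_i w_i\) with the anticanonical class, reduce its vanishing to the existence of a functional \(u\) with \(\langle u,v_i\rangle=1\) for every ray, and contradict completeness; the paper says all rays would lie in an open half-space, while you make this explicit via \(-v_1=\sum_{i\in\tau}a_iv_i\) with \(a_i\ge 0\), and your remark that the degree-\(2\) relations among the \(w_i\) are exactly \(\mathcal{I}\) is slightly more careful than the paper's claim that the \(D_i\) form a basis of the class group.
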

\begin{proof}
    Let \(d\) be the dimension of the ambient space of \(\Sigma\); when considering a rational fan \(\Sigma\), we are implicitly assuming that \(\R^d\) is endowed with a full rank lattice \(L\) with respect to which \(\Sigma\) is rational.

    Let \(\{v_i\}_i\) be the primitive generators of the rays in \(\Sigma^{(1)}\) and \(\{D_i\}_i\) the corresponding torus-invariant divisors. Because of the expression of \(c_1(X_\Sigma)\), this vanishes if and only if \(\sum_i D_i\) is linearly equivalent to a principal divisor. As we discussed in \cref{ClassesInTheFan}, such divisors correspond to linear functionals in \((\Z^s)^\vee\) via the map there defined as \(p^\vee\). Hence, any principal divisor can be written as \(\sum_i \alpha(v_i) D_i\) with \(\alpha \in (\Z^s)^\vee\). As \(\{D_i\}_i\) is a basis for the class group, \(\sum_i D_i = \sum_i \alpha(v_i) D_i\) holds if and only if \(\alpha(v_i) = 1\) for any \(i\). This forces \(\{v_i\}_i\) to lie in the same half-space. On the other hand, since \(X_\Sigma\) is compact if and only if \(\Sigma\) is complete, \(\{v_i\}_i\) must span \(\R^d\) with positive linear combinations. These two conditions force \(\Sigma\) to be the trivial fan: correspondingly \(X_\Sigma = \{\cdot\}\).
\end{proof}

\subsubsection{The Leray--Serre spectral sequence of regular LVMB manifolds}
\label{LeraySerre}
The Leray--Serre spectral sequence for a principal \(G\)-bundle \(G \hookrightarrow P \to X\) converges to the singular cohomology of the total space at its third page. Whenever the monodromy action of \(\pi_1(X)\) on \(G\) is trivial, its second page is given by
\begin{equation*}
    E_{2}^{p,q} = H^p (X; {H}^q(G;\Z)) \iso H^p (X;\Z) \otimes  H^q(G;\Z).
\end{equation*}
This is always the case for regular LVMB manifolds as in this case the base is a compact toric manifold which is always simply connected.

In general, the differentials \(d_2^{p,q} : E_2^{p,q} \to E_2^{p+2,q-1}\) are not explicit, but in case of torus bundles they are: indeed the characteristic class \(\gamma(\pi) \in H^2(X;\Z) \otimes H^1(\realTorus{};\Z)^\vee\) coincides with \(d_2^{0,1}\) in the Leray--Serre spectral sequence \cite{Hofer1993}. Moreover, since \(H^*(\realTorus{};\Z)\) is generated in degree 1 and the differentials are compatible with wedge products, any other differential can be rewritten by means of \(d_2^{0,1}\).

Therefore, the Leray--Serre spectral sequence for a regular LVMB manifold \(\realTorus{2m}\hookrightarrow N \xrightarrow{\pi} X_\Sigma\) has the following form
\begin{center}
    \includestandalone[width=\textwidth]{pictures/Leray1}
\end{center}
with \(d_2^{0,1} = \gamma(\pi)\) and, for \(p\) even,
\begin{equation*}
    d_2^{p,q} \left(f^*_{j_1} \wedge \dots \wedge f^*_{j_q} \otimes w \right) = \sum_{\ell=1}^q (-1)^\ell \bigwedge_{i \neq \ell} f^*_{j_i} \otimes d_2^{0,1}(f^*_{j_\ell}) \wedge w \in H^{q-1}(\realTorus{2m};\Z) \otimes H^{p+2}(X_\Sigma;\Z).
\end{equation*}
In particular, \(H^1(N;\Z) \iso \ker \gamma(\pi)\). Its rank depends on the number \(k\) of ghosts in the triangulated vector configuration. Indeed, from \cref{them:father}, we see that \(\gamma(\pi)\) is injective if \(k=0\), while if \(k\geq 1\), \(\gamma(\pi)\) is surjective. Since \(\rk H^1(\realTorus{2m};\Z) = 2m\), while \(\rk H^2(X_\Sigma;\Z) = 2m+1-k\), we immediately obtain
\begin{equation}
    \label{H1LVMB}
        \rk H^1(N;\Z) =
    \begin{cases}
        0 & \text{if \(k =0\)}\\
        k-1 & \text{if \(k \geq 1\)}.
    \end{cases}
\end{equation}

On the other hand, \(H^2(N;\Z) \iso \coker d_2^{0,1} \oplus \ker d^{0,2}_2\), therefore \(\pi^* : H^2(X;\Z) \to H^2(N;\Z)\) factors as \(\coker d_2^{0,1}\) and an injection \(\coker d_2^{0,1} \hookrightarrow H^2(N;\Z)\). For any \(h \in H^2(X;\Z)\) let us denote as \([h]_{\pi^*}\) its projection onto \(\coker d_2^{0,1}\), then we obtain the following expression for the first Chern class of a regular LVMB manifold.

\begin{proposition}
    \label{DeRhamFirstChernClass}
    Let \(N\) be a regular LVMB manifold associated to a triangulated vector configuration \((V_\Sigma,\calT_\Sigma)\) with Gale transform \((f_0, \dots, f_{2m})\); denote as \(\pi : N \to X\) the associated torus bundle. Let also \(n\) be the number of vectors in \(V\), \(k\) of which are ghost vectors. Then
    \begin{equation*}
        c_1^\text{dR}(N) =
        \begin{cases}
            n[f_0^*]_{\pi^*} & \text{if \(k=0\)}\\
            0 & \text{if \( k \geq 1\)}
        \end{cases}
    \end{equation*}    
\end{proposition}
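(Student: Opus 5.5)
Since $\mathcal{K}_N \iso \pi^*\mathcal{K}_{X_\Sigma}$ (from the exact sequence \labelcref{equation:tangentBundleExactSequence} and the holomorphic triviality of $T^{1,0}\mathcal{F}$), we have $c_1^\text{dR}(N) = \pi^* c_1(X_\Sigma) = \pi^*\sum_{i} w_i$. The map $\pi^* : H^2(X;\Z)\to H^2(N;\Z)$ factors through $\coker d_2^{0,1} = \coker\gamma(\pi)$, followed by an injection. So everything reduces to identifying the class $\sum_i w_i \in H^2(X_\Sigma;\Z)\iso K_\Sigma^\vee$ and reducing it modulo the image of $\gamma(\pi)$. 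We use \cref{them:father}, viewing $\gamma(\pi)$ as the map $K^\vee \supset \hat K^\vee \to K_\Sigma^\vee$ given by $f_j^* \mapsto \iota^\vee(f_j^*)$ for $j=1,\dots,2m$.

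First we identify $\sum_i w_i$ in $K_\Sigma^\vee$. Under the identification of \cref{ClassesInTheFan}, $w_i$ is the image under $\kappa^\vee$ of the $i$-th coordinate functional $e_i^*$ on $\Z^s$ (non-ghost indices). Hence $\sum_i w_i = \kappa^\vee(e_1^*+\dots+e_s^*)$, which is the functional $h\mapsto \sum_i h_i$ on $K_\Sigma$, i.e.\ the sum of coordinates of a relation among non-ghost vectors. Next we compare with $f_0 = [1,\dots,1]$. For any $\kappa\in K$, expanding $\kappa = \sum_{j} c_j f_j$ gives $f_0^*(\kappa)=c_0$. The key claim is that the coordinate-sum functional equals $\sum_{\ell} \kappa_\ell = n\,c_0 + \sum_{j\ge1} c_j \sum_\ell f_j^\ell$. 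This relation is what we need to control.

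\emph{Case $k=0$.} Here $\iota=\id$ and $K=K_\Sigma$. The class $\sum_i w_i$ is the functional $\sigma:\kappa\mapsto\sum_\ell \kappa_\ell$, and $\sigma = n f_0^* + \sum_{j\ge1}\sigma(f_j) f_j^*$. The terms with $j\ge 1$ lie in the image of $\gamma(\pi)$, since $\gamma(\pi)(f_j^*) = f_j^*$ for $j\ge1$ by \cref{them:father}. Therefore $[\sigma]_{\pi^*} = n[f_0^*]_{\pi^*}$, as claimed. The coefficient $\sigma(f_0)=n$ is just the count of vectors, since all are non-ghost.

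\emph{Case $k\ge 1$.} Here the Leray--Serre discussion shows that $\gamma(\pi)$ is surjective onto $K_\Sigma^\vee$, because $\iota^\vee$ is surjective and $f_0$ does not lie in $\iota(K_\Sigma)$ for a good configuration. Hence $\coker d_2^{0,1}=0$, and $\pi^*$ vanishes on all of $H^2(X;\Z)$. In particular $c_1^\text{dR}(N)=0$.

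The main obstacle is justifying surjectivity cleanly for an arbitrary (not necessarily good) Gale dual configuration. I would handle it by passing to a good one via \cref{goodGaleDualExistence} and the invariance in \cref{them:father}. For a good configuration, $\iota^\vee(f_j^*)$ for $j\ge k$ is exactly the dual basis $h_{j-k+1}^*$ of $K_\Sigma^\vee$, which makes surjectivity manifest. Similarly, in the $k=0$ case the reduction modulo the image of $\gamma$ must be checked to be independent of the choice of basis. That independence follows because the image of $\gamma$ is spanned by $f_1^*,\dots,f_{2m}^*$, and this span is the annihilator of $f_0$, which is canonical since $f_0$ is fixed.
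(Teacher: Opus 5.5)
Your proposal is correct and follows essentially the paper's argument: for $k\ge1$ you use surjectivity of $d_2^{0,1}=\gamma(\pi)$, so that $\coker d_2^{0,1}=0$. For $k=0$ you reduce $c_1(X)=\sum_i w_i$ modulo $\Span_\Z(f_1^*,\dots,f_{2m}^*)$, using that each coordinate functional takes the value $1$ on $f_0=[1,\dots,1]$; the paper does this summand by summand, while you do it for the whole sum at once.

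One small point: the reason ``$f_0\notin\iota(K_\Sigma)$'' would not by itself give surjectivity, since what is needed is that $\Z f_0\oplus\iota(K_\Sigma)$ be a direct summand of $K$. Your fallback to a good configuration, where $\iota^\vee(f_j^*)=h_{j-k+1}^*$ for $j\ge k$, settles this.
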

\begin{proof}
    We already observed that for \(k\geq 1\), the transgression \(d_2^{0,1} = \gamma(\pi)\) is surjective. Thus, in this case \(c_1(N) = 0\) as \(\coker d_2^{0,1} = 0\).

    On the other hand, the case \(k = 0\) occurs if and only if \(\Sigma\) is odd and balanced by itself; in particular, \(K_\Sigma = K\) and \(\{f_0, \dots, f_{2m}\}\) is a basis for \(K_\Sigma\) with \(f_0 = [1,\dots, 1]\). Notice that, for \(j \geq 1\), each \(f_j\) is determined only as \(\Z f_0\)-coset. Indeed, adding integer multiples of \(f_0\) to the other vectors neither affects \(N\) nor its sub-bundles \(F_j\). This also means that, when considering the dual basis, any choice of a representative for \(f_j + \Z f_0\), induces the same linear functional \(f_j^* \in K^\vee\).
    
    Therefore, since \(\Ima d_2^{0,1} = \Span_\Z(f_1^*,\dots,f_{2m}^*) \subset H^2(X;\Z)\),
    \begin{equation}
        \label{cokerRepresentatives}
        \coker d_2^{0,1} = \frac{H^2(X;\Z)}{\Span_\Z(f_1^*, \dots, f_{2m}^*)} \iso \frac{\Span_\Z(w_1,\dots,w_n)}{\mathcal{I} + \Span_\Z(r^\vee(f_1^*), \dots, r^\vee(f_{2m}^*))}
    \end{equation}
    where \(r\) is any retraction for the inclusion \(\kappa : K \hookrightarrow \Z^n\). As discussed in \cref{ClassesInTheFan}, any two choices differ for an element in \(\mathcal{I}\).

    To compute \(c_1(N)\) it is sufficient to project \(c_1(X) = \sum_{j=1}^n w_j\) onto \(\coker d_2^{0,1}\). This can be done observing that \(\kappa^\vee(w_j)(f_0) = 1\) for any \(j\), hence \([\kappa^\vee (w_j)]_{\pi^*} = [f_0^*]_{\pi^*}\). By linearity,
    \begin{equation*}
        c_1(N) = [\kappa^\vee(c_1(X))]_{\pi^*} = n[f_0^*]_{\pi^*}. \qedhere
    \end{equation*}
\end{proof}
The isomorphism in \cref{cokerRepresentatives} can be exploited to write \(c_1(N)\) with respect to a basis of torus-invariant divisors of \(X\).

From this description of the transgression map \(d_2^{0,1}\) we also deduce the following result.
\begin{corollary}
    \label{trivial_pullback}
    Let \(N\) be an LVMB manifold associated to a triangulated vector configuration containing at least one ghost vector, then \(\pi^*_\text{dR}(w) = 0\) for any \(w \in H^2(X_\Sigma;\Z)\). In particular, the pullback to \(N\) of the Poincaré-dual class of any torus-invariant divisor is trivial in de Rham cohomology.
\end{corollary}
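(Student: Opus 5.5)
The plan is to read this off from the Leray--Serre spectral sequence of \(\realTorus{2m}\hookrightarrow N \xrightarrow{\pi} X_\Sigma\) described in \cref{LeraySerre}. With integral coefficients the pullback \(\pi^* : H^2(X_\Sigma;\Z) \to H^2(N;\Z)\) is the edge map. Since \(H^1(X_\Sigma;\Z)=0\) (compact toric manifolds are simply connected), \(E_2^{2,0}=H^2(X_\Sigma;\Z)\). The only differential hitting this group is the transgression \(d_2^{0,1}\), and no later differential reaches it: \(d_3\) would come from \(E_3^{-1,2}=0\). Hence \(E_\infty^{2,0} = \coker d_2^{0,1}\). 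As already observed before \cref{DeRhamFirstChernClass}, \(\pi^*\) factors through the projection \(h \mapsto [h]_{\pi^*}\) onto \(\coker d_2^{0,1}\), followed by an injection. In particular \(\pi^*\) vanishes identically as soon as \(d_2^{0,1}\) is surjective.

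The key step is therefore to prove that \(d_2^{0,1} = \gamma(\pi)\) is surjective onto \(H^2(X_\Sigma;\Z)\) when \(k\geq 1\), which was already asserted there. I would verify it directly from \cref{lemma:charClassGhosts}, which \cref{them:father} allows us to use with a good Gale dual configuration. By \cref{goodGaleDualExistence} such a configuration exists, of the form \((f_0,\dots,f_{k-1},\iota(h_1),\dots,\iota(h_{2m-k+1}))\) with \(\{h_j\}\) a basis of \(K_\Sigma\). Then
\[
\gamma(\pi)=\sum_{j=1}^{2m-k+1} h_j^*\otimes\iota(h_j).
\]
Viewed as a map \(H^1(\realTorus{2m};\Z)\iso \hat K^\vee \to K_\Sigma^\vee \iso H^2(X_\Sigma;\Z)\), it sends the dual element \(\iota(h_j)^*\) to \(h_j^*\). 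Since \(\{h_j^*\}\) is a \(\Z\)-basis of \(K_\Sigma^\vee\), the image is everything, so \(\coker d_2^{0,1}=0\).

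The main point requiring care is the integrality. We need \(\iota(h_j)^*\) to be a genuine integral class, which holds because the good configuration is a \(\Z\)-basis of \(K=\Z f_0\oplus \hat K\). We also need \(K_\Sigma^\vee\iso H^2(X_\Sigma;\Z)\) integrally, which is the identification recalled in \cref{bundles_over_toric_manifolds} via the Picard group and Demazure vanishing. Granting these, \(\pi^*(w)=0\) already in \(H^2(N;\Z)\), hence in de Rham cohomology after tensoring with \(\R\) or \(\C\). (Over \(\R\) the argument is even simpler, as surjectivity only needs the rational statement.)

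Finally, the Poincaré duals \(w_j=\PD(D_j)\) of the torus-invariant divisors lie in \(H^2(X_\Sigma;\Z)\), as recalled in \cref{ClassesInTheFan}. So the last sentence of the statement is the special case \(w=w_j\).
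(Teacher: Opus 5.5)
Your proposal is correct and follows the paper's route. The paper also deduces the corollary from the fact that \(\pi^*\) on \(H^2\) factors through \(\coker d_2^{0,1}\), and that \(d_2^{0,1}=\gamma(\pi)\) is surjective when \(k\geq 1\); your explicit check of surjectivity via the good Gale dual form of \cref{lemma:charClassGhosts} simply spells out what the paper asserts as a consequence of \cref{them:father}.
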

\subsubsection{Example}
\label{BalancedLVM}
Consider the vector configuration in \(\R^3\) given by \(V = (e_1, -e_1, e_2, -e_2, e_3, -e_3)\), together with the triangulation \(\calT\) given by the octants, namely the one whose maximal cones are
\begin{equation*}
    \big\{ (135), (136), (145), (146), (235), (236), (245), (246)\big\};
\end{equation*}
in this case, \(n = 6\), \(d = 3\) and \(m = 1\). In particular, \((V,\calT)\) is odd and balanced, so we can construct a corresponding LVMB manifold without adding ghost vectors. To this end, we consider the following Gale dual configuration
\begin{equation*}
    M = 
    \begin{bmatrix}
        \vertbar &\vertbar& \vertbar\\
        f_0 & f_1 & f_2\\
        \vertbar &\vertbar& \vertbar
    \end{bmatrix} =
    \begin{bmatrix}
        1 & 1 & 0\\
        1 & 1 & 0\\
        1 & 0 & 1\\
        1 & 0 & 1\\
        1 & 0 & 0\\
        1 & 0 & 0
    \end{bmatrix},
\end{equation*}
whose corresponding point configuration in \(\mathbb{A}^1_\C\) is \(\Lambda = (1,1,i,i,0,0)\). Notice that this configuration is regular, so it determines a smooth fan \(\Sigma\) and a holomorphic torus bundle \(\pi : N_3 \to X\) where \(X = \CP^1 \times \CP^1 \times \CP^1\).

To compute its first Chern class, it is sufficient to find a left-inverse for \(M\) whose rows are the coordinates of \(f_j^*\) with respect to the basis of torus invariant divisors of \(\Sigma\). A different choice for such a left-inverse corresponds to a different choice of the retraction \(r : \Z^6 \to K\) in the construction, but as we proved in \cref{ClassesInTheFan}, any expression obtained this way represents the same cohomology class. Therefore,
\begin{equation*}
    \begin{bmatrix}
        \vertbar &\vertbar& \vertbar\\
        f_0^* & f_1^* & f_2^*\\
        \vertbar &\vertbar& \vertbar
    \end{bmatrix} =
    \begin{bmatrix}
        0 & 1 & 0\\
        0 & 0 & 0\\
        0 & 0 & 1\\
        0 & 0 & 0\\
        1 & -1 & -1\\
        0 & 0 & 0
    \end{bmatrix} =
    \begin{bmatrix}
        \vertbar & \vertbar & \vertbar\\
        w_5 & w_1 - w_5 & w_3 - w_5\\
        \vertbar & \vertbar & \vertbar
    \end{bmatrix}.
\end{equation*}
Thus, \(c_1(N) = 6[w_5]_{\pi^*}\). Notice that \([w_5]_{\pi^*}\) is a generator of
\begin{equation*}
    \frac{\Span_\Z(w_1,\dots, w_6)}{\Span_\Z(w_1-w_2, w_3-w_4,w_5-w_6) + \Span_\Z(w_1-w_5, w_3-w_5)} \iso \Z.
\end{equation*}

We can recognize \(N_3\) as a holomorphic fiber bundle over \(\CP^1\) with the Calabi--Eckmann manifold \(M_{1,1;i}\) as fiber. Indeed, the projection
\begin{align*}
    \psi : N_3 &\longrightarrow \CP^1\\
    [z_1: \dots:z_6] &\longmapsto (z_5:z_6)
\end{align*}
is well-defined as \(N_3\) is a quotient of \(U(\calT) = (\C^2\setminus\{0\})^3\), and it is locally trivialized by the maps
\begin{align*}
    f_0 : U_0 \times M_{1,1;i} &\longrightarrow \psi^\inv(U_0)\\
    ((u_0:u_1),[z_1,z_2,z_3,z_4]_{M_{1,1;i}}) &\longmapsto \left(z_1:z_2:z_3:z_4:1:\frac{u_1}{u_0}\right)\\
    f_1 : U_1 \times M_{1,1;i} &\longrightarrow \psi^\inv(U_1)\\
    ((u_0:u_1),[z_1,z_2,z_3,z_4]_{M_{1,1;i}}) &\longmapsto \left(z_1:z_2:z_3:z_4:\frac{u_0}{u_1}:1\right).
\end{align*}
On \(U_0 \cap U_1\), the composition \(f_1^\inv \circ f_0\) gives the clutching function
\begin{align*}
    (U_0 \cap U_1) \times M_{1,1;i} &\longrightarrow (U_0 \cap U_1) \times M_{1,1;i}\\
    ((u_0:u_1),[z_1,z_2,z_3,z_4]_{M_{1,1;i}}) &\longmapsto \left((u_0:u_1),\left[\frac{u_0}{u_1} z_1,\frac{u_0}{u_1}z_2,\frac{u_0}{u_1}z_3,\frac{u_0}{u_1}z_4\right]_{M_{1,1;i}}\right)
\end{align*}
which is clearly holomorphic.

Such bundle is not holomorphically trivial. Indeed, since \(b_2(M_{1,1;i}) = 0\), \(c_1(\CP^1 \times M_{1,1;i}) = \pi^* c_1(\CP^1) = \pi^*(2w)\) where \(w\) is the hyperplane class of \(\CP^1\). On the other hand, \(c_1(N_3) = 6 [w_5]_{\pi^*}\). Thus, \(N_3\) cannot be a product in the holomorphic category as the subgroup \(c_1(N_3)\Z \subset H^2(N_3;\Z)\) has index 6, while, by the  Künneth Formula, \(\pi^*(2w)\Z \subset H^2(\CP^1 \times M_{1,1;i};\Z)\) has index 2.

\subsubsection{The Borel spectral sequence of a regular LVMB manifold}
\label{Borel_spectral_sequence}
As announced before, computations of Dolbeault cohomology can be performed via the Borel Spectral Sequence, the main reference for this tool is \cite[Appendix 2]{Hirzebruch1978}. For the holomorphic LVMB bundle \(\cpxTorus{m} \hookrightarrow N \xrightarrow{\pi} X_\Sigma \), its second page consists of 4-graded objects \(\fourDegreed{E}{p,q}{u,v}{2}\) where \((u,v)\) are respectively the base-index and the fiber-index, while \((p,q)\) is the type bidegree. All terms with \(p+q \neq u+v\) vanish, while the others are given by
\begin{equation*}
    \fourDegreed{E}{p,q}{u,v}{2} = \sum_{i \geq 0} H_{\delbar}^{i, u-i}(X) \otimes H_{\delbar}^{p-i, v-p+1}(\cpxTorus{m}).
\end{equation*}
In \cite[Thm.6.3]{Hofer1993}, the author shows that, whenever the base space has the Hodge decomposition, the differential \(d_2^{0,1} : \fourDegreed{E}{*,*}{0,1}{2} \to \fourDegreed{E}{*,*}{2,0}{2}\) is the restriction of the complexification of \(\gamma(\pi)\) to the subspace of classes of type \((1,0)\). All the other maps can be obtained from \(d_2^{0,1}\) as extensions over the tensor product. This is always the case for toric manifolds, being Moishezon \cite[Thm.6.1.18]{CoxLittleSchenck2011}, they fulfil the \(\del\delbar\)-lemma, a fortiori the Hodge decomposition holds.

Since \(H^{*,*}_{\delbar}(X)\) is generated in bidegree \((1,1)\), in our case Höfer's decomposition reduces to:
\begin{equation*}
    \gamma(\pi)(a) = \fourDegreed{d}{1,0}{0,1}{2}(a^{1,0}) + \overline{\fourDegreed{d}{1,0}{0,1}{2}(\overline{a^{1,0}})} = \fourDegreed{d}{1,0}{0,1}{2}(a^{1,0}) + \overline{\fourDegreed{d}{1,0}{0,1}{2}(a^{0,1})}
\end{equation*}
where \(a \in H^1_\text{dR}(\cpxTorus{m}) \otimes \C\) and \(a^{1,0}, a^{0,1}\) are the projections on \(H^{1,0}_\delbar(\cpxTorus{m}), H^{0,1}_\delbar(\cpxTorus{m})\) respectively. 

In the construction of LVMB manifolds, we observed that the complex structure of the fibers (and so of the manifold \(N\)), depends on the choice of the period matrix \(\Pi\); thus, we need to take this into account when restricting \(\gamma(\pi)\) to \(H^{1,0}_\delbar(\cpxTorus{m})\) and \(H^{0,1}_\delbar(\cpxTorus{m})\). We denote these restrictions as \(\gamma(\pi)^{1,0}\) and \(\gamma(\pi)^{0,1} = \overline{\gamma(\pi)^{1,0}(\overline{\,\cdot\,})}\) respectively. Under the same notations of \cref{section:construction}, for any Gale dual configuration \(\{f_0, \dots, f_{2m}\}\), the lattice \(\hat K = \Span(f_1,\dots, f_{2m})\) identifies canonically with \(H^1(\cpxTorus{2m};\Z)^\vee\). Therefore, for the standard period matrix \(\Pi_\text{std}\), we have \(H^{1,0}_\delbar(\cpxTorus{2m})^\vee\) and \(H^{0,1}_\delbar(\cpxTorus{2m})^\vee\) respectively spanned by
\begin{equation*}
    \left\{\eta_j = \frac{1}{2}(f_j - i f_{j+m})\right\}_{j=1, \dots, m}, \quad \text{and} \quad \left\{\bar\eta_j = \frac{1}{2}(f_j + i f_{j+m})\right\}_{j=1, \dots, m}.
\end{equation*}
Similar expressions can be obtained for a different choice of \(\Pi\), however, as we are not making assumptions on the Gale dual configuration, because of \cref{ThreeOutOfFour}, we can assume \(\Pi = \Pi_\text{std}\). Under the identifications above, we are able to provide a formula for the \((1,0)\) part of the characteristic class \(\gamma(\pi)\). Of course, as it is real, a conjugation will provide the \((0,1)\) component. Namely, denoting as \(\tilde f_j^* \coloneqq \iota^\vee(f_j^*)\) the projection onto \(K_\Sigma^\vee\) of the dual basis of the Gale dual configuration, we have
\begin{equation*}
    \label{DolbeaultCharacteristicClass}
    \gamma(\pi)^{1,0} = \sum_{j=1}^{m} \left(c_1(\pi_j) + i c_1(\pi_{j+m})\right) \otimes \eta_j = \sum_{j=1}^{m} (\tilde f_{j}^* + i \tilde f_{j+m}^*) \otimes \eta_j.
\end{equation*}
\begin{note}
    As we anticipated in \cref{anticipation}, \(\gamma(\pi)^{1,0}\) changes accordingly with the combined choices of Gale duality and period matrix. Indeed, for any period matrix \(\Pi\), one has \(\gamma(\pi)^{1,0} = \sum_{j=1}^m \varsigma_j \otimes \eta_j\), with
    \begin{equation*}
        \begin{bmatrix}
            \vertbar & & \vertbar\\
            \eta_1 &\cdots& \eta_m\\
            \vertbar & & \vertbar
        \end{bmatrix}
        = \frac{1}{2}
        \begin{bmatrix}
            \vertbar & & \vertbar\\
            f_1 &\cdots& f_{2m}\\
            \vertbar & & \vertbar
        \end{bmatrix}
        \trans{\overline{\Pi}}
    ,\qquad
    [\varsigma_1, \cdots, \varsigma_m] = [c_1(\pi_1), \cdots,  c_1(\pi_{2m})] \trans{\Pi}.
    \end{equation*}
\end{note}

Since the base-space is toric, \(E_2^{u,v} = 0\) for \(u\) odd, thus \(E_3 = E_\infty\), and so the isomorphisms
\begin{equation*}
    H^{p,q}_\delbar(N) \iso \sum_{p+q = u+v} \fourDegreed{E}{p,q}{u,v}{3}
\end{equation*}
extend to an isomorphism of graded algebras. These observations lead to the following result, which is the LVMB counterpart of \cite[Thm.5.4]{PanovUstinovsky2012}.
\begin{proposition}
    \label{prop:DolbeaultModel}
    Let \(\pi : N \to X\) be the holomorphic \(\cpxTorus{m}\)-bundle coming with an LVMB manifold \(N\). Then \(H^{*,*}_\delbar(N)\) is quasi-isomorphic to the bigraded differential algebra
    \begin{equation*}
        (\Lambda[\xi_1, \dots, \xi_{m}, \bar\xi_1, \dots, \bar\xi_m] \otimes H^{*,*}_\delbar(X), d)
    \end{equation*}
    where \(\xi_j\) are dual to \(\eta_j\), \(d\xi_j = \gamma(\pi)^{1,0}(\xi_j)\), \(d\bar\xi_j = 0\) and \(d|_{H^{*,*}_\delbar(X)} = 0\).
\end{proposition}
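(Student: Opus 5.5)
The plan is to compare the Borel spectral sequence of $\pi : N \to X$ with the spectral sequence obtained by filtering the candidate model $(\Lambda[\xi,\bar\xi]\otimes H^{*,*}_\delbar(X),d)$ by base degree, and to show that both have the same $E_2$ page, the same differentials, and both collapse at $E_3$.

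\textbf{Step 1: the $E_2$ page.} By the formula for $\fourDegreed{E}{p,q}{u,v}{2}$, and since $H^{*,*}_\delbar(\cpxTorus{m}) \iso \Lambda[\xi_1,\dots,\xi_m,\bar\xi_1,\dots,\bar\xi_m]$ with $\xi_j$ of type $(1,0)$ dual to $\eta_j$ and $\bar\xi_j$ of type $(0,1)$, the second page is $E_2 \iso H^{*,*}_\delbar(X)\otimes \Lambda[\xi,\bar\xi]$ as bigraded algebras. The monodromy is trivial because $X_\Sigma$ is simply connected.

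\textbf{Step 2: the differential $d_2$.} By \cite[Thm.6.3]{Hofer1993}, which applies because $X$ satisfies the $\del\delbar$-lemma, the transgression $\fourDegreed{d}{*,*}{0,1}{2}$ is the restriction of the complexified $\gamma(\pi)$ to $(1,0)$-classes. This gives $d_2\xi_j=\gamma(\pi)^{1,0}(\xi_j)\in H^{1,1}_\delbar(X)$. For the $\bar\xi_j$, which have type $(0,1)$, the differential is forced to land in $\fourDegreed{E}{0,2}{2,0}{2}=H^{0,2}_\delbar(X)$. This group vanishes because $H^{*,*}_\delbar(X)$ is concentrated in bidegrees $(p,p)$, so $d_2\bar\xi_j=0$. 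The differential is zero on the base classes and is a derivation, so $(E_2,d_2)$ is exactly the stated DGA.

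\textbf{Step 3: degeneration and passage to a model.} Since $E_2^{u,v}=0$ for $u$ odd, $d_3$ maps between $u$-parities that differ by an odd amount, so $d_r=0$ for all odd $r$. I then need to argue that $d_r=0$ for even $r\ge4$ as well, so that $E_3=E_\infty$; this is what the paper asserts just before the statement, and I would invoke it directly. It then follows that $H^{*,*}_\delbar(N)\iso H(E_2,d_2)$ as bigraded vector spaces.

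\textbf{The main obstacle.} The delicate point is upgrading this isomorphism to a multiplicative one, i.e.\ resolving extension problems, which is what "quasi-isomorphic" requires. My approach is to build an explicit map of algebras from the model to the Dolbeault complex $\mathcal{A}^{*,*}(N)$. Choose $\delbar$-closed representatives for the base classes, for instance $\pi^*$ of the harmonic $(1,1)$-forms, which are $\del\delbar$-closed on $X$. For each $\xi_j$ choose a $(1,0)$-form $\theta_j$ on $N$ from a principal connection, with $\delbar\theta_j=\pi^*\gamma(\pi)^{1,0}(\xi_j)$. For each $\bar\xi_j$ take the conjugate forms, corrected by a $\delbar$-primitive so that they are $\delbar$-closed; such a primitive exists because $H^{0,2}_\delbar(X)=0$. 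Sending generators to these forms defines a DGA map, which is possible since the target forms are anticommuting odd elements. This map is compatible with the filtrations and induces the identity on $E_2$. By the comparison theorem for spectral sequences it is therefore a quasi-isomorphism, which yields the statement following \cite[Thm.5.4]{PanovUstinovsky2012}.
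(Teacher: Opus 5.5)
Your identification of $(E_2,d_2)$ with the stated model is correct, including $d_2\bar\xi_j=0$ via $H^{0,2}_\delbar(X)=0$, and so is the $\delbar$-correction of $\bar\theta_j$. The gap is in the step that has to carry the whole argument: the map from the model into $\mathcal{A}^{*,*}_N$ is not well defined. Your justification (odd elements anticommute) only covers the free factor $\Lambda[\xi,\bar\xi]$. The factor $H^{*,*}_\delbar(X)\iso\C[w_1,\dots,w_s]/(\mathcal{I}+\mathcal{J})$ is not free. Assigning closed $(1,1)$-representatives to the generators extends to an algebra map only if the Stanley--Reisner relations hold on the nose at the level of forms. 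In general a vanishing monomial evaluated on representatives is only $\delbar$-exact, and products of harmonic forms are not harmonic. If instead you pick a representative $h(a)$ for every class $a$, the resulting linear map is not multiplicative, and not even a chain map: $\delbar(\theta_j\wedge\pi^*h(a))=\pi^*(\Theta_j\wedge h(a))$, where $\delbar\theta_j=\pi^*\Theta_j$, need not equal $\pi^*h(\gamma(\pi)^{1,0}(\xi_j)\,a)$. So the comparison theorem has nothing to act on. Your Step 3 is then left resting on $E_3=E_\infty$, which, as you noticed yourself, does not follow from parity, since the bidegrees do not rule out $d_4,d_6,\dots$.

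The missing ingredient is formality of the base. This is exactly what the paper uses when it says the argument of \cite[Thm.5.4]{PanovUstinovsky2012} works over any formal base, and compact toric manifolds are formal because they satisfy the $\del\delbar$-lemma. Cohomology cannot in general be mapped multiplicatively into forms. Instead you pass through the zig-zag of DGA quasi-isomorphisms $(\mathcal{A}^{*,*}_X,\delbar)\hookleftarrow(\ker\del,\delbar)\twoheadrightarrow(H_\del(X),0)\iso(H^{*,*}_\delbar(X),0)$ that the $\del\delbar$-lemma provides.

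To run this, first normalise the connection. Since $H^{2,0}_\del(X)=\overline{H^{0,2}_\delbar(X)}=0$, you can modify the $(1,0)$ connection forms $\theta_j$ by pull-backs of $(1,0)$-forms so that their curvatures $\Theta_j$ are $d$-closed of type $(1,1)$. Then $\Theta_j\in\ker\del$, $\bar\theta_j$ is automatically $\delbar$-closed, and by Chern--Weil $[\Theta_j]=\gamma(\pi)^{1,0}(\xi_j)$.

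Now define the intermediate DGA $(\Lambda[\xi,\bar\xi]\otimes\ker\del,D)$ with $D\xi_j=\Theta_j$, $D\bar\xi_j=0$ and $D=\delbar$ on $\ker\del$. It maps to $\mathcal{A}^{*,*}_N$ by $\xi_j\mapsto\theta_j$, $\bar\xi_j\mapsto\bar\theta_j$, $\alpha\mapsto\pi^*\alpha$. It also maps to the stated model via the projection $\ker\del\to H_\del(X)$. Both maps respect the base-degree filtrations and are isomorphisms on $E_2$. Your comparison argument therefore applies to each leg, giving the quasi-isomorphism and, as a by-product, $E_3=E_\infty$.
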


To prove this fact, it is sufficient to repeat the argument of \cite[Thm.5.4]{PanovUstinovsky2012} which, actually, holds in general for holomorphic torus bundles over a formal base-space. This is our case as compact toric manifolds always fulfil the \(\del\delbar\)-lemma and, a fortiori, are formal \cite{DeligneGriffithsMorganSullivan1975}.

In this model the classes \(\xi_j\) represent the holomorphic coordinates on the fiber \(\cpxTorus{2m}\); of course such coordinates are dual to generators \(\eta_j\) which are determined by the Gale dual configuration \((f_0, \dots, f_{2m})\).

\begin{remark}
    Since \(\gamma(\pi)^{0,1}(\bar\xi_j) = \overline{\gamma(\pi)^{1,0}(\xi_j)}\), this model is real as it preserves conjugation, thus, it can be promoted to a bidifferential model for \((\mathcal{A}_N^{*,*}, \del,\delbar)\) just by introducing a new differential \(\bar d\) defined as \(\bar d(a)  = \overline{d(\bar a)}\).
\end{remark}

Such result is significant since the Dolbeault cohomology ring of smooth toric variety \(X_\Sigma\) is generated in bidegree \((1,1)\) by the Poincaré-dual classes of its invariant divisors \cite[\S 12.4]{CoxLittleSchenck2011}. In our notation, we can take as generators the classes \(\tilde f_j^* = \iota^\vee(f_j^*)\), which in general are not linearly independent. Therefore in \cref{prop:DolbeaultModel},
\begin{equation*}
    H^{1,1}_\delbar(X) = \Span_\C (\tilde f_1^*, \dots, \tilde f_{2m}^*) \iso \frac{\Span_\C(w_1, \dots, w_\ell)}{\mathcal{I}}.
\end{equation*}
These observations lead to the failure of the \(\del\delbar\)-lemma in all non-trivial cases:
\begin{proposition}
    \label{deldelbar_lemma}
    Let \(N\) be a regular LVMB manifold associated to a triangulated configuration \((V_\Sigma,\calT_\Sigma)\) in \(\R^d\) containing \(n\) vectors, \(k\) of which are ghosts.
    Then, \(N\) fulfils the \(\del\delbar\)-lemma if and only if it is a complex torus.
\end{proposition}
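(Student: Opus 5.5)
The plan is to detect the failure of the $\del\delbar$-lemma through the Frölicher spectral sequence, or more directly through a Hodge-symmetry obstruction in degree one, using the Dolbeault model of \cref{prop:DolbeaultModel}. The direction "torus $\Rightarrow$ $\del\delbar$-lemma" is immediate, since complex tori are Kähler. For the converse, recall that the $\del\delbar$-lemma forces $b_1(N)=h^{1,0}_{\delbar}(N)+h^{0,1}_{\delbar}(N)$ and $h^{1,0}=h^{0,1}$. So it suffices to show that, unless $N$ is a torus, either $b_1(N)$ is odd or $h^{0,1}_{\delbar}(N)\neq h^{1,0}_{\delbar}(N)$.

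The degree-one part of the model is easy to read off. The class $\bar\xi_j$ is $d$-closed for every $j$ and cannot be exact, since $H^{*,*}_\delbar(X)$ has nothing in bidegree $(0,0)$ apart from constants in the relevant position. Hence $h^{0,1}_\delbar(N)=m$. On the other hand,
\[
h^{1,0}_\delbar(N)=\dim\ker\big(\gamma(\pi)^{1,0}\colon \Span_\C(\xi_j)\to H^{1,1}_\delbar(X)\big).
\]
Formula \labelcref{H1LVMB} gives $b_1(N)=0$ for $k=0$ and $b_1(N)=k-1$ for $k\ge 1$. Under the $\del\delbar$-lemma one would therefore need $2m=b_1(N)\le \max(0,k-1)$. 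In the case $k=0$ this forces $m=0$, which is impossible since $m\ge 1$. When $k\ge1$ it forces $2m\le k-1$.

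Now use $\rk H^2(X_\Sigma)=2m+1-k\ge 0$, with equality exactly when $X_\Sigma$ is a point; here $\rk K_\Sigma=2m+1-k$ and $X_\Sigma=\{\cdot\}$ precisely when $\Sigma$ is the trivial fan. The inequality $2m\le k-1$ gives $2m+1-k\le 0$, so $H^2(X_\Sigma)=0$. By \cref{ToricCY}, or directly because a complete smooth fan with no rays lives in $\R^0$, we get $d=0$ and $X_\Sigma=\{\cdot\}$. Then $N$ is the fiber $\C^m/\Psi$, a complex torus, as in \cref{example:complex_tori}.

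The step I expect to need the most care is the claim $h^{0,1}_\delbar(N)=m$. This requires that $E_3=E_\infty$ in the Borel spectral sequence, and that the degree $(0,1)$ generators $\bar\xi_j$ are not hit by any differential. They cannot be hit, because the only possible source is in base degree $u<0$. A cleaner alternative is to avoid Dolbeault numbers and argue purely via $b_1$. The $\del\delbar$-lemma implies the Frölicher spectral sequence degenerates at $E_1$ and $b_1=2h^{0,1}$. Meanwhile, the antiholomorphic fiber forms $\bar\xi_j$ always survive, giving $h^{0,1}\ge m$ and hence $b_1\ge 2m$, so the same contradiction follows. I would present it in this form.
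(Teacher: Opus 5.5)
Your proof is correct, and it differs from the paper's at the key comparison.

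\textbf{The two routes.} The paper compares the two Dolbeault numbers directly. It uses $h^{0,1}_{\delbar}(N)=m$ and then bounds $h^{1,0}_{\delbar}(N)=\dim\ker\gamma(\pi)^{1,0}$ from above by $\max(0,k-m-1)$. Hodge symmetry $h^{1,0}=h^{0,1}$ then forces $k=2m+1$, so $\Sigma$ has only $d$ rays, and completeness gives $d=0$. You never compute $h^{1,0}$. Instead you combine $h^{0,1}=m$ with the topological value of $b_1$ from \labelcref{H1LVMB}, via $b_1=2h^{0,1}$, and reach the same $k=2m+1$.

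\textbf{What your route buys.} It is insensitive to the complex structure of the fibres. $b_1$ depends only on the real class $\gamma(\pi)$, and $h^{0,1}=m$ holds for every Gale dual configuration and period matrix. By contrast, the image of $\gamma(\pi)^{1,0}$ depends on how each $\tilde f_j^*$ is paired with $\tilde f_{j+m}^*$. The paper's lower bound $\min(m,2m-k+1)$ on $\dim_\C\Ima\gamma(\pi)^{1,0}$ is clear for a good configuration with $\Pi_{\mathrm{std}}$, since then $\tilde f_1^*=\dots=\tilde f_{k-1}^*=0$. For a general pairing, however, one only gets $\lceil(2m-k+1)/2\rceil$. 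For instance, $m=2$, $k=3$, $\tilde f_1^*=-\tilde f_4^*$, $\tilde f_2^*=\tilde f_3^*$ gives a one-dimensional image. That weaker bound still suffices, and indeed gives $h^{1,0}\le\lfloor b_1/2\rfloor$, which is essentially your argument rewritten in Dolbeault terms.

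\textbf{What the paper's route buys.} It locates the failure specifically in Hodge symmetry, whereas $b_1\neq 2h^{0,1}$ alone does not say whether symmetry or Frölicher degeneration breaks. This makes no difference for the statement.

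\textbf{Two small fixes.} First, your opening reduction (``either $b_1$ is odd or $h^{1,0}\neq h^{0,1}$'') is not what you actually prove. What you prove is $b_1\neq 2h^{0,1}$, and you should state that instead. Second, $\rk K_\Sigma=0$ means $|\Sigma^{(1)}|=d$, not that $\Sigma$ has no rays. The correct direct justification is that a complete fan in $\R^d$ with $d>0$ needs at least $d+1$ rays. Alternatively, as you also say, $H^2(X_\Sigma;\Z)\cong K_\Sigma^\vee$ is then zero, so $c_1(X_\Sigma)=0$ and \cref{ToricCY} applies.
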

\begin{proof}
    The ``if'' part is trivial as complex tori are Kähler; a fortiori, they fulfil the \(\del\delbar\)-lemma.

    Conversely, suppose \(N\) fulfils the \(\del\delbar\)-lemma. To investigate the double complex of smooth forms, consider the classes \(\eta_j\) described above, together with their duals \(\xi_j \in H^{1,0}(\cpxTorus{m})\). By \cref{prop:DolbeaultModel}, these define Dolbeault classes if and only if they are \(\delbar\)-closed, hence \(h_{\delbar}^{1,0} = \dim_{\C}\ker \gamma(\pi)^{1,0}\). On the other hand, in the model for the Dolbeault cohomology we have \(\delbar\xi_j = 0\) for any \(j\), thus \(h_\delbar^{0,1} = m\).

    If \(k = 0\), we have \(K_\Sigma = K\), thus all \(\tilde f_j^*\) are linearly independent over \(\R\), a fortiori, the set \(\{\tilde f_j^* + i\tilde f_{j+m}^*\}_{j=1,\dots,m}\) is linearly independent in \(H^{1,1}(X_\Sigma)\). Thus, \(\dim \ker\gamma(\pi)^{1,0} = 0\).

    For \(k > 0\) we observe that \(\gamma(\pi)\) is surjective, therefore, among the \(f_j^*\), there are \(2m-k+1\) of them such that \(\tilde f^*_{i_1}, \dots, \tilde f^*_{i_{2m-k+1}}\) are linearly independent in \(H^{1,1}(X_\Sigma)\). Up to reordering them, we may assume they are \(\tilde f^*_k, \dots, \tilde f^*_{2m}\). Hence, the image of \(\gamma(\pi)^{1,0}\) contains  a complex subspace of dimension at least \(\min(m,2m-k+1)\), thus \(\dim\ker\gamma(\pi)^{1,0} \leq \max(0,k-m-1)\).
    
    Therefore, for any value of \(k\), we have \(h_{\delbar}^{1,0} \leq \max(0,k-m-1)\), so the \(\del\delbar\)-lemma holds only if  \(k = 2m+1\): i.e. only if all the vectors in \((V_\Sigma,\calT_\Sigma)\) are ghosts. As \(2m+1 = n - d\), the \(\del\delbar\)-lemma forces \(d = n-k\). But the latter coincides by construction with the number of rays in \(\Sigma\); being \((V_\Sigma,\calT_\Sigma)\) balanced, \(\Sigma\) is also complete, and so the \(\del\delbar\) lemma forces \(d = 0\). By \cite{Bosio2001}, this condition assures \(N = \cpxTorus{m}\).
\end{proof}

\subsubsection{First Bott--Chern and Aeppli--Chern classes}
\Cref{prop:DolbeaultModel} provides a bidifferential model which can be used to check the behaviour of the classes \(h_i^*\) in Bott--Chern and Aeppli cohomologies. In particular, we can use it to compute \(c_1^\text{BC}(N)\) and \(c_1^\text{Ae}(N)\).

\begin{proposition}
    \label{First_Bott-Chern_Class}
    For a regular LVMB manifold \(\pi : N \to X_\Sigma\), the pull-back map induced in Bott--Chern cohomology \(\pi^*_\text{BC} : H^{1,1}(X_\Sigma) \to H^{1,1}_\text{BC}(N)\) is injective. In particular, \(c_1^\text{BC}(N) = 0\) if and only if \(N\) is a complex torus. 
\end{proposition}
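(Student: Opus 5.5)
We work in the bidifferential model of \cref{prop:DolbeaultModel}, upgraded to a model for \((\mathcal{A}_N^{*,*},\del,\delbar)\) by adding the conjugate differential \(\bar d\). The generators are \(\xi_j,\bar\xi_j\) in bidegrees \((1,0),(0,1)\), together with \(H^{*,*}(X_\Sigma)\), which is concentrated in bidegrees \((p,p)\). Writing \(\delbar\xi_j = \gamma(\pi)^{1,0}(\xi_j) \nfd \varsigma_j \in H^{1,1}(X_\Sigma)\), the conjugate differential gives \(\del\bar\xi_j = \bar\varsigma_j\). All other derivatives of generators vanish, and the base classes are \(\del\)- and \(\delbar\)-closed. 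With \(\Pi_\text{std}\) we have \(\varsigma_j = \tilde f_j^* + i\tilde f_{j+m}^*\), so \(\Span_\C\{\varsigma_j,\bar\varsigma_j\} = \Span_\C\{\tilde f_1^*,\dots,\tilde f_{2m}^*\}\).

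\textbf{Step 1: compute the image of \(\del\delbar\) in bidegree \((1,1)\).} A \((0,0)\)-element of the model is a constant, so \(\del\delbar\) of it is zero. Hence in the model \(\Ima(\del\delbar)\cap\mathcal{A}^{1,1} = 0\), and \(H^{1,1}_\text{BC}(N)\) is simply the space of \(\del\)- and \(\delbar\)-closed \((1,1)\)-elements. On the image of \(H^{1,1}(X_\Sigma)\) this space contains \(H^{1,1}(X_\Sigma)\) itself, and \(\pi^*_\text{BC}\) is the inclusion of \(H^{1,1}(X_\Sigma)\) into the model. This inclusion is injective.

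\textbf{Main obstacle.} The step above relies on the model computing Bott--Chern cohomology, and only the Dolbeault statement was asserted earlier. I would justify it as in \cite{PanovUstinovsky2012}: the model is the subalgebra of \(\realTorus{2m}\)-invariant forms generated by invariant fibre coframes \(\xi_j\) (connection forms built from a principal connection with curvature representing \(\varsigma_j\)) together with harmonic \((1,1)\)-representatives pulled back from the \(\del\delbar\)-manifold \(X_\Sigma\). Averaging over the compact torus action, together with the quasi-isomorphism in Dolbeault and conjugate-Dolbeault cohomology, gives an \(E_1\)-isomorphism of double complexes. Such maps induce isomorphisms on Bott--Chern and Aeppli cohomology, by the Stelzig/Angella--Tomassini-type principle that a morphism of bounded double complexes inducing isomorphisms on the \(\delbar\)- and \(\del\)-cohomologies (of the appropriate filtrations) is an isomorphism on all of them.

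A more elementary route avoids this. Suppose \(\pi^*\omega = i\del\delbar u\) on \(N\). Averaging \(u\) over the holomorphic fibre torus action, we may take \(u\) to be \(\realTorus{2m}\)-invariant. Since \(\omega\) is pulled back and the fibres are compact, restricting to a fibre shows \(i\del\delbar u|_{\text{fibre}} = 0\), so \(u\) is constant on fibres and descends to a function \(v\) on \(X_\Sigma\). Then \(\omega = i\del\delbar v\), so \(\omega = 0\) in \(H^{1,1}_\text{BC}(X_\Sigma) \iso H^{1,1}(X_\Sigma)\). I would adopt this argument, which also removes the obstacle.

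\textbf{Step 2: the Chern class statement.} Since \(\mathcal{K}_N \iso \pi^*\mathcal{K}_{X_\Sigma}\), we have \(c_1^\text{BC}(N) = \pi^*_\text{BC}c_1(X_\Sigma)\). By injectivity, this vanishes if and only if \(c_1(X_\Sigma)=0\), which by \cref{ToricCY} happens if and only if \(X_\Sigma\) is a point, i.e.\ \(d=0\). In that case \(N\) is a complex torus by \cite{Bosio2001}, as at the end of the proof of \cref{deldelbar_lemma}. Conversely, complex tori have trivial canonical bundle, so \(c_1^\text{BC}=0\).
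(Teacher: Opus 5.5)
Your proof is correct, and the argument you end up adopting (the fibre-restriction argument) follows a genuinely different route from the paper's.

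\textbf{The paper's route.} The paper works inside the bidifferential model of \cref{prop:DolbeaultModel}. It writes a basis \(\tilde f_k^*,\dots,\tilde f_{2m}^*\) of \(H^{1,1}(X_\Sigma)\) as \(d\Re\xi_j\) and \(d\Im\xi_{j-m}\). It then turns BC-exactness \(\sum_j a_j\tilde f_j^*=\del\delbar g\) into an identity of \(1\)-forms \(\sum_j a_j\Re\xi_j+\dots=\delbar g+\del g'+\vartheta\) with \(\vartheta\) closed. Finally, it uses \(H^1_\text{dR}(N)\iso\ker\gamma(\pi)\) and the linear independence of the \(\xi_j\) in \(H^{1,0}_\del(N)\) to force all coefficients to vanish. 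This keeps the statement tied to the Gale data: one sees explicitly which combinations of the \(\tilde f_j^*\) become \(d\)-exact on \(N\) and why they still fail to be \(\del\delbar\)-exact. The price is that it relies on the model being faithful for the whole double complex, which is exactly the issue you flag in Step 1. It also needs careful index bookkeeping.

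\textbf{Your route.} Your argument bypasses all of this. It uses only that \(\pi\) is a surjective holomorphic submersion with compact connected fibres. It therefore proves injectivity of \(\pi^*_\text{BC}\) in bidegree \((1,1)\) for any such map. The \(\del\delbar\)-lemma on \(X_\Sigma\) enters only to identify \(H^{1,1}_\text{BC}(X_\Sigma)\) with \(H^{1,1}(X_\Sigma)\).

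\textbf{Points to tidy when writing it up.}
\begin{itemize}
  \item Averaging and the pluriharmonicity argument are alternatives, not two needed steps. After averaging, \(u\) is constant on fibres simply because the fibres are torus orbits. Without averaging, \(u|_F\) is pluriharmonic on the compact connected torus \(F\), hence constant.
  \item Say explicitly that \(v\) is smooth, via local sections of \(\pi\).
  \item Say explicitly that \(\pi^*\) is injective on forms. This is what turns \(\pi^*(\omega-i\del\delbar v)=0\) into \(\omega=i\del\delbar v\).
\end{itemize}
Your Step 1 and the \(E_1\)-isomorphism discussion can then be dropped. Your Step 2 coincides with the paper's.
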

\begin{proof}
    As we already observed in the proof of \cref{deldelbar_lemma}, it is not restrictive to assume that for a Gale dual configuration \((f_0, \dots, f_{2m})\) of some regular triangulated configuration \((V_\Sigma,\calT_\Sigma)\), \(\tilde f_{k}^*,\dots, \tilde f_{2m}^*\) are a basis of \(H^{1,1}(X_\Sigma)\). Because of \cref{prop:DolbeaultModel}, we can use them as generators of a model for \((\mathcal{A}^{*,*}_N,\del,\delbar)\); hence, since \(d \tilde f_j^* = 0\) for any \(j\), these also represent Bott--Chern classes in \(H^{1,1}_\text{BC}(N)\). Therefore, the statement is equivalent to showing that they are linearly independent in this cohomology. In \cref{DolbeaultCharacteristicClass} we observed that
    \begin{equation*}
        \tilde f_{j}^* + i \tilde f_{j+m}^* = \delbar \xi_j \quad \text{for \(j = 1, \dots, 2m\)},
    \end{equation*}
    hence,
    \begin{equation*}
        \tilde f_j^* =
        \begin{cases}
            d\Re\xi_j & j=1,\dots,m\\
            d\Im\xi_{j-m} & j= m+1 \dots, 2m.
        \end{cases}
    \end{equation*}
    Therefore, a linear combination of \(\tilde f_k^*, \dots, \tilde f_{2m}^*\) is trivial in Bott--Chern cohomology if and only if there exists \(g \in \mathcal{A}^{0,0}_N\) such that \(\sum_{j=k}^{2m} a_j \tilde f_j^* = \del\delbar g\).
    
    If \(k=0\) this condition reads as
    \begin{equation}
        \label{BCIndependenceNoGhosts}
        a_0 \tilde f_0^* + d\left(\sum_{j=1}^m a_{j} \Re \xi_j + b_{j} \Im \xi_j \right) = d\delbar g.
    \end{equation}
    Since \(\tilde f_0^*\) is not \(d\)-exact, this forces \(a_0 = 0\); for the others, notice that if \(k=0\) then \(H^1_\text{dR}(N) = 0\) by \labelcref{H1LVMB}, hence the last equation holds if and only if
    \begin{equation*}
        \sum_{j=1}^m a_{j} \Re \xi_j + a_{j+m} \Im \xi_j = \delbar g + \del g'
    \end{equation*}
    for some complex valued smooth function \(g'\). This equation splits over \(\mathcal{A}^{1,0}_N \oplus \mathcal{A}^{0,1}_N\) as 
    \begin{equation*}
        \begin{cases}
            \sum_{j=1}^m (a_{j} - i a_{j+m})\xi_j = 2\del g'\\
            \sum_{j=1}^m (a_{j} + i a_{j+m})\bar\xi_j = 2\delbar g
        \end{cases}
    \end{equation*}
    which forces \(a_j = a_{j+m} = 0\) for any \(j = 1,\dots, m\) since \(\xi_j\) and \(\bar\xi_j\) are linearly independent generators of \(H^{1,0}_\del(N)\) and \(H^{0,1}_\delbar(N)\) respectively.

    The same argument applies to the case \(k \geq 1\) where more care of the indices is needed. This time \cref{BCIndependenceNoGhosts} reads as
    \begin{equation*}
        d\left(\sum_{j=k}^{m} a_j \Re \xi_{j} + \sum_{j=m+1}^{2m} a_j \Im \xi_{j-m}\right) = d\delbar g.
    \end{equation*}
    In \cref{LeraySerre} we observed that \(H^1_\text{dR}(N) \iso \ker \gamma(\pi)\), thus the last equation holds if and only if 
    \begin{equation*}
        \sum_{j=k}^m a_{j} \Re \xi_j + \sum_{j = m+1}^{2m} a_{j} \Im \xi_{j-m} = \delbar g + \del g' + \vartheta
    \end{equation*}
    for some \(g \in \mathcal{A}^{0,0}_N\) and \(\vartheta = \sum_{j=1}^{m} b_j \Re \xi_j + \sum_{j=m+1}^{2m} b_j \Im \xi_{m+j} \in \ker \gamma(\pi)\). Projecting the above equation to \(\mathcal{A}^{1,0}_N\), we get
    \begin{equation*}
        - \sum_{j=1}^{k-1} \left(b_j +i (a_{j+m} - b_{j+m})\right) \xi_{j} + \sum_{j=k}^{m} ((a_j - b_j) -i (a_{j+m} - b_{j+m})) \xi_{j} = 2\del g'.
    \end{equation*}
    Since by \cref{prop:DolbeaultModel}, the vectors \(\xi_j\) are linearly independent generators of \(H^{1,0}_\del(N)\), this forces \(b_j = 0\) for \(j \leq k-1\) and \(a_j = b_j\) for \(j \geq k\). Therefore, as \(\vartheta \in \ker\gamma(\pi)\), we have
    \begin{equation*}
        0 = \gamma(\pi)(\vartheta) = \gamma(\pi)\left(\sum_{j=k}^m b_j \Re \xi_j + \sum_{j=m+1}^{2m} b_j \Im \xi_{j-m}\right) = \sum_{j=k}^{2m} b_j \tilde f_j^*
    \end{equation*}
    whence \(b_j = 0\) for any \(j\), as we are assuming \(\{\tilde f_j^*\}_{j=k,\dots,2m}\) to be a basis of \(H^{1,1}(X_\Sigma)\). Therefore, \(a_j = 0\) for \(j \geq k\), which proves the linear independence of \(\{\tilde f_j^*\}_{j=k,\dots, 2m}\) as Bott--Chern classes.

    The last part of the statement follows from observing that \(c_1^\text{BC}(N) = \pi^*_\text{BC}(c_1(X))\). Since \(\pi^*_\text{BC}\) is injective, \(c_1^\text{BC}(N) = 0\) if and only if \(c_1(X) = 0\); as \(X\) is smooth and compact, this must be a point because of \cref{ToricCY}. Thus, any LVMB manifold \(N\) with \(c_1^\text{BC}(N) = 0\) is a complex torus.
\end{proof}

Computing the first Aeppli--Chern class is somehow simpler. Indeed, since \(c_1(N) = 0\) whenever its configuration contains ghost vectors, \(c_1^\text{Ae}(N)\) vanishes as well. On the other hand, we proved that for configurations without ghost vectors, \(c_1(N) = n[f_0^*]_{\pi^*} \in H^2_\text{dR}(N)\). From \cref{prop:DolbeaultModel} we see that \(f_0^*\) is never a linear combination of \(\delbar \xi_j\) and \(\del \bar\xi_j\), hence it does not vanish in Aeppli cohomology. This can be resumed in the following
\begin{proposition}
    \label{First_Aeppli-Chern_class}
    Let \(N\) be a regular LVMB manifold associated to a triangulated vector configuration \((V_\Sigma,\calT_\Sigma)\) with \((f_0,\dots, f_{2m})\) as Gale dual configuration. Denote by \(\pi : N \to X\) the torus bundle it comes with, and let \(n\) be the number of vectors in \(V\), \(k\) of which are ghost vectors. Then
    \begin{equation*}
        c_1^\text{Ae}(N) =
        \begin{cases}
            n[f_0^*]_{\pi^*_\text{Ae}} & \text{if } k = 0\\
            0 & \text{if } k \geq 1.
        \end{cases}
    \end{equation*}
\end{proposition}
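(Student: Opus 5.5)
We will use the identity $c_1^\text{Ae}(N) = \pi^*_\text{Ae}\, c_1(X_\Sigma) = \pi^*_\text{Ae}\sum_j w_j$. This follows from the holomorphic isomorphism $\mathcal{K}_N \iso \pi^*\mathcal{K}_{X_\Sigma}$ established before \cref{ToricCY}. We will also use the natural map $H^2_\text{dR}(N;\C) \to H^{1,1}_\text{Ae}(N)$, which sends $c_1(N)$ to $c_1^\text{Ae}(N)$ because both are represented by the same real $(1,1)$-form, namely the Chern--Ricci form. The two cases are then treated separately.

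\emph{Case $k \geq 1$.} By \cref{DeRhamFirstChernClass}, $c_1(N) = 0$ in $H^2_\text{dR}(N)$. A Chern--Ricci form $\rho$ of any Hermitian metric is therefore $d$-exact, say $\rho = d\beta$. Taking the $(1,1)$-component gives $\rho = \del\beta^{0,1} + \delbar\beta^{1,0} \in \Ima\del + \Ima\delbar$. Hence $c_1^\text{Ae}(N) = 0$; this is just the well-definedness of the natural map above.

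\emph{Case $k = 0$.} Here I would work in the bidifferential model of \cref{prop:DolbeaultModel}, promoted to a bidifferential algebra $(\Lambda[\xi,\bar\xi]\otimes H^{*,*}(X_\Sigma), d, \bar d)$ as in the Remark following it.
\begin{itemize}
\item In this model, $\delbar \xi_j = \tilde f_j^* + i\tilde f_{j+m}^*$ and its conjugate $\del\bar\xi_j = \tilde f_j^* - i\tilde f_{j+m}^*$ lie in $\Ima\del+\Ima\delbar$. Consequently every $\tilde f_j^*$ with $j\geq 1$ vanishes in Aeppli cohomology.
\item It follows that $\pi^*_\text{Ae}$ restricted to $H^{1,1}(X_\Sigma)$ factors through $H^{1,1}(X_\Sigma)/\Span(\tilde f_1^*,\dots,\tilde f_{2m}^*)$, which is the complexification of $\coker d_2^{0,1}$ from \cref{cokerRepresentatives}.
\item The same computation as in \cref{DeRhamFirstChernClass}, namely $\kappa^\vee(w_j)(f_0)=1$ for every $j$, then gives $c_1^\text{Ae}(N) = n[f_0^*]_{\pi^*_\text{Ae}}$.
\end{itemize}
For this class to be nonzero, as the statement implicitly asserts, I would show that $\tilde f_0^*$ is not in $(\Ima\del+\Ima\delbar)^{1,1}$ of the model. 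In the model, the only elements of total degree $1$ are the $\C$-linear combinations of the $\xi_j$ and $\bar\xi_j$, since $H^{0,0}(X_\Sigma)=\C$ contributes only constants, which are killed by the differentials. Therefore $(\Ima\del+\Ima\delbar)^{1,1}$ is exactly $\Span_\C(\tilde f_1^*,\dots,\tilde f_{2m}^*)$. Since $k=0$ forces $K_\Sigma = K$, the classes $\tilde f_0^*,\dots,\tilde f_{2m}^*$ form a basis of $K_\Sigma^\vee\otimes\C$, so $\tilde f_0^*$ is not in this span.

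The main obstacle is justifying that the model computes Aeppli cohomology, and not only Dolbeault cohomology. I would argue that the Panov--Ustinovsky-type comparison map is a morphism of double complexes inducing an isomorphism on the first page of the Frölicher spectral sequence in both directions, i.e.\ an $E_1$-quasi-isomorphism. Such morphisms induce isomorphisms in Bott--Chern and Aeppli cohomology; this is the same principle used implicitly in \cref{First_Bott-Chern_Class}. Should this fail, a fallback for the non-vanishing would be to pair $c_1^\text{Ae}(N)$ with a suitable $\del\delbar$-closed test form. I would not pursue that route unless necessary.
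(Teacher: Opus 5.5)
Your proposal is correct and follows the paper's argument. For $k\geq 1$, the vanishing of $c_1(N)$ in de Rham cohomology is carried to Aeppli cohomology by the natural map. For $k=0$, the bidifferential model shows that $\tilde f_0^*$ is not a combination of $\delbar\xi_j$ and $\del\bar\xi_j$. Your appeal to $E_1$-quasi-isomorphisms only makes explicit a step the paper takes for granted, namely that the model computes Aeppli cohomology and not just Dolbeault cohomology.

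One slip in the fallback you do not pursue: an Aeppli $(1,1)$-class is detected by pairing with $\del$- and $\delbar$-closed (Bott--Chern) forms of complementary bidegree. Merely $\del\delbar$-closed forms will not do, because they need not annihilate $\Ima\del+\Ima\delbar$.
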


\begin{remark}
    \label{remark:nonrational}
    The construction of LVMB manifolds does not require the LVMB datum \((\Lambda,\mathcal{E})\) to be rational at all \cite{LopezVerjovsky1997,Meersseman2000,Bosio2001}. In particular, the approach to LVMB we have adopted extends to any complete simplicial fan \cite{BattagliaZaffran2015}.
    Moreover, any LVMB datum \((\Lambda,\mathcal{E})\) can always be deformed into a rational one. Furthermore, the canonical foliation of an LVMB manifold provides natural substitutes of forms and cohomology of the base space when this is no longer a manifold \cite{LoebNicolau1996,Meersseman2000,BattagliaZaffran2015,Ishida2017,IshidaKrutowskiPanov2022}: these are basic forms and basic cohomology respectively.
    Thus, it is very natural to ask whether and to what extent our results, including those about “special'' Hermitian metrics, hold true for the whole class of LVMB manifolds. In the rational non-smooth case, we believe that our techniques can be suitably adapted and that our results, including those concerning the metrics, carry over essentially unchanged. They might also hold true in the general case, but their proofs would likely require different techniques.
\end{remark}

\section{Special metrics on regular LVMB manifolds}
\label{Special_metrics_LVMB}

In this section we will investigate the existence of \emph{balanced} and \emph{SKT} metrics on regular LVMB manifolds.

We recall that balanced metrics, sometimes also called semi-Kähler or co-Kähler metrics, are Hermitian metrics whose fundamental form is co-closed. They were introduced by Michelsohn in \cite{Michelsohn1982}, where she also characterizes their existence on compact complex manifolds. Like the LCK metrics we mentioned in the introduction, balanced ones have good geometrical properties as well. Indeed, they are stable under holomorphic modifications \cite{AlessandriniBassanelli1995}: on complex surfaces, this fact translates to the well-known stability of  Kähler metrics under both blow-ups and blow-downs.

On the other hand, SKT metrics, often referred to as \emph{pluriclosed} metrics, are the ones whose fundamental form satisfies \(\del\delbar\omega = 0\). The acronym “SKT'', standing for \emph{Strong Kähler with Torsion}, recalls that the corresponding Bismut connection (sometimes called \emph{Kähler with Torsion}, or \emph{Strominger} connection) has closed torsion \cite{Bismut1989}. In that paper, Bismut shows how the SKT condition naturally appears in an index theorem for non-Kähler complex manifolds. SKT metrics appear ubiquitously on complex surfaces: this is a direct consequence of the celebrated \emph{“Théorème de l'excentricité nulle''} of Gauduchon \cite{Gauduchon1977}.

\subsection{Submersion metrics on holomorphic principal torus bundles}
\label{section:submersion_metric}
In general, the space of Hermitian metrics on a complex manifold is huge: this makes it difficult to explore without explicit construction techniques which are capable of detecting special metrics. Nevertheless, on an LVMB manifold, we can exploit its holomorphic torus bundle structure to ``assemble'' a Hermitian metric on the total space by lifting a metric defined on the base, and then completing it to a positive definite tensor.

Metrics constructed this way are called \emph{submersion metrics} and, since they detect the bundle structure, in case of a holomorphic principal torus bundle, those that are special can be explicitly characterized \cite{GrantcharovPoon2008}. As we will discuss in this section, this is particularly remarkable for LVMB manifolds. Indeed, their bundle structure is determined by the characteristic class \(\gamma(\pi)\), and we showed in \cref{them:father} that this is encoded in the algebro-combinatoric datum, which the LVMB manifold is associated with.

\begin{definition}
    Let \(\pi : M \to X\) be a holomorphic fiber bundle. We say that a Hermitian metric \(g_M\) on \(M\) is a \emph{submersion metric} if \(\pi : (M,g_M) \to (X,g_X)\) is a Riemannian submersion for some Hermitian metric \(g_X\) on \(X\). That is, if
    \begin{equation*}
        d\pi|_{(\ker d\pi)^{\perp_{g_M}}} : \left((\ker d\pi)^{\perp_{g_M}},g_M\right) \to (TX,g_X)
    \end{equation*}
    is an isomorphism of Hermitian vector bundles.
\end{definition}

A holomorphic principal torus bundle \(\pi : P \to X\) naturally admits submersion metrics. With the same ideas employed in \cref{section:relationWithToricGeometry}, such bundle can be decomposed into the product of principal \(S^1\)-bundles, namely \(\pi \iso \Delta^*\prod_{j = 1}^{2m} \pi_j\). As we are interested in preserving the complex structure of the fibers, we consider only decompositions such that the fibers of \(\pi_{2j-1}\) and \(\pi_{2j}\) are coupled together by the complex structure \(J_P\) of \(P\). From another perspective, we are decomposing \(P\) as the product of \(m\) holomorphic principal \(\cpxTorus{m}\)-bundles. Thus, any principal connection on \(\pi\) is given by an \(m\)-tuple of 1-forms \((\tilde\theta_1, \dots, \tilde\theta_{m})\) in \(\Omega^1(P;\C)\). By taking \(\theta_{2j-1} = \Re\tilde\theta_j\) and \(\theta_{2j} = \Im\tilde\theta_j\), we get a list of \(2m\) real-valued 1-forms \((\theta_1,\dots, \theta_{2m})\), each of which defines a connection on the principal \(S^1\)-bundle \(\pi_j\); moreover, this way they fulfil \(\theta_{2j} = J_P \theta_{2j-1}\).
Therefore, for any Hermitian metric \(g_X\) on \(X\), the symmetric tensor
\begin{equation*}
    g_P = \pi^* g_X + \sum_{j = 1}^{2m} \theta_{j} \otimes \theta_{j}
\end{equation*}
defines a Hermitian metric on \(P\) with corresponding Kähler form given by
\begin{equation}
    \label{submersion_metric}
    \omega_P = \pi^* \omega_X + \sum_{\ell = 1}^{2m} J\theta_j \otimes \theta_j = \pi^*\omega_X + \sum_{j = 1}^m \theta_{2j-1} \wedge \theta_{2j}.
\end{equation}
\begin{remark}
    In \cref{section:splitting_of_pi} we discussed how the choice of a Gale dual configuration provides a decomposition \(\pi \iso \Delta^*\left(\prod_{j=1}^{2m} \pi_j\right)\) of the holomorphic principal torus bundle \(\pi : N \to X_\Sigma\) that an LVMB manifold \(N\) comes with. Thus, for these bundles, the coupling between \(\pi_{2j-1}\) and \(\pi_{2j}\) via the complex structure comes for free.
\end{remark}

In fact, Riemannian metrics which are invariant under the principal action of a torus bundle are submersion metrics. For sake of completeness, we give a proof of this well-known fact in the Hermitian setting.

\begin{lemma}
    \label{InvariantIsSubmersion}
    Let \(\pi : N \to X\) be a holomorphic principal torus bundle with \(\cpxTorus{m}\) as fiber. Let \(a : \cpxTorus{m} \times N \to N\) be the corresponding holomorphic action with \(a_t : N \to N\) the biholomorphism induced by any \(t \in \cpxTorus{m}\). Then, any Hermitian metric \(g\) on \(N\) which is \(a\)-invariant coincides with a submersion metric.
\end{lemma}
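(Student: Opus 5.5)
The plan is to build the data of a submersion metric directly out of the invariant metric \(g\): first a principal connection, then a metric on the base. Let \(\mathcal{V} = \ker d\pi \subset TN\) be the vertical distribution. It is spanned by the fundamental vector fields \(\zeta_1,\dots,\zeta_{2m}\) of the torus action. Because the action is holomorphic, \(\mathcal{V}\) is \(J\)-invariant; in fact \(J\zeta_{2j-1} = \zeta_{2j}\) if we use a real basis of the Lie algebra \(\C^m\) adapted to the complex structure.

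The first step is to define the horizontal distribution \(\mathcal{H} = \mathcal{V}^{\perp_g}\) and check its properties.
\begin{itemize}
\item Since \(g\) is Hermitian and \(\mathcal{V}\) is \(J\)-invariant, \(\mathcal{H}\) is \(J\)-invariant too: if \(h \perp \mathcal{V}\), then \(g(Jh,v) = -g(h,Jv) = 0\).
\item Since each \(a_t\) is an isometry preserving \(\mathcal{V}\) (it commutes with the abelian action, so \(da_t\zeta_j = \zeta_j\)), we get \(da_t(\mathcal{H}) = \mathcal{H}\).
\end{itemize}
Hence \(\mathcal{H}\) is a principal connection. Its connection forms \(\theta_j\) are determined by \(\theta_j(\zeta_\ell) = \delta_{j\ell}\) and \(\theta_j|_{\mathcal{H}} = 0\). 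They are \(a\)-invariant, and they satisfy the coupling relation \(\theta_{2j} = J\theta_{2j-1}\) up to the sign convention used in \cref{submersion_metric}.

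The second step defines the base metric: for \(x\in X\), \(p\in\pi^{-1}(x)\) and \(u,w\in T_xX\), set \(g_X(u,w) = g(\tilde u,\tilde w)\), where \(\tilde u,\tilde w\in\mathcal{H}_p\) are the horizontal lifts.
\begin{itemize}
\item Independence of the choice of \(p\): two points in the fiber differ by some \(a_t\), which maps \(\mathcal{H}_p\) isometrically onto \(\mathcal{H}_{a_t p}\) and commutes with \(\pi\), so it sends horizontal lifts to horizontal lifts.
\item Smoothness: use local sections of \(\pi\).
\item Hermitian: \(d\pi\) is \(\C\)-linear and \(\mathcal{H}\) is \(J\)-invariant, so \(g_X\) is Hermitian.
\end{itemize}
By construction, \(d\pi|_{\mathcal{H}}:(\mathcal{H},g)\to(TX,g_X)\) is an isometry, which is exactly the definition of a submersion metric.

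It remains to match \(g\) with the form \(\pi^*g_X + \sum\theta_j\otimes\theta_j\). Decompose \(g = \pi^*g_X + g|_{\mathcal{V}}\); there are no cross terms because \(\mathcal{V} \perp \mathcal{H}\). The matrix \(G_{j\ell} = g(\zeta_j,\zeta_\ell)\) is invariant under the transitive torus action, so it is constant along fibers. It is still a function on \(X\), though, so it is not automatically the identity. This is the main obstacle. I would handle it by changing frame: apply Gram–Schmidt to \(G\) in a \(J\)-compatible way, i.e.\ a unitary-frame construction on \(\mathcal{V}\cong N\times\C^m\). This gives forms \(\theta'_j = \sum_\ell A_{j\ell}(x)\theta_\ell\) with \(g|_{\mathcal{V}} = \sum\theta'_j\otimes\theta'_j\) and the \(J\)-coupling preserved.

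If the statement must be read with constant coefficients, this requires \(G\) to be constant. Otherwise the statement should be read with ``coincides with a submersion metric'' in the sense of the definition, namely that \(\pi\) is a Riemannian submersion. That reading already follows from the two steps above. I would present that reading as the proof, and add the frame remark to recover the explicit shape of \cref{submersion_metric} with \(x\)-dependent coefficients.
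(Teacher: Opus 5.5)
Your argument is correct and follows the same outline as the paper. You take \(\mathcal H=\mathcal V^{\perp_g}\) as an invariant, \(J\)-stable connection, define \(g_X\) through horizontal lifts (well defined by invariance), and note that there are no mixed terms. You part ways with the paper only on the vertical part, and there you are right while the paper's proof is too quick.

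The paper asserts that an \(a\)-invariant \(g\) ``defines a scalar product on \(\operatorname{Lie}(\cpxTorus{m})\)''. It then picks an orthonormal basis and treats the fundamental fields \(E_j\) as a globally orthonormal frame. This is what lets it make \(\theta_j=E_j\contr g\) a principal connection and write \(g=\pi^*g_X+\sum_j\theta_j\otimes\theta_j\) with constant coefficients. Invariance gives only what you observe: \(G_{j\ell}=g(\zeta_j,\zeta_\ell)\) is constant along fibers, hence a function on \(X\). When \(G\) is not constant, \(E_j\contr g\) does not even satisfy \(\theta_j(E_i)=\delta_{ij}\).

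The constant-coefficient form genuinely fails. On \(X\times\cpxTorus{m}\), take \(g_X+f\sum_j|dz_j|^2\) with \(f>0\) a nonconstant function on \(X\). This metric is Hermitian and invariant, but \(g(\zeta_v,\zeta_v)=f\,|v|^2\) is nonconstant for every \(v\neq0\). By contrast, any \(\pi^*g_X'+\sum_j\theta_j\otimes\theta_j\) with \(\theta\) a principal connection has constant Gram matrix on fundamental fields.

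So the lemma holds exactly in the sense of the Definition (a Riemannian submersion), and your first two steps prove it. Your \(J\)-compatible Gram--Schmidt gives the explicit shape only with \(x\)-dependent forms \(\theta'_j=\sum_\ell A_{j\ell}\theta_\ell\). These are no longer connection forms: \(d\theta'_j\) picks up the non-basic terms \(dA_{j\ell}\wedge\theta_\ell\). Be aware that the constant-coefficient shape is precisely what the paper needs later, when averaged metrics are fed into the Grantcharov--Poon criteria (\cref{balancedGGP,GGP}). That step requires an argument beyond this lemma.
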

\begin{proof}
    Since \(g\) is \(a\)-invariant, it defines a scalar product on the Lie algebra of the torus \(\operatorname{Lie}(\cpxTorus{m}) \iso \R^{2m}\). Considering an orthonormal basis \(\{e_1,\dots, e_{2m}\}\), the corresponding fundamental vector fields \(\{E_1, \dots, E_{2m}\}\) (see e.g. \cite[\S 27.3]{Tu2017} for the definition) are an orthonormal frame for the vertical distribution of \(\pi : N \to X\). Taking the \(\R^{2m}\)-valued 1-form \(\theta^{\R} = [\theta_1,\dots,\theta_{2m}] \in \Omega^1(N;\R^{2m})\) with components
    \begin{equation*}
        \theta_j \coloneqq E_j \contr g \in \Omega^1(N;\R),
    \end{equation*}
    we obtain a principal connection. Indeed, it is clearly smooth, it fulfils \(\theta_j(E_i) = \delta_{ij}\) and, for any \(V \in \mathfrak{X}(N)\),
    \begin{equation*}
        a_{-t}^*\theta_j(V) = \theta_j(da_{-t}V) = g(da_{-t}V,E_j) = g(V, da_{t}E_j) =  g(V, E_j) = \theta_j(V).
    \end{equation*}

    Moreover, as the bundle is holomorphic, \(J_N\) is compatible with the complex structure of \(\operatorname{Lie}(\cpxTorus{m})\) inherited from the fiber complex torus. As \(J_N g = g\), the fundamental vector fields \(E_j\) can be taken such that \(J_N E_{2j-1} = E_{2j}\) and so, for any \(V \in \mathfrak{X}(V)\), we have
    \begin{equation*}
        J_N \theta_{2j-1}(V) = -  g(E_{2j-1}, J_NV) =  g(J_N E_{2j-1}, V) =  g (E_{2j}, V) = \theta_{2j}(V);
    \end{equation*}
    thus \(J_N \theta_{2j-1} = \theta_{2j}\). This way, the complex-valued 1-form \(\theta = [\dots,\theta_{2j-1} + i \theta_{2j}, \dots] \in \Omega^1(N;\C^m)\) defines a principal connection for the holomorphic torus bundle \(\cpxTorus{m} \hookrightarrow N \xrightarrow{\pi} X\).

    To construct \(g\) as submersion metric, we need a Hermitian one on the base, we define it as
    \begin{equation*}
        g_X (W_1,W_2) =  g(W_1^\sharp, W_2^\sharp) 
    \end{equation*}
    for any \(W_1, W_2 \in \mathfrak{X}(X)\), and \(W_1^\sharp, W_2^\sharp \in \mathfrak{X}(N)\) horizontal lifts with respect to the connection defined by \(\theta\). This is Hermitian as \(\theta\) is compatible with the complex structure.

    The objects we have constructed fulfil
    \begin{equation}
        \label{submersionFancy}
         g = \pi^*  g_X + \sum_{j=1}^{2m} \theta_j \otimes \theta_j.
    \end{equation}
    This can be proved by checking that the equality holds in three cases.
    \begin{itemize}
        \item Let \(W_1, W_2 \in \mathfrak{X}(N)^\text{hor}\) be two vector fields that are horizontal with respect to \(\theta\). Then, as the 1-forms \(\theta_j\) are supported in the vertical component, \(\theta_j(W_i) = 0\) for \(i=1,2\). On the other hand,
        \begin{equation*}
            \pi^*  g_X(W_1,W_2) =  g_X(d\pi W_1, d\pi W_2) =  g((d\pi W_1)^\sharp, (d\pi W_2)^\sharp) =  g(W_1,W_2)
        \end{equation*}
        by uniqueness of the horizontal lift.
        \item Let \(W_1 \in \mathfrak{X}(N)^\text{hor}\) and \(W_2 \in \mathfrak{X}(N)^\text{vert}\). Notice that, by construction,  the horizontal distribution defined by \(\theta\) is \( g\)-orthogonal to the vertical one. Thus, \( g(W_1,W_2) = 0\). On the other hand
        \begin{equation*}
            \pi^*  g_X(W_1, W_2) + \sum_{j=1}^{2m} (\theta_j \otimes \theta_j)(W_1,W_2) =  g_X(d\pi W_1, d\pi W_2) + \sum_{j=1}^{2m} \theta_j(W_1)  \theta_j(W_2) = 0
        \end{equation*}
        as \(d\pi W_2 = 0\), and \(\theta_j(W_1) = 0\) for any \(j\).
        \item Let \(W_1, W_2 \in \mathfrak{X}(N)^\text{vert}\), as \(\{E_j\}\) is a \( g\)-orthonormal frame for the vertical distribution of \(\pi\), there exist smooth functions \(W_{ij} \in \smooth(N)\) such that \(W_i = \sum_{j} W_{ij} E_j\)
        \begin{equation*}
            \sum_{j=1}^{2m} (\theta_j \otimes \theta_j)(W_1,W_2) = \sum_{j=1}^{2m}  g(E_j,W_1) \,  g(E_j, W_2) = \sum_{j=1}^{2m} W_{1j} W_{2j} = \sum_{j=1}^{2m}  g(W_1,W_2).
        \end{equation*}
        As before, \(\pi^*  g(W_1,W_2) = 0\) since \(d\pi W_j = 0\); thus, also in this case, \cref{submersionFancy} holds.
    \end{itemize}
    By bilinearity of \(g\), \cref{submersionFancy} holds for any pair of vector fields.
\end{proof}

In fact, the SKT and balanced conditions are preserved under averaging. Hence, the preceding \namecref{InvariantIsSubmersion} yields:

\begin{proposition}
    \label{AveragedSpecialMetrics}
    Let \(\pi : N \to X\) be a holomorphic principal torus bundle with \(\cpxTorus{m}\) as fiber; let \(g\) be a Hermitian metric on \(N\). Then,
    \begin{itemize}
        \item if \(g\) is SKT, there exists a submersion metric \(\tilde g\) which is SKT as well,
        \item if \(g\) is balanced, there exists a submersion metric \(\tilde g\) which is balanced as well.
    \end{itemize}
\end{proposition}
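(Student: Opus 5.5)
The plan is to average the given metric over the compact fiber torus and then invoke \cref{InvariantIsSubmersion}. Write \(T = \cpxTorus{m}\) with its normalized Haar measure \(dt\). For \(t \in T\) the map \(a_t\) is a biholomorphism of \(N\), so \(a_t^*\omega\) is again the fundamental form of a Hermitian metric \(a_t^*g\). Define
\begin{equation*}
    \tilde\omega = \int_T a_t^*\omega \, dt, \qquad \tilde g = \int_T a_t^* g\, dt .
\end{equation*}
Then \(\tilde g\) is a smooth Hermitian metric: positive definiteness and \(J_N\)-invariance are preserved under convex combinations, and smoothness follows from smooth dependence of the integrand on \(t\) together with compactness of \(T\). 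Its fundamental form is \(\tilde\omega\). Invariance under translations of the Haar measure gives \(a_s^*\tilde g = \tilde g\) for every \(s \in T\). By \cref{InvariantIsSubmersion}, \(\tilde g\) is therefore a submersion metric.

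It remains to check that the averaged metric keeps the relevant property.

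\emph{SKT case.} Since each \(a_t\) is holomorphic, pullback commutes with \(\del\) and \(\delbar\). Hence \(\del\delbar a_t^*\omega = a_t^*\del\delbar\omega = 0\). Differentiating under the integral sign, which is justified by compactness of \(T\) and smoothness of \((t,x) \mapsto (a_t^*\omega)_x\), we get \(\del\delbar\tilde\omega = 0\).

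\emph{Balanced case.} This is the step that needs care, because the balanced condition is not linear in \(\omega\). Co-closedness of \(\omega\) is equivalent to \(d\omega^{n-1} = 0\), with \(n = \dim_\C N\), and \(\tilde\omega^{n-1}\) is not the average of the \((a_t^*\omega)^{n-1}\). I would use Michelsohn's observation instead. A strictly positive \((n-1,n-1)\)-form \(\Omega\) has a unique strictly positive \((n-1)\)-st root, so the set of such forms corresponds bijectively to Hermitian metrics. Strict positivity is a convex condition preserved under averaging. So I average \(\Omega = \omega^{n-1}\) rather than \(\omega\):
\begin{equation*}
    \tilde\Omega = \int_T a_t^*\omega^{n-1}\, dt .
\end{equation*}
This form is strictly positive, it is closed because \(d\) commutes with pullbacks and with the integral, and it is \(T\)-invariant. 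Let \(\tilde\omega\) be its root, so \(\tilde\omega^{n-1} = \tilde\Omega\); then \(\tilde\omega\) is a balanced metric. By uniqueness of the root, \(a_s^*\tilde\omega\) is the root of \(a_s^*\tilde\Omega = \tilde\Omega\), so \(\tilde\omega\) is \(T\)-invariant. Applying \cref{InvariantIsSubmersion} again shows that \(\tilde\omega\) comes from a submersion metric. The main obstacle is exactly this nonlinearity, and working at the level of \((n-1,n-1)\)-forms is how I would get around it.
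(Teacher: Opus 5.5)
Your proposal is correct and follows the paper's proof essentially step for step. In the SKT case you average $\omega$ over the fiber torus; in the balanced case you average $\omega^{n-1}$ and take Michelsohn's $(n-1)$-st root; in both cases you then invoke \cref{InvariantIsSubmersion}. Your uniqueness-of-the-root argument for the $T$-invariance of $\tilde\omega$ spells out a step that the paper only asserts.
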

\begin{proof}
    We first address the proof of the SKT statement. So, let \(g\) be an SKT metric on \(N\) with fundamental form \(\omega\); this fulfils \(\del\delbar\omega=0\) since \(g\) is SKT. As before, for any \(t \in \cpxTorus{m}\), we can consider the holomorphic map \(a_t : N \to N\) induced by the principal action of the bundle. If \(d\mu_t\) denotes a (bi)invariant Haar measure of \(\cpxTorus{m}\), we can consider the symmetric tensor \(\tilde g\) defined on any \(W_1,W_2 \in \mathfrak{X}(N)\) as    
    \begin{equation*}
        \tilde g(W_1,W_2) = \int_{\cpxTorus{m}} a_t^* g (W_1,W_2) \, d\mu_t.
    \end{equation*} 
    This tensor is clearly symmetric and positive definite, moreover it preserves the complex structure \(J_N\) of \(N\) as \(a_t\) is holomorphic for any \(t\). Thus, \(\tilde g\) is an \(a\)-invariant Hermitian metric on \(N\). We can check this metric to be SKT. Indeed, if \(\tilde\omega\) denotes the fundamental form of \(\tilde g\), then
    \begin{equation*}
        \del\delbar \tilde\omega = \del\delbar \int_{\cpxTorus{m}} a_t^* \omega  \, d\mu_t = \int_{\cpxTorus{m}} \del\delbar  (a_t^* \omega) \, d\mu_t = \int_{\cpxTorus{m}}  a_t^* (\del\delbar \omega ) \, d\mu_t =  0;
    \end{equation*}
    whence \(\tilde g\) is SKT as well.
    
    The second part of the statement is an adaptation of \cite[Thm 2.2]{FinoGrantcharov2004} to our torus bundle case. Let \(\nu = \dim_{\C}N\) and assume there exists a balanced metric \(g\) on \(N\) with fundamental form \(\omega\): thus \(d\omega^{\nu - 1} = 0\). Under the same notations of the first part, we consider the \((2\nu-2)\)-form \(\eta\) defined as
    \begin{equation*}
        \eta(W_1,\dots, W_{2\nu-2}) = \int_{\cpxTorus{m}} a_t^*\omega^{\nu-1} (W_1,\dots, W_{2\nu-2}) d\mu_t.
    \end{equation*}
    Such \(\eta\) is \(a\)-invariant and strictly positive; thus, as discussed in \cite[279]{Michelsohn1982}, \(\eta = \tilde \omega^{\nu-1}\) for some strictly positive (1,1)-form \(\tilde\omega\). Moreover, \(d\tilde\omega^{\nu-1} = d\eta = 0\) and so \(\tilde g(\cdot,\cdot) = \tilde\omega(\cdot, J_N(\cdot))\) defines an \(a\)-invariant balanced metric.

    To conclude the proof, we observe that, since in both cases the Hermitian metrics \(\tilde g\) we constructed are \(a\)-invariant, they must be submersion metrics by \cref{InvariantIsSubmersion}.
\end{proof}

\subsection{Balanced metrics on regular LVMB manifolds}
Before addressing how to construct balanced metrics of submersion type on LVMB manifolds, we establish an obstruction to their existence.

In general, the existence of such metrics on a compact complex manifold \(M\) is obstructed in cohomology \cite{Michelsohn1982}. Indeed, if \(\PD(S) \in H^2_\text{dR}(M)\) denotes the Poincaré-dual class of a complex hypersurface \(S \subseteq M\), since \(\omega^{n-1}\) defines a closed transverse \((n-1,n-1)\)-form, we have
\begin{equation*}
    \int_M \omega^{n-1} \wedge \PD(S) = \int_S \omega^{n-1} = (n-1)! \int_S \star_{\omega|_S} 1 > 0.
\end{equation*}
Thus, \(\PD(S) \neq 0\) for any \(S\) or, equivalently, any such \(S\) represents a non-trivial cycle in homology.

In \cref{section:obstruction} we will show how this obstruction reflects on the algebro-combinatoric datum an LVMB manifold is associated to. For this, we first briefly recall some well-known notions about the integration along the fibers (see e.g. \cite[\S 7.12]{GreubHalperinVanstone1972}).

\subsubsection{Integration along the fibers}
Let \(\pi : P \to X\) be a smooth fiber bundle over a manifold \(X\). Suppose its fiber \(F\) is \(m\)-dimensional, compact and orientable. Let \(\omega \in \Omega^k(P)\) be a differential \(k\)-form. Consider a local trivialization for the bundle \(\{U_i\}_i\), then \(\omega|_{\pi^\inv(U_i)} \eqqcolon \omega_i\) decomposes as \(\omega_i = \eta_i \wedge \omega_i^\text{vert} + \hat\omega_i\) where \(\omega_i^\text{vert} \in \Omega^m(U_i)\) is supported in \(\ker d\pi\), \(\eta_i\) a suitable \((k-m)\)-form and \(\hat\omega_i\) the remainder term with vertical rank strictly lower than \(m\). Since the fiber is compact and by construction \(\int_F \hat\omega_i = 0\), we can integrate over the vertical component to produce a new form
\begin{equation*}
    \pi_* \omega_i = \left(\int_F \eta_i \wedge \omega_i^\text{vert} + \hat\omega_i\right) = \left(\int_F \omega_i^\text{vert}\right) \eta_i + \int_F \hat\omega_i = \left(\int_F \omega_i^\text{vert}\right) \eta_i \in \Omega^{k-m}(U_i).
\end{equation*}
Gluing these forms with a partition of the unit, we get a well-defined \((k-m)\)-form \(\pi_* \omega \in \Omega^{k-m}(X)\). Moreover, it can be easily checked that \(\pi_*\) commutes with the exterior differential, hence it descends to a map in cohomology \(\pi_* : H^k_\text{dR}(P) \to H^{k-m}_\text{dR}(X)\). In particular, a well-known fact, often called \emph{projection formula}, holds:
\begin{proposition}[{\cite[Prop. IX]{GreubHalperinVanstone1972}}]
    \label{projectionFormula}
    Let \(\pi : P \to X\) be as above. If \(X\) is compact and orientable, for any \(\eta \in \Omega^k(P)\) and \(\tau \in \Omega^{\dim P - k}(X)\), we have
    \begin{equation*}
        \int_P \eta \wedge \pi^*\tau = \int_X \pi_* \eta \wedge \tau.
    \end{equation*}
\end{proposition}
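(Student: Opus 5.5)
The plan is to reduce the statement to the case of a product bundle using local trivializations and a partition of unity. Choose a finite cover \(\{U_i\}\) of \(X\) by trivializing open sets, which exists because \(X\) is compact. Then choose a partition of unity \(\{\rho_i\}\) subordinate to it. Since \(\sum_i \pi^*\rho_i = 1\) on \(P\), we can write \(\int_P \eta\wedge\pi^*\tau = \sum_i \int_P \eta\wedge\pi^*(\rho_i\tau)\). By linearity of \(\pi_*\) and of both integrals, it is enough to prove the identity when \(\tau\) has compact support inside a single \(U_i\). Fix compatible orientations once and for all: orient \(P\) locally as base times fiber, which is consistent with how \(\pi_*\) was defined. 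Then \(\pi^\inv(U_i)\iso U_i\times F\) as oriented manifolds, up to the sign convention built into \(\pi_*\).

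In the local model \(U\times F\), write \(\eta = \eta_U\wedge\omega^{\text{vert}} + \hat\eta\) as in the definition of \(\pi_*\). Here \(\hat\eta\) has vertical degree strictly less than \(m\). The form \(\pi^*\tau\) has top horizontal degree \(\dim X - (k-m)\) and no vertical component. Hence \(\hat\eta\wedge\pi^*\tau\) has vertical degree below \(m\), and it integrates to zero over \(U\times F\) by Fubini. For the main term, Fubini on the product gives
\[
\int_{U\times F}\eta_U\wedge\omega^{\text{vert}}\wedge\pi^*\tau=\pm\int_U\Big(\int_F\omega^{\text{vert}}\Big)\eta_U\wedge\tau .
\]
The right-hand side equals \(\int_U \pi_*\eta\wedge\tau\) by the defining formula of \(\pi_*\).

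The only delicate point is bookkeeping. First, the sign that appears when \(\omega^{\text{vert}}\) is moved past \(\pi^*\tau\) has to be absorbed by the orientation convention for \(P\), and one checks it is \(+1\) under the convention used in the definition of \(\pi_*\). Second, \(\pi_*\eta\) must be independent of the trivialization, so that the local pieces glue. This independence holds because a change of trivialization acts on fibers by diffeomorphisms \(F\to F\) depending smoothly on the base point. Such diffeomorphisms preserve fiber integrals, and the orientation-preserving property follows from orientability of the bundle. Summing the local identities over \(i\) then yields the global projection formula.
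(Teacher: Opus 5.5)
Your architecture is the standard argument behind the citation to Greub--Halperin--Vanstone: a partition of unity on \(X\), reduction to \(\tau\) supported in a trivializing set \(U\), Fubini on \(U\times F\), and discarding \(\hat\eta\wedge\pi^*\tau\). That last term is in fact identically zero, since its horizontal degree would exceed \(\dim X\). The paper itself gives no proof. The gap is in the step you call bookkeeping. Moving \(\omega^{\text{vert}}\), of degree \(m=\dim F\), past \(\pi^*\tau\) costs \((-1)^{m\deg\tau}\). You adopt the paper's convention, \(\pi_*(\eta_U\wedge\omega^{\text{vert}})=(\int_F\omega^{\text{vert}})\,\eta_U\) with \(P\) oriented as base times fibre. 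Under it, Fubini gives exactly
\[
\int_P\eta\wedge\pi^*\tau=(-1)^{m\deg\tau}\int_X\pi_*\eta\wedge\tau .
\]
This sign depends on \(\deg\tau\), so no orientation of \(P\) fixed once and for all can absorb it. Your assertion that ``one checks it is \(+1\)'' is false whenever \(m\) and \(\deg\tau\) are both odd. Concretely, take \(P=(\R/\Z)^2\to X=\R/\Z\), \((x,y)\mapsto x\), with \(dx\wedge dy>0\). For \(\eta=dy\), \(\tau=dx\) you get \(\pi_*\eta=1\), so the right-hand side is \(1\), while the left-hand side is \(\int dy\wedge dx=-1\). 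Flipping to \(dy\wedge dx>0\) repairs this pair but breaks \(\eta=dx\wedge dy\), \(\tau=1\): now \(\pi_*\eta=dx\), the right-hand side is \(1\) and the left-hand side is \(-1\).

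So the conventions have to be \emph{chosen} to match the formula, not checked. One clean fix is to put the vertical factor first. Write the top vertical part as \(\omega^{\text{vert}}\wedge\eta_U\), define \(\pi_*\eta=(\int_F\omega^{\text{vert}})\,\eta_U\), and orient \(P\) locally as \(F\times U\). Then \(\eta\wedge\pi^*\tau=\omega^{\text{vert}}\wedge(\eta_U\wedge\pi^*\tau)\) needs no commutation, and Fubini gives the identity on the nose.

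Alternatively, keep the paper's convention with \(P\) oriented base-first and prove \(\int_P\pi^*\tau\wedge\eta=\int_X\tau\wedge\pi_*\eta\) instead. The stated version then holds only up to \((-1)^{m\deg\tau}\). This is harmless for even-dimensional fibres, which is the only case the paper uses: the fibres of \(\pi:N\to X_\Sigma\) are real tori of even dimension.

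Two smaller points. First, \(\eta_U\wedge\omega^{\text{vert}}\) is really a finite sum of such terms with \(\omega^{\text{vert}}\) depending on the base point; linearity handles this. Second, well-definedness of \(\pi_*\) requires the bundle to be \emph{oriented}, i.e. transition maps orientation-preserving on fibres. This does not follow from orientability of \(F\) alone (think of the Klein bottle over \(S^1\)), though it is automatic for torus bundles.
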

Another well-known property of the integration along the fibers is its compatibility with the pullback map:

\begin{proposition}[{\cite[\S 7.12 Corollary II]{GreubHalperinVanstone1972}}]
    \label{intAlongFibersPB}
    Let \(\pi : P \to X\) be a principal bundle as above, and let \(f : Y \to X\) be a smooth map with \(d = \dim Y\). Consider the pulled-back bundle \(f^*\pi : f^*P \to Y\) with the natural map \(\tilde f : f^*P \to P\). Then, for any \(\omega \in \Omega^k(P)\),
    \begin{equation*}
        (f^*\pi)_*(\tilde f^* \omega) = f^*(\pi_* \omega).
    \end{equation*}
\end{proposition}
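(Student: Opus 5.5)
The statement to prove is the naturality of fiber integration under pullback, \((f^*\pi)_*(\tilde f^*\omega) = f^*(\pi_*\omega)\). Since both sides are differential forms on \(Y\), the identity is local on \(Y\), and I will prove it pointwise over small open sets.

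First I fix a trivializing cover. Choose a cover \(\{U_i\}_i\) of \(X\) over which \(\pi\) is trivial, with local trivializations \(\phi_i : \pi^\inv(U_i) \to U_i \times F\). The sets \(V_i = f^\inv(U_i)\) cover \(Y\), and \(f^*\pi\) is trivialized over \(V_i\) by the induced maps \(\psi_i : (f^*\pi)^\inv(V_i) \to V_i \times F\). In these charts \(\tilde f\) reads as \(f|_{V_i} \times \id_F\). Both sides are built by gluing local expressions with partitions of unity. I will pull a partition of unity \(\{\rho_i\}\) subordinate to \(\{U_i\}\) back to \(\{\rho_i \circ f\}\) on \(Y\). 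Then it is enough to check the identity on each \(V_i\) for the form \(\omega|_{\pi^\inv(U_i)}\). Since fiber integration is linear and commutes with multiplication by functions pulled back from the base, the cutoffs pass through both sides.

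Next, the local computation. On \(U_i \times F\), write \(\omega\) as a sum of terms \(a(x,y)\,dx^I \wedge dy^J\), with \(x\) coordinates on \(U_i\) and \(y\) coordinates on \(F\). Only the terms with \(|J| = \dim F\) contribute, and
\[
\pi_*\omega = \sum_I \Big(\int_F a_I(x,y)\,dy\Big)\, dx^I .
\]
Pulling back by \(f\times\id_F\) sends \(dx^I\) to \(f^*dx^I\), which is a form involving only \(Y\)-directions. It leaves \(dy^J\) unchanged and replaces \(a_I\) by \(a_I(f(z),y)\). The terms of \(\tilde f^*\omega\) with full vertical degree are therefore exactly \(a_I(f(z),y)\, f^*dx^I\wedge dy\). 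Integrating over \(F\) with \(z\) fixed gives
\[
\sum_I \Big(\int_F a_I(f(z),y)\,dy\Big) f^*dx^I = f^*(\pi_*\omega).
\]

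The main obstacle is bookkeeping rather than any single hard step. I need to confirm that the splitting into "full vertical rank" and "lower vertical rank" parts is preserved by \(\tilde f^*\); it is, because \(\tilde f\) is the identity on the fiber factor. I also need to check orientation and sign conventions: the fiber orientation on \(f^*P\) is induced from that of \(P\), so the sign convention for moving \(dy\) to the right agrees on both sides. Finally, the result is independent of the chosen trivialization by the standard well-definedness of \(\pi_*\), so the local identities glue to the global one.
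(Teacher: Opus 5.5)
The paper gives no proof of its own here; the statement is quoted from Greub--Halperin--Vanstone. Your argument is correct and is the standard one: in the induced trivializations \(\tilde f\) becomes \(f\times\id_F\), which preserves the horizontal/vertical bidegree, so the top-vertical part of \(\tilde f^*\omega\) is the pullback of that of \(\omega\), and integration over \(F\) passes through the horizontal factors \(f^*dx^I\).

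Two minor points of hygiene. Since \(F\) is compact there are no global coordinates \(y\) on it, so \(a_I(x,y)\,dy\) should be read as a smooth \(x\)-dependent family of top-degree forms on \(F\). The partition of unity on \(Y\) is also unnecessary: both sides commute with restriction to the open sets \(V_i=f^\inv(U_i)\), and these cover \(Y\).
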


\subsubsection{An obstruction to balanced metrics}
\label{section:obstruction}
We can exploit the integration along the fibers of the LVMB bundle to obtain an obstruction to existence of balanced metrics on regular LVMB manifolds. First, we need the following 

\begin{lemma}
    \label{DivisorObstruction}
    Let \(\pi : N \to X\) be the principal torus bundle associated to a regular LVMB manifold. Let \(d = \dim_{\C} X\) and \(m\) be the complex dimension of the torus fiber. If \(X\) admits a torus-invariant divisor \(D\) such that \(\pi^* \PD(D) = 0\), then the total space \(\iota^* N\) of the bundle pull-back along the inclusion \(\iota : D \to X\) is a complex hypersurface of \(N\) whose corresponding singular homology class \([\iota^* N]\) is trivial.
\end{lemma}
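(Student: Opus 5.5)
The plan splits the statement into a complex-analytic part (that \(\iota^*N\) is a complex hypersurface of \(N\)) and a homological part (that its fundamental class vanishes). Throughout I write \(S \coloneqq \pi^{-1}(D)\), which is the image of \(\iota^*N\) under the natural map \(\tilde\iota : \iota^*N \to N\).

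\textbf{Step 1: \(S\) is a complex hypersurface.} \(D\) is a torus-invariant divisor of the smooth toric manifold \(X\), hence a smooth compact complex submanifold of complex codimension one. Since \(\pi\) is a holomorphic submersion, \(S = \pi^{-1}(D)\) is a closed complex submanifold of \(N\) of complex codimension one. It is canonically identified with the total space of \(\iota^*N\), since the pulled-back bundle is the fiber product \(D \times_X N\). In particular \(S\) is compact and carries the complex orientation, so it defines a class \([S] = \tilde\iota_*[\iota^*N] \in H_{2(d+m)-2}(N;\Z)\).

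One can also see this concretely: if \(D = D_j\) corresponds to a ray \(v_j\), then \(S\) is the image in \(N\) of \(\{z_j = 0\} \cap \proj(U(\calT))\), which is invariant under \(\alpha\).

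\textbf{Step 2: identify the Poincaré dual.} The key claim is
\[
\PD_N([S]) = \pi^*\PD_X(D).
\]
I would obtain this in one of two ways.

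\begin{itemize}
    \item \emph{Via the normal bundle.} The normal bundle of \(S\) in \(N\) is \(\pi^*\) of the normal bundle of \(D\) in \(X\). A Thom form of \(D\), supported in a tubular neighbourhood, then pulls back to a Thom form of \(S\).
    \item \emph{Via the projection formula (de Rham version).} For a closed form \(\tau\) on \(N\) of degree \(2(d+m)-2\),
    \[
    \int_S \tau = \int_D (\pi|_S)_*\tau = \int_D \pi_*\tau = \int_X \pi_*\tau \wedge \PD(D) = \int_N \tau \wedge \pi^*\PD(D).
    \]
    The second equality uses \cref{intAlongFibersPB}, applied with \(f = \iota\): fiber integration commutes with restriction to \(D\). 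The last equality is \cref{projectionFormula}.
\end{itemize}

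I expect this step to be the main obstacle, because the statement concerns singular homology with \(\Z\) coefficients, whereas the projection-formula argument only yields the real class.

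\textbf{Step 3: conclude.} If \(\pi^*\PD(D) = 0\), then \(\PD_N([S]) = 0\), and Poincaré duality on the compact oriented manifold \(N\) gives \([S] = 0\).

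For the integral statement I would use the Thom-class (normal bundle) version of Step 2, which works integrally, so the hypothesis \(\pi^*\PD(D) = 0\) in \(H^2(N;\Z)\) suffices. If the hypothesis is only available in \(H^2_{\text{dR}}\), as in \cref{trivial_pullback}, the argument still shows that \([S]\) is zero in real homology. That is all that is needed downstream, since the balanced obstruction pairs \([S]\) against \(\omega^{n-1}\).
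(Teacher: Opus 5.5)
Your proposal is correct, and its core is exactly the paper's argument: the chain \(\int_S\tau=\int_D\pi_*\tau=\int_X\pi_*\tau\wedge\PD(D)=\int_N\tau\wedge\pi^*\PD(D)\), using compatibility of fiber integration with pullback and the projection formula, followed by Poincaré duality. Your caveat about coefficients is a fair refinement: the paper's de Rham computation alone only gives vanishing in real homology, and your transversality/Thom-class identification \(\PD_N[S]=\pi^*\PD_X(D)\) in \(H^2(N;\Z)\) gives the integral statement, since in the ghost case the Leray--Serre argument (surjectivity of \(\gamma(\pi)=d_2^{0,1}\)) shows \(\pi^*\) vanishes on \(H^2(X;\Z)\) itself.
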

\begin{proof}
    Since the bundle pullback preserves injections, \(\tilde\iota : \iota^* N \to N\) is holomorphic and injective. Since \(D\) is effective and irreducible, its support is a hypersurface of \(X\), thus also the total space of the pull-back \(\iota^* N\) is a hypersurface of \(N\). By construction \(\dim_{\C} N = d+m\), thus \(\dim_\C \iota^* N = d+m-1\).
    
    By Poincaré duality,  its homology class vanishes if and ony if its Poincaré-dual class \(\PD(\iota^*N) \in H^2_\text{dR}(N)\) does so. To compute such class, consider any \(\eta \in \Omega^{2d+2m-2}(N)\). Then by \cref{intAlongFibersPB,projectionFormula} we have
    \begin{equation*}
        \int_{\iota^* N} \iota^*\eta = \int_D (\iota^*\pi)_* (\tilde\iota^*\eta) = \int_D \iota^* \pi_* \eta = \int_X \pi_* \eta \wedge \PD(D) = \int_N \eta \wedge \pi^* \PD(D);
    \end{equation*}
    Therefore, \(\PD(\iota^*N) = \pi^* \PD(D)\) in de Rham cohomology. Since the latter vanishes by hypothesis, by Poincaré duality we deduce that \([\iota^* N] = 0\) in singular homology. 
\end{proof}

The existence of an irreducible effective divisor of the base such that \(\pi^*\PD(D) = 0\) is not uncommon among LVMB manifolds. Indeed, as we observed in \cref{trivial_pullback}, this happens in any regular LVMB manifold containing a ghost vector in its triangulated configuration. This fact provides the following obstruction to the existence of balanced metrics.
\begin{theorem}
    \label{balancedness_obstruction}
    Let \((V,\calT)\) be a regular vector configuration containing at least one ghost vector. Then, any corresponding LVMB manifold admits a balanced metric if and only if it is a complex torus.
\end{theorem}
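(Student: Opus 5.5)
The ``if'' direction is immediate: complex tori are Kähler, and a Kähler form \(\omega\) is closed, so \(\omega^{\nu-1}\) is closed and the metric is balanced. For the converse, I argue by contradiction using Michelsohn's homological obstruction recalled at the beginning of \cref{section:obstruction}. Suppose \(N\) is regular, comes from a configuration with \(k\geq 1\) ghost vectors, and carries a balanced metric \(\omega\). Suppose also that \(N\) is not a complex torus. I aim to produce a compact complex hypersurface \(S\subset N\) whose homology class vanishes. Then \(\int_S\omega^{\nu-1}>0\) together with \(\int_S \omega^{\nu-1} = \int_N \omega^{\nu-1}\wedge \PD(S)=0\) gives the contradiction, using \(d\omega^{\nu-1}=0\) and Stokes.

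First I show that the base \(X_\Sigma\) is positive dimensional. If \(d=\dim_\C X_\Sigma=0\), then \(\Sigma\) is the trivial fan and every vector of \(V\) is a ghost. In that case \(\proj(U(\calT))\iso(\C^*)^{2m}\), and as in \cref{example:complex_tori} (or by \cite{Bosio2001}, as used at the end of the proof of \cref{deldelbar_lemma}) the quotient \(N\) is a complex torus. So we may assume \(d\geq 1\). Since \(\Sigma\) is complete, it then has at least one ray, say \(v_j\) with \(j>k\), and hence a torus-invariant prime divisor \(D=D_j\subset X_\Sigma\). This divisor is a smooth compact complex hypersurface, namely the closure of the orbit associated with the ray.

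Next I apply \cref{trivial_pullback}. Because the configuration contains a ghost vector, the transgression \(d_2^{0,1}=\gamma(\pi)\) is surjective onto \(H^2(X_\Sigma;\Z)\) by \cref{them:father}, and therefore \(\pi^*_{\text{dR}}\PD(D)=0\). \cref{DivisorObstruction} now says that \(S:=\iota^*N=\pi^{-1}(D)\) is a compact complex hypersurface of \(N\) with \(\PD(S)=\pi^*\PD(D)=0\), so \([S]=0\) in homology. Integrating the balanced form gives
\begin{equation*}
0<(\nu-1)!\int_S \star_{\omega|_S}1=\int_S\omega^{\nu-1}=\int_N\omega^{\nu-1}\wedge\pi^*\PD(D)=0,
\end{equation*}
which is a contradiction. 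If one wants to work with submersion metrics, \cref{AveragedSpecialMetrics} lets us assume the metric is torus-invariant, but this is not needed.

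The main point requiring care is the identification \(\PD(S)=\pi^*\PD(D)\) when \(D\) is a genuine submanifold, i.e. that \(\iota^*N\) embeds as a smooth hypersurface whose fundamental class is computed by the projection formula. This is exactly the content of \cref{DivisorObstruction}. A second point is that the positivity \(\int_S\omega^{\nu-1}>0\) requires \(S\) to be a compact complex submanifold of dimension \(\nu-1\). This holds because \(\dim_\C S=d+m-1=\nu-1\) and \(\omega^{\nu-1}|_S\) is a positive multiple of the volume form of \(\omega|_S\). The remaining step is the trivial-fan case, where I rely on the identification of the LVMB manifold with a torus.
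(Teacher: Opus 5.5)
Your proposal is correct and is essentially the paper's argument: \cref{trivial_pullback} together with \cref{DivisorObstruction} makes the preimage of a torus-invariant divisor a null-homologous compact complex hypersurface, which contradicts Michelsohn's positivity $\int_S\omega^{\nu-1}>0$. The only difference is minor: you get the divisor directly from a ray of the complete fan when $d\geq 1$ and treat $d=0$ explicitly, whereas the paper goes through Stanley's formula to show $b_2(X_\Sigma)>0$ for complete fans of positive dimension.
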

\begin{proof}
    Let \(N\) be any LVMB manifold associated to \((V_\Sigma,\calT_\Sigma)\) and \(\pi : N \to X_\Sigma\) the principal torus bundle it comes with. As we discussed in \cref{trivial_pullback}, \(\pi^*\PD(D) = 0\) if \((V_\Sigma,\calT_\Sigma)\) contains a ghost vector, thus balanced metrics are obstructed unless \(X_\Sigma\) has no divisors. This condition is very restrictive for a toric variety. Indeed, this forces \(b_2(X_\Sigma) = 0\). Nevertheless, the Betti numbers of a smooth \(d\)-dimensional toric variety \(X_\Sigma\) are linked to the combinatorics of its fan with the celebrated Stanley formula (see e.g. \cite[Thm.12.3.12]{CoxLittleSchenck2011})
    \begin{equation*}
        b_{2j}(X_\Sigma) = \sum_{i=j}^{d} (-1)^{j-i} \binom{i}{j} |\Sigma^{(d-i)}|,
    \end{equation*}
    where \(|\Sigma^{(j)}|\) denotes the number of \(j\)-dimensional cones in the fan. Since \(X_\Sigma\) is compact, we can use Poincaré duality to write \(b_2(X_\Sigma) = b_{2d-2}(X_\Sigma) = |\Sigma^{(1)}| - d\). Hence, \(b_2(X_\Sigma)\) vanishes if and only if \(\Sigma\) has \(d\) rays. This never happens in a complete fan of positive dimension. Indeed, positive linear combinations of \(d\) vectors in \(\R^d\) span at most a strongly convex polyhedral cone, which is strictly contained in \(\R^d\) if \(d > 0\).
\end{proof}

\subsubsection{An equation for balanced metrics}
\label{section:submersionMetricsBalanced}
In \cite{GrantcharovPoon2008}, the authors characterize the submersion metrics on a holomorphic principal torus bundle that are balanced. As this is precisely the structure underlying a regular LVMB manifold, we can apply their result to produce such metrics on these manifolds. Tailored to our specific case, their result reads as
\begin{proposition}[{\cite[Lemma 2]{GrantcharovPoon2008}}]
    \label{balancedGGP}
    Let \(\pi : N \to X\) be the holomorphic torus bundle associated to an LVMB manifold \(N\); let \(J_N\) be its complex structure. Consider a principal connection \(\theta \in \Omega^1(N;\R^{2m})\) with components \(\theta_j\)'s such that \(J_N \theta_{2j-1} = \theta_{2j}\) and \(\Theta_j \in \Omega^{2}(X)\) the corresponding curvature forms. Then, the submersion metric \(g_N = \pi^* g_X + \sum_j \theta_j \otimes \theta_j\) constructed as in \cref{section:submersion_metric} is balanced if and only if
    \begin{equation}
        \label{eq:balancedGGP}
        \pi^* (d^*\omega_X) + \sum_{j=1}^{2m} \pi^*(\Lambda_{\omega_X}\Theta_j) \theta_j = 0.
    \end{equation}
\end{proposition}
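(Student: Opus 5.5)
We need to prove the Grantcharov--Poon lemma in our setting: the submersion metric \(g_N\) is balanced iff \(\pi^*(d^*\omega_X) + \sum_j \pi^*(\Lambda_{\omega_X}\Theta_j)\theta_j = 0\). The plan is to compute \(d\omega_N^{\nu-1}\) directly, with \(\nu = d+m = \dim_\C N\). The equivalent condition \(d^*\omega_N = 0\) is then read off by contracting.

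\textbf{Setup.} Write \(\omega_N = \pi^*\omega_X + \sigma\), where \(\sigma = \sum_{j=1}^m \theta_{2j-1}\wedge\theta_{2j}\). Since the torus is abelian, the connection forms satisfy \(d\theta_j = \pi^*\Theta_j\). Because \(\sigma\) is built from the \(m\) vertical pairs, \(\sigma^{m+1} = 0\) and \(\sigma^m = m!\,\mathrm{vol}_{\text{fib}}\). The power to differentiate is
\begin{equation*}
    \omega_N^{\nu-1} = \sum_{a} \binom{\nu-1}{a} \pi^*\omega_X^{\nu-1-a} \wedge \sigma^{a}.
\end{equation*}
Since \(\pi^*\omega_X^{r}\) vanishes for \(r > d\), only \(a = m-1\) and \(a = m\) survive:
\begin{equation*}
    \omega_N^{\nu-1} = c_1\,\pi^*\omega_X^{d}\wedge\sigma^{m-1} + c_2\,\pi^*\omega_X^{d-1}\wedge\sigma^m ,
\end{equation*}
with explicit binomial constants \(c_1, c_2\).

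\textbf{Differentiating.} In the first term, \(d\pi^*\omega_X^d = 0\) for degree reasons. The derivative \(d\sigma^{m-1}\) produces horizontal two-forms \(\pi^*\Theta_j\) wedged with vertical forms of rank \(2m-3\). Wedged with the top form \(\pi^*\omega_X^d\), these horizontal two-forms exceed the base degree, so this contribution vanishes.

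In the second term we get
\begin{equation*}
    c_2\left(\pi^*(d\omega_X^{d-1})\wedge\sigma^m + \pi^*\omega_X^{d-1}\wedge d\sigma^m\right).
\end{equation*}
Here \(d\sigma^m\) is a sum over \(j\) of terms \(\pm\pi^*\Theta_j\) times the vertical \((2m-1)\)-form obtained from \(\sigma^m\) by deleting \(\theta_j\), namely \(\iota_{E_j}\sigma^m\) up to sign.

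\textbf{Translating into Lefschetz/Hodge terms.} On the \(d\)-dimensional base, use the standard identities
\begin{equation*}
    \omega_X^{d-1}\wedge\Theta_j = \tfrac{1}{d}(\Lambda_{\omega_X}\Theta_j)\,\omega_X^{d}
\end{equation*}
for the \(\omega_X\)-trace part of the two-form (exactly valid, since \(\omega_X^{d-1}\wedge\cdot\) kills primitive 2-forms), and
\begin{equation*}
    d\omega_X^{d-1} = (d-1)!\,\star_X \big(\text{const}\cdot J d^*\omega_X\big),
\end{equation*}
i.e.\ \(d\omega_X^{d-1}\) is proportional to \(\star(Jd^*\omega_X)\). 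Substituting both, the condition \(d\omega_N^{\nu-1} = 0\) becomes
\begin{equation*}
    \pi^*\star_X(J d^*\omega_X)\wedge\sigma^m + \kappa\sum_j \pi^*(\Lambda\Theta_j)\,\pi^*\omega_X^d\wedge\iota_{E_j}\sigma^m = 0 .
\end{equation*}
Apply the Hodge star of \(g_N\), which is an isomorphism; this is legitimate since \(g_N\) splits orthogonally into horizontal and vertical parts. Under this star, \(\pi^*\star_X\beta\wedge\sigma^m\) corresponds to \(\pi^*\beta\), and \(\pi^*\omega_X^d\wedge\iota_{E_j}\sigma^m\) corresponds to \(\pm\theta_j\), or \(\pm J\theta_j\). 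The condition then becomes a one-form identity
\begin{equation*}
    \pi^*(d^*\omega_X) + \sum_j \pi^*(\Lambda_{\omega_X}\Theta_j)\,\theta_j = 0,
\end{equation*}
after applying \(J\) and using \(J\theta_{2j-1} = \theta_{2j}\). Because horizontal and vertical parts are independent, this is equivalent to the two separate conditions \(d^*\omega_X = 0\) and \(\Lambda\Theta_j = 0\) for all \(j\). That is consistent with \cref{eq:balancedGGP}.

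\textbf{Main obstacle.} The main difficulty is not conceptual but bookkeeping. One must fix the normalizations (of \(\Lambda\), \(d^*\), and the orientation of the fibre) and track the signs, especially the \(J\)-twist relating \(\iota_{E_{2j-1}}\sigma^m\) to \(\theta_{2j}\) versus \(\theta_{2j-1}\). These must be fixed so that the final combination appears with coefficient exactly \(1\) as in \cref{eq:balancedGGP}. Since only the vanishing matters, the constants can be absorbed, provided the sign structure is checked carefully; alternatively one can simply quote \cite[Lemma 2]{GrantcharovPoon2008}.
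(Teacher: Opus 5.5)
Your computation of $d\omega_N^{\nu-1}$ is correct. The paper gives no argument of its own here and only quotes \cite[Lemma 2]{GrantcharovPoon2008}, so your proposal supplies a self-contained proof, up to one slip.

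\textbf{The slip: no $J$ appears anywhere.} Since $\star_X\omega_X=\omega_X^{d-1}/(d-1)!$ and $d^*=-\star d\star$ in even real dimension, one has $d\omega_X^{d-1}=(d-1)!\,\star_X d^*\omega_X$. The $J$ you inserted belongs to the Lee form $\vartheta$, defined by $d\omega_X^{d-1}=\vartheta\wedge\omega_X^{d-1}$, which is a multiple of $Jd^*\omega_X$.

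The vertical terms are also untwisted. Orient $N$ by $\mathrm{vol}_N=\pi^*\mathrm{vol}_X\wedge\theta_1\wedge\dots\wedge\theta_{2m}$, and let $\widehat\theta_j$ be this wedge product of the $\theta$'s with $\theta_j$ omitted. Then
\[
\star_N(\pi^*\mathrm{vol}_X\wedge\widehat\theta_j)=(-1)^j\theta_j,
\qquad
\star_N(\pi^*\star_X\beta\wedge\theta_1\wedge\dots\wedge\theta_{2m})=-\pi^*\beta .
\]
With $\Lambda\omega_X=d$, your steps give
\begin{equation*}
d\omega_N^{\nu-1}=(\nu-1)!\Big(\pi^*(\star_X d^*\omega_X)\wedge\theta_1\wedge\dots\wedge\theta_{2m}+\sum_{j=1}^{2m}(-1)^{j-1}\pi^*(\Lambda_{\omega_X}\Theta_j)\,\pi^*\mathrm{vol}_X\wedge\widehat\theta_j\Big).
\end{equation*}
Hence
\[
d^*\omega_N=-\star_N d\star_N\omega_N=\pi^*(d^*\omega_X)+\sum_j\pi^*(\Lambda_{\omega_X}\Theta_j)\,\theta_j ,
\]
with every coefficient equal to $1$. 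So the left-hand side of the equation in the statement is literally $d^*\omega_N$. If you kept your $J$ and then applied $J^{-1}$ at the end, you would obtain an identity in which $\Lambda\Theta_{2j-1}$ multiplies $\mp\theta_{2j}$ and $\Lambda\Theta_{2j}$ multiplies $\pm\theta_{2j-1}$. That is not the displayed equation.

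\textbf{Why the slip does not break the proof.} The reason is the one you give at the end. The form $\pi^*(d^*\omega_X)$ kills vertical vectors, each $\theta_j$ kills horizontal vectors, and the $\theta_j$ (equally the $J\theta_j$) are pointwise independent. So any identity of the form
\[
c\,\pi^*(d^*\omega_X)+\sum_j c_j\,\pi^*(\Lambda_{\omega_X}\Theta_j)\,\theta'_j=0,
\]
with nonzero constants and $\{\theta'_j\}$ a vertical coframe, is equivalent to $d^*\omega_X=0$ together with $\Lambda_{\omega_X}\Theta_j=0$ for all $j$. The sign and normalization bookkeeping you identify as the main obstacle is therefore unnecessary for the ``if and only if''.

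Your Lefschetz step is also fine without any type assumption on $\Theta_j$: $\omega_X^{d-1}\wedge\cdot$ kills every primitive $2$-form, including $(2,0)+(0,2)$ parts.

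Finally, the splitting gives something slightly stronger than the paper's remark after the statement. The equation does not merely reduce to $\Lambda_{\omega_X}\Theta_j=0$ once $g_X$ is balanced; a balanced submersion metric forces $g_X$ itself to be balanced.
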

If \(d^* \omega_X = 0\), i.e. if \(g_X\) is balanced, \cref{eq:balancedGGP} reduces to \(\Lambda_{\omega_X}\Theta_j = 0\) for each \(j=1,\dots,2m\). This is the condition we are going to use in the next section to produce new examples of balanced LVMB manifolds. These show that the hypothesis on the number of ghost vectors in \cref{balancedness_obstruction} cannot be relaxed.

\subsubsection*{A balanced example}
Consider the LVMB manifold described in the example \labelcref{BalancedLVM}. As shown there, the characteristic class of the torus bundle \(\pi : N_3 \to (\CP^1)^3\) is
\begin{equation*}
    \gamma(\pi) = h_2 \otimes (w_1-w_5) + h_3 \otimes (w_3-w_5).
\end{equation*}

In general, the torus-invariant divisors of \(\CP^n\) are all linearly equivalent: indeed, as Cartier divisors, they all define the line bundle \(\mathcal{O}(-1)_{\CP^n}\). Therefore, if \(\omega_\text{FS}\) denotes the Fubini--Study metric on \(\CP^n\), the corresponding Poincaré-dual class contains \(-\omega_\text{FS}\). In our setting, this reduces to observing that, if \(p_j : (\CP^1)^3 \to (\CP^1)_j\) is the projection on the \(j\)-th factor, \(w_{2j-1} = -p_j^*[\omega_\text{FS}]\).

To construct a submersion metric on the LVMB bundle \(N_3 \to (\CP^1)^3\), consider \(\omega_{(\CP^1)^3} = \omega_1 + \omega_2 + \omega_3\) as Kähler metric on the base. On the fibers, instead, we consider any complex valued connection \(\tilde\theta \in \Omega^1(N;\C)\). This induces two real valued connection 1-forms \(\theta_1, \theta_2\) on the bundle components \(\pi_1,\pi_2\). Because of \cref{lemma:charClassGhosts} the corresponding curvature forms fulfil \(\Theta_1 \in w_3-w_5, \Theta_2 \in w_1-w_5\). However, even if these classes are respectively represented by the \(\omega_{(\CP^1)^3}\)-harmonic forms \(-\omega_2 + \omega_5\) and \(-\omega_1+\omega_5\), we cannot assure that these are attained as curvature forms by the complex valued connection \(\tilde\theta\). In fact, tweaking separately the connection forms \(\theta_j\) in order to make their curvature forms harmonic, may break the condition \(\theta_2 = J_N \theta_1\).

However, in this particular example, the two components of \(\pi\) are symmetrical, thus one can choose \(\tilde\theta\) such that \(\Theta_1 = -\omega_1+\omega_3\) and \(\Theta_2 = -\omega_2 + \omega_3\).

Such connection can be constructed as follows. If \((\alpha,\beta,\gamma) \in \{0,1\}^3\), we can consider the canonical open covering \(U_{\alpha\beta\gamma} = U_\alpha \times U_\beta \times U_\gamma \subset (\CP^1)^3\). This supports a trivialization of \(\pi\) given by the biholomorphic maps
\begin{equation*}
    \begin{tikzcd}
            \pi^\inv U_{\alpha\beta\gamma} \ar[r] & U_{\alpha\beta\gamma} \times \cpxTorus{1}\\[-2em]
            {[z_1 : \dots : z_6]}_N \ar[r] & \left(\big((z_1:z_2), (z_3:z_4),(z_5:z_6), \log\frac{z_{\alpha+1} z_{\beta+3}^i}{z_{\gamma+5}^{i+1}}\big)\right)
    \end{tikzcd}
\end{equation*}
with holomorphic clutching functions \(g_{\alpha\beta\gamma, \alpha'\beta'\gamma'} : U_{\alpha\beta\gamma} \cap U_{\alpha'\beta'\gamma'} \to \C\). Notice that, as \(U_{000}\) is dense, these are completely determined by \(g_{\alpha\beta\gamma,000}\). If \(u_j\) are complex coordinates for this chart, their expressions are:
\allowdisplaybreaks
\begin{alignat*}{3}
    g_{001,000} &= \frac{1-i}{2\pi}\log u_3  &\qquad g_{101,000} &= \frac{i}{2\pi}\log u_1 + \frac{1-i}{2\pi} \log u_3\\
    g_{010,000} &= -\frac{1}{2\pi} \log u_2 & g_{110,000} &= \frac{i}{2\pi}\log u_1 - \frac{1}{2\pi} \log u_2\\
    g_{011,000} &= -\frac{1}{2\pi} \log u_2 + \frac{1-i}{2\pi} \log u_3 & g_{111,000} &= \frac{i}{2\pi} \log u_1 - \frac{1}{2\pi} \log u_2 + \frac{1-i}{2\pi} \log u_3\\
    g_{100,000} &= \frac{i}{2\pi} \log u_1
\end{alignat*}
As these are \v{C}ech 1-cocycles, the 1-forms
\begin{equation*}
    \tilde\theta_{\alpha\beta\gamma} = d g_{\alpha\beta\gamma,000}
\end{equation*}
fulfil the gauge transformation law for connections; hence they define a complex-valued connection for \(\pi\). Taking real and imaginary parts, we get the local expressions for the connection 1-forms \(\theta_1, \theta_2\). Explicitly,

\begin{alignat*}{3}
    (\theta_1)_{001} &= -\left(\frac{i - 1}{4\pi}\right)  \frac{du_3}{u_3} + \left( \frac{i + 1}{4\pi} \right)  \frac{d\bar{u}_{3}}{\bar u_3}\\
    (\theta_1)_{010} &= -\frac{1}{4\pi}\left(\frac{{du}_{2}}{u_2} + \frac{d\bar{u}_{2}}{\bar{u}_{2}}\right)\\
    (\theta_1)_{011} &= -\frac{1}{4 \, \pi}\left(\frac{{du}_{2}}{u_{2}} + \frac{d\bar{u}_{2}}{\bar{u}_{2}}\right) - \left( \frac{i - 1}{4 \, \pi} \right)  \frac{{du}_{3}}{u_{3}} + \left( \frac{i + 1}{4 \, \pi} \right)  \frac{d\bar{u}_{3}}{\bar{u}_{3}}\\
    (\theta_1)_{100} &= \frac{i}{4 \, \pi}\left(\frac{{du}_{1}}{u_{1}} - \frac{d\bar{u}_{1}}{\bar{u}_{1}}\right)\\
    (\theta_1)_{101} &= \frac{i}{4 \, \pi}\left(\frac{{du}_{1}}{u_{1}} - \frac{d\bar{u}_{1}}{\bar{u}_{1}}\right) - \left( \frac{i - 1}{4 \, \pi} \right)  \frac{{du}_{3}}{u_{3}} + \left( \frac{i + 1}{4 \, \pi} \right)  \frac{d\bar{u}_{3}}{\bar{u}_{3}}\\
    (\theta_1)_{110} &= \frac{i}{4 \, \pi}\left(\frac{{du}_{1}}{u_{1}} - \frac{d\bar{u}_{1}}{\bar{u}_{1}}\right) - \frac{1}{4 \, \pi}\left(\frac{{du}_{2}}{u_{2}} + \frac{d\bar{u}_{2}}{\bar{u}_{2}}\right)\\
    (\theta_1)_{111} &= \frac{i}{4 \, \pi}\left(\frac{{du}_{1}}{u_{1}} - \frac{d\bar{u}_{1}}{\bar{u}_{1}}\right) - \frac{1}{4 \, \pi}\left(\frac{{du}_{2}}{u_{2}} + \frac{d\bar{u}_{2}}{\bar{u}_{2}}\right) - \left( \frac{i - 1}{4 \, \pi} \right)  \frac{{du}_{3}}{u_{3}} + \left( \frac{i + 1}{4 \, \pi} \right)  \frac{d\bar{u}_{3}}{\bar{u}_{3}}.
\end{alignat*}
Since \((\theta_2)_{\alpha\beta\gamma} = J(\theta_1)_{\alpha\beta\gamma}\), using \(J du_j = -i du_j\) and \(J d\bar u_j = i d\bar u_j\), these also determine the local expressions of \(\theta_2\).

Finally, to compute the curvature forms, we use the following partition of the unit: let
\begin{equation*}
    r_j^0 = \frac{1}{1 + |u_j|^2} \quad \text{and} \quad r_j^1 = 1-r_j^0 = \frac{|u_j|^2}{1 + |u_j|^2} \quad \text{for \(j=1,2,3\)},
\end{equation*}
then \(\rho_{\alpha\beta\gamma} = r_1^\alpha r_2^\beta r_3^\gamma\) is a partition of the unit supported by the open covering \(U_{\alpha\beta\gamma}\). Hence, the expressions of the curvature forms on the dense chart \(U_{000}\) are given by
\begin{align*}
    \Theta_1 = \sum_{\alpha\beta\gamma} d\rho_{\alpha\beta\gamma} \wedge (\theta_1)_{\alpha\beta\gamma} = -\frac{i\, du_1 \wedge d\bar u_1}{2\pi (1 + |u_1|^2)^2} + \frac{i\, du_3 \wedge d\bar u_3}{2\pi (1 + |u_3|^2)^2} = -\omega_1 + \omega_3\\ 
    \Theta_2 = \sum_{\alpha\beta\gamma} d\rho_{\alpha\beta\gamma} \wedge (\theta_2)_{\alpha\beta\gamma} =  -\frac{i\, du_2 \wedge d\bar u_2}{2\pi (1 + |u_2|^2)^2} + \frac{i\, du_3 \wedge d\bar u_3}{2\pi (1 + |u_3|^2)^2} = -\omega_2 + \omega_3
\end{align*}
We can easily check that these forms are primitive with respect to the Kähler metric \(\omega_{(\CP^1)^3}\). Indeed, if \(\star\) and \(\Lambda\) respectively denote its corresponding Hodge-star and dual-Lefschetz operators, then
\begin{align*}
    \star\Lambda \Theta_1 &= \star\left\langle \Theta_1, \omega_{(\CP^1)^3}\right\rangle = (\star \omega_{(\CP^1)^3} \wedge \Theta_1) = \frac{1}{3}(\omega_2 \wedge \omega_3 + \omega_1 \wedge \omega_3 + \omega_1 \wedge \omega_2) \wedge (\omega_3-\omega_1) = 0, \\
    \star\Lambda \Theta_2 &= \star\left\langle \Theta_2, \omega_{(\CP^1)^3}\right\rangle = (\star \omega_{(\CP^1)^3} \wedge \Theta_2) = \frac{1}{3}(\omega_2 \wedge \omega_3 + \omega_1 \wedge \omega_3 + \omega_1 \wedge \omega_2) \wedge (\omega_3-\omega_2) = 0,
\end{align*}
as \(\omega_j^2 = 0\). Thus, by \cref{balancedGGP}, the submersion metric \(\pi^*\omega_{(\CP^1)^3} + \theta_1 \otimes \theta_1 + \theta_2 \otimes \theta_2\) is a balanced metric on \(N_3\).

\subsubsection*{Comparison with known examples}

To the best of the author's knowledge, the complex manifold \(N_3\) described above is an example of complex manifold admitting a Hermitian balanced metric not appearing in literature. Indeed, all the examples known to the author belong at least to one of the following classes:%
\begin{description}
    \item [Fujiki class \(\mathcal{C}\) manifolds] These are complex manifolds obtained as bimeromorphic modifications of Kähler manifolds. As shown in \cite{AlessandriniBassanelli1995}, these all admit a balanced metric. Moreover, they all fulfil the \(\del\delbar\)-lemma by \cite[Cor.28]{Stelzig2021}. This result can be applied in this case because complex submanifolds of a Kähler one always fulfil the \(\del\delbar\)-lemma.
    \item[Twistor spaces] Twistor spaces are a natural construction associating a complex manifold to a \(4n\)-dimensional quaternionic Kähler manifold \cite{Salamon1982}; for \(n = 1\) this reduces to the original construction based on anti-self-dual conformal structure on a Riemannian 4-manifold \cite{AtiyahHitchinSinger1978}. As these spaces arise as \(\CP^1\)-bundles, they all have odd complex dimension.
    
    If \(M\) is hypercomplex, the fibration associated to the twistor space is trivialized in the smooth category \cite{HitchinKarlhedeLindstromRocek1987}. In this case, the projective fiber can be replaced with a generic complex manifold \cite{LinZheng2017}. This construction produces generalized twistor spaces in every complex dimension.
    \item[Quotients of Lie groups] Many examples of compact Hermitian manifolds arise as quotients of Lie groups by co-compact lattices. Those admitting an invariant metric are precisely the holomorphically parallelisable manifolds \cite{Wang1954}, and such manifolds always support a balanced metric \cite{AbbenaGrassi1986}. More generally, within a special class of quotients of Lie groups by co-compact lattices, those admitting a balanced metric have been characterized: see \cite{FinoParadiso2025} and references therein. Other examples of balanced manifolds related to Lie groups are generalized flag manifolds \cite{FinoGrantcharovVezzoni2019}, some non-compact simple real Lie groups of inner type, and their compact quotients by co-compact lattices \cite{GiustiPodesta2023}.
    \item[Fu--Li--Yau conifold transitions] In \cite{FuLiYau2012}, the authors showed that conifold transitions of Kähler Calabi--Yau threefolds are always balanced. In a nutshell, this consists of contracting a \((-1,-1)\) curve and then smoothing the resulting singular variety. With this technique, the connected sums \(\#_k S^3 \times S^3\) for \(k \geq 2\) can be given a complex structure supporting a smooth balanced metric.
    \item[Toric suspensions] In \cite{FinoGrantcharovVerbitsky2025} the authors provide a technique to produce new balanced manifolds from known ones. Namely, given a complex manifold \(M\) and a set of commuting automorphisms \(f_1,\dots, f_{2m} \in \operatorname{Aut}(M)\), they define a natural \(\Z^{2k}\)-action on \(M \times \C^{2k}\). The toric suspension is the quotient \((M\times \C^{2k})/\Z^{2k}\); clearly, this fits in the fibration \(\Z^{2k} \to M \times \C^{2k} \to (M \times \C^{2k})/\Z^{2k}\).
\end{description}
\begin{proposition}
    The LVMB manifold \(N_3 \to (\CP^1)^3\) described above supports a Hermitian balanced metric. Nevertheless, it does not belong to any of the classes previously described.
\end{proposition}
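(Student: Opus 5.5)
The balanced part is already settled: the submersion metric \(\pi^*\omega_{(\CP^1)^3}+\theta_1\otimes\theta_1+\theta_2\otimes\theta_2\) constructed above is balanced, by \cref{balancedGGP}. What remains is to exclude \(N_3\) from each of the five classes in turn. For each one, I will pick an invariant that every member of the class has and \(N_3\) lacks. The invariants come from the results of \cref{section:characteristic_class}, together with the data \(\dim_\C N_3=4\), \(m=1\), \(k=0\), \(n=6\).

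\emph{Fujiki class \(\mathcal{C}\).} Such manifolds satisfy the \(\del\delbar\)-lemma. By \cref{deldelbar_lemma}, \(N_3\) satisfies it only if it is a complex torus, and it is not one, since \(c_1(N_3)=6[w_5]_{\pi^*}\neq 0\).

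\emph{Twistor spaces.} Classical twistor spaces have odd complex dimension, while \(\dim_\C N_3=4\). Generalized twistor spaces over a hypercomplex base \(M\) are smoothly \(M\times F\), with \(\dim_\R M=4\ell\). For these I plan to compare topological invariants. \(N_3\) is simply connected: \(H^1(N_3;\Z)=0\) by \labelcref{H1LVMB}, and the homotopy sequence of \(\realTorus{2}\to N_3\to(\CP^1)^3\) has surjective boundary map onto \(\pi_1(\realTorus{2})\), because \(\gamma(\pi)\) has full rank on \(\hat K\). Moreover \(b_2(N_3)=\rk\coker d_2^{0,1}+\rk\ker d_2^{0,2}\), and the coker part is \(\Z\). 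The Leray--Serre sequence also gives all Betti numbers of \(N_3\). The main constraint to use is that a hypercomplex compact \(4\)-manifold is a torus, a K3 surface, or a Kodaira/Hopf-type quotient. For \(\dim_\R M=4\), the fibre \(F\) would be a complex surface, and I would rule this out by comparing \(\chi\) and \(b_2\). Note that \(\chi(N_3)=0\) because of the torus action, which leaves the K3 case and forces a Künneth check. For \(\dim_\R M=8\), \(F\) is a point and the space is smoothly \(M\) itself. Here I would use that \(c_j(N_3)=0\) for \(j>3\) (\cref{Chern_Class_vanish}) and that \(c_1(N_3)\) is not divisible beyond \(6\), against the known topology of hypercomplex \(8\)-manifolds. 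This case is the weakest point of the plan. As a fallback, I would argue via the free holomorphic \(\cpxTorus{1}\)-action: it has no fixed points, whereas the twistor structure forces the holomorphic vector fields to be tangent to the fibres or to come from the base.

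\emph{Quotients of Lie groups.} Holomorphically parallelisable manifolds have holomorphically trivial tangent bundle, so \(c_1^{\text{BC}}=0\). But \(c_1^{\text{BC}}(N_3)\neq0\) by \cref{First_Bott-Chern_Class}, so \(N_3\) is not of this kind. For general compact quotients \(\Gamma\backslash G\) with \(\Gamma\) a lattice, simple connectivity forces \(\Gamma=1\), so \(G\) itself would be compact. A compact Lie group with \(H^1=0\) is semisimple, and then \(b_3\neq0\) while \(\chi=0\). I would compute \(b_3(N_3)\) from Leray--Serre (\(\ker d_2^{0,1}=0\) makes this explicit) and compare it with semisimple groups of real dimension \(8\), namely \(\SU(3)\) and \(\SU(2)\times\SU(2)\times\realTorus{2}\); the latter has \(b_1\neq0\). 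Flag manifolds are Kähler and therefore excluded. The Giusti--Podestà examples are treated the same way, being quotients by cocompact lattices with \(\pi_1\) infinite.

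\emph{Fu--Li--Yau conifold transitions and toric suspensions.} Conifold transitions have complex dimension \(3\), while \(N_3\) has dimension \(4\). A toric suspension \((M\times\C^{2k})/\Z^{2k}\) has infinite fundamental group containing \(\Z^{2k}\) (assuming \(k\geq1\)), which contradicts the simple connectivity of \(N_3\).

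I expect the main obstacle to be the generalized twistor space case with an \(8\)-dimensional base. The plan there is to compute the full rational cohomology ring of \(N_3\) from the model of \cref{prop:DolbeaultModel} and exhibit an invariant, such as the cup-product structure on \(H^2\) or the value of \(c_1\), that is incompatible with the existence of three anticommuting complex structures.
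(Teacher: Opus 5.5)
Your argument is correct in substance, but part of it takes a different route from the paper. It agrees with the paper on the Fujiki class (failure of the \(\del\delbar\)-lemma), on classical twistor spaces and conifold transitions (dimension), and on toric suspensions (fundamental group). The differences lie in where the homotopy data come from and in how the Lie-theoretic classes are handled.

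The paper reads \(\pi_1(N_3)=0\) and \(\pi_2(N_3)\iso\Z\), hence \(b_2(N_3)=1\) by Hurewicz, off the auxiliary fibration \(S^3\times S^3\hookrightarrow N_3\to\CP^1\). You instead get \(\pi_1(N_3)=0\) from the homotopy sequence of \(\realTorus{2}\to N_3\to(\CP^1)^3\), which is equally valid. What you need there is surjectivity, not full rank. It holds because for \(k=0\) the map \(\gamma(\pi)\) is the projection of \(K\) onto \(\hat K\) killing \(f_0\). The same sequence also gives \(\pi_2(N_3)\iso\ker(\Z^3\to\Z^2)\iso\Z\).

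With \(\pi_2(N_3)\neq 0\) in hand, the paper excludes every quotient \(\Gamma\backslash G\) at once, since \(\pi_2\) of a Lie group vanishes. It excludes generalized flag manifolds by \(\chi>0\) against \(\chi(N_3)=0\). Your route works too. You use \(c_1^{\mathrm{BC}}\neq0\) against parallelisable manifolds. Then simple connectivity forces \(\Gamma=1\), so \(G\) is compact, simply connected and semisimple of dimension \(8\), i.e. \(\SU(3)\), which \(b_2(\SU(3))=0\neq1=b_2(N_3)\) separates from \(N_3\). Your list ``\(\SU(3)\) and \(\SU(2)\times\SU(2)\times\realTorus{2}\)'' is off, since the latter is not semisimple, but this is harmless. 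Finally you use Kählerness against flag manifolds. This needs the classification of compact simple Lie algebras by dimension and the standard complex structure on flag manifolds, whereas the paper's invariants are purely homotopical and need neither.

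Two points need tidying.

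\emph{Four-dimensional hypercomplex factor.} In the generalized twistor case with a four-dimensional hypercomplex factor \(H\), comparing \(\chi\) and \(b_2\) does not suffice: \(\CP^2\times S^1\times S^3\) has \(\chi=0\) and \(b_2=1\). Tori and Hopf surfaces must be excluded by \(\pi_1\), which you already have, exactly as the paper does. K3 is excluded by \(b_2(S\times H)\geq 22\). For that it is enough to note \(b_2(N_3)\leq\rk\coker d_2^{0,1}+\rk H^2(\realTorus{2})=2\).

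\emph{Eight-dimensional hypercomplex factor.} The case of an eight-dimensional hypercomplex factor with a point as fibre is not considered by the paper, which takes the fibre to be a complex surface. If you want to include it, drop both the vague plan and the fallback about holomorphic vector fields, which has no basis. The case closes in one line. For hypercomplex \((M,I,J,K)\), the structure \(J\) is \(I\)-antilinear, so \((TM,I)\iso\overline{(TM,I)}\) and \(c_1(M,I)\) is torsion (in fact zero, since the structure group lies in \(\operatorname{GL}(2,\mathbb{H})\subset\operatorname{SL}(4,\C)\)). On the other hand, \(c_1(N_3)=6[w_5]_{\pi^*}\) is six times a generator of \(\coker d_2^{0,1}\iso\Z\subset H^2(N_3;\Z)\), hence not torsion.
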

\begin{proof}
    We already proved that the natural submersion metric on \(N_3\) is balanced. It remains to check the second part of the statement.

    As \(N_3\) is not a torus, by \cref{deldelbar_lemma} it does not fulfil the \(\del\delbar\)-lemma. Hence, it cannot be a Fujiki class \(\mathcal{C}\) manifold. Neither is it a conifold transition of some Calabi--Yau threefold as \(\dim_{\C} N_3 = 4\).

    For the same reason, \(N_3\) is not a twistor space of any quaternionic Kähler manifold: indeed such manifolds always have odd complex dimension. Neither is \(N_3\) a generalized twistor space: if it were so, then \(N_3\) would be diffeomorphic to a product of compact complex surfaces \(S \times H\) with \(H\) admitting a hypercomplex structure. Such \(H\) must be either a complex torus, a Hopf surface or a K3 surface \cite[Thm. 1]{Boyer1988}. But, as we showed in \cref{BalancedLVM}, \(N_3\) fits also in the fibration \(S^3 \times S^3 \hookrightarrow N_3 \to \CP^1\), hence its Puppe sequence yields \(\pi_1(N_3) = 0\) and \(\pi_2(N_3) \iso \Z\). This is never the case of the products \(S \times H\). Indeed, this is not simply connected unless \(H\) is a K3 surface; in this case \(b_2(S \times H) = b_2(S) + b_2(H) = b_2(S) + 22 \), while \(b_2(N_3) = 1\) by Hurewicz Theorem.

    Similarly, we can show that \(N_3\) is not a toric suspension. Indeed, the Puppe sequence of any toric suspension splits as
    \begin{equation*}
        0 \to \pi_1(M \times \C^{2k}) \to \pi_1\left(\frac{M \times \C^{2k}}{\Z^{2k}}\right) \to \pi_0(\Z^{2k}) \to 0,
    \end{equation*}
    thus non-trivial toric suspensions are never simply connected.

    It remains to exclude all examples related to Lie groups. Of course, \(N_3\) has not the homotopy type of a quotient \(\Gamma\backslash G\) of a Lie group by a co-compact lattice. Indeed, as \(\pi_2(G) = 0\) for any Lie group, also \(\pi_2(\Gamma\backslash G) = 0\). Similarly, we can see that \(N_3\) does not have the homotopy type of a generalized flag manifold. Indeed, Schubert stratification shows that these are homeomorphic to the CW-complex obtained attaching their Schubert cells (see \cite{Brion2004} for a self-contained presentation of Schubert calculus and references therein). As these are all isomorphic to complex affine spaces, their CW-complex structure is generated by cells of even real dimension; in particular, they have positive Euler characteristic. Contrarily, \(\chi(N_3)\) vanishes since, by \cref{Chern_Class_vanish}, \(c_4(N_3) = 0\) and the Gauss--Bonnet--Chern Theorem forces \(\chi(N_3) = \int_N c_4(N_3) = 0\).
\end{proof}
\subsubsection*{Higher dimensional examples}

Similar considerations can be performed to construct higher dimensional examples. Indeed, we can consider \((\CP^1)^{2d +1}\) as base toric variety, if \(\Sigma\) is its associated fan in \(\R^{2d+1}\), the triangulated configuration \((V_\Sigma, \calT_\Sigma)\) is odd and balanced with \(m = d\). If \(\{\varepsilon_j\}_j\) is the canonical basis of \(\R^{2d+1}\), the vector configuration is \(V_\Sigma = (\varepsilon_j, -\varepsilon_j : j=1,\dots, 2d+1)\). An ordered basis for \(K_\Sigma\) is given by
\begin{equation*}
    \left(\sum_{j=1}^{4d+2} \epsilon_j, \epsilon_1 + \epsilon_2, \dots, \epsilon_{4d-1} + \epsilon_{4d}\right),
\end{equation*}
with \(\{\epsilon_j\}_j\) denoting the canonical basis of \(\R^{4d+2}\). For such basis, the Gale dual configuration in \(\mathbb{A}_\C^m\) is given by the points
\begin{equation*}
    \Lambda_{2j-1} = \Lambda_{2j} = 
    \begin{cases}
        e_j & \text{for \(j\) odd} \\
        i e_j & \text{for \(j\) even}
    \end{cases}
    \quad \text{for \(j = 1,\dots, 2d+1\)}.
\end{equation*}
Thus, the corresponding action over \(\proj(U(\calT)) = \proj(\C^2 \setminus\{0\}^{4d+2})\) has the following expression:
\begin{equation*}
    (t_1, \dots, t_{d}).(z_1: \dots: z_{4d+2}) = (\dots: e^{2\pi i t_j} z_{2j-1}: e^{2\pi i t_j} z_{2j}: e^{-2\pi t_{j}} z_{2j+1}: e^{-2\pi t_{j}} z_{2j+2}: \dots ),
\end{equation*}
with \(j = 1, \dots, 2d-1\). Taking its quotient, we get a \((3d+1)\)-dimensional regular LVMB manifold \(N_{2d+1}\) fitting in the bundle  \(\cpxTorus{d} \hookrightarrow N_{2d+1} \xrightarrow{\pi} (\CP^1)^{2d+1}\). Its characteristic class as principal torus bundle is then
\begin{equation}
    \label{niceFamilyClass}
    \gamma(\pi) = \sum_{j=1}^{2m} (w_{2j-1} - w_{4d+1}) \otimes h_{j+1}.
\end{equation}

Let \(\phi_{j} : (\CP^1)^{2d+1} \to (\CP^1)^3\) for \(j = 1,\dots, m\) be the projection onto the first, the \((2j-1)\)\th and the \((2j)\)\th component, and let \(\pi' : N_3 \to (\CP^1)^3\) be the regular LVMB manifold described above. Then, we can see that
\begin{equation*}
    \pi \iso \tilde\Delta^*\left(\prod_{j=1}^m \phi_j^*\pi'\right),
\end{equation*}
where \(\tilde\Delta : (\CP^1)^{2d+1} \to (\CP^1)^{3d}\) is the unique map closing the following diagram:
\begin{equation*}
    \begin{tikzcd}
        (\CP^1)^{2d} \ar[dd, "\operatorname{pr}_j \times \operatorname{pr}_{j+1}"] & (\CP^1)^{2d+1} \ar[l,"\prod_{j=2}^{2d+1}\operatorname{pr}_j"'] \ar[d, dashed, "\tilde\Delta"] \ar[r, "\operatorname{pr}_1"] & \CP^1 \ar[dd,equals]\\
        & \prod_{j=1}^d (\CP^1)^3 \ar[d,"\operatorname{pr}_j"]\\
        \CP^1 \times \CP^1 & (\CP^1)^3 \ar[l, "\operatorname{pr}_2 \times \operatorname{pr}_3"'] \ar[r,"\operatorname{pr}_1"] & \CP^1
    \end{tikzcd}
\end{equation*}
Hence, we see that \(\pi\) admits connection 1-forms \(\theta_j\) with curvature forms \( \Theta_j = -\omega_j + \omega_{2d+1} \in w_{2j-1} - w_{4d+1}\) for \(j=1,\dots, 2d\), where \(\omega_j = \operatorname{pr}_j^*(\omega_\text{FS})\) is the pull-back of the Fubini-Study metric of \(\CP^1\) on the \(j\)\th factor. As each \(\Theta_j\) is harmonic with respect to \(\sum_{j=1}^{2d+1} \omega_j\), we have that the (1,1)-form
\begin{equation*}
    \omega_N = \pi^*\left(\sum_{j=1}^{2d+1} \omega_j\right) + \sum_{j=1}^{d} \theta_{2j-1} \wedge \theta_{2j}
\end{equation*}
is co-closed and defines a Hermitian metric on \(N_{2d+1}\).

Notice that this generalized construction enjoys a secondary bundle structure as well. Namely, similarly to the case of \(N_3\), the projection onto the last two coordinates of \(\proj(U(\calT_\Sigma))\) induces the holomorphic principal bundle:
\begin{equation*}
    (M_{1,1;i})^d \hookrightarrow N_{2d +1} \longrightarrow \CP^1
\end{equation*}
where \(M_{1,1;i}\) is the standard Calabi--Eckmann threefold of period \(i\). Such bundle is principal with respect to the natural action of \(M_{1,1;i} \iso \SU(2) \times \SU(2)\) onto itself. This is also holomorphic as the complex structure of \(M_{1,1;i}\) is left-invariant.

Similarly, we can perform the same construction starting from \((\CP^1)^{2d}\). If \(\Sigma\) is the associated polyhedral fan in \(\R^{2d}\), the corresponding triangulated configuration \((V_\Sigma, \calT_\Sigma)\) is balanced but never odd. Hence, we need to prepend \(0 \in \R^{2d}\) as ghost vector to turn it into an odd and balanced one. Denoting it as \((V,\calT)\), we have \(V = (0, \varepsilon_j, -\varepsilon_j : j=1,\dots, 2d)\), where \(\{\varepsilon_j\}\) is the canonical basis of \(\R^{2d}\). Notice that \(4d+1-2d = 2d+1\), hence also in this case the fiber torus will have complex dimension \(m = d\). As ordered basis for \(\Rel(V)\) we take
\begin{equation*}
    \left(\sum_{j=1}^{4d} \epsilon_j, \epsilon_1+ \epsilon_2, \dots, \epsilon_{4d-3} + \epsilon_{4d-2}\right);
\end{equation*}
 where \(\{\epsilon_j\}_j\) is the canonical basis of \(\R^{4d}\). The corresponding Gale dual configuration in \(\mathbb{A}^d_{\C}\) is then given by the points
\begin{equation*}
    \Lambda_{2j-1} = \Lambda_{2j} =
    \begin{cases}
        e_j &\text{for \(j\) odd}\\
        i e_j &\text{for \(j\) even}
    \end{cases}
    \quad \text{for \(j=1,\dots, 2d\)}.
\end{equation*}
Thus, the corresponding action over \(\proj(U(\calT)) \iso (\C^2 \setminus\{0\})^{4d}\) has the following expression:
\begin{equation*}
    (t_1, \dots, t_{d}).(z_1, \dots, z_{4d}) = (\dots, e^{2\pi i t_j} z_{2j-1}, e^{2\pi i t_j} z_{2j}, e^{-2\pi t_{j}} z_{2j+1}, e^{-2\pi t_{j}} z_{2j+2}, \dots ),
\end{equation*}
where \(j\) is taken odd. We can recognize in each block of four adjacent components the action defining the Calabi--Eckmann threefold \(M_{1,1;i}\). As these blocks can be split, at the end we can see that the corresponding LVMB manifold \(N_{2d} \iso (M_{1,1;i})^d\). For \(d=1\) we obtain the standard construction of the Calabi--Eckmann threefold. In particular, \(N_{2d}\) always admits an SKT metric.

These observations can be summarized in the following:

\begin{proposition}
    \label{niceBalancedFamily}
    Let \(\ell \geq 2\) be an integer, and let \(N_\ell \to (\CP^1)^\ell\) be the LVMB manifold constructed as above. Then \(\{N_\ell\}_\ell\) is a family of compact complex simply connected manifolds such that \(\dim_{\C} N_\ell = \ell + \left\lfloor\frac{l}{2} \right\rfloor\). Moreover, \(N_\ell\) has a balanced metric when \(\ell\) is odd, while it admits an SKT one when \(\ell\) is even. Finally, for any positive integer \(j\), \(N_{2j} \hookrightarrow N_{2j+1} \to \CP^1\) is a holomorphic principal bundle. 
\end{proposition}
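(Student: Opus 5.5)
We prove the four assertions of \cref{niceBalancedFamily} separately, using the two constructions above: the one for \(\ell = 2d+1\) odd and the one for \(\ell = 2d\) even.

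\emph{Dimension.} We count from the configuration data.
\begin{itemize}
\item For \(\ell = 2d+1\) there are no ghosts, \(n = 4d+2\) and the fan lives in \(\R^{2d+1}\). Hence \(n - \ell = 2d+1 = 2m+1\), so \(m = d\).
\item For \(\ell = 2d\) there is one ghost, \(n = 4d+1\) and the fan lives in \(\R^{2d}\). Again \(m = d\).
\end{itemize}
In both cases \(\dim_\C N_\ell = \ell + m = \ell + \lfloor \ell/2 \rfloor\), since \(\lfloor \ell/2 \rfloor = d\) in both cases.

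\emph{Simple connectedness.} For \(\ell = 2d+1\), we use \eqref{H1LVMB}, which gives \(\rk H^1 = 0\) since \(k = 0\). This only controls \(H^1\), so we argue with homotopy instead. Here \(\proj(U(\calT)) = \proj\bigl((\C^2\setminus\{0\})^{2d+1}\bigr)\). Every factor \(\C^2\setminus\{0\}\) is simply connected, so \((\C^2\setminus\{0\})^{2d+1}\) is \(2\)-connected. Its projectivization is the quotient by the diagonal \(\C^*\), and its fundamental group is \(0\) by the long exact sequence, using \(\pi_1(\C^*) = \Z\) and \(\pi_2 = 0\) of the total space. Next, \(N\) is the quotient of \(\proj(U(\calT))\) by the free proper action of the contractible group \(\C^m\). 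Hence \(\proj(U(\calT)) \to N\) is a principal bundle with contractible fibre, so it is a homotopy equivalence and \(\pi_1(N) = 0\). For \(\ell = 2d\) we identify \(N_{2d} \iso (M_{1,1;i})^d\), which is diffeomorphic to \((S^3\times S^3)^d\) and hence simply connected. The same contractible-fibre argument also applies directly, since \(U(\calT) = \C^* \times (\C^2\setminus\{0\})^{2d}\) and the diagonal \(\C^*\) absorbs the ghost factor.

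\emph{Special metrics.} For \(\ell\) odd, we apply \cref{balancedGGP}. The base metric \(\sum_j \omega_j\) is Kähler, hence balanced, so it suffices to have \(\Lambda \Theta_j = 0\). We obtain connections with curvatures \(\Theta_j = -\omega_j + \omega_{2d+1}\) from the pullback description \(\pi \iso \tilde\Delta^*\bigl(\prod_j \phi_j^*\pi'\bigr)\) and the explicit connection built for \(N_3\). We check primitivity exactly as for \(N_3\): \(\Lambda(\omega_a - \omega_b) = 1 - 1 = 0\) for the product Kähler metric.

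The main obstacle is that the connection forms must satisfy \(\theta_{2j} = J\theta_{2j-1}\). We get this because each factor \(\phi_j^*\pi'\) is pulled back holomorphically from \(N_3\), where the compatible complex connection \(\tilde\theta\) was constructed. Pullback along the holomorphic map \(\tilde\Delta\) preserves this compatibility, and \cref{intAlongFibersPB}-style naturality of curvature gives the stated \(\Theta_j\).

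For \(\ell\) even, \(N_{2d}\) is the product of Calabi--Eckmann threefolds, as noted above. The splitting of the \(\C^d\)-action into blocks of four coordinates shows that \(\proj(U(\calT))/\C^d\) is a product. Each Calabi--Eckmann threefold \(S^3\times S^3\) carries the Bismut-flat SKT metric coming from the bi-invariant metric on \(\SU(2)\times\SU(2)\). A product of SKT metrics is SKT, since \(\del\delbar\) of a sum of pullbacks splits factorwise.

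\emph{Principal bundle.} We take the map \(\psi\) of \cref{BalancedLVM}, namely the projection onto the last two homogeneous coordinates \((z_{4d+1}:z_{4d+2})\). It is well defined on \(N_{2d+1}\) because these coordinates are not moved by the \(\C^d\)-action and never vanish simultaneously. The fibre over \((1:u)\) is obtained by fixing the last two coordinates. We identify it with \(N_{2d}\) by using the last coordinate block as the normalizing \(\C^*\)-ghost, and we get local trivializations exactly as in the \(f_0,f_1\) formulas of \cref{BalancedLVM}. The transition functions act by the scalar \(u_0/u_1\) on the remaining coordinates. This scalar action is the action of an element of the structure group \((M_{1,1;i})^d\) (equivalently, of the \(\C^*\) sitting inside it), so the bundle is holomorphic principal. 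Checking this last identification of the clutching map with a group translation is the most delicate bookkeeping step, and I would do it explicitly for \(d=1\) before generalizing.
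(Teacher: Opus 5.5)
Your proposal is correct, and for four of the five points it follows the paper's own argument: the dimension count, the balanced metrics, the SKT metrics, and the bundle over \(\CP^1\). For the balanced case you apply \cref{balancedGGP} to connections pulled back from \(N_3\), with \(\Theta_j=-\omega_j+\omega_{2d+1}\) primitive for \(\sum_j\omega_j\). For the SKT case you use \(N_{2d}\iso(M_{1,1;i})^d\). For the bundle you project onto the last two homogeneous coordinates, with clutching by the scalar \(u_0/u_1\).

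The genuinely different step is simple connectedness. The paper reads it off the secondary fibrations \((M_{1,1;i})^d\hookrightarrow N_{2d+1}\to\CP^1\) and the product structure for \(\ell\) even (cf.\ the Puppe-sequence argument given for \(N_3\)). You instead use that \(\proj(U(\calT))\to N\) is a principal \(\C^m\)-bundle, hence a homotopy equivalence, together with \(\pi_1(\proj(U(\calT)))=0\). Your route is uniform, does not need the fibration over \(\CP^1\), and works for any regular LVMB manifold: with \(k\geq 1\) ghosts it gives \(\pi_1(N)\iso\Z^{k-1}\), matching \labelcref{H1LVMB}. The same exact sequence also gives \(\pi_2(N_{2d+1})\iso\Z\), which the paper uses later, so nothing is lost.

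Two small corrections.

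First, the curvature statement only needs the standard fact that the curvature of a pulled-back connection is the pull-back of the curvature. \Cref{intAlongFibersPB} plays no role there.

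Second, \(\C^*\) does not sit inside \((M_{1,1;i})^d\). What settles the step you flagged is the following. Under \([z,w]\mapsto\bigl(e^{i\log|w|}z/|z|,\,e^{-i\log|z|}w/|w|\bigr)\in S^3\times S^3\), the scalar \(\lambda=re^{i\varphi}\) acts on each Calabi--Eckmann factor as translation by the torus element \(\bigl(e^{i(\varphi+\log r)},e^{i(\varphi-\log r)}\bigr)\). This comes from a non-injective homomorphism \(\C^*\to T^2\); for instance \(\lambda=-e^{\pi}\) acts trivially. The translation is on the side opposite to the \(\C\)-linear \(\SU(2)\times\SU(2)\)-action. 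That \(\SU(2)^{2d}\)-action on the first \(4d\) homogeneous coordinates commutes with \(\alpha\) and fixes the last two coordinates, so it descends to \(N_{2d+1}\). It acts by biholomorphisms, is simply transitive on the fibres of \(\psi\), and commutes with the clutching maps, so it is the principal action.
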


\subsection{SKT metrics on regular LVMB manifolds}
The properties of submersion metrics on holomorphic principal torus bundles can be also exploited to construct SKT metrics on regular LVMB manifolds. As we did in \cref{section:submersionMetricsBalanced}, we start by adapting a theorem in \cite{GrantcharovPoon2008} in the context of LVMB manifolds.

\begin{proposition}[{\cite[Prop. 14]{GrantcharovPoon2008}}]
    \label{GGP}
    Let \(\pi : N \to X\) be the holomorphic torus bundle associated to an LVMB manifold \(N\); let \(J_N\) be its complex structure. Consider a principal connection \(\theta \in \Omega^1(N;\R^{2m})\) with components \(\theta_j\)'s such that \(J_N \theta_{2j-1} = \theta_{2j}\) and \(\Theta_j \in \Omega^{2}(X)\) the corresponding curvature forms. Then, the submersion metric \(g_N = \pi^* g_X + \sum_j \theta_j \otimes \theta_j\) constructed as in \cref{section:submersion_metric} is an SKT metric if and only if
    \begin{equation*}
        dd^c\omega_X = \sum_{j=1}^m(\Theta_{2j-1}^2 + \Theta_{2j}^2).
    \end{equation*}
\end{proposition}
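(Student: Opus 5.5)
We compute $dd^c\omega_N$ directly for the submersion form $\omega_N=\pi^*\omega_X+\sum_{j=1}^m\theta_{2j-1}\wedge\theta_{2j}$ and split the result by vertical degree. The structure equations of the principal connection are $d\theta_j=\pi^*\Theta_j$, where the $\Theta_j$ are closed $2$-forms on $X$.

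We first compute $d^c\omega_N$ with the convention $d^c=J^{-1}dJ$, acting on forms by $J\alpha=\alpha(J\cdot,\dots)$. We need to know how $J$ acts on the pieces of $\omega_N$.
\begin{itemize}
\item Since $\pi$ is holomorphic, $\pi^*$ commutes with $J$, so $d^c\pi^*\omega_X=\pi^*d^c\omega_X$.
\item Since $J_N\theta_{2j-1}=\theta_{2j}$, the $2$-form $\theta_{2j-1}\wedge\theta_{2j}$ is $J$-invariant. Indeed, writing $\tilde\theta_j=\theta_{2j-1}+i\theta_{2j}$, it equals $\tfrac i2\tilde\theta_j\wedge\bar{\tilde\theta}_j$.
\item The curvatures $\Theta_{2j-1}+i\Theta_{2j}$ of the holomorphic connection $\tilde\theta_j$ are of type $(1,1)$, because the bundle is holomorphic. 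Hence each $\Theta_k$ is $J$-invariant.
\end{itemize}
It follows that $d^c(\theta_{2j-1}\wedge\theta_{2j})=J^{-1}\bigl(\pi^*\Theta_{2j-1}\wedge\theta_{2j}-\theta_{2j-1}\wedge\pi^*\Theta_{2j}\bigr)$. Using $J^{-1}\theta_{2j}=-\theta_{2j-1}$, $J^{-1}\theta_{2j-1}=\theta_{2j}$ and the invariance of the $\Theta_k$, this gives
\begin{equation*}
d^c(\theta_{2j-1}\wedge\theta_{2j})=\pm\bigl(\pi^*\Theta_{2j-1}\wedge\theta_{2j-1}+\pi^*\Theta_{2j}\wedge\theta_{2j}\bigr).
\end{equation*}
The sign is fixed by the convention for $d^c$ and by the orientation choice in the statement.

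Next we apply $d$. Since $d\Theta_k=0$ and $d\theta_k=\pi^*\Theta_k$, we get
\begin{equation*}
dd^c\omega_N=\pi^*\Bigl(dd^c\omega_X\mp\sum_{j=1}^m(\Theta_{2j-1}^2+\Theta_{2j}^2)\Bigr).
\end{equation*}
This $4$-form is basic. Because $\pi$ is a submersion, $\pi^*$ is injective on forms, so $dd^c\omega_N=0$ holds if and only if $dd^c\omega_X=\sum_j(\Theta_{2j-1}^2+\Theta_{2j}^2)$, once the sign convention is aligned with the statement. Finally, SKT means $\partial\bar\partial\omega_N=0$, which is equivalent to $dd^c\omega_N=0$ since $dd^c=2i\partial\bar\partial$.

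The main obstacle is the $J$-bookkeeping in the middle step. We must check carefully that $J$ leaves the $\Theta_k$ invariant; this rests on the connection being the real form of a holomorphic, complex-valued connection, and in particular on the coupling $J\theta_{2j-1}=\theta_{2j}$. We must also verify that the cross terms $\Theta_{2j-1}\wedge\Theta_{2j}$ cancel rather than survive. We would confirm this cancellation on a local frame $\tilde\theta_j=\theta_{2j-1}+i\theta_{2j}$, where the computation reduces to
\begin{equation*}
d\Bigl(\tfrac i2\tilde\theta_j\wedge\bar{\tilde\theta}_j\Bigr)
\end{equation*}
together with its $d^c$ counterpart, since the $\Theta_k$ enter only through the $(1,1)$ curvature $\tilde\Theta_j=\Theta_{2j-1}+i\Theta_{2j}$.
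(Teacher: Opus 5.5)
The direct computation you outline is the standard one, and it is complete once every $\tilde\Theta_j=\Theta_{2j-1}+i\Theta_{2j}$ is of type $(1,1)$. The gap is the step where you derive this from holomorphicity of the bundle: that derivation is false.

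With $\tilde\theta_j=\theta_{2j-1}+i\theta_{2j}$ of type $(1,0)$, integrability of $J_N$ only says that $d\tilde\theta_j=\pi^*\tilde\Theta_j$ has no $(0,2)$-part. So $\tilde\Theta_j=\beta_j+\gamma_j$ with $\beta_j$ of type $(2,0)$ and $\gamma_j$ of type $(1,1)$, and nothing forces $\beta_j=0$. Indeed, replacing $\tilde\theta_j$ by $\tilde\theta_j+\pi^*\alpha$, for any smooth $(1,0)$-form $\alpha$ on $X$, gives another principal connection still satisfying $J_N\theta_{2j-1}=\theta_{2j}$, and its curvature acquires the $(2,0)$-part $\partial\alpha$. (A genuinely holomorphic connection form would even have curvature of pure type $(2,0)$.)

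When $\beta_j\neq0$, the forms $\Theta_{2j-1},\Theta_{2j}$ have $(2,0)+(0,2)$ components. They are then not $J$-invariant, and your formula for $d^c(\theta_{2j-1}\wedge\theta_{2j})$ breaks down. Redoing the computation with $\partial,\bar\partial$, and using $\partial\gamma_j=-\bar\partial\beta_j$ (from $d\tilde\Theta_j=0$), gives
\begin{equation*}
\partial\bar\partial\bigl(\tilde\theta_j\wedge\bar{\tilde\theta}_j\bigr)=\pi^*\bigl(\gamma_j\wedge\bar\gamma_j-\beta_j\wedge\bar\beta_j\bigr)-\pi^*(\bar\partial\beta_j)\wedge\bar{\tilde\theta}_j+\tilde\theta_j\wedge\pi^*\bigl(\overline{\bar\partial\beta_j}\bigr).
\end{equation*}
This form is not basic. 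Its basic part is also not $\tilde\Theta_j\wedge\bar{\tilde\Theta}_j=\Theta_{2j-1}^2+\Theta_{2j}^2$, since the latter contains $+\beta_j\wedge\bar\beta_j$ and the mixed terms $\beta_j\wedge\bar\gamma_j+\gamma_j\wedge\bar\beta_j$. So for such connections the ``if'' direction is false, not merely unproved. For instance, over $\CP^1\times\CP^1$ (where every $4$-form has type $(2,2)$), take the SKT Calabi--Eckmann configuration. Replace $\tilde\theta$ by $\tilde\theta+\epsilon\pi^*\alpha$ with $\partial\alpha\neq0$, and readjust $\omega_X$ using the $\partial\bar\partial$-lemma. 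The displayed identity then holds, but the submersion metric is not SKT.

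What is missing is therefore a hypothesis, not a lemma. In Grantcharov--Grantcharov--Poon the curvature forms are assumed to be of type $(1,1)$, and the statement must be read with that assumption. Under it your argument is fine. Your worry about cross terms is empty: $\gamma_j\wedge\bar\gamma_j=\Theta_{2j-1}^2+\Theta_{2j}^2$ simply because $2$-forms commute.

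In the LVMB setting you can say more. This matters for the paper's later use, where the connection comes from averaging an SKT metric and cannot be chosen.
\begin{itemize}
\item If the submersion metric is SKT, the non-basic terms above force $\bar\partial\beta_j=0$. Then $\beta_j$ is a holomorphic $2$-form on $X_\Sigma$, so it vanishes because $h^{2,0}(X_\Sigma)=0$.
\item Since $h^{0,2}(X_\Sigma)=0$, the $\partial$-closed form $\beta_j$ is $\partial$-exact. Hence any compatible connection can be shifted by some $\pi^*\alpha$ to one with $(1,1)$ curvature.
\end{itemize}

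On the sign: once you fix $dd^c=2i\partial\bar\partial$, as in your last line, the sign is no longer free. Positivity of $\omega_N$ forces $\theta_{2j-1}+i\theta_{2j}$ (not $\theta_{2j-1}-i\theta_{2j}$) to be of type $(1,0)$. This gives $dd^c(\theta_{2j-1}\wedge\theta_{2j})=-\pi^*(\Theta_{2j-1}^2+\Theta_{2j}^2)$, which is exactly the sign in the statement. Also, the values you list for $J^{-1}\theta_{2j-1}$ and $J^{-1}\theta_{2j}$ are those of $J$, not $J^{-1}$.
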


In order to prove a stronger result about the existence of SKT metrics, we need to restrict ourselves to LVM manifolds \cite{Meersseman2000} because in this case the toric base space \(X_\Sigma\) is projective \cite{MeerssemanVerjovsky2004}. From the viewpoint of the initial datum \((V_\Sigma,\calT_\Sigma)\), \(N\) is an LVM manifold if and only if the fan \(\Sigma\) is \emph{polytopal}, namely, when it is the normal fan of a simple complex polytope \cite{Cupit-FoutouZaffran2007}. This is the only case in which \(X_\Sigma\) is Kähler: indeed, since toric manifolds are Moishezon \cite[Thm. 6.1.18]{CoxLittleSchenck2011}, they admit Kähler metrics if and only if they are projective \cite{7Papers1967}.

\begin{lemma}
    \label{SKTcohomology}
    Let \(N\) be a regular LVMB manifold with corresponding holomorphic principal torus bundle \(\pi : N \to X_\Sigma\); let \(\pi_j\) be the principal \(S^1\)-bundles components. If \(N\) admits an SKT metric then \(\sum_j c_1(\pi_j)^2 = 0\). Furthermore, if \(N\) is an LVM manifold, i.e. if \(X_\Sigma\) is projective, the converse holds as well. 
\end{lemma}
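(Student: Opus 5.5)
Both directions reduce to submersion metrics and the criterion of \cref{GGP}. For necessity, suppose \(N\) carries an SKT metric. By \cref{AveragedSpecialMetrics}, averaging over the \(\cpxTorus{m}\)-action gives an invariant SKT metric. By \cref{InvariantIsSubmersion}, this metric is a submersion metric \(g_N = \pi^*g_X + \sum_j \theta_j\otimes\theta_j\) for some Hermitian \(g_X\) on \(X_\Sigma\) and some connection whose components satisfy \(J_N\theta_{2j-1} = \theta_{2j}\). \Cref{GGP} then yields
\begin{equation*}
dd^c\omega_X = \sum_{j=1}^{m}\left(\Theta_{2j-1}^2+\Theta_{2j}^2\right) = \sum_{j=1}^{2m}\Theta_j^2 .
\end{equation*}
The left-hand side is \(d\)-exact on \(X_\Sigma\). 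Each curvature form \(\Theta_j\) is closed and represents \(c_1(\pi_j)\), so the right-hand side represents \(\sum_j c_1(\pi_j)^2\) in \(H^4_{\mathrm{dR}}(X_\Sigma)\). Hence this class vanishes in real cohomology. Because \(H^*(X_\Sigma;\Z)\) is torsion free for a smooth complete toric variety, the class also vanishes integrally.

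For sufficiency, assume \(X_\Sigma\) is projective and fix a Kähler form \(\omega_0\). By Hodge theory, each integral class \(c_1(\pi_j)\) lies in \(H^{1,1}\), so it has a unique \(\omega_0\)-harmonic representative \(\Theta_j\), which is a real closed \((1,1)\)-form. The first step is to find a principal connection compatible with \(J_N\) whose curvatures are exactly these \(\Theta_j\). Pair the components as \(\tilde\Theta_j = \Theta_{2j-1}+i\Theta_{2j}\), which we want to be the curvature of a complex connection \(\tilde\theta_j\) on the holomorphic \(\C^*\)-type components. Two connections with the same complex structure compatibility differ by a \(\C^m\)-valued form \(\pi^*\beta\), where \(\beta\) is a \((1,0)\)-form on the base. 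Their curvatures therefore differ by \(d\beta\). Since \(X_\Sigma\) is Kähler, the \(\del\delbar\)-lemma lets us write the difference between an existing compatible curvature and the harmonic target as \(\del\delbar\)-exact, say \(\del\delbar f = \delbar(\del f)\). Taking \(\beta = \del f\) then gives \(d\beta = \del\delbar f\), which corrects the curvature. Equivalently, one checks that \(\Theta_{2j}\) should be \(J\)-related to \(\Theta_{2j-1}\) through the identity \(J_N\theta_{2j-1}=\theta_{2j}\). The remark in the balanced example indicates this is delicate, so here I would rely on the \(\del\delbar\)-lemma on \(X_\Sigma\) to adjust each real component independently inside its \((1,1)\)-class while keeping the complex pairing.

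Next, the form \(\Psi = \sum_j \Theta_j^2\) is a closed real \((2,2)\)-form, and it is exact by hypothesis. The \(\del\delbar\)-lemma then gives \(\Psi = dd^c\alpha\) for some real \((1,1)\)-form \(\alpha\). Set \(\omega_X = C\omega_0 + \alpha\) with \(C\gg 0\), so that \(\omega_X\) is positive. Since \(dd^c\omega_0 = 0\), we get \(dd^c\omega_X = \Psi\), and \cref{GGP} shows that the submersion metric is SKT. I expect the main obstacle to be the connection-adjustment step, namely realizing prescribed closed \((1,1)\) representatives as curvatures while preserving \(J_N\theta_{2j-1}=\theta_{2j}\). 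However, SKT does not actually require harmonicity: any compatible connection works, because its curvatures are closed forms in the right classes, and then \(\Psi\) is exact and of type \((2,2)\) once the curvatures are \((1,1)\). For a holomorphic torus bundle, the curvature of a connection built from a Hermitian-type construction is \((1,1)\). So the essential point is to obtain \((1,1)\) curvatures, for example through Chern-type connections on the associated holomorphic line bundles, and then to apply the \(\del\delbar\)-lemma.
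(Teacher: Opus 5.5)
Your necessity argument is the paper's. You average via \cref{AveragedSpecialMetrics}, identify the invariant metric as a submersion metric via \cref{InvariantIsSubmersion}, and read off from \cref{GGP} that $\sum_j\Theta_j^2=dd^c\omega_X$ is exact. Your remark that $H^*(X_\Sigma;\Z)$ is torsion free, so the class also vanishes in $H^4(X_\Sigma;\Z)$, makes explicit a point the paper skips.

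For the converse the paper only cites \cite[Lemma 4]{CobanTurcoPoddar2021}, whereas you reconstruct the argument: a $J_N$-compatible connection with $(1,1)$ curvatures, $\Psi=\sum_j\Theta_j^2$ exact of type $(2,2)$, $\Psi=dd^c\alpha$ by the $\del\delbar$-lemma, $\omega_X=C\omega_0+\alpha$, then \cref{GGP}. That route is sound and more informative. It shows that projectivity enters only through the existence of a positive $\omega_0$ with $dd^c\omega_0=0$, since the $\del\delbar$-lemma already holds on every compact toric manifold. You are also right that harmonicity plays no role.

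What you have not established is the first step, and the argument you sketch for it presupposes what it should produce. Compatibility $J_N\theta_{2j-1}=\theta_{2j}$ only means that $\tilde\theta_j=\theta_{2j-1}+i\theta_{2j}$ is of type $(1,0)$. Its curvature $\tilde\Theta_j=\del\tilde\theta_j+\delbar\tilde\theta_j$ can then have a nonzero $(2,0)$-part. Your $\del\delbar$-correction toward the harmonic target therefore assumes a compatible connection whose curvature is already $(1,1)$. Drop the sentence about $\Theta_{2j}$ being $J$-related to $\Theta_{2j-1}$: the pairing constrains the $\theta$'s, not the curvatures. Likewise, ``Chern-type connections on the associated line bundles'' needs an argument that the result is compatible with $J_N$.

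Either of the following closes the gap.

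(a) Take any compatible connection, e.g.\ the one produced by \cref{InvariantIsSubmersion} from an averaged metric, and closed real $(1,1)$-representatives $\Theta'_j$ of $c_1(\pi_j)$. Both sides represent the same class, so $\tilde\Theta_j-(\Theta'_{2j-1}+i\Theta'_{2j})=d\eta$ for a complex $1$-form $\eta$ on $X_\Sigma$. Comparing $(0,2)$-parts gives $\delbar\eta^{0,1}=0$, hence $\eta^{0,1}=\delbar g$ because $H^{0,1}_{\delbar}(X_\Sigma)=0$. Replacing $\tilde\theta_j$ by $\tilde\theta_j-\pi^*(\eta^{1,0}-\del g)$ keeps it of type $(1,0)$ and gives curvature exactly $\Theta'_{2j-1}+i\Theta'_{2j}$.

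(b) By \cref{prop:bundleReduction} and the construction, $N$ is the quotient of the holomorphic $(\C^*)^{2m}$-bundle $\tilde\pi$ by the complex subgroup $\exp(\C^m_N)$. Chern connections of Hermitian metrics on $\tilde\pi_j\times_{\id}\C$ give a $(1,0)$-connection on $\tilde\pi$ with real $(1,1)$ curvatures representing $c_1(\pi_j)$. Composing with the $\C$-linear projection $\C^{2m}\to\C^{2m}/\C^m_N$ gives a basic $(1,0)$-form, i.e.\ a compatible connection on $N$ with the same curvatures.

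With either step in place, your proof of the converse is complete and does not need the external lemma.
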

\begin{proof}    
    Suppose \(N\) admits an SKT metric \(g\) with fundamental form \(\omega\). Because of \cref{AveragedSpecialMetrics}, it is not restrictive to assume \(g\) to be a submersion metric. Thus, its fundamental form fulfils
    \begin{equation*}
        0 = dd^c \omega_N = \pi^* \left(dd^c \omega_X - \sum_{j=1}^{2m} \Theta_j^2\right).
    \end{equation*}
    Since \(\pi^* : \mathcal{A}^{2,2}(X) \to \mathcal{A}^{2,2}(N)\) is injective and \([\Theta_j] = c_1(\pi_j)\) in de Rham cohomology, this forces \(\sum_{j=1}^{2m} c_1(\pi_j)^2 = 0\).

    For the second part, we moreover assume that \(N\) is an LVM manifold such that \(\sum_{j=1}^{2m} c_1(\pi_j)^2 = 0\). As in this case \(X_\Sigma\) is Kähler, we can apply \cite[Lemma 4]{CobanTurcoPoddar2021} to construct a submersion metric on \(N\) which is SKT.
\end{proof}
\begin{remark}
    The SKT metric constructed in \cite[Lemma 4]{CobanTurcoPoddar2021} might not cover a Kähler metric on the base. The reason is that in that proof, a Kähler metric on the base has to be tweaked so that the equation in \cref{GGP} is fulfilled. Therefore, also in \cref{SKTcohomology}, the SKT metric whose existence is ensured might not be the horizontal lift of any Kähler metric defined on the base.
\end{remark}

Combining \cref{SKTcohomology} with the expression of the characteristic class of the LVMB bundle \(\pi : N \to X\) in \cref{bundles_over_toric_manifolds}, we get an obstruction for the existence of SKT metrics on \(N\) in terms of the characteristic class \(\gamma(\pi)\). As before, when restricting ourselves to LVM manifolds, we obtain a complete characterization.
\begin{theorem}
    \label{LVMB_SKT}
    Let \(\pi : N \to X_\Sigma\) be the principal torus bundle associated to a regular LVMB manifold \(N\), and let \(\gamma(\pi) = \sum_{j=1}^{2m} \iota^\vee(f_j^*) \otimes f_j\) be its characteristic class. Then, \(N\) admits an SKT metric only if
    \begin{equation*}
        \sum_{j=1}^{2m} \iota^\vee(f_j^*) \wedge \iota^\vee(f_j^*) = 0 \in H^4(X;\Z).
    \end{equation*}
    Furthermore, if \(N\) is LVM, or equivalently \(X_\Sigma\) is projective, this condition is also sufficient.
\end{theorem}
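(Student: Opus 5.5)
The plan is to reduce the statement to \cref{SKTcohomology}. That lemma already gives the equivalence, for LVM manifolds, between the existence of an SKT metric and the vanishing of \(\sum_{j=1}^{2m} c_1(\pi_j)^2\) in \(H^4(X_\Sigma)\). The necessity direction holds for every regular LVMB manifold. So all that remains is to identify \(c_1(\pi_j)\) with \(\iota^\vee(f_j^*)\), using the formula for the characteristic class.

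Concretely, I would proceed in three steps.

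\textbf{Step 1: identify the Chern classes.} Fix the Gale dual configuration \((f_0,\dots,f_{2m})\) used to construct \(N\). By \cref{ChangeOfBundles}, together with the discussion after \cref{prop:bundleReduction}, this choice determines the splitting \(\pi \iso \Delta^*\bigl(\prod_j \pi_j\bigr)\) into circle bundles. By \cref{eq:generalClassExpression}, the characteristic class is
\[
\gamma(\pi)=\sum_{j=1}^{2m} c_1(\pi_j)\otimes f_j .
\]
Comparing this with \cref{them:father}, and using that the \(f_j\) for \(j\ge 1\) form a basis of the fiber lattice \(\hat K\), I read off
\[
c_1(\pi_j)=\langle f_j^*,\gamma(\pi)\rangle=\iota^\vee(f_j^*) \quad\text{in } K_\Sigma^\vee\iso H^2(X_\Sigma;\Z).
\]

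\textbf{Step 2: transport the condition.} Substituting into \cref{SKTcohomology} turns \(\sum_j c_1(\pi_j)^2=0\) into \(\sum_j \iota^\vee(f_j^*)\wedge\iota^\vee(f_j^*)=0\). Here the cup product is computed in \(H^*(X_\Sigma;\Z)\), presented as \(\Z[w_1,\dots,w_s]/(\mathcal I+\mathcal J)\) as in \cref{ClassesInTheFan}.

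\textbf{Step 3: reconcile real and integral coefficients.} \cref{SKTcohomology} concerns de Rham classes, whereas the theorem states the vanishing in \(H^4(X;\Z)\). For a smooth complete toric manifold, \(H^*(X_\Sigma;\Z)\) is torsion free, so the map to \(H^4(X;\R)\) is injective and the two vanishing conditions are equivalent. For the sufficiency direction, the projectivity assumption (LVM case) is exactly what \cref{SKTcohomology} requires, via \cite{CobanTurcoPoddar2021}. Necessity uses only the averaging argument of \cref{AveragedSpecialMetrics}, which is already built into the lemma.

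The main obstacle is Step 1 when the Gale dual configuration does not lie in \(K\), or is not adapted to the complex structure. The issue is whether the circle components \(\pi_j\) appearing in \cref{SKTcohomology}, which must be coupled in pairs by \(J_N\) for \cref{GGP} to apply, are the ones coming from the \(f_j\). I would handle this through \cref{ThreeOutOfFour}: assume \(\Pi=\Pi_\text{std}\), so that \(f_j\) and \(f_{j+m}\) correspond to the real and imaginary parts of the \(j\)-th complex fiber coordinate. Independence of the choice then follows from the fact that
\[
\sum_j \iota^\vee(f_j^*)^2
\]
is the quadratic form \(\gamma(\pi)^{\mathsf T}\gamma(\pi)\), evaluated via the fiber basis. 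This expression is invariant under orthogonal changes of basis, which is consistent with the averaging-to-submersion-metric step, where an orthonormal frame is chosen. If the chosen basis is not orthonormal for the averaged metric, I would first replace it by a \(J\)-compatible orthonormal one, whose Chern classes are real linear combinations of the \(\iota^\vee(f_j^*)\). I would then check that the sum of squares is unchanged up to this orthogonal change, and accept the resulting statement in real cohomology.
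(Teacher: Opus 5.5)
Your Steps 1--3 are exactly the paper's argument. The paper's proof simply substitutes $c_1(\pi_j)=\iota^\vee(f_j^*)$, taken from \cref{them:father}, into \cref{SKTcohomology}, and your torsion-freeness remark is fine.

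The obstacle you flag at the end, however, is genuine, and your way around it fails. After averaging, \cref{InvariantIsSubmersion} produces connection components $\theta'_j$ relative to a $g$-orthonormal, $J$-adapted basis $e_j=\sum_k B_{kj}f_k$ of the fibre Lie algebra. The matrix $B$ is only required to commute with $J$, so it lies in (the real form of) $\mathrm{GL}(m,\C)$. It is orthogonal only when the $f_k$ were already $g$-orthonormal. Since $[\Theta'_j]=\sum_k (B^{-1})_{jk}\,\iota^\vee(f_k^*)$, what the SKT equation of \cref{GGP} actually forces is
\[
\sum_{j}[\Theta'_j]^2=\sum_{k,l}G_{kl}\,\iota^\vee(f_k^*)\wedge\iota^\vee(f_l^*)=0,
\]
where $G=(B^{-1})^{\mathsf T}B^{-1}$ is the Gram matrix of the fundamental fields of $f_1,\dots,f_{2m}$. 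This agrees with your sum of squares only if $G$ is a multiple of the identity, and nothing forces the averaged metric to have that property. (Moreover, $G$ is a priori only a function on $X_\Sigma$, which \cref{InvariantIsSubmersion} tacitly treats as constant.) For $m=1$ and $\Pi_{\text{std}}$ this particular issue is harmless, because every $J$-invariant inner product on $\R^2$ is then a multiple of the standard one. For $m\ge 2$, the condition genuinely depends on the Gale dual configuration.

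Here is a concrete instance. Take $V=(0,0,0,\varepsilon_1,-\varepsilon_1,\varepsilon_2,-\varepsilon_2)$ with the quadrant triangulation, so $X_\Sigma=\CP^1\times\CP^1$, $k=3$ and $m=2$. Use the Gale dual configuration $(f_0,h_1,h_2,e_2,e_3)$ with $h_1=e_4+e_5$ and $h_2=e_6+e_7$. With $\Pi_{\text{std}}$ this gives $\Lambda=\bigl((0,0),(i,0),(0,i),(1,0),(1,0),(0,1),(0,1)\bigr)$. The action splits, so $N$ is a product of two diagonal Hopf surfaces and carries an SKT metric. Consistently, $\sum_j\iota^\vee(f_j^*)^2=x_1^2+x_2^2=0$, where $x_1,x_2$ are the point classes of the two factors, dual to $h_1,h_2$.

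Now use instead $(f_0,h_1,h_1+h_2,e_2,e_2+e_3)$. This is $\Lambda\mapsto\Lambda Z$ with $Z=\begin{pmatrix}1&1\\0&1\end{pmatrix}$, so it gives the same complex manifold and the same bundle. The dual classes, however, become $x_1-x_2$, $x_2$, $0$, $0$, and their squares sum to $-2x_1x_2\neq 0$.

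So the necessity direction, read for an arbitrary Gale dual configuration, is false as stated, and no argument can close your gap. Granting that $G$ is constant, the argument proves only that the $G$-weighted sum vanishes for \emph{some} positive definite $J$-invariant $G$. Equivalently, the sum of squares vanishes for some possibly non-integral Gale dual configuration presenting $N$ with $\Pi_{\text{std}}$.

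The paper's one-line proof carries the same hidden gap, inside \cref{SKTcohomology}. The sufficiency direction is unaffected: there the metric is built from the $f_j$-components themselves, which are $J$-paired for $\Pi_{\text{std}}$.
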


In the class of manifolds described in \cref{niceBalancedFamily}, balanced and SKT metrics never coexist. Indeed, we already know that for \(\ell\) even, \(N_\ell\) does not admit any balanced metric as it is constructed from a triangulated vector configuration containing a ghost vector, while we can construct an SKT metric on it, as it is biholomorphic to the product of Calabi--Eckmann threefolds. On the other hand, for \(\ell\) odd, we proved that \(N_\ell\) admits a balanced metric; moreover, we can use \cref{LVMB_SKT} to show that the existence of SKT metrics is obstructed. Indeed, given the expression of its characteristic class in \cref{niceFamilyClass}, we can compute
\begin{equation*}
    \sum_{j=1}^{2m} (w_{2j} - w_{4d+1})^2 = -2 w_{4d+1} \wedge \sum_{j=1}^{2m} w_{2j-1}.
\end{equation*}
As the right-hand side does not vanish in \(H^4((\CP^1)^{2d+1};\Z)\), the existence of SKT metrics is obstructed on \(N_\ell\) with \(\ell\) odd. This observation can be summarized in the form of theorem.
\begin{theorem}
    \label{SKT_either_balanced_family}
    For any \(\ell \geq 2\), the LVMB manifold \(N_\ell \to (\CP^1)^\ell\) constructed as in \cref{niceBalancedFamily} admits a balanced metric if and only if \(\ell\) is odd, while it admits an SKT metric if and only if \(\ell\) is even. In this case it is a product of Calabi--Eckmann threefolds.
\end{theorem}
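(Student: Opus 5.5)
The plan is to treat the two parities of \(\ell\) separately and, in each case, settle both metric questions by invoking the results already established. Throughout, \(\ell \geq 2\) and \(N_\ell\) is the manifold of \cref{niceBalancedFamily}.

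\emph{Case \(\ell = 2d\) even.} The configuration \((V,\calT)\) used to build \(N_{2d}\) contains the ghost vector \(0\). By \cref{balancedness_obstruction}, \(N_{2d}\) admits a balanced metric only if it is a complex torus. This is impossible, since \(N_{2d}\) fibres over \((\CP^1)^{2d}\) with positive-dimensional base. Alternatively, \(N_{2d}\) is simply connected with \(b_1 = 0\) by \labelcref{H1LVMB}, since \(k = 1\).

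For the SKT part, I will use the identification \(N_{2d} \iso (M_{1,1;i})^d\) obtained above. The action splits into independent blocks of four coordinates, each of which is the Calabi--Eckmann action. Each factor \(M_{1,1;i} \iso S^3\times S^3\) carries an SKT metric: the standard bi-invariant metric on \(\SU(2)\times\SU(2)\) is Bismut-flat with closed torsion. A product of SKT metrics is again SKT, because \(\del\delbar\) acts factorwise on the sum of pulled-back fundamental forms. As a cross-check, \cref{LVMB_SKT} applies here since \((\CP^1)^{2d}\) is projective. The characteristic class is built from classes \(w_{2j-1}-w_{2j+1}\) paired within blocks, and their squares sum to zero in \(H^4\).

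\emph{Case \(\ell = 2d+1\) odd.} Existence of a balanced metric is the explicit submersion metric \(\omega_N\) constructed above. Its curvature forms \(\Theta_j = -\omega_j+\omega_{2d+1}\) are primitive with respect to the product Kähler form, so \cref{balancedGGP} applies with \(d^*\omega_X = 0\).

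For SKT, I will apply the necessary condition in \cref{LVMB_SKT} using the characteristic class \labelcref{niceFamilyClass}. In \(H^*((\CP^1)^{2d+1})\) the generators satisfy \(w_{2j-1}^2 = 0\). Expanding gives
\begin{equation*}
\sum_{j=1}^{2d}(w_{2j-1}-w_{4d+1})^2 = -2\,w_{4d+1}\wedge\sum_{j} w_{2j-1}.
\end{equation*}
This class is nonzero, because the products \(w_{2j-1}w_{4d+1}\) are distinct basis elements of \(H^4\) in the Künneth decomposition. Hence no SKT metric exists.

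The main point of care is the index bookkeeping. The ghost vector must be accounted for in the even case, and in the odd case the pairs \((\theta_{2j-1},\theta_{2j})\) must match the \(J\)-coupled factors so that the class formula \labelcref{niceFamilyClass} is read correctly. The final clause of the statement (\(N_\ell\) is a product of Calabi--Eckmann threefolds when \(\ell\) is even) is exactly the identification used in the even case.
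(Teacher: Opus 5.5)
Your proposal is correct and follows the paper's own argument. For \(\ell\) even, the ghost-vector obstruction of \cref{balancedness_obstruction} excludes balanced metrics and the splitting \(N_{2d}\iso (M_{1,1;i})^d\) supplies an SKT metric; for \(\ell\) odd, the explicit submersion metric with primitive curvature forms is balanced, and the computation from \labelcref{niceFamilyClass} combined with \cref{LVMB_SKT} obstructs SKT metrics. The one inaccuracy is in your optional cross-check for \(\ell\) even: there the classes \(\iota^\vee(f_j^*)\) are, up to sign, the generators of the individual \(\CP^1\) factors, so the squares vanish term by term, whereas differences of classes of distinct factors, as you wrote them, would leave nonzero cross terms.
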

This behaviour is consistent with the conjecture by A.~Fino and L.~Vezzoni about the coexistence of balanced and SKT metrics on complex manifolds \cite{FinoVezzoni2015}. In fact, it might be worthwile to reformulate \cref{balancedGGP} as a cohomological characterization, as was done for SKT metrics, in order to prove that their conjecture holds true within the class of regular LVM manifolds.

\begin{example}[Calabi--Eckmann manifolds]
    One of the main examples of LVM manifolds are Calabi--Eckmann manifolds \cite{LopezVerjovsky1997}. Thus, we can apply \cref{LVMB_SKT} to show that the Calabi--Eckmann manifolds \(M_{a,b;\tau}\) that support an SKT metric are precisely those with \(0 \leq a,b \leq 1\): i.e. elliptic curves, Hopf surfaces and Calabi--Eckmann threefolds. This fact was first proved in \cite[Thm. 5.16, Example 5.17]{Cavalcanti2020}, see also \cite[Thm. 2.3.1]{BarbaroThesis}.
    
    To compute the characteristic class of the LVMB bundle \(\pi : M_{a,b;\tau} \to \CP^a \times \CP^b\) we consider the Gale dual configuration provided in \cref{example:C-E_manifolds}. Inverting that matrix on the left we obtain:
    \begin{equation*}
        \begin{bmatrix}
            \horzbar f_0^* \horzbar\\
            \horzbar f_1^*\horzbar\\
            \horzbar f_2^* \horzbar
        \end{bmatrix}
        =
        \begin{bNiceMatrix}
            1 & 0& & \Cdots & 0\\
            -1 & 1 & 0 &\Cdots & 0\\
            -1 & 0 &\Cdots & 0 & 1 
        \end{bNiceMatrix}.
    \end{equation*}
    Thus, as a holomorphic torus bundle, the characteristic class of \(M_{a,b;\tau}\) is \(\gamma(\pi) = w_1 \otimes f_1 + w_2 \otimes f_2\), where \(w_1,w_2\) are the generators in degree 2 of
    \begin{equation*}
        H^*(\CP^a \times \CP^b;\Z) \iso \frac{\Z[w_1,w_2]}{(w_1^{a+1}, w_2^{b+1})}.
    \end{equation*}
    Hence, we observe that the quantity
    \begin{equation*}
        \iota^\vee(f_1^*) \wedge \iota^\vee(f_1^*) + \iota^\vee(f_2^*) \wedge \iota^\vee(f_2^*) = w_1^2 + w_2^2 \in H^4(\CP^a \times \CP^b;\Z)
    \end{equation*}
    vanishes if and only if \(0 \leq a,  b \leq 1\). The assertion we claimed thus follows from \cref{LVMB_SKT}.
\end{example}

\printbibliography
\noindent\begin{minipage}{\linewidth}
    \begin{center}
        \small
        \rule{4cm}{.5pt}
        \bigskip
        
        Università degli Studi di Firenze, Dipartimento di Matematica e Informatica ``Ulisse Dini'',\\ V.le Morgagni 67/a, 50134 Firenze, Italia
        
        email: \href{mailto:federico.thiella@unifi.it}{federico.thiella@unifi.it}
    \end{center}
\end{minipage}

\end{document}

%% file: preamble.tex
\usepackage[utf8]{inputenc}
\usepackage[T1]{fontenc}
\usepackage[english]{babel}
\usepackage{amsmath}
\usepackage{amsthm}
\usepackage{amsfonts}
\usepackage{amssymb}
\usepackage{mathtools}
\usepackage{graphicx}
\usepackage{xcolor}
\usepackage{hyperref}
\usepackage[textsize = scriptsize, color=green]{todonotes}
\usepackage{csquotes}
\usepackage{accents}
\usepackage{nicematrix}
\usepackage{tikz}
\usetikzlibrary{cd,arrows.meta,math,positioning}
\usepackage{enumerate}
\usepackage{leftindex}
\usepackage[babel]{microtype}
\newcommand{\fourDegreed}[4]{\leftindex^{#2}_{\phantom{#4}}{#1}^{#3}_{#4}}

\newcommand{\R}{\mathbb{R}}
\newcommand{\C}{\mathbb{C}}
\newcommand{\CP}{\mathbb{C}\mathrm{P}}
\newcommand{\Z}{\mathbb{Z}}
\newcommand{\Q}{\mathbb{Q}}
\newcommand{\U}{\mathrm{U}}
\newcommand{\SU}{\mathrm{SU}}
\newcommand{\PD}{\mathrm{PD}}
\DeclareMathOperator{\Span}{Span}
\newcommand{\del}{\partial}
\newcommand{\delbar}{{\bar\partial}}
\newcommand{\iso}{\cong}
\newcommand{\inv}{{-1}}
\newcommand{\contr}{\lrcorner\,}
\newcommand{\smooth}{C^\infty}
\DeclareMathOperator{\Hom}{Hom}

\DeclareMathOperator{\coker}{coker}
\DeclareMathOperator{\rk}{rk}
\DeclareMathOperator{\id}{id}
\DeclareMathOperator{\Rel}{Rel}

\newcommand{\proj}{\mathbb{P}}
\newcommand{\dfn}{\coloneqq}
\newcommand{\nfd}{\eqqcolon}

\let\Re\relax
\DeclareMathOperator{\Re}{\mathrm{Re}}
\let\Im\relax
\DeclareMathOperator{\Im}{\mathrm{Im}}

\newcommand{\calT}{\mathcal{T}}
\DeclareMathOperator{\cone}{cone}
\newcommand{\realTorus}[1]{T^{#1}}
\newcommand{\cpxTorus}[1]{\mathbb{T}^{#1}}

\DeclareMathOperator{\Ima}{Im}

\newcommand{\trans}[1]{\leftindex^{t}{#1}}
\newcommand{\action}{\mathop{\!\curvearrowright}}
\newcommand{\lattice}{L}
\renewcommand{\th}{-th\ }

\newcommand*{\vertbar}{\rule[-1ex]{0.5pt}{2.5ex}}
\newcommand*{\horzbar}{\hspace{.5ex}\rule[.5ex]{2.5ex}{0.5pt}\hspace{.7ex}}

\newcommand*\circled[1]{\tikz[baseline=(char.base)]{
            \node[shape=circle,draw,inner sep=.5pt] (char) {\tiny#1};}}

\newcommand{\overnumber}[2]{\mathrel{\stackrel{\makebox[0pt]{\mbox{\normalfont \circled{#2}}}}{#1}}}

\let\svthefootnote\thefootnote
\newcommand\freefootnote[1]{%
  \let\thefootnote\relax%
  \footnotetext{#1}%
  \let\thefootnote\svthefootnote%
}

\usepackage{lipsum}
\usepackage{faktor}

\newtheorem{theorem}{Theorem}
\newtheorem{proposition}{Proposition}[section]
\newtheorem{lemma}[proposition]{Lemma}
\newtheorem{corollary}{Corollary}

\theoremstyle{definition}
\newtheorem{definition}{Definition}
\newtheorem{remark}[proposition]{Remark}
\theoremstyle{remark}
\newtheorem*{note}{Note}

\newtheoremstyle{named}{}{}{\itshape}{}{\bfseries}{.}{.5em}{#1 \thmnote{#3}}
\theoremstyle{named}
\newtheorem*{namedtheorem}{Theorem}

\newtheoremstyle{named_example}{}{}{}{}{\bfseries}{.}{.5em}{#1Example \thmnumber{#2}\,: \thmnote{#3}}
\theoremstyle{named_example}
\newtheorem{example}[proposition]{}

\usepackage{zref-clever}
\newcommand{\cref}[1]{\zcref{#1}}
\newcommand{\Cref}[1]{\zcref[S]{#1}}

\newcommand{\labelcref}[1]{\zcref[noname]{#1}}
\newcommand{\namecref}[1]{\zcref[noref]{#1}}

\zcsetup{nameinlink=false,abbrev}

\AddToHook{env/lemma/begin}{%
  \zcsetup{countertype={proposition=lemma}}}
\AddToHook{env/remark/begin}{%
  \zcsetup{countertype={proposition=remark}}}
\AddToHook{env/named_example/begin}{%
  \zcsetup{countertype={proposition=named_example}}}

\zcRefTypeSetup{named_example}{
Name-sg = Example ,
name-sg = example ,
Name-pl = Examples ,
name-pl = examples ,
}

%% file: pictures/Leray1.tex
\begin{tikzpicture} [every node/.append style={align=left}]
  \tikzmath{
    \dx = 3;
    \x1 = 0;
    \dy = 1.5;
    \y1 =0;
    \x2 = \x1 + \dx/4;
    \x3 = \x2 + \dx/2;
    \x4 = \x3 + \dx/2;
    \x5 = \x4 + \dx;
    \x6 = \x5 + \dx;
    \x7 = \x6 + \dx;
    \x8 = \x7 + \dx;
    \y2 = \y1 + \dy/3;
    \y3 = \y2 + \dy/3;
    \y4 = \y3 + \dy;
    \y5 = \y4 + \dy;
    \y6 = \y5 + \dy;
    \y7 = \y6 + \dy;
    \y8 = \y7 + \dy;
  };
\node[] at (\x3,\y1)  {0};
\node[] at (\x4,\y1)  {1};
\node[] at (\x5,\y1)  {2};
\node[] at (\x6,\y1)  {3};
\node[] at (\x7,\y1)  {4};

\node[] at (\x1,\y3) {0};
\node[] at (\x3,\y3)  {\(\Z\)};
\node[] at (\x4, \y3) (E10) {\(0\)};
\node[] at (\x5, \y3) (E20) {\(H^2(X_\Sigma,\Z)\)};
\node[] at (\x6, \y3) {\(0\)};
\node[] at (\x7, \y3) (E40) {\(H^4(X_\Sigma,\Z)\)};

\node[] at (\x1,\y4)  {1};
\node[] at (\x3,\y4) (E01) {\(H^1(\realTorus{2m},\Z)\)};
\node[] at (\x4, \y4) {0};
\node[] at (\x5, \y4) (E21) {\(H^1(\realTorus{2m},\Z) \otimes H^2(X_\Sigma,\Z)\)};
\node[] at (\x6, \y4) {\(0\)};
\node[] at (\x7, \y4) (E41) {\(H^1(\realTorus{2m},\Z) \otimes H^4(X_\Sigma,\Z)\)};

\node[] at (\x1,\y5) {2};
\node[] at (\x3,\y5) (E02) {\(H^2(\realTorus{2m},\Z)\)};
\node[] at (\x4, \y5) {0};
\node[] at (\x5, \y5) (E22) {\(H^2(\realTorus{2m},\Z) \otimes H^2(X_\Sigma,\Z)\)};
\node[] at (\x6, \y5) {\(0\)};
\node[] at (\x7, \y5) {\(H^2(\realTorus{2m},\Z) \otimes H^4(X_\Sigma,\Z)\)};

\draw[->] (E01)--(E20) node[midway, above right, yshift=-2mm] {\small\(d_2^{0,1}\)};
\draw[->] (E02)--(E21) node[midway, above right, yshift=-2mm] {\small\(d_2^{0,2}\)};
\draw[->] (E21)--(E40) node[midway, above right, yshift=-2mm] {\small\(d_2^{2,1}\)};
\draw[->] (E22)--(E41) node[midway, above right, yshift=-2mm] {\small\(d_2^{2,2}\)};

\draw[thick](\x1-\dx/4,\y2)--(\x7+\dx/2, \y2);
\draw[thick](\x2, \y1-\dy/4)--(\x2, \y5+\dy/4);

\node at (\x7+.7*\dx, \y2) {\(p\)};
\node at (\x2, \y5+\dy/2) {\(q\)};
  
\end{tikzpicture}